
\documentclass[11pt]{article}

\setlength{\textwidth}{6.5in}
\setlength{\textheight}{8.5in}
\setlength{\footskip}{0.8in}
\setlength{\unitlength}{1mm}
\setlength{\evensidemargin}{0pt} 
\setlength{\oddsidemargin}{0pt}
\setlength{\topmargin}{0in}

\usepackage{enumerate,xspace,proof}
\usepackage{amsmath,xspace,amssymb,stmaryrd}
\usepackage[dvips]{graphics}
\usepackage{graphicx}
\usepackage[dvipsnames]{xcolor}
\usepackage{hyperref}
\hypersetup{
    colorlinks,
    citecolor=black,
    filecolor=black,
    linkcolor=MidnightBlue,
    urlcolor=black
}

\usepackage{verbatim}

\usepackage{xy}
\xyoption{all}

\usepackage{tikz-cd}

\title{Latent Fibrations: \\ Fibrations for Categories of Partial Maps}
\author{Robin Cockett, Geoff Cruttwell, Jonathan Gallagher, and Dorette Pronk 
\thanks{Partially supported by NSERC, Canada.} 
}


\newcommand{\x}{\times}
  
\newcommand{\<}{\langle}
\renewcommand{\>}{\rangle}

\newcommand{\E}{\ensuremath{\mathbb E}\xspace}
\newcommand{\A}{\ensuremath{\mathbb A}\xspace}
\newcommand{\B}{\ensuremath{\mathbb B}\xspace}
\newcommand{\F}{\ensuremath{\mathbb F}\xspace}
\newcommand{\X}{\ensuremath{\mathbb X}\xspace}
\newcommand{\Y}{\ensuremath{\mathbb Y}\xspace}

\newcommand{\W}{\ensuremath{\mathbb W}\xspace}
\newcommand{\p}{\ensuremath{\mathsf{p}}}
\newcommand{\q}{\ensuremath{\mathsf{q}}}
\newcommand{\M}{\ensuremath{\mathsf{M}}}

\newcommand{\R}{{\ensuremath{\mathbb R}}\xspace}
\renewcommand{\S}{{\ensuremath{\mathbb S}}\xspace}
\newcommand{\T}{{\ensuremath{\mathbb T}}\xspace}

\renewcommand{\O}{{\ensuremath{\mathcal O}}\xspace}

\newcommand{\rst}[1]{\overline{#1}}
\newcommand{\rs}[1]{\overline{#1}}

\renewcommand{\split}{{\sf Split}_r}

\newcommand{\idb}{1_{\bullet}}
\newcommand{\bu}{\bullet}

\newcommand{\tidle}[1]{\tilde{#1}}

\newtheorem{theorem}{Theorem}[section]    

\newtheorem{corollary}[theorem]{Corollary}   
\newtheorem{lemma}[theorem]{Lemma}   
\newtheorem{preremark}[theorem]{Remark}   
\newtheorem{prexample}[theorem]{Example}   
\newtheorem{proposition}[theorem]{Proposition}

\newtheorem{definition}[theorem]{Definition}

\newenvironment{example}{\begin{prexample}\rm}{\end{prexample}}


\makeatletter


\newdimen\w@dth

\def\setw@dth#1#2{\setbox\z@\hbox{\scriptsize $#1$}\w@dth=\wd\z@
\setbox\@ne\hbox{\scriptsize $#2$}\ifnum\w@dth<\wd\@ne \w@dth=\wd\@ne \fi
\advance\w@dth by 1.2em}

\def\t@^#1_#2{\allowbreak\def\n@one{#1}\def\n@two{#2}\mathrel
{\setw@dth{#1}{#2}
\mathop{\hbox to \w@dth{\rightarrowfill}}\limits
\ifx\n@one\empty\else ^{\box\z@}\fi
\ifx\n@two\empty\else _{\box\@ne}\fi}}
\def\t@@^#1{\@ifnextchar_ {\t@^{#1}}{\t@^{#1}_{}}}

\def\t@left^#1_#2{\def\n@one{#1}\def\n@two{#2}\mathrel{\setw@dth{#1}{#2}
\mathop{\hbox to \w@dth{\leftarrowfill}}\limits
\ifx\n@one\empty\else ^{\box\z@}\fi
\ifx\n@two\empty\else _{\box\@ne}\fi}}
\def\t@@left^#1{\@ifnextchar_ {\t@left^{#1}}{\t@left^{#1}_{}}}

\def\two@^#1_#2{\def\n@one{#1}\def\n@two{#2}\mathrel{\setw@dth{#1}{#2}
\mathop{\vcenter{\hbox to \w@dth{\rightarrowfill}\kern-1.7ex
                 \hbox to \w@dth{\rightarrowfill}}%
       }\limits
\ifx\n@one\empty\else ^{\box\z@}\fi
\ifx\n@two\empty\else _{\box\@ne}\fi}}
\def\tw@@^#1{\@ifnextchar_ {\two@^{#1}}{\two@^{#1}_{}}}

\def\tofr@^#1_#2{\def\n@one{#1}\def\n@two{#2}\mathrel{\setw@dth{#1}{#2}
\mathop{\vcenter{\hbox to \w@dth{\rightarrowfill}\kern-1.7ex
                 \hbox to \w@dth{\leftarrowfill}}%
       }\limits
\ifx\n@one\empty\else ^{\box\z@}\fi
\ifx\n@two\empty\else _{\box\@ne}\fi}}
\def\t@fr@^#1{\@ifnextchar_ {\tofr@^{#1}}{\tofr@^{#1}_{}}}


\newdimen\W@dth
\def\setW@dth#1#2{\setbox\z@\hbox{$#1$}\W@dth=\wd\z@
\setbox\@ne\hbox{$#2$}\ifnum\W@dth<\wd\@ne \W@dth=\wd\@ne \fi
\advance\W@dth by 1.2em}

\def\T@^#1_#2{\allowbreak\def\N@one{#1}\def\N@two{#2}\mathrel
{\setW@dth{#1}{#2}
\mathop{\hbox to \W@dth{\rightarrowfill}}\limits
\ifx\N@one\empty\else ^{\box\z@}\fi
\ifx\N@two\empty\else _{\box\@ne}\fi}}
\def\T@@^#1{\@ifnextchar_ {\T@^{#1}}{\T@^{#1}_{}}}

\def\T@left^#1_#2{\def\N@one{#1}\def\N@two{#2}\mathrel{\setW@dth{#1}{#2}
\mathop{\hbox to \W@dth{\leftarrowfill}}\limits
\ifx\N@one\empty\else ^{\box\z@}\fi
\ifx\N@two\empty\else _{\box\@ne}\fi}}
\def\T@@left^#1{\@ifnextchar_ {\T@left^{#1}}{\T@left^{#1}_{}}}

\def\Tofr@^#1_#2{\def\N@one{#1}\def\N@two{#2}\mathrel{\setW@dth{#1}{#2}
\mathop{\vcenter{\hbox to \W@dth{\rightarrowfill}\kern-1.7ex
                 \hbox to \W@dth{\leftarrowfill}}%
       }\limits
\ifx\N@one\empty\else ^{\box\z@}\fi
\ifx\N@two\empty\else _{\box\@ne}\fi}}
\def\T@fr@^#1{\@ifnextchar_ {\Tofr@^{#1}}{\Tofr@^{#1}_{}}}

\def\Two@^#1_#2{\def\N@one{#1}\def\N@two{#2}\mathrel{\setW@dth{#1}{#2}
\mathop{\vcenter{\hbox to \W@dth{\rightarrowfill}\kern-1.7ex
                 \hbox to \W@dth{\rightarrowfill}}%
       }\limits
\ifx\N@one\empty\else ^{\box\z@}\fi
\ifx\N@two\empty\else _{\box\@ne}\fi}}
\def\Tw@@^#1{\@ifnextchar_ {\Two@^{#1}}{\Two@^{#1}_{}}}

\def\to{\@ifnextchar^ {\t@@}{\t@@^{}}}
\def\from{\@ifnextchar^ {\t@@left}{\t@@left^{}}}
\def\tofro{\@ifnextchar^ {\t@fr@}{\t@fr@^{}}}
\def\To{\@ifnextchar^ {\T@@}{\T@@^{}}}
\def\From{\@ifnextchar^ {\T@@left}{\T@@left^{}}}
\def\Two{\@ifnextchar^ {\Tw@@}{\Tw@@^{}}}
\def\Tofro{\@ifnextchar^ {\T@fr@}{\T@fr@^{}}}

\makeatother


\begin{document}
\maketitle

\begin{abstract}
Latent fibrations are an adaptation, appropriate for categories of partial maps (as presented by restriction categories), of the usual notion of fibration.  The paper initiates the development of  the basic theory of latent fibrations and explores some key examples.   Latent fibrations cover a wide variety of examples, some of which are partial versions of standard fibrations, and some of which are particular to partial map categories (particularly those that arise in computational settings).  Latent fibrations with various special properties are identified: hyperconnected latent fibrations, in particular, are shown to support the construction of a fibrational dual -- important to reverse differential programming and, more generally, in the theory of lenses.   
\end{abstract}

\tableofcontents


\section{Introduction}

Fibrations play a fundamental role in categorical logic: in particular, they provide models of type theories \cite{book:Jacobs-Cat-Log} used in the semantics of logical and computational systems.  The  purpose of this paper is to develop the analog of fibrations for settings -- such as those describing computation -- which are based on partial maps.   Because categories of partial maps admit an abstract and completely algebraic description as {\em restriction categories\/},  it is more general and, indeed, more convenient to develop a theory of fibrations for restriction categories.   As the structure of a restriction category is more nuanced than that of a mere category -- due to the necessity to support the partiality of maps -- the restriction analogue of a fibration, which is called a \emph{latent fibration}, is a necessarily more subtle notion.  The purpose of this paper is to initiate a careful development of the theory of latent fibrations.

There are many reasons why an abstract theory of fibrations, 
into which partiality of maps has been baked, might be useful.  Recalling that computation is fundamentally partial, 
there is an obvious motive to consider such settings in the semantics of computation and, in particular, to have partiality built into type theories describing computation.  Notably the first 
use of latent fibrations was in Chad Nester's MSc.~thesis \cite{msc:nester-calgary}, where they were used in the study of realizability.  More recently the desire to understand the semantics of differential programming, which has received increased attention due to its connection to machine learning, has further stimulated the development of a general theory of partiality in fibrations.

In differential programming one needs to calculate the derivative of partially-defined smooth functions (that is, smooth functions defined on some open subset of their domain).  The  categorical structure of the so-called (total) ``forward'' derivative was developed in \cite{journal:BCS:CDC}: this characterized the operation which takes a smooth map $f\colon \R^n \to R^m$ and 
produces the map
	\[ D[f]\colon R^n \times R^n \to R^m \]
whose value at a pair $(x,v)$ is $J(f)(x)\cdot v$: the Jacobian of $f$, at $x$, in the direction $v$.  The structure of differentials for partially defined smooth functions is described in  \cite{journal:diff-rest} which introduced  differential {\em restriction\/} categories. In this case, the structure is axiomatized by an operation which still takes a map $f\colon A \to B$ and produces a map $D[f]\colon A \times A \to B$, but now with additional axioms relating the partiality of $D[f]$ to that of $f$: in particular, one asks that $\rst{D[f]} = \rst{f} \times 1$; 
that is, the partiality of $D[f]$ is entirely determined by the partiality of $f$.  This structure, while being interesting in its own right, is also of key importance in understanding how one can build differential manifolds at this level of generality \cite[Section 6]{journal:TangentCats}. 

Differentials are tightly linked to fibrations because the derivative, both in the total and the restriction case, can be regarded as a section of the ``simple fibration'' over $\X$.  That is, the category whose objects are objects in a context $\Sigma$, expressed as pairs $(\Sigma,A)$, with a map from $(\Sigma,A)$ to $(\Sigma',B)$ consisting of a pair of maps
	\[ f\colon \Sigma \to \Sigma' \mbox{ and } g\colon \Sigma \times A \to B \]
where the second component $g$ uses the context.  Then given a map $f\colon \Sigma \to \Sigma'$ in the base category, the pair $(f,D[f])$ gives a map in the simple slice category, and the functoriality of this operation is precisely the chain rule.

In the restriction case, the construction of the ordinary simple fibration does not inherit a restriction structure; however, a restriction structure 
is inherited when we require pairs $(f,g)$ such that $\rst{g} = \rst{f} \times 1$.  This new construction does not provide an ordinary fibration over 
$\X$, and there is no way to obtain an ordinary fibration.  However, what it does provide is a \emph{latent fibration}, which is, of course, the 
main subject of this paper.  Intriguingly, sections of these latent simple fibrations recapture the additional axioms of differential 
restriction categories because the equation $\rs{D[f]} = \rs{f} \times 1$ is then forced.

There is an important addendum to this story: the backpropogation algorithm, which is widely used in machine learning,  is based on computing the \emph{reverse} differential \cite{arxiv:RDC} as computationally this can be much more efficient.  The reverse derivative takes a smooth map $f\colon \R^n \to R^m$ and produces the map
	\[ R[f]\colon R^n \times R^m \to R^n \]
whose value at a pair $(x,v)$ is $J^T(f)(x)\cdot v$, the \emph{transpose} of the Jacobian of $f$, at $x$, in the direction $v$ (note its difference in type from the forward derivative $D[f]\colon R^n \times R^n \to R^m$).  Understanding the abstract properties of the reverse differential allows one to apply these machine learning techniques to different settings:  for example, they have already been used to develop machine-learning algorithms for Boolean circuits \cite{learn-bool}.

When we look at the fibrational structure of the reverse derivative operation, it can also be seen as giving a section to a fibration, but instead of being a section of the simple slice over $\X$, now it is a section of the \emph{fibrational dual\/} of the simple slice over $\X$.  The fibrational dual can be defined for any ordinary fibration by taking the opposite category in each fibre.  For the simple fibration, its dual is the category whose objects are pairs $(A,A')$ as before, but now a map from $(A,A')$ to $(B,B')$ consists of a pair of maps
	\[ f\colon A \to B, f^*\colon A \times B' \to A'. \]
Fibrational duals have also received renewed interest due to their role to the theory of lenses.  Lenses were originally developed for database theory \cite{journal:lenses-rosebrugh-johnson} but are now used more widely in learning systems, and elsewhere \cite{hedges2017coherence,spivak2019generalized}. The research in this paper opens the door to the formal study of partial lenses.

Thus, not only is it useful for the theory of reverse {\em restriction} differential categories to be able to form the fibrational dual of a latent fibration, but also, more generally,  in the theory of partial lenses.   However, in general, it is simply not the case that a latent fibration will have a fibrational dual! This is basically because the opposite of a restriction category is not generally a restriction category.  Thus, taking the ``opposite in each fibre'' of a latent fibration will not necessarily produce a latent fibration.  However, in the particular cases of interest, which use the simple latent fibration \ref{simple-latent-fibration} and the codomain latent fibration \ref{codomain-latent-fibration}, it is clear that it {\em is\/} possible to define a suitable fibrational dual.  Thus, one of the goals of this paper is to develop the theory of latent fibrations sufficiently so that the circumstances under which it is possible to define the fibrational dual of a latent fibration is fully understood.  

Of course, aside from the aforementioned motivations, latent fibrations are of intrinsic mathematical interest in their own right.  
The definition of a latent fibration (Definition \ref{newdefn}) involves a subtle change of the normal notion of Cartesian map to what, in order to clearly distinguish the notion, we call a {\em prone map\/}.   Latent fibrations first appeared in \cite{msc:nester-calgary} and were defined in a more complicated -- albeit equivalent -- manner:  see Definition \ref{olddefn} in Appendix \ref{Appendix-A}.   It is reasonable to wonder whether these definitions are ad hoc.  A theoretically convincing argument that the notion has a solid basis can be found in Appendix \ref{Appendix-B} where it is  shown that the notion of latent  fibration corresponds precisely to the 2-categorical notion introduced by Street \cite{street_fibration} for the (carefully chosen) 2-category of restriction semi-functors and transformations. 

While many results about ordinary fibrations are true of latent fibrations, there are some subtle aspects of the theory.  For example, while prone arrows always compose and isomorphisms are always prone, one might expect that partial isomorphisms should be prone as these generalize isomorphisms in a partial setting.  However, they are not in general.   
This failure, in fact, leads to the investigation of important additional properties that a latent fibration can satisfy (see Section \ref{sec:types}).   The first additional property we study is that of being an {\em admissible\/}  latent fibration (see Section \ref{sec:admissible}): in admissible latent fibrations restriction idempotents have prone liftings which are restriction idempotents.  An important consequence of being admissible is that one can split 
the idempotents to obtain an $r$-split latent fibration: these then can be linked to fibrations of partial map categories and ${\sf M}$-categories (see Section \ref{sec:fibrations-of-partial}).  The next additional property we consider is that of being {\em separated\/} (see Section \ref{sec:separated}), which is the requirement that the projection functor separates restriction idempotents.  This turns out to be equivalent to asking that all restriction idempotents in the total category be prone.  When both these conditions hold the latent fibration is a {\em hyperfibration} (see Section \ref{sec:connected}), and this is in turn equivalent to asking that the projection functor of the latent fibration be a hyperconnection (\cite[pg. 39]{journal:rcats-enriched}).  We provide separating examples for each of these additional requirements, and develop their properties: in particular, we show that latent hyperfibrations have fibrational duals.  

After developing the basic theory, we turn to obtaining an explicit description of latent fibrations as categories of partial maps (as ${\sf M}$-categories), Section \ref{sec:fibrations-of-partial}.  Indeed, we completely characterize the $r$-split latent fibrations of partial map categories, showing that they are equivalent to giving a fibration between the total categories which is ``$M$-plentiful'' -- this is a requirement, that $M$-maps lift in an appropriate manner.  Finally, in the last section, Section \ref{sec:dual}, we describe how to define the fibrational dual of a latent hyperfibration.

The authors are grateful for Bob Rosebrugh's many contributions to the category theory research community, through his interesting and enlightening talks and papers,  his many years of service with TAC, and, on a personal level, his discussions with each of us.  We hope that the present paper may serve as a continuation of his work on lenses and fibrations (e.g., \cite{journal:lenses-rosebrugh-johnson}) into settings which involve partial maps.  


\section{Restriction Categories}

In this section we give a brief introduction to restriction categories concentrating on some of the less well-known aspects that are relevant to this paper.  
For further details see \cite{journal:rcats1, cockett_lack_2007}.


\subsection{Restriction categories}

A {\bf restriction category} (see \cite{journal:rcats1} for details) is a category equipped with a {\bf restriction} combinator which given a map $f\colon A \to B$, returns 
an endomorphism on the domain $\rst{f}\colon A \to A$ which satisfies just four identities\footnote{Composition is written in diagrammatic order in this paper.}:
\[ \mbox{[R.1]}~\rst{f}f = f ~~~~ \mbox{[R.2]}~\rst{f}~\rst{g} = \rst{g}~\rst{f} ~~~~\mbox{[R.3]}~\rst{f}~\rst{g} = \rst{\rst{f} g} ~~~~ \mbox{[R.4]}~f \rst{g} = \rst{fg}f \] 
The prototypical restriction category is the category of sets and partial maps, ${\sf Par}$.  The restriction of a partial map in ${\sf Par}$ is the partial identity on the domain which is 
defined precisely when the partial map is defined. 

 In any restriction category $\rst{f}$ is always an idempotent and any idempotent $e = ee$ with $e=\rst{e}$ is called a {\bf restriction idempotent}. It is not the case that every idempotent need be a  restriction idempotent.  The restriction idempotents on an object $A$ form a meet semi-lattice, with the meet given by composition, which we denote by ${\cal O}(A)$:  the elements of ${\cal O}(A)$ may be regarded as distinguished predicates on the object $A$.

Restriction categories are always full subcategories of partial map categories (see \cite[Proposition 3.3]{journal:rcats1}) and this means that parallel maps in a restriction 
category can be partially ordered: the partial order is defined using the restriction by $f \leq g$ if and only if $\rst{f}g = f$ and is called the {\bf restriction order}, and in fact gives an enrichment.  

A map $f\colon A \to B$ in a restriction category $\X$ is said to be {\bf total} in case $\rst{f} = 1_A$.  Total maps compose and include identities and, thus, form a (non-full) subcategory denoted 
${\sf Total}(\X)$.  Any category can be endowed with a trivial restriction which takes each map to the identity on its domain: thus, every category occurs as the total maps of some restriction category.

A map $s\colon A \to B$ in a restriction category is a {\bf partial isomorphism} if there is a map $s^{(-1)}\colon B \to A$ -- the {\bf partial inverse} of $s$ -- with 
$s s^{(-1)} = \rst{s}$ and $s^{(-1)}s = \rst{s^{(-1)}}$.  The partial inverse of a map is unique.  Partial isomorphism include all the restriction idempotents 
and are closed to composition.  A restriction category in which all the maps are partial isomorphisms is called an {\bf inverse category\/}.  Inverse categories 
are to restriction categories what groupoids are to ordinary categories.


\subsection{\texorpdfstring{${\sf M}$}{M}-categories and \texorpdfstring{$r$}{r}-split restriction categories}

A restriction category is {\bf $r$-split} if all its restriction idempotents split.   Given an arbitrary restriction category, $\X$, one may always split its restriction idempotents to obtain 
${\sf Split}_r(\X)$, an $r$-split restriction category.  The 2-category of $r$-split restriction categories, restriction functors, and total natural transformations is 2-equivalent to the 2-category of ${\sf M}$-categories \cite{journal:rcats1}.  ${\sf M}$-categories are categories with a system of monics which is closed to composition and pullbacks along any map.  Functors between ${\sf M}$-categories must not only preserve the ${\sf M}$-maps but also the pullbacks along ${\sf M}$-maps.  Natural transformations between ${\sf M}$-functors are natural transformations which, in addition, are Cartesian (or tight) for transformations between ${\sf M}$-maps -- that is the naturality squares for ${\sf M}$-maps are pullbacks.  The 2-equivalence is given on the one hand, by moving from an ${\sf M}$-category $(\X,{\sf M}_\X)$ to its partial map category ${\sf Par}(\X,{\sf M}_\X)$ and on the other hand by moving to the ${\sf M}$-category consisting of the total map category of the $r$-split restriction category, ${\sf Total}(\E)$, with the restriction monics, ${\sf Monic}(\E)$, $({\sf Total}(\E),{\sf Monic}(\E))$.  The restriction monics can be variously described as partial isomorphisms which are total, restriction sections, or, more interestingly, as left adjoints with respect to the partial order enrichment.


\subsection{Restriction (semi)functors and transformations}

There are various sorts of morphisms between restriction categories which can be considered: the most basic is that of a {\bf restriction functor}, $F\colon \X \to \Y$, which is a functor between the categories which in addition preserves the restriction structure, that is $F(\rst{f}) = \rst{F(f)}$.  Restriction functors so defined preserve total maps, restriction idempotents, and partial isomorphisms.  As we shall see, the re-indexing or substitution functors for a latent fibration generally only satisfy the conditions for  the slightly weaker notion of a restriction \emph{semi}functor; thus, it will be important to briefly review these and the related notion of a transformation between restriction semifunctors.  

\begin{definition} Let $\X$ and $\Y$ be restriction categories.
\begin{itemize}
	\item A \textbf{restriction semifunctor} $F$ from $\X$ to $\Y$ is a semifunctor from $\X \to \Y$ (that is, a map on objects and arrows which preserves domains, codomains, and composition, but not necessarily identities) which preserves restrictions.  Note that for a restriction semifunctor, while $F(1_X)$ is not the identity, it is still a restriction idempotent since
		\[ \rs{F(1_X)} = F(\rs{1_X}) = F(1_X). \]
	\item If $F,G\colon \X \to \Y$ are restriction semifunctors, a \textbf{restriction transformation} $\alpha\colon F \Rightarrow G$ is a natural transformation from $F$ to $G$ such that for each $X \in \X$, $\rs{\alpha_X} = F(1_X)$.  
\end{itemize}
\end{definition}

Restriction functors and transformations organize themselves into a 2-category.  Similarly, restriction semifunctors and their transformations organize themselves into a 2-category, which we will denote ${\sf SRest}$.


\subsection{Precise diagrams}

In a restriction category, we shall call a commuting diagram {\em precise} in case the restriction of the overall map that the diagram describes is equal to the restriction of all the maps leaving the start node.   Precise commuting triangles play a key role in the definition of latent fibrations. 

\begin{definition}\label{defn:precise}
In a restriction category, a commuting triangle
\[ \xymatrix{ A \ar[dr]^g \ar[d]_k \\ B \ar[r]_f & C} \] 
is {\bf precise} in case $\rst{g}= \rst{k}$.  We refer to $k$  as the {\bf left factor} (and $f$ the right factor) of the triangle.
\end{definition}
  The following are some useful observations on precise triangles:

\begin{lemma}  
\label{Jaws}
In any restriction category:
\begin{enumerate}[(i)]
\item A commuting triangle 
\[ \xymatrix{ A \ar[dr]^g \ar[d]_k \\ B \ar[r]_f & C} \]
is precise if and only if $k\rst{f} = k$.
\item A commuting triangle with right factor a restriction idempotent
\[ \xymatrix{ A \ar[dr]^g \ar[d]_k \\ B \ar[r]_{e = \rst{e}} & C} \]
is precise if and only if $g=k$.
\item A commuting triangle with right factor a partial isomorphism
\[ \xymatrix{ A \ar[dr]^g \ar[d]_k \\ B \ar[r]_{\alpha} & C} \]
is precise if and only if $k = g \alpha^{(-1)}$.
\item The commuting triangle 
\[ \xymatrix{A \ar[d]_{\rst{f}} \ar[dr]^f \\ A \ar[r]_f& B} \]
is precise.
\end{enumerate}
\end{lemma}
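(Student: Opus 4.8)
The plan is to verify each of the four statements directly from the restriction axioms [R.1]--[R.4] and the definition of preciseness ($\rst{g} = \rst{k}$ for the triangle with $kf = g$). Throughout I will freely use the standard consequences of the axioms: that $\rst{f}$ is idempotent, that $\rst{\rst{f}} = \rst{f}$, that restriction idempotents commute [R.2], and the multiplicativity rule [R.3] in the form $\rst{kf} = \rst{k\,\rst{f}}$, which follows from [R.4] since $\rst{kf} = \rst{k\rst{f}}$.

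For part (i), the key computation is to express $\rst{g}$ in terms of $k$ and $f$. Since $g = kf$, I would write $\rst{g} = \rst{kf}$ and apply [R.4] (in the form $\rst{kf} = \rst{k\rst{f}}$, then [R.3]) to obtain $\rst{g} = \rst{k}\,\rst{k\rst{f}}$, and more usefully compute $k\rst{f}$ directly: by [R.4], $k\rst{f} = \rst{kf}\,k = \rst{g}\,k$. This is the crucial identity. For the forward direction, if the triangle is precise then $\rst{g} = \rst{k}$, so $k\rst{f} = \rst{g}\,k = \rst{k}\,k = k$ by [R.1]. For the converse, if $k\rst{f} = k$, then taking restrictions gives $\rst{k} = \rst{k\rst{f}} = \rst{k}\,\rst{\rst{f}} = \rst{k}\,\rst{f}$ by [R.3], while $\rst{g} = \rst{kf} = \rst{k\rst{f}} = \rst{k}\,\rst{f}$; hence $\rst{k} = \rst{g}$, which is exactly preciseness. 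So (i) reduces to the single identity $k\rst{f} = \rst{g}\,k$ together with [R.1] and [R.3].

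Parts (ii), (iii), and (iv) then follow quickly using (i) as the workhorse. For (ii), when $f = e$ is a restriction idempotent we have $\rst{f} = e = f$, so the criterion $k\rst{f} = k$ from (i) becomes $ke = k$; but $g = ke$, so this says precisely $g = k$, giving the stated equivalence. For (iii), when $f = \alpha$ is a partial isomorphism with partial inverse $\alpha^{(-1)}$ satisfying $\alpha\alpha^{(-1)} = \rst{\alpha}$, I would note that $g = k\alpha$ implies $g\alpha^{(-1)} = k\alpha\alpha^{(-1)} = k\rst{\alpha} = k\rst{f}$; so the criterion $k\rst{f} = k$ from (i) is equivalent to $g\alpha^{(-1)} = k$, as claimed. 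For (iv), the triangle has left factor $\rst{f}$ and right factor $f$, and commutativity is just $\rst{f}\,f = f$, which is [R.1]; preciseness asks $\rst{f} = \rst{\rst{f}}$, which holds since $\rst{f}$ is a restriction idempotent.

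I do not anticipate a genuine obstacle here, since each part is a short equational manipulation; the only place demanding slight care is establishing the pivotal identity $k\rst{f} = \rst{g}\,k$ in part (i), as one must apply [R.4] in the correct orientation and keep the diagrammatic composition order straight. Once that identity is in hand, the remaining three parts are essentially immediate specializations.
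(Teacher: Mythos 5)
Your route is the same as the paper's: establish (i) via the [R.4] identity $k\rst{f} = \rst{kf}\,k = \rst{g}\,k$, then obtain (ii)--(iv) as quick specializations of (i); indeed your part (iii), which notes that $g\alpha^{(-1)}$ and $k\rst{\alpha}$ are \emph{the same map} (so the equations $k = g\alpha^{(-1)}$ and $k = k\rst{\alpha}$ coincide), is if anything slicker than the paper's longer computation for that converse. One step, however, is wrong as written: in the converse of (i), the expansion $\rst{k\rst{f}} = \rst{k}\,\rst{\rst{f}}$ is not an instance of [R.3] --- that axiom has the restriction idempotent \emph{pre}-composed, $\rst{\rst{f}g} = \rst{f}\,\rst{g}$ --- and the right-hand side does not even typecheck, since $\rst{k}$ is an endomorphism of $A$ while $\rst{f}$ is an endomorphism of $B$, so the composite $\rst{k}\,\rst{f}$ is undefined. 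Fortunately this step is superfluous: from $k\rst{f} = k$ you already have $\rst{k} = \rst{k\rst{f}}$, and the derived identity $\rst{kf} = \rst{k\rst{f}}$ gives $\rst{g} = \rst{k\rst{f}}$, so $\rst{k} = \rst{g}$ follows --- which is precisely the paper's argument. Relatedly, your preamble justification of $\rst{kf} = \rst{k\rst{f}}$ (``follows from [R.4] since $\rst{kf} = \rst{k\rst{f}}$'') is circular as phrased; the identity is standard but requires [R.3] and [R.4] together, e.g.\ $\rst{k\rst{f}} = \rst{\rst{kf}\,k} = \rst{kf}\,\rst{k} = \rst{kf}$. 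With that one expansion deleted and the derived identity cited properly, your proof is correct and essentially identical to the paper's.
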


\begin{proof}~
\begin{enumerate}[{\em (i)}]
\item We have the calculations:
\begin{description}
\item[($\Rightarrow$)]
If $\rst{g} = \rst{k}$ then as $g=kf$ we have $k\rst{f} = \rst{kf} k = \rst{g}k = \rst{k}k = k$.
\item[($\Leftarrow$)]
If $k\rst{f} = k$ then $\rst{g} = \rst{k f} = \rst{k\rst{f}} = \rst{k}$.
\end{description}
\item We shall use {\em (i)} above:  when the triangle is precise we have $g = ke = k \rst{e} = k$. Conversely, if $k=g$ then $k\rst{e} = ke = k$ so the triangle is precise.
\item Again we use {\em (i)} above: when the triangle is precise we have $g \alpha^{(-1)} = k \alpha \alpha^{(-1)} = k \rst{\alpha} = k$.  Conversely, if 
$k = g \alpha^{(-1)}$ we have
\[ k \rst{\alpha} = \rst{k \alpha}k =  \rst{g \alpha^{(-1)} \alpha}k =  \rst{g \rst{\alpha^{(-1)}}}k =  \rst{g \alpha^{(-1)}}k = \rst{k}k = k \]
showing the triangle is precise.
\item $\rst{f}f= f$ is precise with left factor $\rst{f}$ because $\rst{\rst{f}} = \rst{f}$.
\end{enumerate}
\end{proof}


\subsection{Cartesian restriction categories}

Cartesian restriction categories are described in \cite{cockett_lack_2007}.  They are restriction categories with a restriction terminal object and restriction products.  A {\bf restriction terminal object} in a restriction category, $\X$, is an object $1  \in \X$ such that for every object $X \in \X$ there is a unique total map $!_X\colon X \to 1$ such that every map $f\colon X \to 1$ has $f = \rst{f}!_X$.  
Given two objects $X,Y \in \X$ a {\bf restriction product} of $X$ and $Y$ is an object $X \x Y$ together with two projections $\pi_0\colon X \x Y \to X$ and $\pi_1\colon X \x Y \to Y$ which are total and are  such that given any other object $Z$ with two maps $a\colon Z \to X$ and $b\colon Z \to Y$ there is a unique map $\< a,b\>\colon Z \to X \x Y$ such that $\< a,b \>\pi_0 = \rst{b}a$ and 
$\< a,b\>\pi_1 = \rst{a}b$.
\subsection{Latent pullbacks}  

The notion of a latent pullback in a restriction category was introduced in \cite[pg. 459]{journal:guo-range-join}; here we give a slight modification of the original definition:  

\begin{definition}
A commuting square in a restriction category 
\[ \xymatrix{A' \ar[d]_a \ar[r]^{f'} & B' \ar[d]^{b} \\ A \ar[r]_f & B} \]
is a {\bf latent pullback} in case $f'\rst{b} = f'$ and $a \rst{f} = a$  (that is, it is precise) and given any $e$-commuting square (where $e=\rst{e}$ is a restriction idempotent)
\[ \xymatrix{X \ar[d]_{x_1} \ar[r]^{x_0} \ar@{}[dr]|{=_e} & B' \ar[d]^{b} \\ A \ar[r]_f & B} \]
that is $e x_1 f = e x_0 b$, where $e \leq \rst{x_1f}$ and $e \leq \rst{x_0b}$ there is a unique map $k\colon X \to A'$ 
\[ \xymatrix{X \ar[drr]^{x_0}_{\leq}  \ar[ddr]_{x_1}^{\geq} \ar[dr]|k \\
                    & A' \ar[d]_a \ar[r]_{f'} & B' \ar[d]^{b} \\  & A \ar[r]_f & B} \]
such that $\rst{k} = e$, $kf' \leq x_0$, $ka \leq x_1$, and $k \rst{f'} = k \rst{a} = k$.  
\end{definition}
The original definition asked that for any (ordinary) commuting square $x_0b = x_1f$ there was a unique $k$ with the same requirements above, except that $\rs{k} = \rs{x_1f} = \rs{x_0b}$ instead of $\rs{k} = \rs{e}$.  It is readily seen that the two definitions are equivalent, as if $e$ is a restriction idempotent on $X$ such that $e \leq \rs{x_1f}$ and $e \leq \rs{x_0b}$, then $\rs{e} = \rs{ex_1f} = \rs{ex_0b}$.  

We recall some basic properties of latent pullbacks:

\begin{lemma} \label{latent-pullbacks}~
\begin{enumerate}[(i)]
\item The two commuting squares
\[ \begin{matrix}\xymatrix{A' \ar[d]_a \ar[r]^{f'} & B' \ar[d]^{b} \\ A \ar[r]_f & B} \end{matrix} ~~~\mbox{and}~~~ 
   \begin{matrix}\xymatrix{Z \ar[d]_z \ar[r]^{y} & B' \ar[d]^{b} \\ A \ar[r]_f & B} \end{matrix} \]
are latent pullbacks if and only if there is a unique mediating partial isomorphism $\alpha\colon A' \to Z$ with 
$\alpha z = a$ , $\alpha y = f'$, $\rst{\alpha} =\rst{a}= \rst{f'}$, and $\rst{\alpha^{(-1)}} = \rst{z} = \rst{y}$ 
and one of the squares is a latent pullback.
\item If the two smaller squares below are latent pullbacks
\[ \xymatrix{A' \ar[d]_a  \ar[r]^{f'} & B' \ar[d]_b \ar[r]^{g'} & C' \ar[d]_c \\ A \ar[r]_f & B \ar[r]_g & C} \]
then the outer square is a latent pullback.  Furthermore, if the right square is a latent pullback and the perimeter is a latent pullback then the 
left square is a latent pullback.        
\item If $\alpha\colon X \to Y$ is a partial isomorphism and $f\colon Z \to Y$ is any map, then
	\[ \xymatrix{ Z \ar[r]^{\rst{f\alpha^{(-1)}}} \ar[d]_{f\alpha^{(-1)}} & Z \ar[d]^{f} \\ X \ar[r]_{\alpha} & Y} \]
is a latent pullback.  
\item If the latent pullback  square
\[  \xymatrix{A' \ar[d]_a \ar[r]^{f'} & B' \ar[d]^{b} \\ A \ar[r]_f & B} \]
has $f$ a partial isomorphism then $f'$ is a partial isomorphism.
\end{enumerate}
\end{lemma}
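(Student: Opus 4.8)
The plan is to realize $f'$ as the composite of a partial isomorphism with a restriction idempotent, and then to conclude using the fact (recalled in the preliminaries) that the restriction idempotents are partial isomorphisms and that partial isomorphisms are closed under composition. The only real work is to produce a partial isomorphism $\alpha\colon A' \to B'$ together with the identity $f' = \alpha\,\rst{bf^{(-1)}}$; everything else is then immediate.

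To obtain $\alpha$, I would first manufacture a \emph{second} latent pullback over the same cospan as the given one. Since $f$ is a partial isomorphism, part (iii) applied to the partial isomorphism $f\colon A \to B$ and the map $b\colon B' \to B$ shows that
\[ \xymatrix{ B' \ar[r]^{\rst{bf^{(-1)}}} \ar[d]_{bf^{(-1)}} & B' \ar[d]^{b} \\ A \ar[r]_f & B} \]
is a latent pullback. The crucial (and easily overlooked) point is that this square lies over exactly the cospan $A \xrightarrow{f} B \xleftarrow{b} B'$, which is the same cospan carrying the given latent pullback square. Hence both squares are latent pullbacks of the same data, and its top edge $\rst{bf^{(-1)}}$ is visibly a restriction idempotent.

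Next I would invoke part (i) for these two latent pullbacks. Because both are latent pullbacks over the common cospan, part (i) furnishes a unique mediating partial isomorphism $\alpha\colon A' \to B'$; reading off the clause $\alpha y = f'$ with $y = \rst{bf^{(-1)}}$ gives precisely $\alpha\,\rst{bf^{(-1)}} = f'$. Thus $f'$ is the composite of the partial isomorphism $\alpha$ with the restriction idempotent $\rst{bf^{(-1)}}$, and since restriction idempotents are partial isomorphisms and partial isomorphisms compose, $f'$ is a partial isomorphism.

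The conceptual hurdle here is not computational but structural: one must think to build the canonical comparison latent pullback over the same cospan via part (iii), rather than attempting to construct a partial inverse of $f'$ directly from the universal property. Once that comparison square is in hand, matching the cospans and extracting the correct mediating equation from part (i) is routine bookkeeping, and the closure argument finishes the proof with no further calculation.
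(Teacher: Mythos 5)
Your proof of part (iv) is correct and takes essentially the same route as the paper, whose entire proof of that part is the remark that it ``follows by combining parts (i) and (iii)'': you instantiate (iii) at the cospan $f\colon A \to B$, $b\colon B' \to B$ to get a comparison latent pullback whose top edge is the restriction idempotent $\rst{bf^{(-1)}}$, then use the mediating partial isomorphism from (i) to write $f' = \alpha\,\rst{bf^{(-1)}}$, a composite of partial isomorphisms and hence a partial isomorphism. This is precisely the paper's intended argument, spelled out in full.
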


The last part of the lemma follows by combining parts (i) and (iii).  


\section{Latent Fibrations}\label{sec:latentFibrations}

We begin with the definition of a latent fibration before looking at examples and developing some of their basic properties.


\subsection{The definition}
 
\begin{definition} \label{newdefn}
Let $\E$ and $\B$ be restriction categories and ${\sf p} \colon \E \to \B$ a restriction semifunctor.  
\begin{enumerate}[(i)]
\item 
An arrow $f' \colon X' \to X$ in $\E$ is {\bf ${\sf p}$-prone} in case whenever we have $g \colon Y \to X$ in $\E$ and $h \colon {\sf p}(Y) \to {\sf p}(X')$ in $\X$ such that $h{\sf p}(f') = {\sf p}(g)$ is a precise triangle (Definition \ref{defn:precise}) then there is a unique {\bf lifting} of $h$ to $\widetilde{h}\colon Y \to X'$ so that $\p(\widetilde{h}) = h$ and $\widetilde{h}f' = g$ is a precise triangle:
\[ \xymatrix{Y \ar@{..>}[d]_{\widetilde{h}} \ar[dr]^{g} &~ \ar@{}[drr]|{\textstyle\mapsto}&& \p(Y) \ar[d]_h \ar[dr]^{\p(g)}  & ~
\\
 X'  \ar[r]_{f'} & X  && \p(X') \ar[r]_{\p(f')} & \p(X)} \]
\item
${\sf p} \colon \E \to \B$ is a {\bf latent fibration} if for each $X \in \E$ and each $f \colon A \to {\sf p}(X)$ in $\B$ such that $f = f\p(1_X)$, there is a ${\sf p}$-prone map $f'\colon X' \to  X$ sitting over $f$ (that is, ${\sf p}(f') = f$).
\end{enumerate}
\end{definition}
Note that if $\p$ is a restriction functor (so that $\p$ preserves identities) then the condition $f = f\p(1_X)$ is vacuous.  However, if $\p$ is a genuine semifunctor, then that condition is necessary, as if there is an $f'\colon X' \to X$ sitting over $f$, then since $f' = f'1_X$, $\p(f') = \p(f')\p(1_X)$, i.e., $f$ must satisfy $f = f\p(1_X)$.

Most of our examples of latent fibrations will be restriction functors; however, there is at least one important example of a latent fibration which is a genuine semifunctor (the forgetful functor from the restriction idempotent splitting of $\X$ to itself: see Proposition \ref{prop:splitting_example}).  

More importantly, however, the main reason we have chosen to work with restriction semifunctors is that latent fibrations are precisely fibrations in the 2-category ${\sf SRest}$ of restriction categories, semifunctors, and restriction transformations: see Appendix \ref{Appendix-B}. 


\subsection{Examples of latent fibrations}
It is clear that, for any restriction category $\X$, the identity functor is obviously a latent fibration.  Furthermore, for any restriction categories $\X$ and $\Y$, the projection $\pi_1\colon \X \times \Y \to \Y$ is a latent fibration.   Moreovr, there are a variety of other examples of latent fibrations: some are immediately seen to be restriction versions of a normal fibration couterpart; others are particular, however, to restriction categories.  We present an overview of some of the basic examples before going into a more detailed description of them.  
\begin{enumerate}[\bf ({3.2.}1)]
\item For any restriction category $\X$, the identity functor $1_{\X}\colon \X\to\X$ and projections $\pi_1: \Y \x \X \to \X$ are a latent fibrations.
	\item If $\X$ is a Cartesian restriction category, there are two latent versions of the simple slice fibration: the ``lax'' simple slice $\X(\X)$ and the ``strict'' simple slice $\X[\X]$ considered in the previous section.  See Definition \ref{def:simpleSlice} and Proposition \ref{prop:simpleSlice}.  
	\item If $\X$ is a restriction category with latent pullbacks, there are strict and lax versions of the codomain fibration: see Definition \ref{def:codomain} and Proposition  \ref{prop:codomain}. 
	\item For any functor $F\colon \X \to {\sf  Set}$ one may form the category of elements as a (normal) discrete fibration $\partial_F\colon {\sf Elt}(F) \to \X$. If $\X$ is a restriction category then 
	${\sf Elt}(F)$ is also a restriction category and $\partial_F$ is a latent fibration: see Section \ref{discrete}.
	\item For any restriction category $\X$, there is a latent fibration of ``propositions'' (restriction idempotents) over $\X$: see Definition \ref{def:propositions} and Proposition \ref{prop:propositions}. 
	\item For any restriction functor $F\colon \A \to \X$, there is a latent fibration of ``assemblies'', denoted ${\sf Asm}(F)$: see Definition \ref{def:assemblies} and \ref{prop:assemblies}.  
	\item For any restriction category $\X$, the forgetful functor from the restriction idempotent splitting of $\X$ to itself, $\split(\X) \to \X$ is a latent fibration which is a genuine semifunctor: see Proposition \ref{prop:splitting_example}.  
\end{enumerate}


\subsubsection{Identity and projection functors}

Clearly identity functors and projections are latent fibrations.  The prone map over a map for the identity function is just itself.  For a projection $\pi_1: \Y \x \X \to \X$ the prone map over 
a map $f: X \to X'$ at $Y$ is $1 \x f: Y \x X \to Y \x X'$.


\subsubsection{Simple latent fibrations}
\label{simple-latent-fibration} 

We consider simple fibrations for Cartesian restriction categories:  

\begin{definition}\label{def:simpleSlice}
If $\X$ is a Cartesian restriction category, the total category of the \textbf{lax simple fibration}, $\X(\X)$, is described as follows:
\begin{description}
\item[Objects:] Pairs of objects, $(\Sigma,X)$, of $\X$, where $\Sigma$ is called the ``context'';
\item[Maps:] $(f,f')\colon (\Sigma,X) \to (\Sigma',X')$ where $f\colon \Sigma \to \Sigma'$ and $f'\colon \Sigma \x X \to X'$ are maps in $\X$ such that 
$\rst{\pi_0 f} \geq \rst{f'}$;
\item[Composition:] Identities are $(1_\Sigma, \pi_1)\colon (\Sigma,X) \to (\Sigma,X)$ and given $(f,f')\colon  (\Sigma,X) \to (\Sigma',X')$ and $(g,g')\colon  (\Sigma',X') \to (\Sigma'',X'')$ the composite is the map $(f,f')(g,g') := (fg,\<\pi_0f,f'\> g')$;
\item[Restriction:] $\rst{(f,f')} := (\rst{f},\rst{f'}\pi_1)$.
\end{description}
The \textbf{strict simple fibration} is the subcategory $\X[\X]$ determined by the maps $(f,f')$ for which $\rst{\pi_0f} = \rst{f'}$.  
\end{definition}

The strict simple fibration was discussed in the introduction with respect to the differential of smooth partial maps as this differential may be seen as  a  section of the strict simple fibration. 

\begin{proposition}\label{prop:simpleSlice}
If $\X$ is a Cartesian restriction category, then $\X(\X)$ and $\X[\X]$ are Cartesian restriction categories, and the obvious projections to $\X$ are latent fibrations.  
\end{proposition}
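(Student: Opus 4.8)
The plan is to verify, in order, that each simple fibration is (a) a category, (b) a restriction category, and (c) that the projection is a latent fibration. First I would check that composition is well-defined, that is, that the composite $(fg, \langle \pi_0 f, f'\rangle g')$ satisfies the defining inequality $\rst{\pi_0(fg)} \geq \rst{\langle \pi_0 f, f'\rangle g'}$ (and equality in the strict case). This reduces to a restriction computation using [R.1]--[R.4] together with the restriction-product identities for $\langle -, - \rangle$ recalled in the subsection on Cartesian restriction categories; the hypothesis $\rst{\pi_0 f} \geq \rst{f'}$ on each factor should feed into the estimate. I would then confirm associativity and the identity laws, which are routine but require care because identities are $(1_\Sigma, \pi_1)$ rather than a strict identity pair.

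Next I would verify the four restriction axioms [R.1]--[R.4] for the proposed combinator $\rst{(f,f')} = (\rst{f}, \rst{f'}\pi_1)$. Each axiom unwinds into a pair of equations, one in each component; the first components are just the restriction axioms already holding in $\X$, so the work is entirely in the second components, again manipulated via the restriction-product equations $\langle a,b\rangle \pi_0 = \rst{b}a$ and $\langle a,b\rangle \pi_1 = \rst{a}b$. I would also check that $\rst{(f,f')}$ is a legitimate map of the simple category, i.e.\ that it satisfies the defining inequality, and that the strict subcategory $\X[\X]$ is closed under this restriction so that it inherits the structure.

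For the fibrational claim, the projection $\p\colon \X(\X) \to \X$ sends $(\Sigma,X) \mapsto \Sigma$ and $(f,f') \mapsto f$; I would confirm it is a restriction functor (immediate from the definition of the restriction combinator, since $\p(\rst{(f,f')}) = \rst{f}$). Given a target object $(\Sigma',X')$ and a map $f\colon \Sigma \to \Sigma'$ in the base, the natural candidate prone arrow over $f$ is $(f,\pi_1)\colon (\Sigma, X') \to (\Sigma', X')$, reindexing the fibre object along $f$ with the identity-like second component. I would then check the universal property of Definition \ref{newdefn}: given $g = (g_0,g_1)\colon (\Gamma,Y) \to (\Sigma',X')$ in $\E$ and $h\colon \Gamma \to \Sigma$ in $\B$ with $hf = g_0$ a precise triangle, the unique lifting must be $\widetilde{h} = (h, g_1)$, and I would verify $\widetilde{h}(f,\pi_1) = g$ is precise and that $\p(\widetilde h) = h$, using Lemma \ref{Jaws}(i) to translate preciseness into the identity $\widetilde h\, \rst{(f,\pi_1)} = \widetilde h$.

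The main obstacle I expect is the bookkeeping around preciseness and the restriction inequalities: the latent-fibration universal property is stated in terms of precise triangles rather than ordinary commuting triangles, so the delicate point is showing both that the candidate lift $(h,g_1)$ actually lands in $\X(\X)$ (its defining inequality must follow from that of $g$ together with the preciseness of $hf = g_0$) and that the composite triangle it forms is precise in the sense of Definition \ref{defn:precise}. Uniqueness should then fall out from the uniqueness of the base lift $h$ combined with matching the second components. The strict case $\X[\X]$ requires re-running each inequality as an equality, and I would verify that the candidate prone arrow $(f,\pi_1)$ has $\rst{\pi_0 f} = \rst{\pi_1}$ so that it genuinely lies in the strict subcategory.
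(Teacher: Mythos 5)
Your overall architecture (check the restriction category structure, then exhibit prone lifts) matches the paper's, and your proposed lift $\widetilde{h} = (h, g_1)$ is exactly the paper's choice. However, there is a genuine gap: your candidate prone arrow $(f,\pi_1)\colon (\Sigma, X') \to (\Sigma', X')$ is not, in general, a map of $\X(\X)$ at all. A map $(f,f')$ requires $\rst{f'} \leq \rst{\pi_0 f}$, and since $\pi_1$ is total we have $\rst{\pi_1} = 1$, so your candidate satisfies the defining inequality only when $\rst{\pi_0 f} = 1$, i.e.\ essentially only when $f$ is total. (Your own final sanity check, that $\rst{\pi_0 f} = \rst{\pi_1}$ in the strict case, would fail for the same reason.) This is precisely the point where the latent setting diverges from the ordinary simple fibration, where $(f,\pi_1)$ \emph{is} the Cartesian lift: partiality of $f$ must be propagated into the second component.

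The repair is the paper's choice of prone arrow $(f, \rst{\pi_0 f}\,\pi_1)$, whose second component is the projection cut down to where $\pi_0 f$ is defined; by [R.3] one has $\rst{\rst{\pi_0 f}\,\pi_1} = \rst{\pi_0 f}$, so this map satisfies the defining condition with \emph{equality} and hence lies in $\X[\X]$ as well, handling both cases at once. With this corrected lift, the rest of your plan goes through essentially as in the paper: well-definedness of $\widetilde{h} = (h,g_1)$ follows from $\rst{g_1} \leq \rst{\pi_0 g_0} = \rst{\pi_0 h}$ (using preciseness of the base triangle), and the composite $\widetilde{h}\,(f,\rst{\pi_0 f}\pi_1)$ collapses to $(g_0,g_1)$ by a restriction computation exploiting that same inequality; preciseness of the top triangle and uniqueness then follow as you describe.
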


\begin{proof}
For the identity to be well-defined we need $\rst{\pi_0 1} \geq \rst{\pi_1}$, which is obviously true as both sides are restrictions of total maps.  
To show composition is  well defined consider $(f,f')(g,g') = (fg,\<\pi_0f,f'\>g')$: we must show $\rst{\pi_0fg} \geq \rst{\<\pi_0f,f'\>g'}$ which is so by:
\[ \rst{\<\pi_0f,f'\>g'} \leq \rst{\<\pi_0f,f'\>\pi_0 g} = \rst{\rst{f'}\pi_0fg} = \rst{\rst{f'}\rst{\pi_0f}\pi_0fg} =  \rst{\rst{\pi_0f}\pi_0fg} = \rst{\pi_0fg}. \]

For the restriction product structure, set $(\Sigma,X) \x (\Sigma',X') := (\Sigma \x \Sigma',X \x X')$,
	\[ \pi_0:= (\pi_0, \pi_1\pi_0)\colon (\Sigma,X) \x (\Sigma',X') \to (\Sigma,X), \pi_1:= (\pi_1, \pi_1\pi_1)\colon (\Sigma,X) \x (\Sigma',X') \to (\Sigma',X') \] 
and 
	\[ \< (f,f'),(g,g') \> := (\<f,g\>, \<\<\pi_0\pi_0f,\pi_1\pi_0\>f',\<\pi_0\pi_1g,\pi_1\pi_1\>g'\>). \]   
We leave the remaining details of checking this is a Cartesian restriction category to the reader.  

There is an obvious restriction functor 
\[ \pi\colon \X(\X) \to \X; \quad \begin{matrix} \xymatrix{ (\Sigma, X) \ar[d]_{(f,f')}\ar@{}[dr]|{\mapsto}  & \Sigma \ar[d]^{f} \\ (\Sigma',X') & \Sigma' } \end{matrix} \]
We set the prone arrow of $f\colon\Sigma \to \Sigma'$ at $(\Sigma',X)$ to be $(f,\rst{\pi_0 f}\pi_1)\colon (\Sigma,X) \to (\Sigma',X)$; the lifting property is given by
\[ \xymatrix@C=3em{(\Gamma,Y)  \ar@{..>}[d]_{\widetilde{h} := (h,g')} \ar[dr]^{(g,g')} & ~ \ar@{}[drr]|{\textstyle\mapsto}&& \Gamma \ar[dr]^g \ar[d]_{h} 
\\ (\Sigma,X) \ar[r]_{(f,\rst{\pi_0f}\pi_1)} & (\Sigma',X) && \Sigma \ar[r]_f & \Sigma'} \]

The lifting $\widetilde{h}$ is well-defined as $\rst{g'} \leq \rst{\pi_0 g} = \rst{\pi_0 h}$ since the base triangle is precise.  The top triangle commutes since
\begin{eqnarray*}
&    & (h,g') (f,\rst{\pi_0f} \pi_1) \\
& = & (hf, \<\pi_0h, g'\>\rs{\pi_0 f} \pi_1)  \\
& = & (g,\rs{\<\pi_0h, g'\>\pi_0f} \<\pi_0h,g'\>\pi_1) \\
& = & (g,\rs{\rs{g'}\pi_0 hf} \rs{\pi_0 h} g') \\
& = & (g,\rs{g'}\rs{\pi_0g}\rs{\pi_0h} g') \\
& = & (g,g') \mbox{ (since $\rs{g'} \leq \rs{\pi_0g} = \rs{\pi_0h}$)}
\end{eqnarray*}
The top triangle is precise since the bottom triangle is.  The uniquess of $\widetilde{h}$ follows from a similar calculation to showing the top triangle commutes.  Thus, the lax slice $\X(\X)$ is a latent fibration, and the result for the strict slice $\X[\X]$ follows similarly.  

\end{proof}


\subsubsection{Codomain latent fibrations}
\label{codomain-latent-fibration}

When $\X$ has latent pullbacks, restriction variants of the codomain fibration form another source of examples.  In fact, just as in the ordinary case (for example, see \cite{book:Jacobs-Cat-Log}), a restriction category has latent pullbacks if and only if the underlying functor from the (strict) arrow category is a latent fibration.   

\begin{definition}\label{def:codomain}
For any restriction category $\X$, the restriction category $\X^{\leadsto}$ is defined as follows:
\begin{description}
\item[Objects:] are maps $a\colon A' \to A$ of $\X$;
\item[Maps:] are pairs of maps $(f,f')\colon a \to b$ such that $f'b \leq af$ and $f' \rst{b} = f'$:
\[ \xymatrix{A' \ar@{}[dr]|{\geq} \ar[d]_{a} \ar[r]^{f'} & B' \ar[d]^b \\ A \ar[r]_f & B} \]
we will refer to such lax squares as being {\bf semi-precise};
\item[Composition:] if $(f,f')\colon a \to b$ and $(g,g')\colon b \to c$ then the composite is $(ff',gg')\colon a \to c$;
\item[Restriction:] the restriction of $(f,f')\colon a \to b$ is $(\rst{f},\rst{f'})\colon a \to a$. 

\end{description}
The restriction category $\X^{\rightarrow}$ has the same definition except that the maps require the squares commute (and are still semi-precise).  Thus there is an embedding $\X^{\rightarrow} \subseteq \X^{\leadsto}$ which is the identity on objects but, in general, a strict inclusion on maps.
\end{definition}

We define the functor $\partial^{\leadsto}$ by
\[ \partial^{\leadsto} \colon\X^{\leadsto} \to \X; \quad\begin{matrix} \xymatrix{a \ar[d]_{(f,f')}\ar@{}[dr]|{\mapsto} & A \ar[d]^f \\ b & B} \end{matrix} \]
it is clearly a restriction functor.  The restriction functor $\partial^{\rightarrow}$ is defined similarly.

\begin{proposition}\label{prop:codomain}
For any restriction category, $\X$, $\X^{\leadsto}$ and $\X^{\rightarrow}$ are restriction categories, and the restriction functors, $\partial\colon \X^{\leadsto} \to \X$ and $\partial\colon \X^{\rightarrow} \to \X$ are latent fibrations precisely when $\X$ has latent pullbacks.
\end{proposition}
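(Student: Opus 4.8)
The plan is to prove Proposition \ref{prop:codomain} as a biconditional, handling the two directions separately, and within the forward direction treating $\X^{\leadsto}$ and $\X^{\rightarrow}$ in parallel since their prone arrows will be constructed the same way. First I would establish that $\X^{\leadsto}$ and $\X^{\rightarrow}$ are indeed restriction categories; this is the routine part, amounting to checking [R.1]--[R.4] for the restriction $\rst{(f,f')} = (\rst{f},\rst{f'})$ together with verifying that the semi-precise condition ($f'b \leq af$ and $f'\rst{b}=f'$) is closed under composition and that $\partial$ is a restriction functor. These I would dispatch quickly by citing the restriction axioms and the characterization of precise triangles in Lemma \ref{Jaws}, rather than grinding through each identity.

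\textbf{The forward direction (latent pullbacks $\Rightarrow$ $\partial$ is a latent fibration).} Given an object $b\colon B' \to B$ of $\X^{\leadsto}$ and a map $f\colon A \to B = \partial(b)$ in $\X$ satisfying $f = f\rst{1_B} = f$ (vacuous here since $\partial$ is a genuine restriction functor), I would form the latent pullback of $f$ along $b$, producing
\[ \xymatrix{A' \ar[d]_{a} \ar[r]^{f'} & B' \ar[d]^{b} \\ A \ar[r]_f & B} \]
and declare the prone arrow over $f$ at $b$ to be $(f,f')\colon a \to b$, where $a$ is the object $a\colon A' \to A$. The heart of the argument is verifying the prone lifting property of Definition \ref{newdefn}(i): given $(g,g')\colon c \to b$ in $\X^{\leadsto}$ and a map $h\colon \partial(c) \to \partial(a) = A$ in $\X$ with $h\partial(f,f') = \partial(g,g')$ forming a precise triangle (i.e.\ $hf = g$ with $\rst{h} = \rst{g}$), I must produce a unique lifting $(h,h')\colon c \to a$ with $(h,h')(f,f') = (g,g')$ forming a precise triangle upstairs. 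Here the universal property of the latent pullback supplies $h'$: the outer square built from $c$, $g'$, $h$ and the pullback data gives an $e$-commuting square (with $e = \rst{g'}$ or the appropriate restriction idempotent forced by semi-precision), and the mediating map $k$ of the latent pullback is exactly the second component $h'$. The precision of the base triangle is precisely what guarantees the hypotheses $e \leq \rst{x_0 b}$, $e \leq \rst{x_1 f}$ needed to invoke the latent pullback, and the uniqueness clause ($\rst{k}=e$, $kf' \leq x_0$, $ka \leq x_1$) translates directly into uniqueness of the lifting.

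\textbf{The converse direction ($\partial$ a latent fibration $\Rightarrow$ latent pullbacks).} Given a cospan $A \xrightarrow{f} B \xleftarrow{b} B'$ in $\X$, I would view $b$ as an object of $\X^{\leadsto}$, take the prone lifting $(f,f')\colon a \to b$ of $f$ guaranteed by the latent fibration, and argue that the resulting square over $a\colon A' \to A$ is a latent pullback. The prone universal property for arrows into $b$ over the appropriate base triangles should translate back into the defining universal property of the latent pullback; again precision of the base triangle is what matches the restriction-idempotent condition $e = \rst{k}$ in the latent pullback definition.

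\textbf{The main obstacle} I anticipate is the bookkeeping in the forward direction's lifting step: correctly identifying which $e$-commuting square to feed into the latent pullback, and checking that the semi-precise (lax) inequalities $f'b \leq af$ in $\X^{\leadsto}$ interact properly with the precision hypothesis to yield genuine equalities where the pullback's universal property demands them. The strict case $\X^{\rightarrow}$ should be cleaner (the squares commute on the nose), and I expect the lax case $\X^{\leadsto}$ to require the most care in translating between the $\leq$'s of semi-precision, the precision $\rst{h}=\rst{g}$ of the base, and the four conditions ($\rst{k}=e$, $kf'\leq x_0$, $ka \leq x_1$, $k\rst{f'}=k\rst{a}=k$) in Definition of latent pullback. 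I would lean on Lemma \ref{Jaws}(i) throughout to convert precision statements into the manipulable form $k\rst{f} = k$, and on Lemma \ref{latent-pullbacks}(i) to handle the uniqueness-up-to-partial-isomorphism of the prone lifting.
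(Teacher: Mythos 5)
Your proposal follows essentially the same route as the paper: latent pullbacks furnish the prone arrows (with the lifting supplied by the latent pullback's universal property), and conversely prone liftings over a cospan are turned into latent pullbacks. The only detail to note is that in the converse the prone square in $\X^{\leadsto}$ is merely lax semi-precise, so---as the paper does---one must first restrict by $\rst{f'}$ to obtain a precise square before the latent pullback conditions ($f'\rst{b}=f'$ and $a\rst{f}=a$) can hold; this is exactly the bookkeeping issue you flagged as the main obstacle.
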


\begin{proof}
In $\X^{\leadsto}$, composites are well-defined as
               \[ afg \geq f'bg \geq f'g'c~~~\mbox{and}~~~ f'g' \rst{c} = f'(g' \rst{c}) = f'g' \]
and restrictions are well-defined as
\[  a \rst{f} = \rst{af} a \geq \rst{f'b} a = \rst{f'\rst{b}} a = \rst{f'} a ~~~\mbox{and}~~~ \rst{f'}~\rst{a} = \rst{f'\rst{b}} ~\rst{a} = \rst{f'b} ~\rst{a} = \rst{f'b}~\rst{af}~\rst{a} = \rst{f'b}~\rst{af} = \rst{f'b} = \rst{f'} \]
Similarly composites and restrictions are well-defined in $\X^{\rightarrow}$.

First we show that, when $\X$ has latent pullbacks, we have $\partial$-prone arrows, making it a latent fibration.   

Latent pullback gives a prone arrow in $\X^{\leadsto}$ by considering
\[ \xymatrix{C' \ar[dd]_c \ar[drr]^{g'} \ar@{..>}[dr]_{\widetilde{w}} \\
                                  & P\ar[dd]^{a'} \ar[r]_{f'} & A ' \ar[dd]^{a} \\
                                  C \ar@{}[ur]|\geq \ar[rrd]^<<<<g|\hole \ar[dr]_w \\
                                  & B \ar[r]_f  & A} \]
where the back face is a lax semi-precise square (so $cg \geq g'a$), the bottom triangle is precise (so $\rst{w} = \rst{g}$ and it commutes) and the square with apex 
$P$ is a latent pullback.  The back face $\rst{g'a} = \rst{g'}$-commutes so there is a unique $\widetilde{w}$ with $\rst{\widetilde{w}} = \rst{g'}$, $\widetilde{w}f' \leq g'$,
$\widetilde{w}a' \leq cw$.  The top triangle is now clearly precise as $\widetilde{w}f' = g'$ since $\rst{\widetilde{w}f' }= \rst{\widetilde{w}}= \rst{g'}$.  Finally, we must show 
the left square is semi-precise: 
\[ \widetilde{w} \rst{a'} = \rst{\widetilde{w} a'}  \widetilde{w}  =  \rst{\widetilde{w} a' \rst{f}}  \widetilde{w} = \rst{\widetilde{w} a' f}  \widetilde{w} = \rst{\widetilde{w} f' a} \widetilde{w} 
= \rst{\widetilde{w} f' \rst{a}} \widetilde{w} = \rst{\widetilde{w} f'} \widetilde{w} = \rst{\widetilde{w} \rst{f'}} \widetilde{w} = \rst{\widetilde{w}} \widetilde{w} = \widetilde{w}. \]
This shows that latent pullbacks are prone arrows; thus, if $\X$ has latent pullbacks $\partial\colon \X^{\leadsto} \to \X$ is a latent fibration.

We need to show that $\X^{\rightarrow}$ also has prone arrows given by the latent pullback.  For this we need to show that if the back square is an equality the 
front must also be an equality.  This is given by the following calculation:
\[ \rst{\widehat{w} a'} = \rst{\widehat{w}} = \rst{ g'} = \rst{g'a} = \rst{cg} = \rst{c\rst{g}} = \rst{c \rst{w}} = \rst{cw} \]

Conversely if $\partial$ is a latent fibration then we have a prone arrow above and $B \to^f A$ and this at least is a lax semi-precise square.  However, by restricting $b$ by 
$\rst{f'}= \rst{f'a}$ we can turn it into a precise square.  Next suppose we have an $e$-commuting square with apex $Z$ then modifying the top arrow to $ez_0$ gives a lax semi-precise square:
\[ \xymatrix{ Z \ar[ddr]_{z_1}^{\geq} \ar[drr]^{ez_0} \ar@{..>}[dr]|{k} \\
                                  & X\ar[d]^{f} \ar[r]_{\rst{fe}} & X \ar[d]^{f} \\
                                  & Y \ar[r]_e  & Y} \]
This implies there is a $k\colon Z \to P$ with the top triangle precise and the left face a lax semi-precise square.  However, $k$ clearly now provides the mediating map 
to make the modified prone square a latent pullback.
\end{proof}

If $\X$ has latent pullbacks, it is not hard to see that the (strict/lax) simple latent fibration can be embedded into the (strict/lax) codomain latent fibration in a manner analogous to that for ordinary fibrations: 

\[ \xymatrix{\X(\X) \ar[d]_\pi\ar@{}[dr]|{\rightarrow} & \X^{\leadsto} \ar[d]^\partial \\ \X  & \X \ar@{}[u]_{~~~~;}}
\quad\xymatrix{(\Sigma,X) \ar[d]^{(f,f')}\ar@{}[drr]|{\mapsto}  && \Sigma \x X \ar[d]_{\< \pi_0f,f'\>}\ar@{}[dr]|{\leq} \ar[r]^{\pi_0} & \Sigma \ar[d]^f \\ 
(\Sigma',X') && \Sigma' \x X' \ar[r]_{\pi_0} & \Sigma'} \]


\subsubsection{Discrete latent fibrations}
\label{discrete}

Let $\X$ be a restriction category and $F\colon \X^{\rm op} \to {\sf Set}$ any functor (which ignores the restriction structure), then we may form the category of elements of $F$, ${\sf Elt}(F)$:
\begin{description}
\item[Objects:]   $(X,x)$ where $X$ is an object of $\X$ and $x$ is an element of $F(X)$;
\item[Maps:] $f\colon (X,x) \to (Y,y)$ where $f\colon X \to Y$ in $\X$ and $x = F(f)(y)$.
\end{description}
The functor $\partial_F\colon {\sf Elt}(F) \to \X$ has $\partial_F(X,x) = X$ and $\partial_F(f) = f$.  

The restriction of ${\sf Elt}(\X)$ is just the restriction in $\X$.  We must check that this is well-defined that is that if $f\colon (X,x) \to (Y,y)$ that $\rst{f}\colon (X,x) \to (X,x)$ is a map of 
${\sf Elt}(F)$.  For this we need $F(\rst{f})(x) = x$ which follows as $F(\rst{f})(x) = F(\rst{f})(F(f)(y)) = F(\rst{f}f)(y) = F(f)(y) = x$.  The restriction identities are then immediate.

This is a latent fibration as one can easily check that the prone map above $f\colon X \to Y$ at $(Y,y)$ is the usual Cartesian map, namely, $f\colon (X,F(f)(y)) \to (Y,y)$.  


\subsubsection{Latent fibrations of propositions}
\label{propositions}

One way in which restriction idempotents can be used to construct a latent fibration is as follows:

\begin{definition}\label{def:propositions}
Let $\X$ be a restriction category, and define the restriction category $\mathcal{O}(\X)$ by:
\begin{description}
\item[Objects:] pairs $(X,e)$ where $X$ is an object of $\X$, and $e \in {\cal O}(X)$ is a restriction idempotent on $X$.
\item[Maps:] $f\colon (X,e) \to (X',e')$ are maps $f \colon X \to X'$ of $\X$ such that $e \leq \rst{fe'}$, or equivalently $e = \rst{efe'}$.
\item[Composition:] is composition in $\X$. This is well-defined since if 
\[(X,e) \to^f (X',e') \to^{f'} (X'',e'') \]
are maps of $\mathcal{O}(\X)$ given by $f \colon X \to X'$ and $f' \colon X' \to X''$ in $\X$, then we have
\[ \rst{eff'e''} = \rst{\rst{efe'}ff'e''} = \rst{e\rst{fe'}ff'e''} = \rst{efe'f'e''}  = \rst{ef\rst{e'f'e''}} = \rst{efe'} = e \]
so that $ff'$ gives a map $(X,e) \to (X'',e'')$ in $\mathcal{O}(\X)$.
\item[Identities:] as in $\X$. That is, $1_{(X,e)} = 1_X$: this is well-defined as $e = \rst{e1_Xe}$.
\item[Restriction:] also as in $\X$, with $\rst{f} \colon (X,e) \to (X,e)$ for $f \colon (X,e) \to (X',e')$ given by $\rst{f} \colon X \to X$. This is well-defined as 
\[ e = \rst{efe'} = \rst{e\rst{f}fe'} = \rst{ee\rst{f}fe'} = \rst{e\rst{f}efe'} = \rst{e\rst{f}\,\rst{efe'}} = \rst{e\rst{f}e} \]
\end{description}
\end{definition}

\begin{proposition}\label{prop:propositions}
The canonical map $\mathcal{O} \colon \mathcal{O}(\X) \to \X$ is a latent fibration. 
\end{proposition}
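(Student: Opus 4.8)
The plan is to exhibit, for each object $(X,e)$ of $\mathcal O(\X)$ and each arrow $f\colon A \to X$ of $\X$ lying over it, an explicit prone lifting, and then to verify its universal property by a short restriction calculation. Since $\mathcal O$ is the identity on underlying arrows and preserves restrictions, it is a genuine restriction functor; hence the side condition $f = f\,\mathcal O(1_{(X,e)})$ of Definition \ref{newdefn} is vacuous, and I need only produce prone arrows. The natural candidate is to reindex the predicate $e$ along $f$: take the object $(A,\rst{fe})$ together with the arrow $f\colon (A,\rst{fe}) \to (X,e)$. This is a legitimate arrow of $\mathcal O(\X)$ since its defining condition is just $\rst{fe}\leq\rst{fe}$, which holds trivially.

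Next I would set up the lifting problem. Suppose $g\colon (Y,d)\to (X,e)$ is an arrow of $\mathcal O(\X)$ and $h\colon Y \to A$ is an arrow of $\X$ making $hf = g$ a precise triangle; because $\mathcal O(f)=f$ and $\mathcal O(g)=g$, this is exactly the hypothesis of Definition \ref{newdefn}. As $\mathcal O$ is the identity on underlying arrows, the only possible lifting is $h$ itself, so uniqueness is immediate. What must be checked is that $h$ is a valid arrow $(Y,d)\to (A,\rst{fe})$ of $\mathcal O(\X)$, i.e. that $d \leq \rst{h\,\rst{fe}}$. Once this holds, the top triangle $hf=g$ holds in $\mathcal O(\X)$ (composition and restriction there are computed in $\X$) and is precise because the base triangle is, so nothing further is required and preciseness of the conclusion is inherited directly from the hypothesis.

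The heart of the argument is thus the single inequality $d \leq \rst{h\,\rst{fe}}$. Using [R.4] I rewrite $h\,\rst{fe} = \rst{hfe}\,h$, and then [R.3] gives $\rst{h\,\rst{fe}} = \rst{hfe}\,\rst{h}$, the meet of $\rst{hfe}$ and $\rst h$ in the semilattice $\mathcal O(Y)$. It remains to see that $d$ lies below each factor. Since $g$ is an arrow of $\mathcal O(\X)$ we have $d \leq \rst{ge}$, and $ge = hfe$ yields $d \leq \rst{hfe}$; moreover $\rst{hfe} \leq \rst{hf} \leq \rst{h}$ (the restriction of a composite always sits below the restriction of its first factor), so $d \leq \rst h$ as well. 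Hence $d \leq \rst{hfe}\,\rst{h} = \rst{h\,\rst{fe}}$, as required. Assembling these observations shows that $f\colon (A,\rst{fe})\to (X,e)$ is $\mathcal O$-prone, and since such a lifting exists over every $f$, the functor $\mathcal O$ is a latent fibration.

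I expect the main obstacle to be precisely the interplay of the two predicates $e$ and $d$ transported through $f$ and $h$ in that final inequality, where the correct use of [R.3] and [R.4] to identify $\rst{h\,\rst{fe}}$ with the meet $\rst{hfe}\,\rst h$ is essential; everything else is forced by $\mathcal O$ being the identity on underlying arrows, which trivialises both uniqueness and the transfer of preciseness.
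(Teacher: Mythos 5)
Your proposal is correct and follows essentially the same route as the paper: the prone lift of $f$ at $(X,e)$ is the reindexed predicate $f\colon (X',\rst{fe}) \to (X,e)$, and the key verification is exactly the inequality $d \leq \rst{h\,\rst{fe}}$, which the paper obtains in one step via the standard identity $\rst{h\,\rst{fe}} = \rst{hfe} = \rst{ge} \geq d$ (your meet decomposition $\rst{hfe}\,\rst{h}$ collapses to $\rst{hfe}$ anyway since $\rst{hfe}\leq\rst{h}$). Your additional remarks — that uniqueness and the transfer of preciseness are forced by $\mathcal{O}$ being the identity on underlying arrows, and that the semifunctor side condition is vacuous for a genuine restriction functor — are correct and merely make explicit what the paper leaves implicit.
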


We refer to $\mathcal{O}$ as the {\bf latent fibration of propositions}.  

\begin{proof}
Associativity of composition, the restriction combinator axioms, and the requirements on identity maps all hold in $\mathcal{O}(\X)$ immediately since they hold in $\X$. 

There is an obvious restriction functor 
\[ \mathcal{O} \colon \mathcal{O}(\X) \to \X;\quad \begin{matrix} \xymatrix{ (X,e) \ar[d]_f\ar@{}[dr]|{\mapsto} & X \ar[d]_f \\ (Y,d) & Y} \end{matrix} \]

Suppose $f \colon X' \to X$ in $\X$ and let $(X,e)$ be an object of ${\cal O}(\X)$. Then $(X',\rst{fe})$ is also an object of ${\cal O}(\X)$, and $f\colon(X',\rst{fe}) \to (X,e)$ is a map in ${\cal O}(\X)$ since $\rst{fe} = \rst{\rst{fe}fe}$. We shall show that this map is prone over $f$. To that end, suppose that $g \colon (Y,e') \to (X,e)$ and $h \colon Y \to X'$ are maps in ${\cal O}(\X)$ and $\X$ respectively such that
\[\xymatrix{Y \ar[rd]^g \ar[d]_h & \\X' \ar[r]_f  & X }\]
is precise in $\X$, then $h\colon (Y,e') \to (X',\rst{fe})$ is a map in ${\cal O}(\X)$ since $\rst{h \rst{fe}} = \rst{hfe} = \rst{ge} \geq e'$.  Furthermore this gives a precise triangle in ${\cal O}(\X)$ 
showing $f\colon (X',\rst{fe}) \to (X,e)$ is prone.
\end{proof}


\subsubsection{Assembly categories}
\label{assemblies}

Another example comes from \cite{msc:nester-calgary} and is constructed from a category of assemblies, ${\sf Asm}(F)$, associated to a restriction functor, $F\colon \A \to \X$, from any restriction category, $\A$, called the category of {\bf realizers}, into a Cartesian restriction category, $\X$, the base.  

\begin{definition}\label{def:assemblies}
The category of assemblies, ${\sf Asm}(F)$, is defined as follows:
\begin{description}
\item[Objects:]  $\varphi \in {\cal O}(F(A) \x X)$ for all objects $A \in \A$ and $X \in \X$.
\item[Maps:]  $f\colon \varphi \to \varphi'$, where $\varphi \in {\cal O}(F(A) \x X)$and $\varphi' \in {\cal O}(F(A') \x X')$, are maps $f\colon X \to X'$  for which there is a ``tracking'' map 
$\gamma\colon A \to A'$ in $\A$ satisfying:
\begin{enumerate}[{\bf [Tk.1]}] 
\item $\varphi(F(\gamma) \x f) =  \varphi(F(\gamma) \x f)\varphi'$
\item $\rst{\varphi(1 \x f)} = \rst{\varphi(F(\gamma) \x f)}$
\end{enumerate}
\item[Restriction:]  As in $\X$.
\end{description}
\end{definition}

The proof that ${\sf Asm}(F)$ is a restriction category -- and, indeed, when $\A$ is a Cartesian restriction category and $F$ preserves this Cartesian structure, then ${\sf Asm}(F)$ is a Cartesian restriction category -- may be  found in \cite[Prop. 5.2 and 5.3]{msc:nester-calgary}.

If $\X$ is a Cartesian restriction category and $F \colon \A \to \X$ is a restriction functor, the forgetful functor ${\sf p} \colon {\sf Asm}(F) \to \X$ is a latent fibration. Specifically, if $\varphi \in {\cal O}(F(A) \times X)$, $\psi \in {\cal O}(F(B) \times Y)$, and $f \colon \varphi \to \psi$  a map in ${\sf Asm}(F)$, then ${\sf p}(f)$ is $f \colon X \to Y$, the underlying map in $\X$. Clearly ${\sf p}$ is a Cartesian restriction functor.

\begin{proposition}\label{prop:assemblies}
${\sf p} \colon {\sf Asm}{F} \to \X$ as defined above is a latent fibration.
\end{proposition}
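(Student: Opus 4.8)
The plan is to verify Definition \ref{newdefn} directly. Since ${\sf p}$ is a restriction functor it preserves identities, so the side-condition $f = f{\sf p}(1_\psi)$ is automatic; thus it suffices, for each object $\psi \in \mathcal{O}(F(B) \x Y)$ and each map $f\colon X \to Y$ of $\X$, to produce a ${\sf p}$-prone lifting of $f$ with codomain $\psi$. I would take the reindexed assembly
\[ \varphi := \rst{(1 \x f)\psi} \in \mathcal{O}(F(B) \x X), \]
whose realizer object is still $B$, and propose $f$ itself, tracked by $1_B$, as the prone map $\varphi \to \psi$. Checking that this is a legitimate arrow of ${\sf Asm}(F)$ is short: [Tk.2] is trivial since $\gamma = 1_B$, and [Tk.1] reduces, via idempotence of $\psi$ and [R.4], to the identity $\rst{(1\x f)\psi}(1\x f) = (1\x f)\psi$, which is exactly an instance of [R.4].

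For the prone property, suppose given $g\colon \chi \to \psi$ in ${\sf Asm}(F)$ with $\chi \in \mathcal{O}(F(C) \x Z)$ and tracking map $\gamma\colon C \to B$, together with a base map $h\colon Z \to X$ making $hf = {\sf p}(g)$ a precise triangle; write $g_0 := {\sf p}(g)$, so $g_0 = hf$ and, by Lemma \ref{Jaws}(i), $h\rst{f} = h$. Because the morphisms of ${\sf Asm}(F)$ are nothing but their underlying $\X$-maps (the tracking map is existentially quantified, not part of the datum), ${\sf p}$ is faithful, so a lifting $\tidle{h}$ of $h$ is unique as soon as it exists, and the triangle $\tidle{h} f = g$ is automatically precise because the restriction of ${\sf Asm}(F)$ is inherited from $\X$. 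Hence the entire content is to show that $h$ underlies an arrow $\chi \to \varphi$, and I claim the \emph{same} $\gamma$ tracks it.

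The key computation is to establish, for both $k = 1$ and $k = F(\gamma)$, the restriction identity
\[ \rst{\chi(k \x h)} = \rst{\chi(k \x g_0)}. \]
Here I use the standard Cartesian restriction identity $\rst{(1 \x f)} = 1 \x \rst{f}$ together with preciseness in the form $h\rst{f} = h$: functoriality of the product gives $\chi(k \x h)(1 \x \rst{f}) = \chi(k \x h\rst{f}) = \chi(k \x h)$, that is $\chi(k \x h)\rst{(1\x f)} = \chi(k \x h)$, and one application of [R.4] turns this into $\rst{\chi(k\x h)} \leq \rst{\chi(k\x h)(1\x f)} = \rst{\chi(k \x g_0)}$, using $(k\x h)(1\x f) = k \x g_0$; the reverse inequality is automatic since restriction decreases under postcomposition. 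Granting this, [Tk.2] for $\tidle{h}$ follows by chaining the two instances ($k=1$ and $k=F(\gamma)$) through [Tk.2] for $g$. For [Tk.1] for $\tidle{h}$ I would expand $\chi(F(\gamma)\x h)\varphi = \chi(F(\gamma)\x h)\rst{(1\x f)\psi}$ by [R.4], rewrite $(F(\gamma)\x h)(1\x f) = F(\gamma)\x g_0$, apply [Tk.1] for $g$ to replace $\chi(F(\gamma)\x g_0)\psi$ by $\chi(F(\gamma)\x g_0)$, and finally absorb the leading idempotent using the displayed identity $\rst{\chi(F(\gamma)\x g_0)} = \rst{\chi(F(\gamma)\x h)}$ and [R.1].

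The main obstacle is precisely the displayed restriction identity: it is the only place where preciseness of the base triangle is used, and it is what makes the reindexing $\varphi = \rst{(1\x f)\psi}$ behave correctly. Intuitively, reindexing along $f$ can only shrink domains of definition, so without the hypothesis $h\rst{f} = h$ the restriction of $\chi(k\x h)$ could strictly exceed that of $\chi(k \x g_0)$ and the tracking conditions would break; preciseness guarantees that $h$ already factors through the domain of definition of $f$, which is exactly what rescues the equality and, with it, the fact that $\gamma$ continues to track $\tidle{h}$.
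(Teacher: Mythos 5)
Your proof is correct and takes essentially the same approach as the paper: both construct the reindexed assembly $\rst{(1\times f)\psi} \in {\cal O}(F(B)\times X)$ and exhibit $f$ itself (tracked by the identity realizer) as the prone map over $f$. The paper simply asserts this map is ``readily seen to be prone,'' while you supply the verification -- the key identity $\rst{\chi(k\times h)}=\rst{\chi(k\times g_0)}$ obtained from preciseness, and the tracking checks [Tk.1], [Tk.2] via the same $\gamma$ -- which is precisely the content the paper leaves to the reader.
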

\begin{proof}
Suppose $f \colon X \to Y$ is a map in $\X$, and let $\psi \in {\cal O}(F(B) \times Y)$ be an assembly. Then $\rst{(1 \times f)\psi} \in {\cal O}{F(B) \times X}$ is also an assembly, and 
$f$ can be viewed as a map $\rst{(1 \times f)\psi} \to \psi$ in ${\sf Asm}(F)$ which is readily seen to be prone. 
\end{proof}


\subsubsection{Idempotent splitting}
\label{splitting}

We end these examples with a latent fibration which is a genuine semifunctor. 
\begin{proposition}\label{prop:splitting_example}
For any restriction category $\X$, the forgetful functor from the restriction idempotent splitting of $\X$ back to $\X$, $U: \split(\X) \to \X$, is a latent fibration.
\end{proposition}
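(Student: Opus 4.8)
The plan is to exhibit an explicit prone lifting and then lean on the fact that $U$ is the identity on underlying arrows to obtain existence and uniqueness of liftings cheaply. Recall that an object of $\split(\X)$ is a pair $(A,e)$ with $e$ a restriction idempotent on $A$, that an arrow $g\colon (D,d)\to(A,e)$ is an arrow $g\colon D\to A$ of $\X$ satisfying $dg=g=ge$, that the identity on $(A,e)$ is $e$ itself, and that the restriction is computed as in $\X$. Thus $U$ sends $(A,e)\mapsto A$ and is the identity on arrows; it is a genuine semifunctor since $U(1_{(A,e)})=e$, which is not $1_A$ in general, and for a map $f\colon C\to A$ the side condition $f=f\,U(1_{(A,e)})=fe$ of Definition \ref{newdefn}(ii) is exactly the requirement that $f$ land inside the subobject named by $e$.

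Given an object $(A,e)$ and a map $f\colon C\to A$ with $fe=f$, I would take the candidate prone arrow to be $f\colon (C,\rst f)\to(A,e)$. This is a legal arrow of $\split(\X)$ because $\rst f\,f\,e=fe=f$, and it manifestly sits over $f$ since $U$ is the identity on arrows.

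To verify proneness, suppose we are given $g\colon (D,d)\to(A,e)$ in $\split(\X)$ and $h\colon D\to C$ in $\X$ with $hf=g$ a precise triangle, so $\rst h=\rst g$. Because $U$ is the identity on arrows, the only conceivable lift of $h$ is $h$ itself, which settles uniqueness immediately; it remains only to check that $h$ is a legal arrow $(D,d)\to(C,\rst f)$, that is, that $h\,\rst f=h$ and $d\,h=h$. The first of these is precisely the reformulation of preciseness supplied by Lemma \ref{Jaws}(i). For the second I would compute $\rst g=\rst{dg}=d\,\rst g$ (using $dg=g$ together with [R.3] and $d=\rst d$), whence $d\,\rst h=d\,\rst g=\rst g=\rst h$ and therefore $d\,h=d\,\rst h\,h=\rst h\,h=h$. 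The lifted triangle $hf=g$ commutes by hypothesis and is precise because $\rst h=\rst g$ and restriction in $\split(\X)$ is inherited from $\X$.

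The only mild subtlety is the verification that the lift $h$ respects the domain idempotent $d$; this is the single place where preciseness of the base triangle does work beyond Lemma \ref{Jaws}(i), and it is handled by the short restriction calculation above. Everything else — the legality and over-ness of the chosen prone arrow, the commuting of the lifted triangle, and the uniqueness of the lift — is immediate from the definitions and from the faithfulness of $U$.
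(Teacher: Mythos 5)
Your proof is correct and takes essentially the same approach as the paper's: the same prone lift $f\colon (C,\rst{f})\to(A,e)$ (the paper writes the domain idempotent as $\rst{fe}$, which equals $\rst{f}$ since $f=fe$), with uniqueness immediate from $U$ being the identity on arrows and existence reduced to checking the lift is a legal arrow of $\split(\X)$. The only cosmetic difference is that you split the paper's single calculation $dh\,\rst{fe}=h$ into the two checks $h\rst{f}=h$ (via Lemma \ref{Jaws}(i)) and $dh=h$.
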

\begin{proof}
Recall that in $\split(\X)$:
\begin{itemize}
	\item an object is a pair $(X,e)$ with $e$ a restriction idempotent on $\X$;
	\item a map $f\colon (X,e) \to (X',e')$ is a map $f\colon X \to X'$ such that $efe' = f$;
	\item composition and restriction are as in $\X$;
	\item the identity of $(X,e)$ is $e$ itself.
\end{itemize}
Thus in general the forgetful functor $U\colon \split(\X) \to \X$ is a genuine semifunctor (that is, it preserves composition and restriction but not necessarily identities).  

To see that it is a latent fibration, suppose $(X,e)$ is an object of $\split(\X)$, and $f\colon X' \to \p(X)$ in $\X$ is such that $f = fU(1_{(X,e)}) = fe$.  We want to show that $f\colon (X',\rs{fe}) \to (X,e)$ is a prone lift of $f$.  Indeed, $f$ is well-defined since
	\[ \rs{fe}f = f\rs{e}e = fe = f \]
If we have a precise factorization $hf=g$, then $h$ is the unique lift:
\[ \xymatrix{(X'',e') \ar@{..>}[d]_{h} \ar[dr]^{g} & ~ \ar@{}[drr]|{\textstyle\mapsto}&&X'' \ar[d]_h \ar[dr]^{g}  & ~ 
\\ 
(X',\rs{fe})  \ar[r]_{f} & (X,e) &&  X' \ar[r]_{f} & X} \]
It is well-defined since
	\[ e''h\rs{fe} = \rs{e''hfe}h = \rs{e''ge}h = \rs{g}h = \rs{h}h = h, \]
and is clearly unique.
\end{proof}


\subsection{Basic theory of latent fibrations and prone arrows}

A simple observation is that a latent fibration reduces to an ordinary fibration when it is between total map categories.  To see this note that (i) a restriction semifunctor between such categories must be an ordinary functor, and (ii) the requirement of preciseness is automatically satisfied by total maps and so the condition of having enough prone maps reduces to the definition of a Cartesian arrow in an ordinary fibration.

\begin{lemma} \label{latent-fibration-total}
A latent fibration between restriction categories in which all maps are total is a fibration.
\end{lemma}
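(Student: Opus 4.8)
The plan is to verify the two observations that the paragraph preceding the lemma already flags, and then check that these combine to give exactly the classical definition of a fibration. Let $\p\colon \E \to \B$ be a latent fibration where every map in $\E$ and $\B$ is total. First I would record that in a restriction category in which all maps are total, the restriction combinator is forced to be trivial: for any $f\colon A \to B$ we have $\rst{f} = 1_A$ by definition of totality, so $\O(A) = \{1_A\}$ for every object. This immediately trivializes several hypotheses in the latent fibration definition.

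Next I would show that $\p$ is an ordinary functor, not merely a semifunctor. Since $\p$ is a restriction semifunctor, $\p(1_X)$ is a restriction idempotent in $\B$ (as noted in the definition of restriction semifunctors). But $\B$ has only total maps, so the only restriction idempotent on any object is the identity; hence $\p(1_X) = 1_{\p(X)}$, and $\p$ preserves identities. Combined with the fact that $\p$ preserves composition (being a semifunctor), this makes $\p$ an ordinary functor. As a consequence, the domain condition in Definition \ref{newdefn}(ii), namely $f = f\p(1_X)$, becomes $f = f1_{\p(X)} = f$, which is vacuous — so prone liftings exist over \emph{every} $f\colon A \to \p(X)$ in $\B$, matching the cleavage condition of an ordinary fibration.

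The remaining and essentially only substantive step is to check that $\p$-prone maps coincide exactly with Cartesian maps in this total setting. Here I would invoke Lemma \ref{Jaws}: a commuting triangle between total maps is automatically precise, since preciseness asks $\rst{g} = \rst{k}$ and both sides equal the relevant identity. Therefore, in the prone-lifting diagram of Definition \ref{newdefn}(i), the hypothesis ``$h\p(f') = \p(g)$ is a precise triangle'' reduces to ``$h\p(f') = \p(g)$ commutes,'' and the conclusion ``$\widetilde{h}f' = g$ is a precise triangle'' reduces to ``$\widetilde{h}f' = g$ commutes.'' This is verbatim the universal property defining a Cartesian arrow. Thus $f'$ is $\p$-prone if and only if it is Cartesian over $\p(f')$, and the existence-of-prone-liftings clause of Definition \ref{newdefn}(ii) becomes precisely the existence-of-Cartesian-liftings clause of a fibration.

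I do not anticipate a genuine obstacle: every clause of the latent fibration definition degenerates, under totality, into the corresponding clause of the ordinary fibration definition, and the only care needed is the bookkeeping that (a) the semifunctor is actually a functor and (b) preciseness is automatic. The mildly delicate point — the one most worth stating explicitly — is the identity-preservation argument via triviality of restriction idempotents, since that is what upgrades the ambient semifunctor to a functor and retires the auxiliary hypothesis $f = f\p(1_X)$; everything else is immediate once preciseness is seen to collapse to commutativity.
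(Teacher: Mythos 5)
Your proposal is correct and follows essentially the same route as the paper, which proves the lemma via exactly the two observations you verify: (i) a restriction semifunctor between total-map categories is an ordinary functor (since the only restriction idempotents are identities, forcing $\p(1_X) = 1_{\p(X)}$), and (ii) preciseness is automatic for total maps, so the prone-lifting property collapses to the Cartesian-lifting property. Your write-up simply fills in these details slightly more explicitly than the paper does.
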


Just as for ordinary fibrations we can define morphisms of latent fibrations:

\begin{definition}
A morphism of  latent fibrations $F = (F_1,F_0)\colon (\p\colon \E \to \B) \to (\p'\colon \E' \to \B')$ is a pair of restriction semifunctors, respectively, between the ``total'' categories, $F_1\colon \E \to \E'$, and the bases $F_0\colon \B \to \B'$, making
\[ \xymatrix{ \E \ar[r]^{F_1} \ar[d]_{\p} & \E' \ar[d]^{\q} \\ \B \ar[r]_{F_0} & \B'} \]
commute, and such that $F_1$ preserves prone maps.
\end{definition}

Transformations of fibrations are ``pillows'' of  restriction transformations  $(\alpha,\beta)\colon (F_1,F_0) \to (G_1,G_0)$ with $\alpha \q = \p \beta$.

Many results for Cartesian arrows for an ordinary functor have a restriction analogue for prone arrows for a restriction functor. For example:

\begin{lemma} \label{composites-prones}
For any restriction semifunctor, ${\sf p}\colon \E \to \B$, composites of ${\sf p}$-prone arrows in $\E$ are ${\sf p}$-prone and identity maps are $\p$-prone.
\end{lemma}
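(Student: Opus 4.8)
The plan is to handle the two claims separately, using the characterizations of preciseness from Lemma~\ref{Jaws} to manage the semifunctor bookkeeping.

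For identities, I would show $1_X\colon X \to X$ is $\p$-prone directly. Given $g\colon Y \to X$ and $h\colon \p(Y) \to \p(X)$ with $h\p(1_X) = \p(g)$ a precise triangle, the key observation is that $\p(1_X)$ is a restriction idempotent (as recorded in the definition of a restriction semifunctor). Hence by Lemma~\ref{Jaws}(ii), preciseness of the base triangle forces $h = \p(g)$. The only possible lift is $\widetilde h := g$, since any lift must satisfy $\widetilde h\, 1_X = \widetilde h = g$; and this choice works, as $\p(\widetilde h) = \p(g) = h$ and the triangle $\widetilde h\, 1_X = g$ is trivially precise. Uniqueness is then immediate.

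For composites, suppose $f'\colon X' \to X$ and $f''\colon X'' \to X'$ are both $\p$-prone; I would show $f''f'$ is $\p$-prone. Given $g\colon Y \to X$ and $h\colon \p(Y) \to \p(X'')$ with $h\,\p(f''f') = \p(g)$ precise, I would lift in two stages. First, set $h_1 := h\,\p(f'')\colon \p(Y) \to \p(X')$. Using $\p(f''f') = \p(f'')\p(f')$ together with [R.4] one computes $h\,\p(f'')\,\rst{\p(f')} = h\,\rst{\p(f''f')}\,\p(f'') = h\,\p(f'')$, so by Lemma~\ref{Jaws}(i) the triangle $h_1\p(f') = \p(g)$ is precise; proneness of $f'$ yields a unique $\widetilde h_1\colon Y \to X'$ with $\p(\widetilde h_1) = h_1$ and $\widetilde h_1 f' = g$ precise. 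Second, since $\rst{\p(f''f')} \le \rst{\p(f'')}$, preciseness of the original triangle gives $h\,\rst{\p(f'')} = h$, so $h\,\p(f'') = \p(\widetilde h_1)$ is precise; proneness of $f''$ yields a unique $\widetilde h\colon Y \to X''$ with $\p(\widetilde h) = h$ and $\widetilde h f'' = \widetilde h_1$ precise. Then $\widetilde h(f''f') = \widetilde h_1 f' = g$, and chaining the two preciseness conditions gives $\rst{\widetilde h} = \rst{\widetilde h_1} = \rst{g}$, so $\widetilde h$ is a lift of $h$ making a precise triangle over $f''f'$.

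The step I expect to require the most care is uniqueness. Given a competing lift $\widetilde k\colon Y \to X''$ with $\p(\widetilde k) = h$ and $\widetilde k(f''f') = g$ precise, I would factor it through the intermediate object: using [R.4] and Lemma~\ref{Jaws}(i), the triangle $(\widetilde k f'')f' = g$ is again precise and $\p(\widetilde k f'') = h\p(f'') = h_1$, so uniqueness at $f'$ forces $\widetilde k f'' = \widetilde h_1$. Since $\rst{\widetilde k} = \rst{g} = \rst{\widetilde h_1}$, the triangle $\widetilde k f'' = \widetilde h_1$ is precise, and uniqueness at $f''$ then forces $\widetilde k = \widetilde h$. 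The delicate points throughout are verifying that preciseness is inherited by the intermediate triangles; each such verification reduces to an application of [R.4] and the characterization in Lemma~\ref{Jaws}(i), so no genuinely new idea is needed beyond careful restriction-algebra.
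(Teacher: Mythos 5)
Your proposal is correct and follows essentially the same route as the paper's proof: a two-stage lift (first through the factor nearest the codomain, then the other) with the preciseness of each intermediate triangle verified via [R.4] and Lemma~\ref{Jaws}(i), uniqueness obtained by factoring a competing lift through the intermediate stage, and the identity case handled exactly as in the paper using Lemma~\ref{Jaws}(ii) and the fact that $\p(1_X)$ is a restriction idempotent.
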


\begin{proof}
Suppose $f_1$ and $f_2$ are prone in $\E$ and $k {\sf p}(f_1f_2) = {\sf p}(g)$ is a precise triangle.  Then $(k {\sf p}(f_1)) {\sf p}(f_2)$ is also precise as:
\[ k {\sf p}(f_1) \rst{{\sf p}(f_2)} = \rst{k {\sf p}(f_1f_2)} k {\sf p}(f_1)  = \rst{k \rst{{\sf p}(f_1f_2)}} k {\sf p}(f_1) = \rst{k} k {\sf p}(f_1) = k {\sf p}(f_1) \]
Now as $f_2$ is prone we have a unique $\widetilde{k {\sf p}(f_1)}$ sitting above making $\widetilde{k {\sf p}(f_1)} f_2$ precise.  But then as 
$k {\sf p}(f_1) = {\sf p}(\widetilde{k {\sf p}(f_1)})$ is precise this gives a unique $\widetilde{k}$ with 
$\widetilde{k} f_1 = \widetilde{k {\sf p}(f_1)}$ precise.  This certainly  means $\widetilde{k} (f_1 f_2)= \widetilde{k {\sf p}(f_1)} f_2 = g$ but also this is 
precise as  $\rst{\widetilde{k}} = \rst{\widetilde{k {\sf p}(f_1)}}$ as $\widetilde{k} f_1 = \widetilde{k {\sf p}(f_1)}$ precisely commutes, and 
$\rst{\widetilde{k {\sf p}(f_1)}} = \rst{g}$ as $\widetilde{k {\sf p}(f_1)} f_2 =g$ precisely commutes, so $\rst{\widetilde{k}} = \rst{g}$ showing 
$\widetilde{k} (f_1f_2)= g$ precisely commutes.   

Finally $\widetilde{k}$ is unique as supposing $h$ was an alternate then $(hf_1) f_2 = g$ precisely commutes with ${\sf p}(hf_1) = k{\sf p}(f_1)$ making 
$hf_1 = \widetilde{k{\sf p}(f_1)}$ but then by similar reasoning $h= \widetilde{k}$.

Identity maps are clearly always prone when $\p$ is a restriction functor, but thi is not completely immediate if $\p$ is only a semifunctor.  However, it still works in this case as if we have
\[ \xymatrix{Y \ar@{..>}[d]^{} \ar[dr]^{g} & ~\ar@{}[drr]|{\textstyle\mapsto}&& \p(Y) \ar[d]_h \ar[dr]^{\p(g)}  & ~ 
\\ 
X  \ar[r]_{1_X} & X && \p(X) \ar[r]_{\p(1_X)} & \p(X)} \]
with the base triangle precise, then since $\p(1_X)$ is a restriction idempotent, by Lemma \ref{Jaws}.ii, $h = p(g)$.  Thus $g$ is the unique fill-in for the triangle in $\E$.  
\end{proof}

As we shall see, however, restriction idempotents are \emph{not} always prone: see Section \ref{sec:types} for details, and Proposition \ref{prop:separated_equivalences} for a characterization of which semifunctors have all restriction idempotents prone.  

In the total case, it is well-known that any two Cartesian arrows over the same arrow (with a common codomain) have a unique mediating isomorphism.  For prone arrows we now show that the analogous situation induces a mediating \emph{partial} isomorphism.   

\begin{definition}
In a restriction category, a {\bf mediating} map between two maps $f$ and $f'$ with a common codomain, is a partial isomorphism, $\alpha$,  such that $\alpha f' = f$ and $\alpha^{(-1)}f = f'$ are both precise.  
\end{definition}

\begin{lemma} \label{mediating_maps}
If ${\sf p}\colon \E \to \B$ is a restriction semifunctor, then:
\begin{enumerate}[(i)]
\item If $f\colon X \to Y$ and $f'\colon X' \to Y$ are ${\sf p}$-prone maps of $\E$ with ${\sf p}(f) = {\sf p}(f')$  
then there is a unique mediating partial isomorphism $\alpha \colon X \to X'$ (so that    ${\sf p}(\alpha) = \rst{{\sf p}(f)}$, $\alpha f' = f$, and $\rst{\alpha} = \rst{f}$, $\alpha^{(-1)} f = f'$, and $\rst{\alpha^{(-1)}} = \rst{f'}$).
\item If $\alpha$ is a mediating partial isomorphism in $\E$ between $f$ and $f'$ (so that $\alpha f' = f$, and $\rst{\alpha} = \rst{f}$, $\alpha^{(-1)} f = f'$, and $\rst{\alpha^{(-1)}} = \rst{f'}$ as above) then, if either $f$ or $f'$ is ${\sf p}$-prone, then both $f$ and $f'$  are ${\sf p}$-prone.
\item If $e,e'\colon X \to X$ are both prone restriction idempotents in $\E$ such that $\p(e) = \p(e')$, then $e = e'$.
\end{enumerate}
\end{lemma}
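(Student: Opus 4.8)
The plan is to prove the three parts in order, using (i) as the engine for (ii) and (iii). For part (i), I would build the mediating map and its candidate inverse directly as prone liftings. Since $\p(f) = \p(f')$ forces $\p(X) = \p(X')$, the restriction idempotent $\rst{\p(f)}\colon \p(X) \to \p(X')$ satisfies $\rst{\p(f)}\p(f') = \p(f)$, and this triangle is precise by Lemma \ref{Jaws}(iv). Feeding $g := f$ and $h := \rst{\p(f)}$ into the prone property of $f'$ yields a unique $\alpha\colon X \to X'$ with $\p(\alpha) = \rst{\p(f)}$ and $\alpha f' = f$ precise; symmetrically, proneness of $f$ produces $\beta\colon X' \to X$ with $\p(\beta) = \rst{\p(f)}$ and $\beta f = f'$ precise. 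The substance of (i) is then showing $\beta = \alpha^{(-1)}$. I would note that $\alpha\beta f = \alpha f' = f$, that this triangle is precise (because $\beta\rst{f} = \beta$ by Lemma \ref{Jaws}(i) applied to the precise triangle $\beta f = f'$, whence $\alpha\beta\rst{f} = \alpha\beta$), and that $\p(\alpha\beta) = \rst{\p(f)}$; since $\rst{f}$ is also a lifting of $\rst{\p(f)}$ with $\rst{f}f = f$ precise, the uniqueness clause of the prone property of $f$ forces $\alpha\beta = \rst{f} = \rst{\alpha}$. The mirror-image computation gives $\beta\alpha = \rst{f'} = \rst{\beta}$, so $\beta = \alpha^{(-1)}$ with the asserted restrictions, and uniqueness of $\alpha$ is immediate from the uniqueness clause for $f'$.

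For part (ii) I would assume without loss of generality that $f'$ is prone and show $f$ is prone, the converse following by interchanging $f \leftrightarrow f'$ and $\alpha \leftrightarrow \alpha^{(-1)}$. Given a lifting problem $h\p(f) = \p(g)$ precise for $f$, I would transport it along $\alpha$ by setting $h' := h\p(\alpha)$, so that $h'\p(f') = h\p(\alpha f') = h\p(f) = \p(g)$, and check preciseness of this transported triangle using $\p(\alpha)\rst{\p(f')} = \p(\alpha)$ (which follows from $\alpha\rst{f'} = \alpha$, i.e.\ Lemma \ref{Jaws}(i) on the precise triangle $\alpha f' = f$). Proneness of $f'$ then supplies a unique $\widetilde{h'}$, and I would define $\widetilde{h} := \widetilde{h'}\alpha^{(-1)}$; the identities $\alpha^{(-1)}f = f'$ and $h\rst{\p(f)} = h$ are exactly what is needed to verify both that $\widetilde{h}f = g$ is precise and that $\p(\widetilde{h}) = h$. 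Uniqueness of $\widetilde{h}$ follows by sending any competitor $k$ to $k\alpha$, invoking uniqueness for $f'$, and recovering $k = k\rst{f}$ from preciseness of $kf = g$.

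For part (iii) I would apply (i) to the two prone restriction idempotents $e, e'\colon X \to X$ (they share the codomain $X$ and satisfy $\p(e) = \p(e')$), obtaining a mediating partial isomorphism $\alpha$ with $\alpha e' = e$, $\rst{\alpha} = e$, and $\rst{\alpha^{(-1)}} = e'$. The conclusion is then a short restriction-category computation independent of $\p$: since $\alpha\rst{\alpha^{(-1)}} = \alpha$ holds for any partial isomorphism and $\rst{\alpha^{(-1)}} = e'$, we get $\alpha = \alpha e' = e$; as the partial inverse of the restriction idempotent $e$ is $e$ itself, $\alpha^{(-1)} = e$, and hence $e' = \rst{\alpha^{(-1)}} = \rst{e} = e$. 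I expect the main obstacle to lie entirely in part (i): establishing $\alpha\beta = \rst{f}$ and $\beta\alpha = \rst{f'}$ hinges on arranging precisely the right precise triangles so that the uniqueness clause of proneness applies, and this is where Lemma \ref{Jaws} must be deployed carefully. Once (i) is secured, (ii) is a routine transport-of-structure argument and (iii) collapses to the one-line partial-isomorphism identity above.
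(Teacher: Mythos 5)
Your proposal is correct and follows essentially the same route as the paper: part (i) builds the two liftings from proneness and identifies their composites with $\rst{f}$ and $\rst{f'}$ via the uniqueness clause over the precise triangle $\rst{\p(f)}\,\p(f) = \p(f)$ (Lemma \ref{Jaws}.iv), part (ii) transports lifting problems along the mediating partial isomorphism, and part (iii) reduces to (i) plus a one-line computation (the paper invokes Lemma \ref{Jaws}.ii where you use $\alpha\rst{\alpha^{(-1)}} = \alpha$, an inessential difference). You merely spell out the preciseness and over-the-base checks that the paper leaves implicit.
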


\begin{proof}~
\begin{enumerate}[{\em (i)}]
\item Let $\alpha,\alpha'$ be the liftings for the precise triangle $\rst{{\sf p}(f)}{\sf p}(f) = {\sf p}(f)$ (recalling ${\sf p}(f)={\sf p}(f')$):
\[
\xymatrix{
X \ar@{.>}[d]_\alpha \ar[rd]^f \\
X' \ar[r]_{f'} & Y
}
\hspace{30pt}
\xymatrix{
X' \ar@{.>}[d]_{\alpha'} \ar[rd]^{f'} \\
X \ar[r]_f & Y
}
\]
The fact that they are precise triangles implies immediately that $\rst{\alpha} = \rst{f}$ and $\rst{\alpha'} = \rst{f'}$.
The composite of these precise triangles sits over the same precise triangle, $\rst{\p(f)}\p(f) = {\sf p}(f)$, and so, using Lemma \ref{Jaws}.iv, $\alpha\alpha' = \rst{f}$ and $\alpha'\alpha = \rst{f'}$ making them partial inverses.
 \item  Suppose $f$ and $f'$ are mediated by $\alpha$, then the precise triangles on $f$ are in bijective correspondence to those on $f'$, as a precise $kf=g$ is carried to 
 a precise $(k\alpha) f' = g$ from which the result is immediate.
 \item By (i), there is a mediating partial isomorphism $\alpha\colon X \to X$ between $e$ and $e'$.  In particular, $\alpha e = e'$ is precise.  But then by Lemma \ref{Jaws}.ii, 
    this means $\alpha = e'$, and similarly $\alpha^{(-1)} = e$.  But then $\alpha$ is a restriction idempotent, so its partial inverse is itself, so we have $e = \alpha^{(-1)} = \alpha = e'$.  
\end{enumerate}
\end{proof}

As noted above, restriction idempotents are not always prone.  However, isomorphisms, and, more generally, restriction retractions are:

\begin{lemma} \label{restriction-retraction}
If ${\sf p}\colon \E \to \B$ is a restriction semifunctor, then: 
\begin{enumerate}[(i)]
\item Isomorphisms are always $\p$-prone and, if $f$ is total and $\p$-prone with $\p(f)$ an isomorphism and $\p$ a restriction functor, then $f$ is itself an isomorphism.
\item All restriction retractions in $\E$ are ${\sf p}$-prone.
\item If $f = r f'$ is ${\sf p}$-prone in $\E$ and $r$ is a restriction retraction, then $f'$ is $\p$-prone.
\item If $\p$ is a restriction functor, and $f$ is a ${\sf p}$-prone map such that $\rst{f}$ splits, then $\p(f)$ is a restriction retraction if and only if $f$ is a restriction retraction.
\end{enumerate}
\end{lemma}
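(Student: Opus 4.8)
The plan is to handle the four parts in order, leaning on the preciseness characterisations of Lemma~\ref{Jaws} and on the mediating-map transport of Lemma~\ref{mediating_maps} rather than re-deriving proneness by hand each time. For part (i), given a precise base triangle $h\,\p(f) = \p(g)$ with $f$ an isomorphism, I would observe that $f$ is in particular a partial isomorphism, so Lemma~\ref{Jaws}.iii forces any precise lift to be $\widetilde h = g f^{(-1)} = g f^{-1}$; this settles uniqueness immediately, and it remains only to check the candidate lies over $h$. Since $\p$ preserves restrictions and composition, $\p(\widetilde h) = \p(g)\p(f^{-1}) = h\,\p(f)\p(f^{-1}) = h\,\rst{\p(f)}$, and preciseness of the base triangle together with Lemma~\ref{Jaws}.i gives $h\,\rst{\p(f)} = h$. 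For the converse ($f$ total and prone, $\p(f)$ an isomorphism, $\p$ a restriction functor), I would lift twice: first lift the precise triangle $\p(f)^{-1}\p(f) = 1_{\p(Y)} = \p(1_Y)$ to a map $s$ with $s f = 1_Y$ precise, which also forces $s$ total; then note that both $fs$ and $1_X$ sit over $1_{\p(X)}$ and give precise triangles over $\p(f)$ with apex $f$ (here I use that total maps compose, so $fs$ is total), whence uniqueness of the prone lift forces $fs = 1_X$.

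For part (ii), a restriction retraction $r$ is a partial isomorphism whose partial inverse $r^{(-1)}$ is total, so $r^{(-1)}r = \rst{r^{(-1)}} = 1$; the Lemma~\ref{Jaws}.iii argument of part (i) applies verbatim, with the forced lift $\widetilde h = g r^{(-1)}$ satisfying $\widetilde h r = g r^{(-1)} r = g$ and $\p(\widetilde h) = h\,\p(r)\p(r^{(-1)}) = h\,\rst{\p(r)} = h$ by preciseness. For part (iii), where $f = r f'$ with $f$ prone and $r$ a restriction retraction, I would produce a mediating partial isomorphism and apply Lemma~\ref{mediating_maps}.ii. The naive candidate $r$ fails the restriction bookkeeping, so I would instead take $\alpha := \rst{f}\,r$, a partial isomorphism with $\alpha^{(-1)} = r^{(-1)}\rst{f}$. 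Using $\rst{f} \leq \rst{r}$ (as $f = r f'$) one gets $\rst{\alpha} = \rst{f}\,\rst{r} = \rst{f}$ and $\alpha f' = \rst f f = f$, while $f' = r^{(-1)} f$ yields $\alpha^{(-1)} f = f'$ with $\rst{\alpha^{(-1)}} = \rst{r^{(-1)}\rst f} = \rst{r^{(-1)} f} = \rst{f'}$; Lemma~\ref{mediating_maps}.ii then transports proneness from $f$ to $f'$.

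For part (iv), the direction ``$f$ a restriction retraction $\Rightarrow \p(f)$ one'' is immediate, since a restriction functor preserves partial isomorphisms and total maps. For the converse I would use the splitting hypothesis to reduce to parts (i) and (iii): split $\rst{f} = rs$ with $s$ total (a restriction monic) and $r$ a restriction retraction satisfying $\rst r = \rst f$, and set $f_0 := s f$, so that $r f_0 = (rs) f = \rst f f = f$ and $f_0$ is total (since $se = s$ for $e = \rst f$). Part (iii) then makes $f_0$ prone. A short computation shows $\p(f_0)$ is an isomorphism with inverse $\p(f)^{(-1)}\p(r)$: one direction is $\p(f)^{(-1)}\p(r)\,\p(f_0) = \p(f)^{(-1)}\p(f) = 1$, and the other is $\p(f_0)\,\p(f)^{(-1)}\p(r) = \p(s)\,\rst{\p(f)}\,\p(r) = \p(ser) = \p(sr) = 1$ (using $se = s$). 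Part (i) then upgrades the total prone map $f_0$ to a genuine isomorphism, and $f = r f_0$ is a composite of a restriction retraction and an isomorphism, hence a restriction retraction.

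I expect the converse half of part (iv) to be the main obstacle: it is the only place where all of the hypotheses (proneness, $\p$ a restriction functor, and $\rst f$ splitting) are genuinely used, and it requires keeping straight which factor of the splitting is total and which is the retraction, and then verifying that $\p(f_0)$ is a two-sided isomorphism so that the total-prone criterion of (i) can be invoked. The secondary delicate point is the double-lift uniqueness argument in the converse of (i), whose correctness hinges on $fs$ being total — and hence on the closure of total maps under composition.
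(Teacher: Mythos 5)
Your proposal is correct, and on parts (i), (ii), and (iv) it travels essentially the paper's own road: (i) is the same double-lift argument (lift $\p(f)^{-1}$ through the prone $f$ to get a total section $s$ with $sf=1$, then compare $fs$ and $1$ as lifts of the identity), (ii) is the same $g r^{(-1)}$-candidate argument driven by Lemma \ref{Jaws}.iii, and (iv) has the same skeleton as the paper (split $\rst{f}$, factor $f = r f_0$, use (iii) to make $f_0$ prone, observe it is total and sits over an isomorphism, invoke (i), reassemble). The genuine divergence is part (iii). The paper proves it by an explicit lifting construction: given a precise base triangle $h\p(f') = \p(g)$, it lifts $h\p(m)$ through the prone map $f$ to obtain $\tilde{h}$, proposes $\tilde{h}r$ as the lift for $f'$, and then verifies commutativity, preciseness, the projection condition, and uniqueness by hand (the uniqueness step passing through $km$). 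You instead exhibit a single mediating partial isomorphism $\alpha = \rst{f}\,r$ between $f$ and $f'$ — correctly repairing the naive candidate $r$, which fails preciseness since in general only $\rst{f} \leq \rst{r}$ — and your verification of the four conditions $\alpha f' = f$, $\rst{\alpha}=\rst{f}$, $\alpha^{(-1)}f = f'$, $\rst{\alpha^{(-1)}}=\rst{f'}$ is accurate, so Lemma \ref{mediating_maps}.ii indeed transports proneness from $f$ to $f'$. This is shorter and cleanly modular; what it buys is that all the lifting bookkeeping is absorbed into Lemma \ref{mediating_maps}.ii (which is where that computational content now lives), whereas the paper's hands-on proof is self-contained. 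In (iv) you also streamline two details relative to the paper: you bypass its preliminary lift $\widetilde{m}$ of the base section by computing directly that $\p(f_0)$ is invertible with inverse $\p(f)^{(-1)}\p(r)$, and you get totality of $f_0$ from $se=s$ rather than from epicness of $r$; both computations are correct and semifunctor/functor hypotheses are used exactly where needed. The only statement you leave unproved — that a restriction retraction followed by an isomorphism is a restriction retraction — is a one-line check (the partial inverse $f_0^{-1}s$ of $rf_0$ is total), so nothing essential is missing.
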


\begin{proof}~
\begin{enumerate}[{\em (i)}]
\item That isomorphisms are prone is immediate (and follows for example from part {\em (ii)}, see below).  Suppose $f\colon E' \to E$ is total, prone, and 
$\p(f)$ is an isomorphism with $\p$ a restriction functor, then $\p(f)^{-1}$ lifts:
\[ \xymatrix{E \ar@{=}[dr] \ar@{..>}[d]_{\widetilde{\p(f)^{-1}}} & ~\ar@{}[drr]|{\textstyle\mapsto}&&& \p(E) \ar@{=}[dr] \ar[d]_{\p(f)^{-1}}
\\ 
E' \ar[r]_{f'} & E &&&\p(E') \ar[r]_{\p(f)} & \p(E)} \]
 and then we have 
 \[ \xymatrix{E' \ar[dr]^{f} \ar@{..>}[d]_{f~\widetilde{p(f)^{-1}}} & ~ \ar@{}[drr]|{\textstyle\mapsto}&& \p(E) \ar[dr]^f \ar@{=}[d] 
\\ 
E' \ar[r]_{f} & E && \p(E') \ar[r]_{\p(f)} & \p(E)} \]
 showing $f~\widetilde{\p(f)^{-1}}$ is a lifting of the identity map.  However, an alternate lifting is the identity map so $f~\widetilde{\p(f)^{-1}} = 1_{E'}$, showing $f$ is an isomorphism.  
 This is the same argument for why Cartesian arrows above isomorphisms are isomorphims in an ordinary fibration.
\item A restriction retraction, $r$, is a partial isomorphism whose inverse is a section $m$, thus $rm = \rst{r}$ and $mr = 1$.  
Suppose ${\sf p}(g) = h {\sf p}(r)$ then, by Lemma \ref{Jaws}.iii, $h = {\sf p}(g)  {\sf p}(m) = {\sf p}(gm)$ so that $gm$ is a possible lifting and will necessarily be so provided $gmr = g$ but this is the case as $gmr = g1 = g$.

\item Suppose the restriction retraction $r\colon Y \to X'$ has section $m\colon X' \to Y$, so $rm = \rs{r}$ and $mr = 1_{X'}$.  Suppose that there is a $g\colon Z \to X$ so that $h\p(f') = \p(g)$ is precise.  Then we claim that there is a lifting $\tilde{h}$ over $h\p(m)$ via the prone-ness of $f$:
	\[ \xymatrix{Z \ar[dr]^g \ar@{..>}[d]_{\widetilde{h} } & ~ \ar@{}[drr]|{\textstyle~~\mapsto}&&&\p(Z) \ar[dr]^{\p(g)} \ar[d]_{h \p(m)}
\\ 
Y \ar[r]_{f} & X &&& \p(Y) \ar[r]_{\p(f)} & \p(X)} \]
For this, we need to check that the base triangle is precise.  (It obviously commutes by definition of $f$).  For preciseness, note that since $h\p(f') = \p(g)$ is precise,
	\[ h = h \rs{\p(f')} = h \rs{\p(1_{X'}f')} = h \rs{\p(1_{X'})\p(f')} \leq h \rs{\p(1_{X'})}, \]
but we always have the opposite inequality, so $h  = h \rs{\p(1_{X'})}$.  Thus the base triangle above is precise since
	\[ \rs{h\p(m)} = \rs{h\p(\rs{m})} = \rs{h\p(1_{X'})} = \rs{h} = \rs{\p(g)}. \]
	
We now claim that $\tilde{h}r\colon Z \to X'$ is the unique lift for $h$, demonstrating the prone-ness of $f'$.  The triangle commutes since $ \tilde{h}rf' = \tilde{h}f = g$, is precise since
	\[ \tilde{h} r\rs{f'} = \rs{\tilde{h}rf'} \tilde{h}r = \rs{\tilde{h}f}\tilde{h}r = \rs{\tilde{h}}\tilde{h}r = \tilde{h}r, \]
and is over $h$ since 	
	\[ \p(\tilde{h}r) = h\p(m)\p(r) = h \p(mr) = h\p(1_{X'}) = h \]
with the final equality proven above.  

Finally, suppose we have some other $k\colon Z \to X'$ so that $kf' = g, \rs{k} = \rs{g}$, and $\p(k) = h$.  Then $km$ is a precise lifting of $h\p(m)$ as
	\[ \p(km) = \p(k)\p(m) = h\p(m), \ \ kmf = kmrf' = kf' = g, \]
and
	\[ km\rs{f} = \rs{kmf}km = \rs{g}km = \rs{k}km = km. \]
Thus $km = \tilde{h}$, so $kmr = \tilde{h}r$, so $k = \tilde{h}r$.  Thus $f'$ is indeed $\p$-prone.  
\item If $f$ is a restriction retraction then $\p(f)$ is certainly a restriction retraction since $\p$ is a restriction functor.   For the converse, suppose $f\colon Y \to X$ is $\p$-prone with $\p(f)$ a restriction retraction, so that there is an $m\colon \p(X) \to \p(Y)$ with $m\p(f) = 1_{\p(X)}$ and $\rst{\p(f)} = \p(f)m$.  Then $f$ is a retraction with $\widetilde{m}$ its section:
\[ \xymatrix{X \ar@{=}[dr] \ar@{..>}[d]_{\widetilde{m}} & ~ \ar@{}[drr]|{\textstyle\mapsto}&&\p(X) \ar@{=}[dr] \ar[d]_{m} & ~
\\ 
Y \ar[r]_f & X && \p(Y) \ar[r]_{\p(f)} & \p(X)} \] 
so this means, by splitting $\rst{f}$, that $f$ can be factorized into $rf'$ where $r\colon Y \to Y'$ is the coequalizer of $\rst{f}$ and $1_Y$ and lies over the equalizer of $\p(f)$ and $1_{\p(X)}$ which means $f'$ is prone over an isomorphism.  However, $f'$ is also total as using the fact that $r$ is epic we have $r\rst{f'} = \rst{rf'}r =\rst{f}r =r = r1_{Y'}$.  It follows that $\widetilde{\p(f)^{-1}}f' = 1_{X}$ and so $f'$ is an isomorphism from part (i).    This means, in turn, that $f=rf'$ is a restriction retraction.
\end{enumerate}
\end{proof}

In the case when $\p$ is a restriction functor and all restriction idempotents in $\E$ split, Cartesian arrows give prone arrows:
\begin{lemma}\label{lemma:cart_are_prone}
Suppose that $\p\colon \E \to \B$ is a restriction functor, and all restriction idempotents in $\E$ split.  If $f\colon X \to Y$ a total map in $\E$ which is Cartesian for the functor ${\sf Total}(\p)\colon {\sf Total}(\E) \to {\sf Total}(\B)$, then $f$ is $\p$-prone.
\end{lemma}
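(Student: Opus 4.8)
The plan is to reduce the partial lifting problem defining prone-ness to the total lifting problem solved by the hypothesis that $f$ is Cartesian, using the splitting of an idempotent in $\E$ to pass back and forth between the two. First I would observe that, because $f$ is total and $\p$ is a restriction functor, $\p(f)$ is total, so $\rst{\p(f)} = 1$; hence by Lemma \ref{Jaws}(i) every commuting triangle with right factor $\p(f)$ is automatically precise, and similarly every commuting triangle with right factor $f$ is precise. Consequently, to verify that $f\colon X \to Y$ is $\p$-prone in the sense of Definition \ref{newdefn}, it suffices to show: for every $g\colon Z \to Y$ in $\E$ and every $h\colon \p(Z) \to \p(X)$ in $\B$ with $h\,\p(f) = \p(g)$ a commuting triangle, there is a unique $\widetilde{h}\colon Z \to X$ with $\p(\widetilde{h}) = h$ and $\widetilde{h}f = g$.

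Next I would set $e := \rst{g}$ and split it in $\E$ (which is possible by hypothesis) as $r\colon Z \to Z_0$ and $m\colon Z_0 \to Z$ with $rm = e$ and $mr = 1_{Z_0}$, so that $m$ is total and $r$ is a restriction retraction with partial inverse $m$. The key device is that precomposition by $m$ \emph{totalizes} the data. Using $me = m$ together with [R.4], the map $mg\colon Z_0 \to Y$ is total. Likewise, since the triangle $h\,\p(f) = \p(g)$ is (automatically) precise we get $\rst{h} = \rst{\p(g)} = \p(e)$, whence $\p(m)\rst{h} = \p(m)\p(e) = \p(m)$ and the same argument shows $\p(m)h$ is total. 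These assemble into a commuting triangle of total maps in ${\sf Total}(\B)$, namely $(\p(m)h)\,\p(f) = \p(m)\p(g) = \p(mg)$.

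I would then apply the Cartesian property of $f$ for ${\sf Total}(\p)$ to the total map $mg\colon Z_0 \to Y$ together with the total map $\p(m)h$, obtaining a unique total $k\colon Z_0 \to X$ with $\p(k) = \p(m)h$ and $kf = mg$. Setting $\widetilde{h} := rk$, short computations confirm the required properties: $\p(\widetilde{h}) = \p(r)\p(m)h = \p(rm)h = \p(e)h = \rst{h}h = h$, and $\widetilde{h}f = r(mg) = (rm)g = eg = g$, with this last triangle precise since $\rst{rk} = \rst{r} = e = \rst{g}$.

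Finally, for uniqueness I would take any $\widetilde{h}'$ solving the same problem; preciseness forces $\rst{\widetilde{h}'} = e$, so $m\widetilde{h}'$ is again total, satisfies $\p(m\widetilde{h}') = \p(m)h$ and $(m\widetilde{h}')f = mg$, and the uniqueness clause of the Cartesian property yields $m\widetilde{h}' = k$; then $\widetilde{h}' = e\widetilde{h}' = rm\widetilde{h}' = rk = \widetilde{h}$. The only real subtlety---and the step I expect to require the most care---is the bookkeeping showing that $mg$ and $\p(m)h$ are total and that $k$ descends correctly along $r$, i.e.\ that precomposition by the splitting maps faithfully exchanges the partial lifting problem in $\E$ with the total lifting problem in ${\sf Total}(\E)$. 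Everything else is a routine application of the restriction identities [R.1]--[R.4] and Lemma \ref{Jaws}.
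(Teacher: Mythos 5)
Your proof is correct and follows essentially the same route as the paper's: split $\rst{g}$, precompose with the section $m$ to turn the partial lifting problem into a total one solved by the Cartesian property of $f$ in ${\sf Total}(\E)$, descend along the retraction $r$ to produce the lift, and obtain uniqueness by totalizing any competing lift via $m$. The only cosmetic difference is your preliminary observation that preciseness is automatic because $f$ and $\p(f)$ are total, which the paper handles implicitly in its verifications.
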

\begin{proof}
Suppose we have
\[ \xymatrix{Z  \ar[dr]^{g} &~\ar@{}[drr]|{\textstyle\mapsto}&&  \p(Z) \ar[d]_h \ar[dr]^{\p(g)}  & ~
\\ 
X  \ar[r]_{f} & Y && \p(X) \ar[r]_{\p(f)} & \p(Y)} \]
with the triangle in $\B$ precise, so $\rs{h} = \rs{\p(g)}$.  Let $m\colon Z' \to Z$, $r\colon Z \to Z'$ be a splitting of $\rs{g}$, so $rm = \rs{g}$ and $mr = 1$.  Then in particular $mg\colon Z' \to Y$ is total.  Moreover, the pair $(\p(m),\p(r))$ gives a splitting of $\rs{\p(g)}$ and hence of $h$, so $\p(m)h$ is also total.  Thus, we have diagrams in ${\sf Total}(\E)$ and ${\sf Total}(\B)$, and hence get a unique total $h'\colon Z' \to X$:
\[ \xymatrix{Z'  \ar@{..>}[d]_{h'} \ar[dr]^{mg} & ~ \ar@{}[drr]|{\textstyle~~\mapsto}&&&\p(Z) \ar[d]_{\p(m)h} \ar[dr]^{\p(m)\p(g)}  & ~ 
\\ 
X  \ar[r]_{f} & Y &&& \p(X) \ar[r]_{\p(f)} & \p(Y)} \]
We now claim that the composite 
	\[ Z \to^{r} Z' \to^{h'} X \]
gives the required unique arrow in the first triangle.  Indeed, we have
	\[ rh'f = rmg = \rs{g}g = g, \]
and since $h'$ is total,
	\[ \rs{rh'} = \rs{r} = \rs{g}, \]
and
	\[ \p(rh') = \p(r)\p(m)h = \p(rm)h = \p(\rs{g})h = \rs{\p(g)}h = \rs{h}h = h, \]
so $rh'$ has all the required properties.  Moreover, if there is some other $k\colon Z \to X$ with these properties, then $mk$ is total (as $\rs{mk} = \rs{m\rs{g}} = \rs{mg} = 1$) and has the same other properties as $h'$, so $h' = mk$.  Thus, $rh' = rmk = \rs{g}k = \rs{k}k  = k$, so $rh'$ is unique.  
\end{proof}

It follows from the general theory of fibrations of 2-categories that latent fibrations behave well with respect to composition and pullback.  However, it is still helpful to see how this behaviour works out concretely. 

\begin{lemma}\label{lemma:prone_with_comp_functors}
  If $\q\colon \F \to \E$ and $\p\colon \E \to \B$ are restriction semifunctors, $f'$ is $\p$-prone over $f$, and $f''$ is  $\q$-prone over $f'$, then 
  $f''$ is $\q\p$-prone over $f$.  
\end{lemma}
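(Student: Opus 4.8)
The plan is to lift in two stages, first through $\p$ and then through $\q$, mirroring the standard proof that Cartesian arrows compose while keeping careful track of the preciseness conditions at each stage. Write $f''\colon Z'' \to Z$ in $\F$, so that $f' = \q(f'')\colon \q(Z'') \to \q(Z)$ and $f = \p(f') = (\q\p)(f'')$. To test $\q\p$-proneness of $f''$, suppose we are given $g\colon W \to Z$ in $\F$ together with $h\colon (\q\p)(W) \to (\q\p)(Z'')$ in $\B$ such that $h\,(\q\p)(f'') = (\q\p)(g)$ is precise; that is, $h\,\p(f') = \p(\q(g))$ is a precise triangle in $\B$.

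First I would apply the $\p$-proneness of $f'$ to the pair $\q(g)\colon \q(W) \to \q(Z)$ in $\E$ and $h\colon \p\q(W) \to \p\q(Z'')$ in $\B$. Since $\p(\q(g)) = h\,\p(f')$ is precise, this yields a unique $\widehat{h}\colon \q(W) \to \q(Z'')$ in $\E$ with $\p(\widehat{h}) = h$ and $\widehat{h}f' = \q(g)$ a precise triangle. Next I would feed this intermediate lift into the $\q$-proneness of $f''$: since $\q(f'') = f'$, the triangle $\widehat{h}\,\q(f'') = \q(g)$ is exactly the precise triangle just produced, so there is a unique $\widetilde{h}\colon W \to Z''$ in $\F$ with $\q(\widetilde{h}) = \widehat{h}$ and $\widetilde{h}f'' = g$ precise. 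This $\widetilde{h}$ is the required lift: on the one hand $(\q\p)(\widetilde{h}) = \p(\q(\widetilde{h})) = \p(\widehat{h}) = h$, and on the other $\widetilde{h}f'' = g$ is precise by construction.

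For uniqueness I would peel the two stages off in reverse. The key glue fact, which I would record at the outset, is that a restriction semifunctor preserves preciseness: a triangle $k\ell = m$ with left factor $k$ is precise exactly when $\rst{k} = \rst{m}$ (Definition \ref{defn:precise}), and $\q$ preserves both composition and restriction, so $\q$ carries such a triangle to the precise triangle $\q(k)\,\q(\ell) = \q(m)$. Now if $k\colon W \to Z''$ is any lift with $(\q\p)(k) = h$ and $kf'' = g$ precise, then applying $\q$ gives that $\q(k)\,f' = \q(g)$ is precise with $\p(\q(k)) = h$; uniqueness in the first stage forces $\q(k) = \widehat{h}$, and then uniqueness in the second stage (with $kf'' = g$ precise and $\q(k) = \widehat{h}$) forces $k = \widetilde{h}$.

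The argument is essentially bookkeeping, and the only point requiring care, and hence the main obstacle, is the propagation of preciseness between the two stages: one must check that the intermediate triangle $\widehat{h}f' = \q(g)$ remains precise so that $f''$'s proneness applies, and, in the uniqueness direction, that a competing lift's image under $\q$ is again a genuine precise lift in $\E$. Both rest on the single observation that restriction semifunctors preserve precise triangles, which is what makes the two proneness hypotheses chain together cleanly.
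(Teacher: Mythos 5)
Your proof is correct and follows the same route as the paper: the paper's argument is exactly the two-stage lifting (first through the $\p$-proneness of $f'$, then through the $\q$-proneness of $f''$), presented there only as a diagram with the details left implicit. Your explicit verification that restriction semifunctors preserve precise triangles, and the resulting peeling-off argument for uniqueness, is precisely the bookkeeping the paper's ``straightforward two step lifting'' glosses over.
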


\begin{proof}
This is given by a straightforward two step lifting:
\[ \xymatrix{ F \ar[dr]^{g} \ar@{..>}[d]_{\widetilde{\widetilde{h}}} & ~\ar@{}[drr]|{\textstyle\mapsto}&& \q(F) \ar[dr]^{\q(g)} \ar[d]_{\widetilde{h}} & ~ \ar@{}[drr]|{\textstyle\mapsto} && \p(\q(F)) \ar[dr]^{\p(\q(g))} \ar[d]_{h}
\\ 
B \ar[r]_{f''} & F && \q(B) \ar[r]_{f'} & \q(F') && \p(\q(B)) \ar[r]_f & \p(\q(F'))} \]
\end{proof}

\begin{corollary}
Latent fibrations are closed under composition.
\end{corollary}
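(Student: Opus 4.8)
The plan is to prove the composite of two latent fibrations directly, using the two-step lifting of Lemma \ref{lemma:prone_with_comp_functors} to supply the prone arrows. Write the two given latent fibrations as $\q\colon \F \to \E$ and $\p\colon \E \to \B$, so the composite is the restriction semifunctor $\q\p\colon \F \to \B$ (composites of restriction semifunctors are again restriction semifunctors). Fix $X \in \F$ and a map $f\colon A \to \q\p(X) = \p(\q(X))$ in $\B$ satisfying the side condition $f = f\,\q\p(1_X) = f\,\p(\q(1_X))$ of Definition \ref{newdefn}; the goal is to produce a $\q\p$-prone arrow over $f$.

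First I would translate this side condition into the one needed to invoke $\p$. Since $\q$ is a restriction semifunctor, $\q(1_X)$ is a restriction idempotent on $\q(X)$, so $\q(1_X)\,1_{\q(X)} = \q(1_X)$, whence $f\,\p(1_{\q(X)}) = f\,\p(\q(1_X))\,\p(1_{\q(X)}) = f\,\p(\q(1_X)\,1_{\q(X)}) = f\,\p(\q(1_X)) = f$. Thus $f$ meets the hypothesis of Definition \ref{newdefn} for $\p$ at the object $\q(X)$, and $\p$ being a latent fibration yields a $\p$-prone arrow $f'\colon E' \to \q(X)$ with $\p(f') = f$.

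Next I would lift $f'$ through $\q$ to $X$, and here lies the main subtlety. To apply the defining property of $\q$ at $X$, the chosen arrow into $\q(X)$ must satisfy $g = g\,\q(1_X)$, and the $\p$-prone arrow $f'$ need not do so when $\q$ is a genuine semifunctor (indeed $\rs{f'}$ and $\rs{f'\q(1_X)}$ can be distinct restriction idempotents over the common base idempotent $\rs{f}$). I would therefore pass to $g := f'\q(1_X)$, which does satisfy $g = g\,\q(1_X)$ and still has $\p(g) = \p(f')\,\p(\q(1_X)) = f\,\p(\q(1_X)) = f$; applying $\q$ gives a $\q$-prone arrow $g''\colon F'' \to X$ with $\q(g'') = g$ and $\q\p(g'') = f$. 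When $\p$ and $\q$ are genuine restriction functors one has $\q(1_X) = 1_{\q(X)}$, so $g = f'$ is already $\p$-prone and Lemma \ref{lemma:prone_with_comp_functors} immediately makes $g''$ the required $\q\p$-prone arrow. In the semifunctor case, however, $g = f'\q(1_X)$ need not be $\p$-prone, so Lemma \ref{lemma:prone_with_comp_functors} does not apply verbatim; I expect this to be the hard part, and it forces a direct verification that $g''$ is $\q\p$-prone.

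That verification re-runs the two liftings of Lemma \ref{lemma:prone_with_comp_functors} but keeps the genuinely prone $f'$ in the $\p$-step. Given $G\colon W \to X$ and $H$ over $f$ with $Hf = \q\p(G)$ precise, I first lift along $f'$ to the unique $\widetilde H$ with $\p(\widetilde H) = H$ and $\widetilde H f' = \q(G)$ precise; the spurious idempotent is then absorbed, since $\widetilde H\,\q(g'') = \widetilde H f'\,\q(1_X) = \q(G)\,\q(1_X) = \q(G\,1_X) = \q(G)$ remains precise, so the $\q$-prone-ness of $g''$ yields $\tilde H$ with $\q\p(\tilde H) = H$ and $\tilde H g'' = G$ precise. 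For uniqueness I would take another candidate $\tilde H_1$, set $K := \q(\tilde H_1)$, and use $\q(G) = K f'\,\q(1_X) \leq K f'$ together with $\rs{K} = \rs{\q(G)}$ to force $K f' = \q(G)$; then $K = \widetilde H$ by $\p$-prone uniqueness and $\tilde H_1 = \tilde H$ by $\q$-prone uniqueness. This exhibits $g''$ as $\q\p$-prone over $f$, so $\q\p$ is a latent fibration.
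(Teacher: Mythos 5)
Your proof is correct, and it is in fact more careful than the paper's own treatment: the paper offers no proof of this corollary at all, presenting it as immediate from the two-step lifting of Lemma \ref{lemma:prone_with_comp_functors} (and remarking earlier that stability under composition also follows from the general theory of fibrations in a 2-category, via Appendix \ref{Appendix-B}). The subtlety you isolate is genuine, and the implicit ``lift twice, then apply the lemma'' argument silently skips it: the second lift requires feeding a map into Definition \ref{newdefn}.ii for $\q$ at $X$, hence the side condition $f' = f'\q(1_X)$, and this can fail for \emph{every} $\p$-prone lift $f'$ of $f$. Concretely, take $\q = U\colon \split(\X \x \Y) \to \X \x \Y$ (Proposition \ref{prop:splitting_example}) and $\p$ the projection to $\X$; at an object $((B,e_B),(Y_0,e_{Y_0}))$, one checks via Lemma \ref{mediating_maps} that the $\p$-prone lifts of $f$ at $(B,Y_0)$ are exactly the pairs $(f,\alpha)$ with $\alpha$ a restriction retraction, and the equation $(f,\alpha)(e_B,e_{Y_0}) = (f,\alpha)$ forces $e_{Y_0} = 1_{Y_0}$; moreover your replacement $g = (f,\alpha e_{Y_0})$ is then genuinely not $\p$-prone, so Lemma \ref{lemma:prone_with_comp_functors} really cannot be cited and your direct verification is unavoidable. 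That verification is sound: for existence you keep the honestly prone $f'$ in the $\p$-step and absorb the idempotent via $\q(G)\q(1_X) = \q(G 1_X) = \q(G)$, with preciseness inherited from $\rs{\widetilde{H}} = \rs{\q(G)}$; for uniqueness you correctly recover $Kf' = \q(G)$ from $\q(G) = Kf'\q(1_X) \leq Kf'$ together with $\rs{K} = \rs{\q(G)}$ (so $Kf' = \rs{K}Kf' = \rs{\q(G)}Kf' = \q(G)$), and then invoke the uniqueness clauses of $f'$ and of $g''$ in turn. In short, your route is the paper's intended one, but with a repair that is actually necessary whenever $\p$ or $\q$ is a genuine semifunctor; what your version buys is a proof that covers precisely the examples (such as the splitting fibration) for which the semifunctor generality was introduced in the first place.
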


Unfortunately, it is not the case that splitting the idempotents of a latent fibration $\p\colon \E \to \B$, to obtain $\split(\p)\colon \split(\E) \to \split(\B)$, will yield in general a latent fibration.  However, it is the case that splitting the idempotents of the total category will yield a latent fibration over the same base, $U \p\colon\split(\E) \to \B$, as we may precompose with the latent fibration $U\colon \split(\E)  \to \E$ of subsection \ref{splitting}.  Shortly we shall see that $\split(\p)$ is a latent fibration when $\p$  is an \emph{admissible} latent fibration (see Proposition \ref{splitting-latent-fibration}).  

Pullbacks in the category of restriction categories and restriction functors are described in \cite{journal:rcats1}, and work similarly for restriction semifunctors:
\begin{definition}
If $p\colon \E \to \B$ and $F\colon \X \to \B$ are restriction semifunctors, then their pullback is
\[\xymatrix{
\W \ar[r]^{p_1} \ar[d]_{p_0} & \E \ar[d]^{\p} \\
\X \ar[r]_F &  \B }\]
in which the category $\W$ is defined by
\begin{description}
\item {\bf objects} are pairs $(X,E)$ where $X$ and $E$ are objects of $\X$ and $\E$ respectively which satisfy $F(X) = {\sf p}(E)$.
\item {\bf maps} of type $(X,E) \to (X',E')$ are pairs $(f,g)$ where $f$ and $g$ are maps of $\X$ and $\E$ respectively which satisfy $F(f) = \p(g)$.
\item {\bf composition} and {\bf identities} are defined pointwise.
\item {\bf restriction} is given by $\rst{(f,g)} = (\rst{f},\rst{g})$, which is well defined since, if $F(f) = \p(g)$, then $F(\rst{f}) = \rst{F(f)} = \rst{\p(g)} = \p(\rst{g})$.
\end{description}
and the pullback maps $p_0,p_1$ are the first and second projections. 
\end{definition}

\begin{lemma}\label{lemma:pullback_prone}
If $p\colon \E \to \B$ and $F\colon \X \to \B$ are restriction semifunctors and $W$ is their pullback as defined above, then $(f,g)\colon (X,E) \to (X',E')$ in $\W$ is $\p_0$-prone if and only if $g$ is $\p$-prone.
\end{lemma}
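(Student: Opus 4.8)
The statement is an equivalence whose two directions are of very different character, and I would organise the proof around two simple observations about $\W$: any map of $\W$ lying over $f = p_0(f,g)$ must have the form $(f,k)$ with $k\colon E \to E'$ satisfying ${\sf p}(k) = F(f)$, and, since $\rst{(f,k)} = (\rst{f},\rst{k})$, a triangle built from such maps is precise in $\W$ exactly when both its $\X$-component and its $\E$-component are precise. The easy direction is ``$g$ ${\sf p}$-prone $\Rightarrow (f,g)$ $p_0$-prone'', which I would prove by directly translating lifting problems across $F$; the subtle direction is the converse, which I would obtain by a mediating-map argument.

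For ``$g$ ${\sf p}$-prone $\Rightarrow (f,g)$ $p_0$-prone'' I start from an arbitrary $p_0$-lifting problem: a map $(y,u)\colon (Y,W) \to (X',E')$ of $\W$ together with $h\colon Y \to X$ in $\X$ making $hf = y$ a precise triangle. As $(Y,W) \in \W$ gives $F(Y) = {\sf p}(W)$, the map $h' := F(h)\colon F(Y) \to F(X) = {\sf p}(E)$ is a base map for $g$, and the triangle $h'{\sf p}(g) = {\sf p}(u)$ both commutes, since $h'{\sf p}(g) = F(h)F(f) = F(hf) = F(y) = {\sf p}(u)$, and is precise, since $F$ preserves restrictions and $hf = y$ is precise: $\rst{h'} = F(\rst{h}) = F(\rst{y}) = \rst{{\sf p}(u)}$. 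Now ${\sf p}$-proneness of $g$ produces a unique $k\colon W \to E$ over $h'$ with $kg = u$ precise, and $(h,k)$ is the required lift: it is a map of $\W$ because ${\sf p}(k) = h' = F(h)$, it satisfies $(h,k)(f,g) = (hf,kg) = (y,u)$, this triangle is precise componentwise, and its uniqueness follows from that of $k$.

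For the converse I would first record the obstruction that drives the whole argument: a $p_0$-lifting problem can only present test objects that actually lie in $\W$, whereas a ${\sf p}$-lifting problem for $g$ may involve objects of $\E$ with no partner in $\X$, so an arbitrary test for $g$ cannot simply be transported into $\W$. To circumvent this I would choose a ${\sf p}$-prone lift $\bar{g}\colon \bar{E} \to E'$ of ${\sf p}(g) = F(f)$ at $E'$; the side condition for such a lift, $F(f) = F(f){\sf p}(1_{E'})$, holds because ${\sf p}(g) = {\sf p}(g1_{E'}) = {\sf p}(g){\sf p}(1_{E'})$. Since ${\sf p}(\bar{g}) = F(f)$ we get ${\sf p}(\bar{E}) = {\sf p}(E) = F(X)$, so $(X,\bar{E}) \in \W$ and $(f,\bar{g})\colon (X,\bar{E}) \to (X',E')$ is a map of $\W$; by the direction just proved it is $p_0$-prone. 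Thus $(f,g)$ and $(f,\bar{g})$ are two $p_0$-prone maps into $(X',E')$ lying over the same $f$, and Lemma \ref{mediating_maps}(i) supplies a mediating partial isomorphism $\alpha\colon (X,E) \to (X,\bar{E})$ in $\W$.

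To finish I would project $\alpha$ along $p_1$. Partial inverses in the pullback $\W$ are computed componentwise and $p_1$ is a restriction semifunctor, so $\phi := p_1(\alpha)$ is a partial isomorphism with $\phi^{(-1)} = p_1(\alpha^{(-1)})$, and the precise mediating triangles $\alpha(f,\bar{g}) = (f,g)$ and $\alpha^{(-1)}(f,g) = (f,\bar{g})$ project to precise triangles $\phi\bar{g} = g$ and $\phi^{(-1)}g = \bar{g}$. Hence $\phi$ is a mediating partial isomorphism between $g$ and $\bar{g}$, and, as $\bar{g}$ is ${\sf p}$-prone, Lemma \ref{mediating_maps}(ii) yields that $g$ is ${\sf p}$-prone. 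I expect this converse to be the main obstacle, and it is worth stressing that it is not purely formal: it genuinely uses the prone lift $\bar{g}$ in $\E$, that is, that ${\sf p}$ is a latent fibration. This is really needed --- for a bare semifunctor with too few prone liftings $\W$ can see only a proper part of $\E$, and then a $p_0$-prone $(f,g)$ need not have $g$ itself ${\sf p}$-prone.
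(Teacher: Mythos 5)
Your forward direction ($g$ being $\p$-prone implies $(f,g)$ is $\p_0$-prone) is exactly the paper's argument: transport the lifting problem along $F$, solve it in $\E$ by proneness of $g$, and pair the resulting lift with $h$; nothing to add there.

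The converse is where you and the paper part ways, and your instinct is the correct one. The paper disposes of it with the single clause ``a similar proof shows the converse,'' but no such translation-of-tests proof can exist: a $\p_0$-lifting problem only involves objects and arrows of $\E$ lying over the image of $F$, so $\p_0$-proneness of $(f,g)$ carries no information about test data invisible to $\W$. In fact the ``only if'' half of the lemma is false as stated. Take $\X$ free on an arrow $f\colon X \to X'$, take $\B$ free on arrows $\phi\colon \bar{X} \to \bar{X}'$ and $\psi\colon C \to \bar{X}$, let $F$ send $f$ to $\phi$, and let $\E$ have objects $E,E',E_3$ over $\bar{X},\bar{X}',C$ with non-identity arrows $g\colon E \to E'$ over $\phi$, $k_1,k_2\colon E_3 \to E$ over $\psi$, and $u\colon E_3 \to E'$ over $\psi\phi$, subject to $k_1g=k_2g=u$, all restrictions trivial. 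Then $\W$ is the category containing just the arrow $(f,g)$: the only lifting problem against $(f,g)$ comes from $(X,E)$ itself and is solved uniquely by the identity, and $\X$ has no map $X' \to X$, so $(f,g)$ is $\p_0$-prone; yet $g$ is not $\p$-prone, since $u$ lifts through it along $\psi$ in two ways. Your repaired converse --- assume a $\p$-prone lift $\bar{g}$ of $F(f)=\p(g)$ at $E'$ exists (e.g.\ $\p$ is a latent fibration), note $(f,\bar{g})$ is $\p_0$-prone by the forward direction, apply Lemma \ref{mediating_maps}.i to the two $\p_0$-prone maps over $f$, project the mediating partial isomorphism along $\p_1$ (partial inverses in $\W$ being componentwise), and finish with Lemma \ref{mediating_maps}.ii --- is correct in every step, and it is the statement that is actually needed downstream: Corollary \ref{prop:pullback_latent} and the closure results for admissible, separated, and hyperconnected latent fibrations invoke only the forward direction, or else operate in the latent-fibration setting where your argument applies. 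So your proposal proves the true version of the lemma, while the paper's own converse claim is overstated.
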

\begin{proof}
Suppose we have 
\[  \xymatrix@C=3em{   (X'',E'') \ar[rd]^-{(f',g')}  & ~\ar@{}[drr]|{\textstyle\mapsto}&& X'' \ar[d]_h \ar[rd]^{f'} & 
\\  
(X,E) \ar[r]_-{(f,g)}& (X',E') &&   X  \ar[r]_f & X'}
\]
then since $g$ is $p$-prone and $F(f) = \p(g), F(f') = \p(g')$, etc., we also have
\[  \xymatrix@C=3em{   E'' \ar[rd]^-{g'}  \ar[d]_{\widetilde{h}} & ~\ar@{}[drr]|{\textstyle\mapsto} && F(X'') \ar[d]_{F(h)} \ar[rd]^{F(f')} & 
\\ E \ar[r]_-{g}& X' && F(X)  \ar[r]_{F(f)} & F(X')}
\]
so $(h, \tilde{h})$ uniquely fills in the first triangle.  Thus, $(f,g)$ is $\p_0$-prone; a similar proof shows the converse.  
\end{proof}

\begin{corollary}\label{prop:pullback_latent}
The pullback of a latent fibration along any restriction semifunctor is a latent fibration.
\end{corollary}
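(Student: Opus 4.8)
The plan is to obtain this directly from Lemma \ref{lemma:pullback_prone}, which has already identified the $p_0$-prone arrows of $\W$ as precisely those pairs $(f,g)$ whose second component $g$ is $\p$-prone. So all that remains is to produce, for each object and each admissible base arrow, a pair of this form lying over the given arrow.

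Concretely, I would fix an object $(X,E)$ of $\W$ (so $F(X) = \p(E)$) and a map $f\colon X' \to X$ in $\X$ satisfying the condition $f = f\,p_0(1_{(X,E)})$ of Definition \ref{newdefn}. Since the identity on $(X,E)$ is the pair $(1_X,1_E)$, the projection $p_0$ preserves it, so this condition is in fact vacuous; the real content is to transfer the data to $\E$. Applying $F$ and using that $F$ preserves composition gives $F(f) = F(f\,1_X) = F(f)\,F(1_X)$, and because $(1_X,1_E)$ is a morphism of $\W$ we have $F(1_X) = \p(1_E)$, whence $F(f) = F(f)\,\p(1_E)$. This is exactly the hypothesis required to invoke the latent fibration $\p$ at the object $E$ along the base arrow $F(f)\colon F(X') \to \p(E)$.

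That invocation yields a $\p$-prone map $g\colon E' \to E$ with $\p(g) = F(f)$. Then $(X',E')$ is a legitimate object of $\W$, as $\p(E')$ is the domain of $\p(g) = F(f)$, namely $F(X')$; and $(f,g)\colon (X',E') \to (X,E)$ is a legitimate morphism, since $F(f) = \p(g)$. As $p_0(f,g) = f$, this pair sits over $f$, and by Lemma \ref{lemma:pullback_prone} it is $p_0$-prone because $g$ is $\p$-prone. Hence every admissible base arrow admits a $p_0$-prone lift and $p_0\colon \W \to \X$ is a latent fibration.

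I do not expect any genuine obstacle here: the result is essentially bookkeeping on top of Lemma \ref{lemma:pullback_prone}. The single point deserving care -- and the only place the semifunctor (rather than functor) generality enters -- is the translation of the condition $f = f\,p_0(1_{(X,E)})$ into $F(f) = F(f)\,\p(1_E)$, which hinges on $F$ preserving composition together with the identity $F(1_X) = \p(1_E)$ built into the definition of morphisms in $\W$.
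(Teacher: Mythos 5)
Your proof is correct and takes essentially the same approach as the paper, which leaves this corollary as an immediate consequence of Lemma \ref{lemma:pullback_prone}: one lifts $F(f)$ to a $\p$-prone arrow $g\colon E' \to E$ in $\E$ using the latent fibration property of $\p$, pairs it with $f$ to get a map $(f,g)$ of $\W$ over $f$, and concludes proneness from the lemma. Your handling of the semifunctor condition $f = f\,p_0(1_{(X,E)})$ -- using that identities in $\W$ are pointwise, so $F(1_X) = \p(1_E)$ and hence $F(f) = F(f)\,\p(1_E)$ -- is exactly the bookkeeping the paper's definition of the pullback presupposes.
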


Recall that in an ordinary fibration, any map in the total category can be factored as a vertical map followed by a Cartesian map, and this factorization is unique up to a unique vertical isomorphism.  A similar result holds for latent fibrations, with vertical replaced by \emph{sub}vertical:

\begin{definition}
If $\p\colon \E \to \B$ is a restriction semifunctor, then a map $v\colon X \to Y$ in $\E$ is said to be \textbf{subvertical} if $\p(v)$ is a restriction idempotent.
\end{definition}

\begin{proposition}\label{prop:factorization}
If $\p\colon \E \to \B$ is a latent fibration, then for any map $f\colon X \to Y$ in $\E$, there is a subvertical map $v\colon X \to X'$ and a prone map $c\colon X' \to Y$ such that
\[ \xymatrix{ X \ar[d]_{v} \ar[dr]^{f} & \\ X' \ar[r]_{c} & Y } \]
is a precise triangle, and, moreover, $\rs{\p(c)} = \p(v)$.   Such a factorization is unique up to unique subvertical partial isomorphism.
\end{proposition}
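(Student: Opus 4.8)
The plan is to imitate the classical vertical--Cartesian factorization, with the Cartesian lift replaced by a prone lift and ``vertical'' by ``subvertical''. For existence, I would start from the observation that $f = f 1_Y$ forces $\p(f) = \p(f)\p(1_Y)$, so the defining property of the latent fibration $\p$ supplies a $\p$-prone arrow $c\colon X' \to Y$ over $\p(f)$; note this gives $\p(X') = \p(X)$ and $\p(c) = \p(f)$. I would then feed the data $g := f\colon X \to Y$ together with $h := \rs{\p(f)}\colon \p(X) \to \p(X')$ into the prone-ness of $c$: the triangle $h\,\p(c) = \rs{\p(f)}\,\p(f) = \p(f) = \p(g)$ is precise by Lemma \ref{Jaws}.iv (its left factor $\rs{\p(f)}$ is a restriction idempotent). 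The resulting unique lift $v := \widetilde{h}\colon X \to X'$ satisfies $\p(v) = \rs{\p(f)}$, so $v$ is subvertical, and $vc = f$ is precise; finally $\rs{\p(c)} = \rs{\p(f)} = \p(v)$, which is the last asserted identity.

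For uniqueness, the key preliminary is that \emph{any} factorization of the required form already has $\p(c) = \p(f)$: indeed $\p(f) = \p(v)\p(c) = \rs{\p(c)}\,\p(c) = \p(c)$, using $\p(v) = \rs{\p(c)}$ and [R.1]. Hence, given two such factorizations $f = vc$ (with $c\colon X' \to Y$) and $f = v'c'$ (with $c'\colon X'' \to Y$), both $c$ and $c'$ are $\p$-prone over the \emph{same} base map $\p(f)$ with common codomain $Y$. I would then invoke Lemma \ref{mediating_maps}.i to obtain the unique mediating partial isomorphism $\alpha\colon X' \to X''$ with $\p(\alpha) = \rs{\p(f)}$, $\alpha c' = c$ precise, and $\rs{\alpha} = \rs{c}$. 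Since $\p(\alpha) = \rs{\p(f)}$ is a restriction idempotent, $\alpha$ is automatically subvertical.

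It then remains to check that $\alpha$ is compatible with the subvertical parts, i.e.\ that $v\alpha = v'$. I would establish this via the prone-ness of $c'$: both $v'$ and $v\alpha$ lie over $\rs{\p(f)} = \p(v')$ (as $\p(v\alpha) = \rs{\p(f)}\,\rs{\p(f)} = \rs{\p(f)}$), and both compose with $c'$ to give $f$. The one nontrivial point is that $v\alpha c' = f$ is precise; this follows since $\rs{v\alpha} = \rs{v\,\rs{\alpha}} = \rs{v\,\rs{c}} = \rs{v} = \rs{f}$, where I use $\rs{\alpha} = \rs{c}$, the identity $v\,\rs{c} = v$ coming from preciseness of $vc = f$ (Lemma \ref{Jaws}.i), and again preciseness of $vc = f$. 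Uniqueness of the prone lift of $\rs{\p(f)}$ along $c'$ then forces $v\alpha = v'$, and the uniqueness of $\alpha$ among subvertical partial isomorphisms comparing the two factorizations is precisely the uniqueness clause of Lemma \ref{mediating_maps}.i.

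I expect the main obstacle to be purely the bookkeeping of the preciseness side conditions---in particular verifying that $v\alpha c' = f$ is precise so that uniqueness of prone lifts applies---together with the conceptual point that the correct base arrow to lift is the restriction idempotent $\rs{\p(f)}$ rather than an identity. Once these are in hand, the argument is a direct assembly of Definition \ref{newdefn} and Lemmas \ref{Jaws} and \ref{mediating_maps}, with no genuinely new computation required.
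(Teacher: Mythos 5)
Your proposal is correct and follows essentially the same route as the paper: existence by taking a prone lift $c$ of $\p(f)$ and then lifting the restriction idempotent $\rs{\p(f)}$ through $c$ to obtain the subvertical part $v$, and uniqueness via the mediating partial isomorphism of Lemma \ref{mediating_maps}.i. In fact your write-up is more detailed than the paper's (which leaves the verification that $\p(c)=\p(f)$ for an arbitrary factorization, and the compatibility $v\alpha = v'$, implicit), and those extra checks are carried out correctly.
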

\begin{proof}
For existence, let $c\colon X' \to Y$ be a prone arrow over $\p(f)\colon \p(X) \to \p(Y)$, and let $v$ be the induced unique map
\[ \xymatrix{X  \ar@{..>}[d]_{v} \ar[dr]^{f} & ~\ar@{}[drr]|{\textstyle\mapsto}&& \p(X) \ar[d]_{\rs{\p(f)}} \ar[dr]^{\p(f)}  & ~
\\ 
X'  \ar[r]_{c} & Y &&  \p(X') \ar[r]_{\p(f)} & \p(Y)} \]
This satisfies all the required conditions; uniqueness follows from using Lemma \ref{mediating_maps}.i.  
\end{proof}


\subsection{Substitution semifunctors}

Analogous to the fiber of an object in a fibration, is the notion of a \emph{strand} of an object in a latent fibration.  

\begin{definition}
Given a restriction functor $\p\colon \E \to \B$, the {\bf strand} of $B \in \B$, written ${\sf p}^{(-1)}(B)$, is the category whose objects are objects $E$ of $\E$ such that $\p(E) = B$, and whose maps are the $f$ of $\E$ such that $\p(f) \leq 1_B$ (i.e.  $\p(f) \in {\cal O}(B)$); that is, each $f$ is subvertical.  
\end{definition}

A latent fibration may be \emph{cloven} in the same sense as for ordinary fibrations:

\begin{definition}
A latent fibration {\bf has a cleavage} (or is {\bf cloven}) in case there is a chosen prone arrow $f^*_E \colon f^*(E) \to E$ over each $f \colon A \to \p(E)$ for each $E \in \E$.
\end{definition}

We now construct, for a cloven latent fibration, the analogue  of a reindexing or substitution functor.  In general, the substitution functors we obtain 
between the strands of a latent fibration will only be restriction semifunctors.  

Let ${\sf p} \colon \E \to \B$ be a cloven latent fibration, and let $u \colon A \to B$ be a map in $\B$. We define the \emph{substitution semifunctor} $u^* \colon {\sf p}^{(-1)}(B) \to {\sf p}^{(-1)}(A)$, from the strand above $B$ to the strand above $A$ as follows: on arrows $f \colon X \to Y$ in ${\sf p}^{(-1)}(B)$, $u^*(f)$ is the arrow $u^*(X) \to u^*(Y)$ in ${\sf p}^{(-1)}(A)$ given by the lifting 
\[
\text{in $\E$}\hspace{10pt}
\xymatrix{
u^*(X) \ar@{.>}[dd]_{u^*(f)= \widetilde{\rst{u\p(f)}}} \ar[rd]^{u^*_X} \\
& X \ar[rd]^f \\
u^*(Y) \ar[rr]_{u^*_Y} && Y
}
\hspace{30pt}
\text{in $\X$}\hspace{10pt}
\xymatrix{
A \ar[dd]_{\rst{u\p(f)}} \ar[rd]^u && \\
& B \ar[rd]^{{\sf p}(f)} \\
A \ar[rr]_u  && B
}
\]
where $u^*(X)$ is the domain of the prone map above $u$ with codomain $X$.  Furthermore, if $u \leq v$ then there is for each $X \in \p^{(-1)}(B)$ a map 
$(u \leq v)^{*}_X\colon u^{*}(X) \to v^{*}(X)$ given by:
\[ \xymatrix{ u^{*}(X) \ar@{..>}[d]_{(u \leq v)^{*}_X} \ar[dr]^{u^{*}} & ~\ar@{}[drr]|{\textstyle\mapsto}&& \p(u^{*}(X))=A \ar[d]_{\rst{u}} \ar[dr]^-u
\\
 v^{*}(X) \ar[r]_{v^{*}} & X &&  \p(v^{*}(X))=A \ar[r]_-{v} & \p(X)=B} \] 
We then have:

\begin{proposition} \label{pseudo-functor}
If ${\sf p} \colon \E \to \B$ is a latent fibration with a cleavage and $u \colon A \to B$ is a map in $\X$, then $u^* \colon {\sf p}^{(-1)}(B) \to {\sf p}^{(-1)}(A)$ as defined above is a restriction semifunctor.  Furthermore, the assignment $(\_)^*\colon \B^{\rm op} \to {\sf SRest}$ is a pseudo 2-functor where we regard $\B$ as a 2-category whose 2-cells are given by the restriction ordering of maps.
\end{proposition}

Recall that a pseudo 2-functor $P\colon \B^{\rm op} \to {\sf SRest}$ associates to each map (1-cell)  $f\colon A \to B$ in $\B$ a functor (1-cell) $P(f)\colon P(B) \to P(A)$ in ${\sf SRest}$ such that there are (2-cell) natural isomorphisms $\alpha_{f,g}\colon P(g) P(f) \to P(fg)$ and $\alpha_X\colon 1_{P(X)} \to P(1_X)$ satisfying:
\[ \xymatrix@C=3em{P(f) \ar@{=}[dr] \ar[r]^-{P(f) \alpha_Y}  &P(f)P(1_Y) \ar[d]^{\alpha_{f,1_x}} \\ & P(f)} 
~~~~~ \xymatrix@C=3em{P(f) \ar@{=}[dr] \ar[r]^-{\alpha_X P(f)}  &P(1_X)P(f) \ar[d]^{\alpha_{f,1_x}} \\ & P(f)} 
~~~~~ \xymatrix@C=3em{P(h) P(g) P(f) \ar[d]_{P(h) \alpha_{f,g}} \ar[r]^-{\alpha_{g,h} P(f)} & P(gh) P(f) \ar[d]^{\alpha_{f,gh}} \\ P(h) P(fg) \ar[r]_-{\alpha_{h,fg}} & P(fgh)} \]
Similarly if $(f \leq g)\colon f \to g$ is a 2-cell in $\B$ then $P(f \leq g)\colon P(f) \to P(g)$ must be a 2-cell (or transformation) in ${\sf SRest}$.  This assignment must preserve the horizontal (1-cell)
and vertical (2-cell) composition.  Preserving the 2-cell composition is the  requirement that $P(f \leq g)P(g \leq h) = P(f \leq h)$ while preserving the 1-cell composition means
\[ \xymatrix{P(g);P(f) \ar[d]_{P(g\leq h);P(f\leq k)} \ar[r]^-{\alpha_{f,g}} & P(fg) \ar[d]^{P(fg \leq hk)}  \\ P(h);P(k) \ar[r]_-{\alpha_{h,k}} & P(hk)} \]
where the semicolon emphasizes that we are using the horizontal composition in ${\sf SRest}$.

Thus, for $P\colon \B^{\rm op} \to {\sf SRest}$ the inequalities (which are covariant) become transformations between semifunctors whose composites must be preserved.

\medskip
\begin{proof}
We must first show that $u^{*}$ is a semifunctor, that is $u^*(\rst{f}) = \rst{u^*(f)}$ and $u^*(fg) = u^*(f)u^*(g)$. 

First, suppose $f \colon X \to Y$ is a map in ${\sf p}^{(-1)}(B)$. Then $u^*(\rst{f})$ is defined as the lifting of $\rst{u \p(\rst{f})}$.  However, $\rst{u^*_X(\rst{f})}$ also makes 
the triangle precise and $\p( \rst{u^*_X(\rst{f})}) = \rst{\p(u^*_X(\rst{f}))} = \rst{u\p(\rst{f})}$ so $u^*(\rst{f})= \rst{u^*(\rst{f})}$ and, thus $u^*$ preserves the restriction.   

Next, suppose $f \colon X \to Y$ and $g \colon Y \to Z$ are maps of ${\sf p}^{-1}(B)$. Then $u^*(fg)$ is defined as the lifting of $\rst{u\p(fg)} =\rst{u\p(f)\p(g)}$  while $u^*(f)u^*(g)$ is the lifting of 
$\p(u^*(f)u^*(g)) = \p(u^*(f)) \p(u^*(g)) = \rst{u\p(f)}~\rst{u\p(g)} = \rst{\rst{u\p(f)}u \p(g)} = \rst{u\rst{p(f)}\p(g)} = \rst{u\p(f)\p(g)}$ so that they are equal.
Thus $u^*$ preserves composition, and is therefore a restriction semifunctor.

To show that $(\_)^*$ is a pseudo functor we use the fact that composites of prone maps are unique up to a unique mediating partial isomorphism  (combine Lemma \ref{composites-prones} with Lemma \ref{mediating_maps}) and the unit data is provided by the fact that the identity map $1_X$ is prone so there is a mediating map $X \to 1_{\p(X)}^*(X)$.

We must show $\rst{(u\leq v)^{*}_X} = u^*(1_X)$ and that the transformation is natural; that is, $u^*(f) (u\leq v)^{*}_Y = (u\leq v)^{*}_X v^*(f)$ (so $(u \leq v)^*_f$ is the identity).   The first requirement is immediate from the 
preciseness of the triangle defining $(u \leq v)^{*}_X$ as $u^{*}(1_X) = \rst{u^*_X} = \rst{(u \leq v)^{*}_X}$.  Suppose for the second that $f\colon X \to Y$ in the strand over $B$ then 
$\p(u^*(f) (u\leq v)^{*}_Y) = \rst{u\p(f)}~\rst{u} = \rst{u\p(f)}$ and $\p((u\leq v)_X v^*(f)) = \rst{u} ~\rst{vp(f)} = \rst{\rst{u}v\p(f)} = \rst{u\p(f)}$ so both arrows sit above $\rst{u\p(f)} \in \B$.  
As all the components are precise liftings their composites are as well; thus, they are equal.  This shows that the transformation is natural as required.

The assignment is immediately functorial with respect to the restriction preorder as the components of the transformations lift restriction idempotents which compose.

The coherence properties with the associator are similarly straightforward to prove.
\end{proof}

Recall that in an $r$-split latent fibration $\p \colon \E \to \B$, whenever $f$ is total we know that there is a total prone map above it (see Lemma \ref{totals-in-flush-fibration}).   If the cleavage is such that, for every $X$, $f^*_X$ is total whenever $f$ is total,  then substitution semifunctors along total maps become restriction functors.

\begin{proposition}
If ${\sf p} \colon \E \to \B$ is a latent fibration with a cleavage which preserves total maps then $f^* \colon {\sf p}^{(-1)}(B) \to {\sf p}^{(-1)}(A)$ is a restriction functor whenever $f$ is total.
\end{proposition}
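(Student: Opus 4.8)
The plan is to leverage Proposition \ref{pseudo-functor}, which already establishes that $f^*$ is a restriction semifunctor—that is, it preserves composition and restriction. Since the only property separating a restriction functor from a restriction semifunctor is preservation of identities, the entire proof reduces to verifying that $f^*(1_X) = 1_{f^*(X)}$ for each object $X$ in the strand ${\sf p}^{(-1)}(B)$.

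First I would recall, from the computation in the proof of Proposition \ref{pseudo-functor}, the identity $f^*(1_X) = \rst{f^*_X}$, where $f^*_X$ denotes the chosen prone map of the cleavage sitting over $f$ with codomain $X$. To re-derive this cleanly: by definition $f^*(1_X)$ is the unique prone lifting over $\rst{f\,\p(1_X)} = \rst{f}$ (using that $\p$ is a restriction functor, so $\p(1_X) = 1_B$) making the triangle $f^*(1_X)\, f^*_X = f^*_X$ precise. But $\rst{f^*_X}$ is exactly such a lifting: it sits over $\rst{\p(f^*_X)} = \rst{f}$, satisfies $\rst{f^*_X}\, f^*_X = f^*_X$ by [R.1], and this triangle is precise by Lemma \ref{Jaws}.iv. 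Uniqueness of prone liftings then forces $f^*(1_X) = \rst{f^*_X}$.

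With this identity in hand, the hypotheses finish the job immediately. Since $f$ is total and the cleavage is assumed to preserve total maps, the chosen prone arrow $f^*_X$ is total, whence $\rst{f^*_X} = 1_{f^*(X)}$. Combining this with the formula above gives $f^*(1_X) = 1_{f^*(X)}$, so $f^*$ preserves identities and is therefore a restriction functor.

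I expect no substantive obstacle here: the genuine content was already absorbed into Proposition \ref{pseudo-functor}, and the present statement is essentially the observation that totality of the cleavage map collapses the restriction idempotent $\rst{f^*_X}$—the sole obstruction to identity-preservation—to a genuine identity. The one point requiring a moment of care is confirming that the identity of $X$ in the strand coincides with the $\E$-identity $1_X$ (so that $\p(1_X) = 1_B$), which holds because strands are defined relative to a restriction functor $\p$.
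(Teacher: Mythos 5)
Your proposal is correct and follows essentially the same route as the paper: both arguments reduce everything to the key identity $f^*(1_X) = \rst{f^*_X}$ and then use totality of the cleavage map $f^*_X$ (guaranteed by the hypothesis, since $f$ is total) to conclude $f^*(1_X) = 1_{f^*(X)}$. The only inessential difference is that you establish $f^*(1_X) = \rst{f^*_X}$ via the uniqueness clause of the prone lifting, whereas the paper derives it by a short restriction-calculus computation using that $f^*$ preserves restrictions as a semifunctor.
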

\begin{proof}
We have already shown that $f^*$ is a restriction semifunctor. Since the cleavage of ${\sf p}$ preserves total maps, we have that $f^*_X \colon u^*(X) \to X$ is total, when $f$ is. This gives
\begin{align*}
& f^*(1_X) = f^*(\rst{1_X}) = \rst{f^*(1_X)} = \rst{f^*(1_X)\rst{f^*_X}} \\
& = \rst{f^*(1_X)f^*_X} = \rst{f^*_X} = 1_{f^*(X)}
\end{align*}
and so $f^*$ is a restriction functor.
\end{proof}


\section{Types of Latent Fibrations}\label{sec:types}

In the previous section we described some properties of latent fibrations.  However, it is important to note two things that do \emph{not} hold in an arbitrary latent fibration $\p\colon \E \to \B$.  Both involve the behaviour of restriction idempotents.  

The first is that a restriction idempotent in the base need not lift as one might expect: that is, given a restriction idempotent $e\colon A \to A$ in $\B$ and an object $X$ over $A$ in $\E$, there need not be a prone restriction idempotent $e'\colon X \to X$ over $e$.   For an example of this, consider the latent fibration of propositions of a restriction category $\X$ (Example \ref{def:propositions}).  Suppose $e\colon A \to A$ is a restriction idempotent in $\X$, and $(A,e')$ is an object in $\O(\X)$ over $A$.  Since the projection in this case simply sends a map to itself, a restriction idempotent over $e$ must be $e$ itself; thus, we would need $e$ to be a map from $(X,e')$ to $(X,e')$ in $\O(\X)$.  But this would mean that $e = e'e$, which requires $e \leq e'$.  Thus, for any restriction idempotent $e'$ which is not $\geq$ e, this is not possible.  Of course, being a latent fibration, there is \emph{a} lift of $e$: as per Proposition \ref{prop:propositions}, it is the map
	\[ (X,ee') \to^{e} (X,e'), \]
it is just that this map is not a restriction idempotent in $\O(\X)$ (in particular, it is not an endomorphism!)  

Thus, in this case, there isn't even a restriction idempotent over $e$ (let alone a prone restriction idempotent).  As we shall see, however, the ability to lift restriction idempotents to prone restriction idempotents is useful, and is true for most latent fibrations.  Thus,  in the next section we consider such latent fibrations; we term these \emph{admissible}. 

The second issue is the behaviour of restriction idempotents in the total category of a latent fibration.   In particular, while identities are always prone, restriction idempotents (i.e., ``partial identities'') need not be.  To see the problem in general, suppose $e\colon X \to X$ is a restriction idempotent in $\E$, and suppose we have $g\colon Y \to X$ in $\E$ and $h\colon B \to A$ in $\B$ such that $h\p(e) = \p(g)$: 
\[ \xymatrix{Y \ar@{..>}[d] \ar[dr]^{g} & ~\ar@{}[drr]|{\textstyle\mapsto}&&A \ar[dr]^{\p(g)} \ar[d]_h \\ X \ar[r]_{e} & X &&  A \ar[r]_{\p(e)} & A} \]
If $e$ were the identity, we could obviously choose $g$ as the unique lift.  However, if $e$ is partial, there is no obvious lift that will make the triangle in $\E$ commute.

However, note that we do have some control given that the commutativity of the bottom triangle tells us something about how defined $g$ is relative to $e$.  Thus, if the restriction idempotents in $\E$ were closely related to those in $\B$, then we could hope to have a lift.  In fact, this is the case in the ``strict'' versions of the simple and codomain fibrations.  In these examples, one can check that restriction idempotents are prone (while they are not generally in the ``lax'' versions).  

Thus, restriction idempotents being prone is clearly an important condition that is true in some latent fibrations but not all.  We shall see that this condition is equivalent to $\p$ being monic on restriction idempotents, or \emph{separated}.  Thus, in Section \ref{sec:separated} we consider the theory and examples of separated latent fibrations.

Many examples (including the motivating example of the strict simple fibration) are both admissible and separated.   The combination of these conditions has some very useful consequences (for example, see Proposition \ref{prop:hyper_consequences}).  Moreover, we will see that the combination of the admissible and separated conditions is equivalent to the restriction semifunctor $\p$ being \emph{hyperconnected}; that is, $\p$ is a bijection on restriction idempotents.   

Thus, in the next few sections, we consider the theory and examples of admissible, separated, and hyperconnected latent fibrations.  


\subsection{Admissible latent fibrations}\label{sec:admissible}

\begin{definition} A restriction semifunctor ${\sf p} \colon \E \to \B$ is {\bf admissible} if for every $X \in \E$ and every restriction idempotent, $e$ on ${\sf p}(X)$ in the base such that $e\p(1_X) = e$,  there is a prone restriction idempotent $e^{*}$ on $X$ over $e$, that is with ${\sf p}(e^{*}) = e$.
\end{definition}

\begin{example} Most latent fibrations are admissible:
\begin{enumerate}[(i)]
\item The identity $1_\X\colon \X\to\X$ is admissible.
	\item The lax and strict simple fibrations are admissible, as the prone lifting given in Proposition \ref{prop:simpleSlice} is a restriction idempotent.
	\item The lax and strict codomain fibrations are admissable; for proof, see the result below.  
	\item The product latent fibration is admissible; the prone lifting $(1,e)$ of a restriction idempotent $e$ is again a restriction idempotent.  
	\item The assemblies fibration is admissible; given a restriction idempotent $e\colon X \to X$ and an assembly $\phi$ over $X$, it is easy to check that $e$ (with identity tracking map) is prone in the assemblies category.   
	\item The splitting fibration $\split(\X) \to \X$ is admissible.  If we are  given an object $(X,e')$ in $\split(\X)$ and a restriction idempotent $e\colon X \to X$ such that $e = e\p(1_{(X,e')}) = ee'$, then
		\[ (X,e') \to^{e} (X,e') \]
	is a well-defined map in $\split(\X)$, and it is straightforward to check that it is prone (since this fibration is separated, this also follows from Proposition \ref{prop:separated_equivalences}).  
	\item As noted in the introduction to this section, the latent fibration of propositions is \textbf{not} generally admissible.  Nor is a discrete fibration in general admissible: the prone arrow above a restriction idempotent is not necessarily an endomorphism unless the point is fixed by the function induced by the idempotent.
\end{enumerate}
\end{example}

\begin{proposition}
For any restriction category $\X$ with latent pullbacks, the latent fibrations $\X^{\leadsto}$ and $\X^{\rightarrow}$ are admissible. 
\end{proposition}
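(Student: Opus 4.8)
The plan is to exhibit, for each object $a\colon A'\to A$ of $\X^{\leadsto}$ (resp.\ $\X^{\rightarrow}$) and each restriction idempotent $e\colon A\to A$, the explicit candidate $e^{*} := (e,\rst{ae})\colon a\to a$ as the desired prone restriction idempotent over $e$. (Since $\partial$ is a restriction functor, the side condition $e\,\partial(1_a)=e$ is vacuous.) First I would check that $e^{*}$ is a legitimate restriction-idempotent endomorphism of $a$. Using [R.4] one gets $\rst{ae}\,a = a\rst{e} = ae$, and $\rst{ae}\,\rst{a}=\rst{ae}$, so the defining square of $(e,\rst{ae})$ is semi-precise and in fact commutes (the latter identity shows it lives in $\X^{\rightarrow}$ as well). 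Both components being restriction idempotents, $e^{*}$ is itself a restriction idempotent, and clearly $\partial(e^{*})=e$.

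The substance is proving $e^{*}$ prone, which I would do directly. Suppose $(g_0,g')\colon b\to a$ and $h\colon B\to A$ satisfy that $he=g_0$ is a precise triangle in $\X$. Because the right factor $e$ is a restriction idempotent, Lemma \ref{Jaws}.ii forces $h=g_0$, and hence $g_0 e = g_0$. The lift is then the map $g=(g_0,g')$ itself: one computes $g\,e^{*}=(g_0 e,\,g'\rst{ae})=(g_0,g')$, where the only nonobvious point is the identity $g'\rst{ae}=g'$. This is the crux, and it follows from the hypotheses: semi-preciseness $g'\rst{a}=g'$ gives $\rst{g'a}=\rst{g'}$ (via [R.4]), while $g'a\le bg_0$ together with $g_0 e = g_0$ gives $g'ae=g'a$; combining, $g'\rst{ae}=\rst{g'ae}\,g'=\rst{g'a}\,g'=g'$. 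Uniqueness is a short [R.3]/[R.4] calculation: any competing lift $(g_0,k)$ with $k\rst{ae}=g'$ and $\rst{k}=\rst{g'}$ forces $\rst{kae}=\rst{k}$, whence $g'=k\rst{ae}=\rst{kae}\,k=\rst{k}\,k=k$. The same computation serves $\X^{\rightarrow}$, since the lift $g$ is an honest commuting square there. Alternatively (Route B) I would note that $e$ is a partial isomorphism with $e^{(-1)}=e$, so Lemma \ref{latent-pullbacks}.iii presents the canonical prone lift of $e$ at $a$ as $(e,\rst{ae})\colon ae\to a$ (prone by Proposition \ref{prop:codomain}); then $e^{*}$ is mediated to this lift by the partial isomorphism $(e,\rst{ae})\colon a\to ae$, so $e^{*}$ is prone by Lemma \ref{mediating_maps}.ii.

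The one genuinely subtle point, and the thing I expect to be the main obstacle, is that the canonical prone lift of $e$ sits over the object $ae\colon A'\to A$ rather than over $a$ itself, so the defining square of $e^{*}$ is \emph{not} a latent pullback (the preciseness requirement $a\rst{e}=a$ fails in general). Consequently the prone-ness of the restriction idempotent $e^{*}$ cannot simply be read off as a latent pullback in the manner of Proposition \ref{prop:codomain}; it must instead be obtained either from the direct lifting argument above or from the closure of prone maps under mediating partial isomorphisms. Getting the second component exactly right (namely $\rst{ae}$, not $\rst{a}$) and verifying the identity $g'\rst{ae}=g'$ is where the real care is needed.
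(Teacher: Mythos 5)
Your proposal is correct and follows essentially the same route as the paper's own proof: the same candidate prone restriction idempotent $(e,\rst{ae})$ on $a\colon A'\to A$ (the paper writes the components in the opposite order), the same use of Lemma \ref{Jaws}.ii to force the lift to be $(g_0,g')$ itself, and the same crux computation $g'\rst{ae}=g'$ derived from the lax-square condition $g'a\le bg_0$ together with semi-preciseness $g'\rst{a}=g'$. Your explicit uniqueness calculation, your alternative Route B via a mediating partial isomorphism with the latent-pullback lift $(e,\rst{ae})\colon ae\to a$, and your observation that the square defining $e^{*}$ is not itself a latent pullback are all correct, but the core argument coincides with the paper's.
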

\begin{proof}
Given a restriction idempotent $e\colon Y \to Y$ and a map $f\colon X \to Y$ in $X$, we need to define a prone lifting of $e$ to $\X^{\leadsto}$ which is also a restriction idempotent.  We claim that the pair $(\rs{fe}, e)$ does this.  Indeed, consider the diagram
 \[     \xymatrix{C' \ar[dd]_c \ar[drr]^{g'} \ar@{..>}[dr]_{\widetilde{w}} \\
                                  & X \ar[dd]^{f} \ar[r]_{\rst{fe}} & X \ar[dd]^{f} \\
                                  C \ar@{}[ur]|\geq \ar[rrd]^<<<<g|\hole \ar[dr]_w \\
                                  & Y\ar[r]_e  & Y} \]
Since $g = we$ is precise, by Lemma \ref{Jaws}.iii, $g = w$.  Similarly, for the top triangle to be precise, we must have $\widetilde{w} = g'$.  It remains to check that $\widetilde{w} = g'$ satisfies the required conditions.

Indeed, it gives a lax square since the outer square is lax by assumption:
	\[ g'f \leq cg = cwe = cw, \]
and the top triangle commutes since
	\[ g'\rs{fe} = \rs{g'fe}g' = \rs{chee} g' = \rs{che}g' = \rs{g'f} = g'\rs{f} = g' \]
with the last equality holding since the outer square is assumed precise.

Thus $(fe,e)$ is prone, and so $\X^{\leadsto}$ is admissible; a similar proof shows that $\X^{\rightarrow}$ is admissible. 
\end{proof}

The definition of admissible only asks for the existence of a prone restriction idempotent over a restriction idempotent in the base.  However, by Lemma \ref{mediating_maps}.iii, such a restriction idempotent is unique; thus, one in fact gets a section:

\begin{lemma} \label{reflection-fibration}
If ${\sf p}\colon \E \to \B$ is admissible, then the semilattice map 
	\[ {\sf p}|_{{\cal O}(X)}\colon {\cal O}(X) \to \{e \in \O(\p(X))\colon e \leq \p(1_X)\} \]
has a section $(\_)^*$, where $e^*$ is the (unique) prone restriction idempotent above $e$ at $X$.  Furthermore, for any $d \in \O(X)$, $d \leq {\sf p}(d)^{*}$, so that the prone arrows are a reflective subposet  of ${\cal O}(X)$.
\end{lemma}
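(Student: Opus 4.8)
The plan is to prove three things in sequence: that $(\_)^*$ is a well-defined order-preserving section, that $d \leq \p(d)^*$ for every $d \in \O(X)$, and finally that these combine into a reflection. First I would check that $\p|_{\O(X)}$ is correctly typed: for $d \in \O(X)$ we have $\p(d)\p(1_X) = \p(d\, 1_X) = \p(d)$, and since $\p(d)$ is a restriction idempotent this says exactly $\p(d) \leq \p(1_X)$, so $\p|_{\O(X)}$ does land in $\{e \in \O(\p(X)) : e \leq \p(1_X)\}$, and it is monotone because $\p$ preserves composition hence the restriction order. For a given $e$ in the codomain, the hypothesis $e \leq \p(1_X)$ is precisely the condition $e\p(1_X) = e$ demanded by admissibility, so admissibility supplies a prone restriction idempotent $e^*$ over $e$ and Lemma \ref{mediating_maps}.iii makes it the unique one; setting $e \mapsto e^*$ thus gives a well-defined function with $\p(e^*) = e$, a section of $\p|_{\O(X)}$.

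The technical heart is a single manoeuvre, applied twice. To prove $d \leq \p(d)^*$ I would feed the endomorphism $d\colon X \to X$ through the prone map $\p(d)^*$: taking $h := \p(d)$ in the base, the triangle $\p(d)\cdot\p(\p(d)^*) = \p(d)$ has right factor the restriction idempotent $\p(d)$ and so is precise by Lemma \ref{Jaws}.ii, whence prone-ness of $\p(d)^*$ produces a unique lift $\widetilde{h}\colon X \to X$ with $\widetilde{h}\,\p(d)^* = d$ a precise triangle. But now the right factor $\p(d)^*$ is itself a restriction idempotent, so Lemma \ref{Jaws}.ii applied in $\E$ forces $\widetilde{h} = d$; substituting back into the commuting triangle gives $d\,\p(d)^* = d$, that is $d \leq \p(d)^*$.

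The same trick shows $(\_)^*$ is monotone. Given $e_1 \leq e_2$, I would lift $e_1^*$ through the prone map $\p(e_2^*)$ using $h := e_1$: the base triangle $e_1 e_2 = e_1$ is precise (again by Lemma \ref{Jaws}.ii, since $e_1 \leq e_2$ and $e_2$ is a restriction idempotent), so there is a unique lift $\widetilde{h}$ with $\widetilde{h}\, e_2^* = e_1^*$ precise, and Lemma \ref{Jaws}.ii in $\E$ identifies $\widetilde{h} = e_1^*$, yielding $e_1^*\, e_2^* = e_1^*$, i.e. $e_1^* \leq e_2^*$.

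Finally I would package these into the reflective-subposet claim. The two monotone maps together with the identities $\p(e^*) = e$ and $d \leq \p(d)^*$ assemble into a Galois connection $\p|_{\O(X)} \dashv (\_)^*$; since the counit $\p(e^*) \leq e$ is an equality, $(\_)^*$ is an order-embedding whose image is exactly the prone restriction idempotents (any prone restriction idempotent $p$ satisfies $p = \p(p)^*$ by the uniqueness above). Hence this image is a reflective subposet of $\O(X)$ with reflector $d \mapsto \p(d)^*$: we have $d \leq \p(d)^*$, and if $d \leq p$ with $p = \p(p)^*$ prone then $\p(d) \leq \p(p)$ and so $\p(d)^* \leq \p(p)^* = p$ by monotonicity. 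I expect the one step needing care to be the unit inequality $d \leq \p(d)^*$: a priori $\p(d)^*$ might sit incomparably to $d$, and it is precisely the lifting-plus-Lemma \ref{Jaws}.ii argument that pins down the comparison.
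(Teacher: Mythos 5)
Your proposal is correct and follows essentially the same route as the paper: the crucial unit inequality $d \leq \p(d)^*$ is obtained exactly as in the paper's proof, by lifting the trivial precise triangle $\p(d)\p(d)=\p(d)$ through the prone idempotent $\p(d)^*$ and using Lemma \ref{Jaws}.ii to identify the lift with $d$. The only cosmetic difference is in the order-theoretic packaging: where you prove monotonicity of $(\_)^*$ by a second lifting argument and assemble a Galois connection, the paper instead observes that $(\_)^*$ is a semilattice morphism ($(e_1e_2)^* = e_1^*e_2^*$ by uniqueness of prone restriction idempotents and closure of prone maps under composition, and $\p(1_X)^* = 1_X$), from which the reflective-subposet claim follows.
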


\begin{proof}
Note that $(e_1 e_2)^{*} = e_1^{*} e_2^{*}$, as both are prone over $e_1e_2$, and $\p(1_X)^{*} = 1_X$, as $1_X$ is trivially prone.  Thus, the section is also a semilattice morphism.  Moreover, ${\sf p}(d) {\sf p}(d) = {\sf p}(d)$ is a trivial precise triangle with $d {\sf p}(d)^{*} = e$ precise above it: but this implies $d \leq {\sf p}(d)^{*}$ as required.
\end{proof}

Another useful property of admissible latent fibrations is:

\begin{lemma} \label{restriction-monics-in-fibrations}
If $\p\colon \E \to \B$ is admissible and $m\colon E' \to E$ is prone over a restriction monic, then $m$ is a partial isomorphism.    
\end{lemma}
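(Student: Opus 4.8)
The plan is to build an explicit partial inverse for $m$ out of proneness together with a prone restriction idempotent supplied by admissibility, using the precise-triangle bookkeeping of Lemma \ref{Jaws}. Write $\mu := \p(m)\colon A \to B$, with $A = \p(E')$ and $B = \p(E)$. As $\mu$ is a restriction monic it is total, so $\rs{\mu} = 1_A$, and a partial isomorphism, so $\mu\mu^{(-1)} = \rs{\mu} = 1_A$ and $\mu^{(-1)}\mu = \rs{\mu^{(-1)}}$. I would set $e := \rs{\mu^{(-1)}} = \mu^{(-1)}\mu$, a restriction idempotent on $B$, and record the identity $\mu e = \mu\mu^{(-1)}\mu = \mu$. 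From $m = m1_E$ we get $\mu = \mu\p(1_E)$, hence $e\p(1_E) = \mu^{(-1)}\mu\p(1_E) = e$, so admissibility applies at $E$ and produces a prone restriction idempotent $e^{*}$ on $E$ with $\p(e^{*}) = e$.

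Next I would construct the candidate inverse $m^{(-1)}$. The triangle $\mu^{(-1)}\mu = e$ is precise (its left factor is $\mu^{(-1)}$ and $\rs{\mu^{(-1)}} = e = \rs{\mu^{(-1)}\mu}$), and $\p(e^{*}) = e$; feeding this into the proneness of $m$ with $g := e^{*}$ and $h := \mu^{(-1)}$ yields a unique $m^{(-1)}\colon E \to E'$ with $\p(m^{(-1)}) = \mu^{(-1)}$ and $m^{(-1)}m = e^{*}$ a precise triangle. Preciseness forces $\rs{m^{(-1)}} = \rs{e^{*}} = e^{*}$, so $m^{(-1)}m = e^{*} = \rs{m^{(-1)}}$ already, which is one of the two partial-inverse identities.

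The crux — and the step I expect to be the main obstacle — is the absorption identity $me^{*} = m$; everything else is routine manipulation of restriction identities. Here I would exploit the proneness of $e^{*}$ itself. The triangle $\mu e = \mu$ is precise (left factor $\mu$, which is total), so proneness of $e^{*}$ over $e$, applied with $g := m$ and $h := \mu$, gives a unique $\hat{\mu}\colon E' \to E$ with $\p(\hat{\mu}) = \mu$ and $\hat{\mu}e^{*} = m$ a precise triangle. But now the right factor $e^{*}$ is a restriction idempotent, so Lemma \ref{Jaws}.ii converts the preciseness of $\hat{\mu}e^{*} = m$ into the equality $\hat{\mu} = m$, whence $me^{*} = \hat{\mu}e^{*} = m$.

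Finally I would deduce $mm^{(-1)} = \rs{m}$ from uniqueness of prone lifts for $m$. Both $\rs{m}$ and $mm^{(-1)}$ sit over $1_A = \rs{\mu}$, and both compose with $m$ to give $m$: indeed $\rs{m}\,m = m$, while $mm^{(-1)}m = m(m^{(-1)}m) = me^{*} = m$. Both triangles are precise, since $\rs{mm^{(-1)}} = \rs{m\,\rs{m^{(-1)}}} = \rs{me^{*}} = \rs{m}$. Applying the uniqueness clause in the proneness of $m$ (with $g := m$ and $h := 1_A$) to these two lifts gives $mm^{(-1)} = \rs{m}$. Combined with $m^{(-1)}m = \rs{m^{(-1)}}$ from the second paragraph, this exhibits $m^{(-1)}$ as a partial inverse of $m$, so $m$ is a partial isomorphism. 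Note that the argument only uses that $m$ is prone and that $\p$ is admissible, so no further completeness of $\p$ is required.
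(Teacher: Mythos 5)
Your proof is correct and follows essentially the same route as the paper's own argument: admissibility supplies the prone restriction idempotent $e^{*}$ over $\rs{\p(m)^{(-1)}}$, proneness of $m$ lifts $\p(m)^{(-1)}$ to the candidate partial inverse with $m^{(-1)}m = e^{*} = \rs{m^{(-1)}}$, proneness of $e^{*}$ together with Lemma \ref{Jaws}.ii yields the absorption identity $me^{*} = m$, and uniqueness of prone lifts of $m$ over the identity gives $mm^{(-1)} = \rs{m}$. The only difference is cosmetic (notation), plus your explicit verification that $e\p(1_E) = e$ before invoking admissibility --- a point the paper leaves implicit and which is indeed needed when $\p$ is a genuine semifunctor.
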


\begin{proof}
If $m\colon E' \to E$ is over a restriction monic, then there is a map $r\colon \p(E) \to \p(E')$ such that $r\p(m) = \rs{r}$ and $\p(m)r = 1_{\p(E')}$.  By admissibility, there is a prone restriction idempotent $\rs{r}^*\colon E \to E$ over $\rs{r}$.  Let $r'$ be defined as the unique lift
\[ \xymatrix{ E \ar@{..>}[d]_{r'} \ar[dr]^{\rs{r}^*} & ~\ar@{}[drr]|{\textstyle\mapsto}&& \p(E)  \ar[d]_{r} \ar[dr]^{\rs{r}} & ~  
\\
E' \ar[r]_m & E &&\p(E') \ar[r]_{\p(m)} & \p(E) } \]
Thus, $r'm = \rs{r}^* = \rs{\rs{r}^*} = \rs{r'}$.  Thus, $r'$ satisfies one half of being a partial inverse to $m$; we also need $mr' = \rs{m}$. 

For this, first, since $\rs{r}^*$ is itself prone, we have some unique $k\colon E' \to E$ such that
\[ \xymatrix{ E' \ar@{..>}[d]_{k} \ar[dr]^{m} & ~\ar@{}[drr]|{\textstyle\mapsto}&& \p(E')  \ar[d]_{\p(m)} \ar[dr]^{\p(m)} & ~  
\\
E \ar[r]_{\rs{r}^*} & E && \p(E) \ar[r]_{\rs{r}} & \p(E) } \]
However, by Lemma \ref{Jaws}.ii, $k = m$.  Thus, in particular, since the top triangle is precise, we have $m\rs{r}^* = m$, so $mr'm = m\rs{r}^* = m$.  Thus we have the triangle
\[ \xymatrix{ E' \ar[d]_{mr'} \ar[dr]^{m} & ~ \ar@{}[drr]|{\textstyle\mapsto}&& \p(E')  \ar[d]_{1} \ar[dr]^{\p(m)} & ~
\\ 
E' \ar[r]_{m} & E && \p(E') \ar[r]_{\p(m)} & \p(E) } \]
But $\rs{m}$ also fits precisely into the top triangle in place of $mr'$ (and is over $1$ since $\p(m)$ is total), so since $m$ is prone, $mr' = \rs{m}$.  Thus $m$ is a partial isomorphism with partial inverse $r'$.
\end{proof}

\begin{lemma} \label{nearly_connected}
If ${\sf p}\colon \E \to \B$ is an admissible latent fibration then any mediating partial isomorphism $\alpha$ between two ${\sf p}$-prone maps $f'$ and $f$ over the same map such that 
$\rst{f} = \rst{{\sf p}(f)}^*$ is itself prone .
\end{lemma}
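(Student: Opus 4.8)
The goal is to show that a mediating partial isomorphism $\alpha\colon X \to X'$ between two $\p$-prone maps $f\colon X \to Y$ and $f'\colon X' \to Y$ (over the same base map, so $\p(f) = \p(f')$) is itself $\p$-prone, under the extra hypothesis that $\rst{f} = \rst{\p(f)}^*$, the canonical prone restriction idempotent supplied by admissibility. By Lemma~\ref{mediating_maps}.ii we already know that mediating maps transport preciseness between $f$ and $f'$; the difficulty is that being a \emph{mediating partial isomorphism} is a statement purely about the two-sided triangles involving $f$ and $f'$, whereas being \emph{prone} is a universal property about all incoming precise triangles. So the plan is to verify the prone lifting property for $\alpha$ directly, reducing it to the prone-ness of $f'$ (equivalently $f$) by post-composing with $f'$.

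**Main steps.**
First I would record what $\alpha$ gives us: $\p(\alpha) = \rst{\p(f)}$, $\alpha f' = f$, $\rst{\alpha} = \rst{f}$, $\alpha^{(-1)} f = f'$, $\rst{\alpha^{(-1)}} = \rst{f'}$, these being the data of a mediating map from Lemma~\ref{mediating_maps}.i. Now suppose we are given $g\colon Z \to X'$ in $\E$ and $h\colon \p(Z) \to \p(X)$ in $\B$ with $h\,\p(\alpha) = \p(g)$ a precise triangle; I must produce a unique precise lift $\widetilde{h}\colon Z \to X$ with $\widetilde{h}\alpha = g$. The natural candidate is to push the problem along $f'$: from $g\colon Z \to X'$ form $gf'\colon Z \to Y$, and observe that $h\,\p(\alpha)\,\p(f') = \p(g)\,\p(f') = \p(gf')$, so $h\,\p(\alpha f') = \p(gf')$, i.e.\ $h\,\p(f) = \p(gf')$. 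One checks this triangle over $f$ is precise (using $\rst{\p(\alpha)} = \rst{\p(f)}$ and the preciseness of the given triangle, together with $\rst{f}=\rst{\p(f)}^*$ to control the restriction), so by prone-ness of $f$ there is a unique precise $\widetilde{h}\colon Z \to X$ with $\widetilde{h} f = gf'$ and $\p(\widetilde{h}) = h$.

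**Closing the argument.**
It then remains to show this $\widetilde{h}$ actually satisfies $\widetilde{h}\alpha = g$ (not merely $\widetilde{h} f = gf'$) and is precise over $\alpha$. Here I would exploit the partial-isomorphism structure: from $\widetilde{h} f = gf'$ and $f = \alpha f'$, one wants to cancel $f'$ on the right. Since $\alpha^{(-1)} f = f'$ and $f = \alpha f'$, post-composing $\widetilde{h}\alpha$ and $g$ with $f'$ yields equal maps, and the hypothesis $\rst{f} = \rst{\p(f)}^*$ — via Lemma~\ref{reflection-fibration} and the uniqueness built into prone liftings (Lemma~\ref{mediating_maps}.iii) — is exactly what is needed to upgrade this ``equal after composing with $f'$'' to genuine equality $\widetilde{h}\alpha = g$. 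Uniqueness of $\widetilde{h}$ as a lift over $\alpha$ follows from uniqueness of the lift over $f$ by the same post-composition-with-$f'$ bijection.

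**Expected obstacle.**
The main obstacle is the cancellation step: turning $\widetilde{h} f = g f'$ into $\widetilde{h}\alpha = g$. Because $f'$ is only a partial (not a genuine) epimorphism, one cannot cancel it freely, and this is precisely where the special hypothesis $\rst{f} = \rst{\p(f)}^*$ must do its work — it pins down the restriction of the lift so that the partial inverse $\alpha^{(-1)}$ can reconstruct $g$ from $\widetilde{h}\alpha$ without losing definedness. I would expect the verification that the restrictions match (so that the mediating-map identities of Lemma~\ref{mediating_maps}.i can be applied coherently) to be the delicate computation, whereas the existence of $\widetilde{h}$ and the bijection of precise triangles are comparatively routine consequences of prone-ness and Lemma~\ref{mediating_maps}.ii.
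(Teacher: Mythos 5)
Your overall strategy coincides with the paper's: lift $h$ through the \emph{other} prone map to obtain $\widetilde{h}$ with $\widetilde{h}f = gf'$, then identify $\widetilde{h}\alpha$ with $g$ by invoking the uniqueness clause of a prone map. However, you have attached the hypothesis to the wrong map, and this breaks the argument exactly at the step you yourself flag as delicate. You take $\alpha\colon X \to X'$ with $\alpha f' = f$ and place the hypothesis $\rst{f} = \rst{\p(f)}^*$ on $f$, the prone map out of the \emph{domain} of $\alpha$; but the lemma (reading ``mediating between $f'$ and $f$'' against the paper's definition, which makes $\alpha f = f'$, so $\alpha\colon \mathrm{dom}(f') \to \mathrm{dom}(f)$) places it on the map out of the \emph{codomain} of $\alpha$, and that is what the proof needs. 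The lifting problem for $\alpha$ concerns maps $g$ landing in $\mathrm{cod}(\alpha)$, and to cancel the prone map on the right one must know that $g$ composed with the prone map out of $\mathrm{cod}(\alpha)$ is a \emph{precise} triangle with left factor $g$ --- in your notation, that $g\,\rst{f'} = g$. This follows from proneness of the restriction idempotent $\rst{f'}$ (lift the trivial precise triangle $\p(g)\rst{\p(f')} = \p(g)$ and apply Lemma \ref{Jaws}.ii), but your hypothesis only makes $\rst{f}$ prone, which yields $k\,\rst{f} = k$ for maps $k$ into $X$ --- information the argument never uses. Neither Lemma \ref{reflection-fibration} nor Lemma \ref{mediating_maps}.iii can substitute for this.

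The gap is not repairable in your orientation, because the statement as you set it up is false. In the lax simple fibration $\X(\X) \to \X$ (admissible but not separated) over $\X = {\sf Par}({\sf Set})$, take $\Sigma = \{*\}$, $W = \{0\}$, $V = \{0,1\}$; let $f' = (1_\Sigma,\psi)\colon (\Sigma,V) \to (\Sigma,W)$ where $\psi(*,0)=0$ and $\psi(*,1)$ is undefined, and let $f = 1_{(\Sigma,W)}$. Then $f'$ is a partial isomorphism whose partial inverse $(1_\Sigma,\psi')$, $\psi'(*,0)=0$, is total, so $f'$ is prone by Lemma \ref{mediating_maps}.ii (it is mediated with the identity); $f$ is prone with $\rst{f} = \rst{\p(f)}^* = 1$, so your hypothesis holds; and the mediating map in your orientation is $\alpha = f'^{(-1)} = (1_\Sigma,\psi')\colon (\Sigma,W) \to (\Sigma,V)$. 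Yet $\alpha$ is not prone: taking $g = 1_{(\Sigma,V)}$ and $h = 1_\Sigma$, any lift $\widetilde{h} = (1_\Sigma,\chi)$ with $\widetilde{h}\alpha = 1_{(\Sigma,V)}$ would require $\psi'(*,\chi(*,1)) = 1$, impossible since $\psi'$ only takes the value $0$. (Equivalently, proneness of $\alpha$ would force $g\,\rst{f'} = g$ for every $g$ into $(\Sigma,V)$, while $\rst{f'} \neq 1$.) If instead you attach the hypothesis to the codomain-side map, so that $g\,\rst{f} = g$ becomes available by the mechanism above, your outline fills in to become precisely the paper's proof.
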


\begin{proof}
Consider maps $f$ and $f'$ which are both ${\sf p}$-prone above the same map, ${\sf p}(f) = {\sf p}(f')$.  Let $\alpha$ be the mediating map between $f$ and $f'$
so in particular ${\sf p}(\alpha) = \rst{{\sf p}(f)}$.  Now consider $g\colon C \to B$ with ${\sf p}(g) = h \rst{{\sf p}(f)}$.  By Lemma \ref{Jaws}.ii, it follows that $h = {\sf p}(g)$.   We wish to show that 
there is a lifting $\widetilde{h}$ so that $\widetilde{h}\alpha = g$ is precise, however, the best we can do is to lift $h$, using the fact that $f$ is prone, making $\widetilde{h}f' = gf$:
\[ \xymatrix{C \ar[dr]_g  \ar@{..>}[r]^{\widetilde{h}} & B' \ar[d]^{\alpha} \ar[dr]^{f'} & ~\ar@{}[drr]|{\textstyle\mapsto}&&{\sf p}(C) \ar[dr]_{{\sf p}(g)}  \ar[r]^{h} & {\sf p}(B) \ar[d]|{{\sf p}(\alpha) =\rst{{\sf p}(f)}} \ar[dr]^{{\sf p}(f)}
\\
 & B \ar[r]_f & A &&& {\sf p}(B) \ar[r]_{{\sf p}(f)} & {\sf p}(A) } \]
Now we observe that $\widetilde{h} \alpha$ sits over ${\sf p}(g)$ as ${\sf p}(\widetilde{h} \alpha) = {\sf p}(\widetilde{h}){\sf p}(\alpha) = h \rst{{\sf p}(f)} = {\sf p}(g)$; furthermore, 
$\widetilde{h} \alpha f = \widetilde{h} f' = g f$ and $\widetilde{h} \alpha \rst{f} = \widetilde{h} \alpha$ so the triangle $(\widetilde{h}\alpha) f = (\widetilde{h} f')$ is precise.
However, as $g\rst{f} = g \rst{{\sf p}(f)}^*$ as ${\sf p}(g) ={\sf p}(g) \rst{{\sf p}(f)}$, the triangle $g f = (\widetilde{h} f')$ is precise.  So we may conclude 
that $g = \widetilde{h}\alpha$ and, as $\widetilde{h}\rst{\alpha} = \widetilde{h}\rst{f'}$, it is precise.  The triangle is, furthermore, unique as $\alpha$ is a partial isomorphism, and thus $\alpha$ is prone.
\end{proof}

\begin{proposition}
Admissible latent fibrations are closed under composition and pullback.
\end{proposition}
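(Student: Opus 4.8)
The two closure claims follow quickly once admissibility is known to be preserved, since we have already established that latent fibrations are closed under composition (the Corollary following Lemma \ref{lemma:prone_with_comp_functors}) and under pullback (Corollary \ref{prop:pullback_latent}). So the plan is, in each case, to take the given admissible latent fibration(s) and verify only the admissibility clause for the composite, respectively the pullback.

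Pullback is the easy half. Given admissible $\p\colon \E \to \B$ and a restriction semifunctor $F\colon \X \to \B$ with pullback $\p_0\colon \W \to \X$, I would take $(X,E)\in\W$ and a restriction idempotent $e$ on $X=\p_0(X,E)$ satisfying the required compatibility with $\p_0(1_{(X,E)})$. Since $F(X)=\p(E)$ and $F$ preserves restrictions, $F(e)$ is a restriction idempotent on $\p(E)$, and one checks the compatibility $F(e)\p(1_E)=F(e)$ needed to invoke admissibility of $\p$; this yields a $\p$-prone restriction idempotent $F(e)^*$ on $E$ over $F(e)$. Then $(e,F(e)^*)$ is a restriction idempotent in $\W$ lying over $e$, and it is $\p_0$-prone by Lemma \ref{lemma:pullback_prone} precisely because its second component is $\p$-prone. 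This is the required prone lift.

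Composition is the subtle half. Let $\q\colon \F \to \E$ and $\p\colon \E \to \B$ be admissible, so $\q\p\colon \F\to\B$ is a latent fibration; I must lift a restriction idempotent $e$ on $\q\p(Z)=\p(\q(Z))$ with $e\,\q\p(1_Z)=e$ (i.e. $e\leq \p(\q(1_Z))$) to a prone restriction idempotent on $Z$. The obvious move is a two-step lift: first use admissibility of $\p$ to get a $\p$-prone restriction idempotent $e^*$ on $\q(Z)$ over $e$, then push it down the $\q$-fibration. The obstacle is that admissibility of $\q$ applies only to idempotents $d$ on $\q(Z)$ with $d\,\q(1_Z)=d$, and $e^*$ need not satisfy $e^*\leq\q(1_Z)$: using Lemma \ref{reflection-fibration}, $e^*$ is only forced below the reflection $\p(\q(1_Z))^*$ of $\q(1_Z)$, not below $\q(1_Z)$ itself. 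Consequently the correct idempotent to feed to admissibility of $\q$ is $d:=e^*\,\q(1_Z)$, but $d$ is not $\p$-prone, so Lemma \ref{lemma:prone_with_comp_functors} does not directly certify the resulting lift as $\q\p$-prone.

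So the real work is to take the $\q$-prone restriction idempotent $\epsilon$ on $Z$ over $d=e^*\,\q(1_Z)$ (a restriction idempotent lying over $e$, since $\q\p(\epsilon)=\p(d)=e\,\p(\q(1_Z))=e$) and verify $\q\p$-proneness by hand. The fact that rescues the argument is the absorption identity $\q(g)=\q(g)\q(1_Z)$, valid for every $g\colon W\to Z$ in $\F$ because $g=g1_Z$. Given a test triangle $k\,\q\p(\epsilon)=\q\p(g)$ in $\B$, one lifts $k$ through the $\p$-prone $e^*$ to a map $\ell$ with $\ell e^*=\q(g)$ precise; absorption upgrades this to $\ell\,d=\q(g)$ precise, so $\q$-proneness of $\epsilon$ supplies the lift. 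For uniqueness one runs the computation backwards: any competing lift $k'$ has $\q(k')=\q(k')\q(1_Z)$, whence, as restriction idempotents commute, $\q(k')e^*=\q(k')\q(1_Z)e^*=\q(k')e^*\q(1_Z)=\q(k')\,d=\q(g)$, forcing $\q(k')=\ell$ by $\p$-proneness of $e^*$ and then $k'=\tilde\ell$ by $\q$-proneness of $\epsilon$. I expect this direct verification — in particular the realization that $e^*\leq\q(1_Z)$ can fail, yet the weaker absorption identity for maps into $Z$ suffices — to be the main obstacle; the pullback half and the bookkeeping of the various compatibility conditions are routine.
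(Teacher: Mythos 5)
Your proof is correct, but it is not merely a transcription of the paper's: the paper disposes of both claims in one line, citing Lemma \ref{lemma:prone_with_comp_functors} for composition and Lemma \ref{lemma:pullback_prone} for pullback. For the pullback half your argument coincides with the paper's intended one (lift $F(e)$ by admissibility of $\p$ and observe that $(e,F(e)^*)$ is $\p_0$-prone because its second component is $\p$-prone). For the composition half, however, the paper's citation implicitly runs the naive two-step lift: use admissibility of $\p$ to get the $\p$-prone idempotent $e^*$ over $e$, then use admissibility of $\q$ to lift $e^*$, then apply Lemma \ref{lemma:prone_with_comp_functors}. As you point out, the second step is only legitimate if $e^*\,\q(1_Z)=e^*$, which holds automatically when $\q$ is a restriction functor (or when $\p$ is separated, since then $\p(e^*\q(1_Z))=\p(e^*)$ forces the equality), but can genuinely fail for semifunctors: taking $\p$ to be the lax simple fibration and $\q=U\colon \split(\X(\X))\to\X(\X)$ at an object $((\Sigma,A),\epsilon)$ with $\epsilon$ strictly below the maximal idempotent $(e_0,(e_0\times 1)\pi_1)$, one computes $e_0^*\,\q(1)=\epsilon\neq e_0^*$; moreover $d=e^*\q(1_Z)$ cannot itself be $\p$-prone (prone restriction idempotents over a fixed map are unique by Lemma \ref{mediating_maps}.iii), so Lemma \ref{lemma:prone_with_comp_functors} really cannot be applied to the pair $(d,d^*)$ and a direct verification is unavoidable. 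Your repair — feed $d=e^*\q(1_Z)$ to admissibility of $\q$ and check $\q\p$-proneness of $d^*$ by hand, using the absorption identity $\q(g)=\q(g)\q(1_Z)$ to pass between triangles over $e^*$ and triangles over $d$, with uniqueness obtained by chasing uniqueness first through $e^*$ and then through $d^*$ — is sound in both its existence and uniqueness steps. In short, your proof agrees with the paper where the paper's citation suffices, and supplies a genuine argument where the paper's one-liner glosses over the semifunctor subtlety; what it buys is a proof valid for arbitrary admissible latent fibrations, whereas the paper's cited lemmas only settle the case where the relevant compatibility with $\q(1_Z)$ holds.
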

\begin{proof}
This follows from Lemma \ref{lemma:prone_with_comp_functors} and Lemma \ref{lemma:pullback_prone}.
\end{proof}

Perhaps the most useful property of admissible latent fibrations is that they can be $r$-split; we prove this in Proposition \ref{splitting-latent-fibration}.  


\subsection{Separated latent fibrations}\label{sec:separated}

We now consider a different property a restriction semifunctor can have.  

\begin{definition}
A restriction semifunctor $\p\colon \E \to \B$ is said to be \textbf{separated} if for any object $X \in \E$ and restriction idempotents $e,e'$ on $X$, if $\p(e) = \p(e')$ then $e = e'$.
\end{definition}

\begin{example}\label{separated_examples}
\begin{enumerate}[(i)]
\item The identity fibration $1_\X\colon \X\to\X$ is separated.
	\item The strict simple fibration $\X[\X]$ is separated: since the restriction of a map in the total category is entirely determined by its restriction in the base, the functor $\partial\colon \X[\X]\to\X$ is separated.  
	\item Conversely, in general the lax simple fibration $\X(\X)$ is not separated, as the inequality allowed in the definition of an arrow permits multiple possible restriction idempotents over a fixed one in the base.  Thus, this is an example of a latent fibration which is admissible but not separated.  
    \item For any restriction category $\X$ with latent pullbacks, $\X^{\rightarrow} \to \X$ is separated: if
	\[ \xymatrix{ X \ar[r]^{e'} \ar[d]_{f} & X \ar[d]^{f} \\ Y \ar[r]_{e} & Y}	\]
	is a restriction idempotent in $\X^{\rightarrow}$, then since the square must strictly commute and be precise, we have
		\[ e' = \rs{e'} = \rs{e'\rs{f}} = \rs{e'f} = \rs{fe}. \]
    Thus, the restriction idempotent $(e',e)$ is entirely determined by $e$, and so the functor $\partial\colon \X^{\rightarrow} \to \X$ is a separated.
    \item By lemma \ref{latent-pullbacks}.ii, one may be tempted to think that $\X^{\leadsto}$ is separated; however, for this to be the case partially inverting the top and bottom 
arrows of latent pullback would have to produce a semi-precise square: this is not the case in general.  
     \item The propositions fibration is clearly separated, as in this case $\p(e) = e$.  Thus, this is an example of a latent fibration which is separated but not admissible.  
     \item For assemblies, notice that all restriction idempotents $e \in \X$ are tracked by identity realizers and so every restriction idempotent is tracked.  This means  that the restriction 
	idempotents in ${\sf Asm}(F)$ are the same as those in $\X$ making $\p$ separated.
     \item The splitting fibration $\split(\X) \to \X$ is clearly separated, as in this case $\p(e) = e$.  Similarly, a discrete fibration is always separated.
\end{enumerate}
\end{example}

An immediate property of a separated semifunctor is the following:
\begin{lemma}\label{lemma:separated_reflects_total}
If $\p\colon \E \to \B$ is separated, then $\p(f)$ total implies $f$ is total.
\end{lemma}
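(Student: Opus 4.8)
The plan is to compare two restriction idempotents on the domain of $f$ and then invoke separatedness directly. Write $f\colon X \to Y$ and suppose $\p(f)$ is total, so $\rst{\p(f)} = 1_{\p(X)}$. The two restriction idempotents on $X$ that I want to identify are $\rst{f}$ and the identity $1_X$; both are genuine restriction idempotents on $X$ (the latter since $\rst{1_X} = 1_X$). Since separatedness asserts that any two restriction idempotents on a common object of $\E$ with equal image under $\p$ must coincide, it suffices to prove $\p(\rst{f}) = \p(1_X)$: separatedness then yields $\rst{f} = 1_X$, i.e.\ $f$ is total.

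Computing the left-hand side is immediate, since $\p$ preserves restrictions: $\p(\rst{f}) = \rst{\p(f)} = 1_{\p(X)}$. The real content is therefore to show $\p(1_X) = 1_{\p(X)}$, and this is precisely where the main obstacle lies: because $\p$ is only a restriction \emph{semi}functor, $\p(1_X)$ is a priori merely a restriction idempotent and need not be an identity, so one cannot read off $\p(\rst{f}) = \p(1_X)$ for free. I expect to dispose of this by exploiting $f = 1_X f$ in $\E$. Applying $\p$ gives $\p(f) = \p(1_X)\p(f)$, and then taking restrictions, using [R.3] together with the fact that $\p(1_X)$ is a restriction idempotent and the totality of $\p(f)$, yields
\[ 1_{\p(X)} = \rst{\p(f)} = \rst{\p(1_X)\p(f)} = \p(1_X)\,\rst{\p(f)} = \p(1_X). \]
Hence $\p(1_X) = 1_{\p(X)} = \p(\rst{f})$, and separatedness closes the argument.

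It is worth remarking that in the special case where $\p$ is an honest restriction functor the obstacle disappears entirely: there $\p(1_X) = 1_{\p(X)}$ holds by definition, so one immediately has $\p(\rst{f}) = \rst{\p(f)} = 1_{\p(X)} = \p(1_X)$ and applies separatedness. The only genuine work is thus in the semifunctor case, handled by the computation above.
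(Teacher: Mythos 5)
Your proof is correct and follows essentially the same route as the paper's: both reduce the problem to showing $\p(1_X) = 1_{\p(X)}$ (the only subtlety in the semifunctor setting), establish this from the totality of $\p(f)$ together with the fact that $\p(1_X)$ is a restriction idempotent, and then apply separatedness to the pair $\rst{f}$, $1_X$. The paper derives $\p(1_X) = 1_{\p(X)}$ via the inequality chain $1_{\p(X)} = \p(\rst{f}) = \p(1_X)\p(\rst{f}) \leq \p(1_X) \leq 1_{\p(X)}$, whereas you compute it directly from $f = 1_X f$ using [R.3]; this is only a cosmetic difference.
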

\begin{proof}
If we have such an $f$, then we also have
	\[ 1_{\p(X)} = \rs{\p(f)} = \p(\rs{f}) = \p(1_X)\p(\rs{f}) \leq \p(1_X), \]
but we always have the opposite inequality, so $\p(1_X) = 1_{\p(X)}$.  Then $\p(\rs{f}) = 1_{\p(X)} = \p(1_X)$, so since $p$ is separated, $\rs{f} = 1_X$.  Thus $f$ is total.
\end{proof}

More importantly, however, being separated is precisely the right condition to assure that all restriction idempotents (and, more generally, partial ismorphisms) are prone.

\begin{proposition}\label{prop:separated_equivalences}
For any restriction semifunctor $\p\colon \E \to \B$, the following are equivalent:
\begin{enumerate}[(i)]
	\item $\p$ is separated;
	\item all partial isomorphisms in $\E$ are $\p$-prone;
	\item all restriction idempotents in $\E$ are $\p$-prone.
\end{enumerate}
\end{proposition}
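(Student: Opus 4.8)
The plan is to establish the equivalence by proving the cycle of implications (i) $\Rightarrow$ (ii) $\Rightarrow$ (iii) $\Rightarrow$ (i). Two of the three implications are essentially immediate. For (ii) $\Rightarrow$ (iii), I would simply observe that every restriction idempotent $e$ is a partial isomorphism --- it is its own partial inverse, since $ee = e = \rs{e}$ --- so if all partial isomorphisms are $\p$-prone then in particular all restriction idempotents are. For (iii) $\Rightarrow$ (i), I would take restriction idempotents $e,e'$ on an object $X$ with $\p(e) = \p(e')$; by hypothesis both are $\p$-prone, so Lemma \ref{mediating_maps}.iii applies directly to yield $e = e'$, which is exactly separatedness.

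The substance lies in (i) $\Rightarrow$ (ii). Here I would let $s\colon X \to Y$ be a partial isomorphism and test the prone condition against a precise base triangle $h\p(s) = \p(g)$ with $g\colon Z \to Y$. Since a restriction semifunctor preserves the defining equations of a partial isomorphism, $\p(s)$ is again a partial isomorphism with inverse $\p(s^{(-1)})$; thus Lemma \ref{Jaws}.iii forces any precise lift to equal $\widetilde{h} = g s^{(-1)}$, which immediately settles uniqueness. For existence I would then verify the three required properties of $\widetilde{h} = g s^{(-1)}$. That $\widetilde{h}$ lies over $h$ follows by applying $\p$ and using preciseness of the base triangle through Lemma \ref{Jaws}.iii once more, since $\p(g s^{(-1)}) = \p(g)\p(s)^{(-1)} = h$.

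The main obstacle is the commutativity $\widetilde{h}s = g$, and this is precisely where separatedness becomes indispensable. Computing, $\widetilde{h}s = g s^{(-1)} s = g\,\rs{s^{(-1)}}$, so commutativity reduces to showing $g\,\rs{s^{(-1)}} = g$. I would prove this by comparing the two restriction idempotents $\rs{g\,\rs{s^{(-1)}}}$ and $\rs{g}$ on the object $Z$: a short computation using $\p(g) = h\p(s)$ together with the partial-isomorphism identity $\p(s)\,\rs{\p(s^{(-1)})} = \p(s)$ shows that both have the same image $\rs{\p(g)}$ under $\p$. Separatedness then forces $\rs{g\,\rs{s^{(-1)}}} = \rs{g}$, and a further application of the restriction axioms (chiefly [R.4]) converts this into $g\,\rs{s^{(-1)}} = g$. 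The very same restriction equality simultaneously gives $\rs{\widetilde{h}} = \rs{g s^{(-1)}} = \rs{g}$, hence preciseness of the lifted triangle. Thus $s$ is $\p$-prone, closing the cycle.

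I expect the delicate bookkeeping in this last step --- keeping straight which restriction idempotents live on $Z$ versus on $Y$, and invoking separatedness on exactly the pair $\{\rs{g},\rs{g\,\rs{s^{(-1)}}}\}$ --- to be the one genuinely nontrivial point; the remaining verifications are routine restriction-category manipulations, and the two easy implications of the cycle are almost formal consequences of earlier results.
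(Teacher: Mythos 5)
Your proposal is correct and follows essentially the same route as the paper's proof: the same cycle of implications, the same easy steps for (ii)~$\Rightarrow$~(iii) and (iii)~$\Rightarrow$~(i) via Lemma \ref{mediating_maps}.iii, and for (i)~$\Rightarrow$~(ii) the same candidate lift $g s^{(-1)}$ forced by Lemma \ref{Jaws}.iii, with separatedness applied to exactly the same pair of restriction idempotents to obtain $\rs{gs^{(-1)}} = \rs{g}$ and hence commutativity and preciseness.
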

\begin{proof}
For (i) $\Rightarrow$ (ii), suppose that $\p$ is separated.  Suppose we have a partial isomorphism $f\colon X \to Y$ (with partial inverse $f^{(-1)}$) and a map $g\colon Z \to Y$ such that $h\p(f) = \p(g)$ is precise:
\[ \xymatrix{Z \ar@{..>}[d]^{} \ar[dr]^{g} & ~\ar@{}[drr]|{\textstyle\mapsto}&& \p(Z) \ar[d]_h \ar[dr]^{\p(g)}  & ~ 
\\
 X  \ar[r]_{f} & Y && \p(X) \ar[r]_{\p(f)} & \p(Y)} \]
We want to show that $gf^{(-1)}$ precisely fits in the left triangle (it is the only possible choice by Lemma \ref{Jaws}.iii).  First, note that 
	\[ \p(\rs{gf^{(-1)}}) = \rs{h\p(f)\rs{\p(f^{(-1)})}} = \rs{h\p(f)} = \rs{\p(g)} = \p(\rs{g}) \]
so since $\p$ is separated, $\rs{gf^{(-1)}} = \rs{g}$.  Then the triangle commutes since
	\[ gf^{(-1)}f = g\rs{f^{(-1)}} = \rs{gf^{(-1)}}g = \rs{g}g, \]
and is precise.  Moreover, $gf^{(-1)}$ is over $h$ since
	\[ \p(gf^{(-1)}) = h\p(f)\p(f^{(-1)} = h\rs{\p(f)} = h \]
since $h\p(f) = \p(g)$ is precise. \\

(ii) $\Rightarrow$ (iii) is immediate, since restriction idempotents are partial isomorphisms.\\

For (iii) $\Rightarrow$ (i), if all restriction idempotents are prone, then $\p$ is separated by Lemma \ref{mediating_maps}.iii.  

\end{proof}

When a restriction semifunctor is separated, the prone condition is a bit simpler to check:

\begin{lemma}\label{lemma:prone_for_separated}
If $\p\colon \E \to \B$ is separated, then an arrow $f\colon X \to X'$ in $\E$ is prone if and only if for any $g\colon Y \to X'$ and any $h\colon p(Y) \to p(X)$ such that $hp(f) = p(g)$ is precise, there is a unique $h'\colon Y \to X$ such that $h'f = g$.  
\end{lemma}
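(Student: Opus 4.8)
The plan is to reduce the whole statement to a single observation about separated semifunctors, namely that a lifting of $h$ which merely makes the top triangle \emph{commute} automatically makes it \emph{precise}. Throughout I read the displayed condition as asking for a unique \emph{lifting} $h'$ of $h$ (that is, with $\p(h') = h$) satisfying $h'f = g$; the genuine simplification relative to Definition \ref{newdefn} is that the preciseness of the triangle $h'f = g$ has been dropped. One cannot also drop $\p(h') = h$: already for the identity fibration on $\mathsf{Par}$ a surjection admits several bare solutions of $h'f = g$, so uniqueness of an unconstrained $h'$ fails even though every map is prone. Separatedness is therefore used precisely to recover preciseness, not to recover the fibre condition.

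First I would prove the key lemma: if $\p$ is separated, $\p(h') = h$, $h'f = g$, and the base triangle $h\p(f) = \p(g)$ is precise, then $h'f = g$ is precise. Since $g = h'f$ we have $\rst{g} = \rst{h'f} \le \rst{h'}$, so $\rst{h'}$ and $\rst{g}$ are two restriction idempotents on $Y$. Applying $\p$, and using preciseness of the base triangle (which, by Definition \ref{defn:precise}, gives $\rst{\p(g)} = \rst{h}$), yields
\[ \p(\rst{h'}) = \rst{\p(h')} = \rst{h} = \rst{\p(g)} = \p(\rst{g}). \]
Thus $\rst{h'}$ and $\rst{g}$ have the same image under $\p$, so separatedness forces $\rst{h'} = \rst{g}$; that is, the triangle is precise.

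With this in hand the backward direction is quick. Assuming the simplified condition, and given $g$ and $h$ with $h\p(f) = \p(g)$ precise, I obtain the unique lifting $h'$ of $h$ with $h'f = g$; the key lemma upgrades this to a precise triangle, so $h'$ is a prone lift, which gives existence. For the uniqueness clause of proneness, any prone lift is in particular a lifting of $h$ satisfying $(-)f = g$, and such a map is unique by hypothesis; hence $f$ is $\p$-prone. The forward direction runs along the same lines: if $f$ is $\p$-prone then the prone lift $\widetilde{h}$ is a lifting of $h$ with $\widetilde{h}f = g$, establishing existence in the simplified condition, while any lifting $h'$ of $h$ with $h'f = g$ is precise by the key lemma, hence is a genuine prone lift, hence equals $\widetilde{h}$ by the uniqueness part of Definition \ref{newdefn}.

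The only real obstacle is the key lemma itself, and more precisely recognising that separatedness is exactly what is needed to trade the commutativity of the top triangle for its preciseness. The comparison $\rst{g}\le\rst{h'}$ is not what does the work (it holds for free); what matters is that $\p$ sends both $\rst{h'}$ and $\rst{g}$ to the \emph{same} restriction idempotent $\rst{h}$, at which point injectivity of $\p$ on the semilattice $\O(Y)$ collapses them. Once that trade is available, both implications are formal.
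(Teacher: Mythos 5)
Your proof is correct and takes essentially the same route as the paper: the paper's entire argument is your ``key lemma'' --- since $\p(\rst{h'}) = \rst{h} = \rst{\p(g)} = \p(\rst{g})$, separatedness forces $\rst{h'} = \rst{g}$, so commutativity of the top triangle automatically upgrades to preciseness. Your additional remarks (spelling out both directions, and noting that the condition $\p(h') = h$ must be retained, as the reading of the statement) are sound elaborations of what the paper leaves implicit.
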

\begin{proof}
The only change in the ordinary definition is the requirement that the triangle in $\E$ be precise.  However, if $h'f = g$, then $\rs{p(h')} = \rs{h} = \rs{p(g)}$, so since $\p$ is separated, $\rs{h'} = \rs{g}$ is guaranteed.
\end{proof}

These functors also have a useful factorization property of prone arrows:

\begin{lemma}\label{lemma:left_factor_separated}
Suppose $\p\colon \E \to \B$ is separated.  If 
\[	 \xymatrix{X \ar[dr]^{f} \ar[d]_{f_1} &  \\ X' \ar[r]_{f_2} & X''} \]
is a precise triangle in $\E$ and $f$ and $f_2$ are prone, then $f_1$ is prone. 
\end{lemma}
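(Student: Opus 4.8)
The plan is to verify that $f_1$ is prone by using the simplified criterion available for separated semifunctors (Lemma~\ref{lemma:prone_for_separated}): I must show that for any $g\colon Y \to X'$ and any $h\colon \p(Y) \to \p(X)$ with $h\p(f_1) = \p(g)$ precise, there is a unique lift $h'\colon Y \to X$ over $h$ with $h'f_1 = g$. The strategy is to push the lifting problem one step further along the triangle, solve it with the proneness of $f = f_1f_2$, and then cancel $f_2$ using the proneness of $f_2$. As a preliminary, I note that applying $\p$ to the given precise triangle $f_1f_2 = f$ yields $\p(f_1)\p(f_2) = \p(f)$, which is again precise in $\B$ since $\p$ is a restriction semifunctor; in particular $\rst{\p(f_1)} = \p(\rst{f_1}) = \p(\rst{f}) = \rst{\p(f)}$.

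For existence, I would form $gf_2\colon Y \to X''$ and take $h$ as the candidate base map. The triangle $h\p(f) = \p(gf_2)$ commutes, since $h\p(f) = h\p(f_1)\p(f_2) = \p(g)\p(f_2) = \p(gf_2)$, and it is precise: by Lemma~\ref{Jaws}(i) it suffices to check $h\rst{\p(f)} = h$, which holds because $\rst{\p(f)} = \rst{\p(f_1)}$ and $h\rst{\p(f_1)} = h$ (the latter being preciseness of the given base triangle, again via Lemma~\ref{Jaws}(i)). Since $f$ is prone, this produces a unique $h'\colon Y \to X$ over $h$ with $h'f = gf_2$ a precise triangle in $\E$.

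It then remains to show $h'f_1 = g$. Both $h'f_1$ and $g$ lie over $\p(g)$, and they satisfy $(h'f_1)f_2 = h'f = gf_2 = g f_2$, so to conclude equality from the proneness of $f_2$ I need the base triangle $\p(g)\p(f_2) = \p(gf_2)$ to be precise. This is the crux of the argument, and I expect to obtain it by transporting the $\E$-level preciseness of $h'f = gf_2$ down through $\p$: that preciseness gives $\rst{h'} = \rst{gf_2}$, whence $\rst{\p(g)} = \rst{h} = \rst{\p(h')} = \p(\rst{h'}) = \p(\rst{gf_2}) = \rst{\p(gf_2)}$, using $\rst{h} = \rst{\p(g)}$ from the original precise triangle. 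With this preciseness established, the proneness of $f_2$ forces $h'f_1 = g$. Finally, uniqueness is immediate: any $h''$ over $h$ with $h''f_1 = g$ satisfies $h''f = h''f_1f_2 = gf_2$, so $h'' = h'$ by uniqueness of the lift through the prone map $f$. Thus the main obstacle is precisely the preciseness of $\p(g)\p(f_2) = \p(gf_2)$, which is exactly where separatedness is used — the equation $\rst{h'} = \rst{gf_2}$ in $\E$ transports cleanly to $\B$ only because the restriction structure is faithfully reflected along the triangle.
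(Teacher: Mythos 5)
Your proof is correct and follows essentially the same route as the paper's: post-compose with $f_2$, lift $h$ through the prone composite $f = f_1f_2$ (after checking the outer base triangle is precise), cancel $f_2$ using its proneness over the precise base triangle $\p(g)\p(f_2) = \p(gf_2)$, and obtain uniqueness from the uniqueness of lifts through $f$. One caveat on your closing remark: pushing $\rst{h'} = \rst{gf_2}$ down to $\B$ uses only that $\p$ preserves restrictions, not separatedness; separatedness actually enters through Lemma \ref{lemma:prone_for_separated}, which is what allows you both to verify proneness of $f_1$ without the top-triangle preciseness condition and to invoke the uniqueness clauses for $f_2$ and $f$ on fill-ins that are not known in advance to form precise triangles.
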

\begin{proof}
Throughout this proof we will use the fact that when looking at whether an arrow is prone for a separated semifunctor, we don't need to consider precise-ness of the top triangle (see Lemma \ref{lemma:prone_for_separated}).  

Thus, we need to show that for $g\colon Y \to X'$ with $h\colon p(Y) \to p(X)$ such that $hp(f_1) = p(g)$ is precise, we have a unique fill-in $h'\colon Y \to X$.  Consider this diagram, extended to the right by post-composing with $f_2$:
\[ \xymatrix{Y \ar@{..>}[d]_{h'}  \ar[dr]^{g} \ar@/^1pc/[drr]^{gf_2} & &\ar@{}[drr]|{\textstyle\mapsto}&& \p(Y) \ar[dr]^{\p(g)} \ar[d]_{h} \ar@/^1pc/[drr]^{\p(gf_2)} & &
\\
X \ar[r]_{f_1} & X' \ar[r]_{f_2} & X'' &&\ \p(X) \ar[r]_{\p(f_1)} & \p(X') \ar[r]_{\p(f_2)} & \p(X')}
\]
Now, the outer triangle in the base is also precise since
	\[ \rs{\p(gf_2)} = \rs{h\p(f_1)\rs{\p(f_2)}} = \rs{h\p(f_1)} = \rs{\p(g)} = \rs{h} \]
where we have used the fact that $f = f_1f_2$ is precise to give $f_1\rs{f_2} = f_1$.

Thus, since $f = f_1f_2$ is prone, there is a unique $h'\colon Y \to X$ such that $\rs{h'} = \rs{gf_2}$ and $h'f_1f_2 = gf_2$.  We need to show that $h'$ makes the inner triangle commute; that is, we need $h'f = g$.

However, $\p(h'f_1) = h\p(f_1) = \p(g)$, so since we have $h'f_1f_2 = gf_2$ and $f_2$ is prone, $h'f = g$.  Uniqueness of $h'$ then follows by uniqueness of factorizations for $f_1f_2$.  
\end{proof}

\begin{corollary}\label{cor:res_monic_prone}
If $\p\colon \E \to \B$ is separated, then any restriction monic in $\E$ is prone.
\end{corollary}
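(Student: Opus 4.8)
The plan is to identify a restriction monic with a total partial isomorphism and then invoke the equivalence already established in Proposition \ref{prop:separated_equivalences}. Recall from the discussion of $\M$-categories and $r$-split restriction categories that the restriction monics are precisely those partial isomorphisms that are total (equivalently, the restriction sections). Since $\p$ is assumed separated, Proposition \ref{prop:separated_equivalences} tells us that \emph{every} partial isomorphism in $\E$ is $\p$-prone. A restriction monic is in particular a partial isomorphism, so it is prone. This is essentially the whole argument.

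Because the corollary is placed immediately after Lemma \ref{lemma:left_factor_separated}, I would also record the alternative derivation that exhibits that lemma in action, as it is illuminating. Given a restriction monic $m\colon X \to Y$ with partial inverse $m^{(-1)}$, totality of $m$ gives $m\,m^{(-1)} = \rst{m} = 1_X$, so the identity factors as
\[ \xymatrix{X \ar[dr]^{1_X} \ar[d]_{m} & \\ Y \ar[r]_{m^{(-1)}} & X} \]
This triangle is precise in the sense of Definition \ref{defn:precise}, since its overall map $1_X$ satisfies $\rst{1_X} = 1_X = \rst{m}$. The composite $1_X$ is $\p$-prone by Lemma \ref{composites-prones}, and the right factor $m^{(-1)}$ is a restriction retraction (its section is $m$, since $m\,m^{(-1)} = 1_X$ and $m^{(-1)}m = \rst{m^{(-1)}}$), hence $\p$-prone by Lemma \ref{restriction-retraction}.ii. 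Applying Lemma \ref{lemma:left_factor_separated} to this precise factorization, with both the composite and the right factor prone, yields that the left factor $m$ is prone.

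The only point requiring care is matching the definition of a restriction monic to the hypotheses of the cited results: one must use totality of $m$ both to identify it as a partial isomorphism and to ensure preciseness of the triangle $1_X = m\,m^{(-1)}$. There is no genuine obstacle here, since the substantive work was already carried out in Proposition \ref{prop:separated_equivalences} (for the first route) and in Lemma \ref{lemma:left_factor_separated} together with Lemma \ref{restriction-retraction} (for the second). This corollary is merely a specialization of those results.
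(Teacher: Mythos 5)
Your proposal is correct, and it actually contains two valid arguments. Your primary route is genuinely different from the paper's: you identify a restriction monic as a total partial isomorphism and then quote Proposition \ref{prop:separated_equivalences}, which already says that separatedness is equivalent to all partial isomorphisms being prone. This is sound (the identification of restriction monics with total partial isomorphisms, equivalently restriction sections, is stated in the paper's background section), and it buys brevity: the corollary becomes a one-line specialization of the equivalence proposition, with no need for Lemma \ref{lemma:left_factor_separated} at all. Your ``alternative derivation'' is, word for word, the paper's own proof: factor $1_X = m\,m^{(-1)}$ as a precise triangle, note that the identity is prone (Lemma \ref{composites-prones}) and that the restriction retraction $m^{(-1)}$ is prone (Lemma \ref{restriction-retraction}.ii), and apply the left-factor Lemma \ref{lemma:left_factor_separated}. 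What the paper's route buys is exactly what you guessed from the placement of the corollary: it exhibits the left-factor lemma in action immediately after it is proved, and it works directly from the section/retraction description of restriction monics without invoking the partial-isomorphism characterization. Your verification of the side conditions (totality of $m$ giving preciseness of the triangle, and $m^{(-1)}$ being a restriction retraction with section $m$) is accurate, so there is no gap in either route.
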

\begin{proof}
Suppose we have a restriction monic $m\colon X \to Y$ in $\E$.  Then by assumption there is a restriction retraction $r\colon Y \to X$ so that $mr = 1$; thus,
	\[ \xymatrix{X \ar[dr]^{1} \ar[d]_{m} & \\ Y \ar[r]_{r} & X} \]
is a precise triangle.  By Lemma \ref{restriction-retraction}, identities and restriction retractions are always prone, so by the previous result, $m$ is also prone.
\end{proof}

Separated latent fibrations have the useful property that one can (latently) pullback a prone arrow along a subvertical arrow:

\begin{lemma}\label{lemma:pullback_prone_subvertical}
  Suppose $\p \colon \E \to \B$ is a separated latent fibration.  Then every cospan 
  	\[ B \to^{c} C \from^{v} A \]
with $v$ subvertical and $c$ prone has a corresponding   latent pullback: 
  \[
    \xymatrix{
      U \ar[r]^{c'} \ar[r] \ar[d]_{w} & A \ar[d]^{v} \\
      B \ar[r]_{c} \ar[r] & C
    }
  \]
  where $w$ is subvertical and $c'$ is prone.
\end{lemma}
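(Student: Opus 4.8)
The plan is to first pull back downstairs in $\B$, where the right-hand leg becomes a restriction idempotent, and then lift the resulting two legs back up into $\E$, using the prone arrow $c$ together with a freshly chosen prone arrow over the other base leg. Separatedness will be used repeatedly to transport equalities of restriction idempotents from $\B$ up to $\E$. Concretely: since $v$ is subvertical, $\p(A)=\p(C)$ and $e:=\p(v)$ is a restriction idempotent on $\p(A)$. Applying Lemma~\ref{latent-pullbacks}.iii to the cospan $\p(A)\to^{e}\p(C)\from^{\p(c)}\p(B)$, taking the partial isomorphism to be $e$ (which is its own partial inverse), yields a latent pullback in $\B$ whose apex is $\p(B)$, with legs $\p(c)e\colon\p(B)\to\p(A)$ and $\rst{\p(c)e}\colon\p(B)\to\p(B)$. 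This is exactly the shape the image under $\p$ of the sought square should have, with $\p(c')=\p(c)e$ and $\p(w)=\rst{\p(c)e}$.

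Next I would lift. Because $e$ and $\p(1_A)$ are commuting restriction idempotents with $\p(1_A)e=e$ (apply $\p$ to $1_Av=v$), we have $\p(c)e=\p(c)e\,\p(1_A)$, so a prone lift $c'\colon U\to A$ of $\p(c)e$ exists, with $\p(U)=\p(B)$ and $c'$ prone by construction. For the left leg, observe that $\rst{\p(c)e}\,\p(c)=\p(c)e=\p(c'v)$ is a precise triangle in $\B$ (using \textbf{[R.4]} and $\p(c'v)=\p(c)e\cdot e=\p(c)e$), so prone-ness of $c$ gives a unique $w\colon U\to B$ over $\rst{\p(c)e}$ with $wc=c'v$ precise. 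Since $\p(w)=\rst{\p(c)e}$ is a restriction idempotent, $w$ is subvertical, and we obtain the commuting square $c'v=wc$.

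It then remains to verify the latent pullback conditions. Preciseness splits into $w\rst c=w$, which is immediate from Lemma~\ref{Jaws}.i applied to $wc=c'v$, and $c'\rst v=c'$: here one always has $c'\rst v\leq c'$, while $\p(\rst{c'\rst v})=\rst{\p(c)e}=\p(\rst{c'})$, so separatedness (Proposition~\ref{prop:separated_equivalences}) forces $\rst{c'\rst v}=\rst{c'}$ and hence equality. For the universal property I would take an $e_0$-commuting square over $v$ and $c$ (so $e_0x_1c=e_0x_0v$), apply $\p$, and feed it to the base latent pullback to get a unique $k_0\colon\p(X)\to\p(U)$; a short calculation gives $k_0=\p(e_0x_1)$ and makes $k_0\,\p(c')=\p(e_0x_0)$ a precise triangle. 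Lifting through the prone $c'$ produces a unique $k\colon X\to U$ with $\p(k)=k_0$ and $kc'=e_0x_0$ precise, hence $\rst k=e_0$, $kc'\leq x_0$, and $k\rst{c'}=k$. To obtain the remaining $kw\leq x_1$ and $k\rst w=k$ I would prove $kw=e_0x_1$ by exhibiting both as the unique lift, through the prone $c$, of the precise base triangle $k_0\,\p(c)=\p(e_0x_1c)$, with separatedness supplying $\rst{kw}=e_0$. Uniqueness of $k$ then follows since any competitor $k'$ has $\p(k')=k_0$ by uniqueness in the base and $k'c'=e_0x_0$ precise, so $k'=k$ by uniqueness of prone lifts.

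The main obstacle is that essentially none of the defining equalities of a latent pullback upstairs — preciseness, and the inequalities $kc'\leq x_0$, $kw\leq x_1$ — can be read off directly in $\E$, because the partiality of $v$ (it is not merely an idempotent lift) means these identities genuinely hold only after applying $\p$. Separatedness is exactly the hypothesis that allows each such identity, once checked in $\B$ and seen to concern restriction idempotents, to be pulled back up to $\E$. The craft of the proof is therefore to arrange every upstairs claim as a base-level preciseness statement plus one application of separatedness, and to lift the two legs in the right order so that prone-ness of $c$ and of $c'$ can each be invoked cleanly in the universal property.
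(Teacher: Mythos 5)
Your proposal is correct and follows essentially the same route as the paper's proof: take $c'$ to be a prone lift of $\p(c)\p(v)$, obtain the subvertical leg $w$ from the universal property of the prone arrow $c$, construct the mediating map by lifting through $c'$, establish the $w$-side condition via uniqueness of lifts through $c$ (with separatedness transporting the restriction-idempotent equalities upstairs), and derive uniqueness of the mediating map from proneness of $c'$. The only cosmetic differences are that you verify the $e_0$-commuting-square form of the universal property while the paper uses the equivalent original form, and that you organize the base-level data through the latent pullback supplied by Lemma \ref{latent-pullbacks}.iii rather than writing the base maps down directly.
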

\begin{proof}
Since $v$ is subvertical, $p(A) = p(C)$, and so the composite
	\[ \p(B) \to^{\p(c)} \p(A) \to^{\p(v)} \p(C) \]
is well-defined; we take $c'\colon U \to A$ to be a prone lift of this map to $A$.

Then let $w\colon U \to B$ be the unique arrow from the universal property of $c$:
\[ \xymatrix{ U \ar@{..>}[d]_{w} \ar[dr]^{c'v} & ~\ar@{}[drr]|{\textstyle\mapsto}&&& \p(U) \ar[dr]^{\p(c'v)} \ar[d]_{\rs{\p(c)\p(v)}} 
\\ 
B \ar[r]_{c}  & C &&&\p(B) \ar[r]_{\p(c)} & \p(C)} \]
(Note that the triangle in $\B$ is precise since $\rs{\p(c'v)} = \rs{\p(c)\p(v)\p(v)} = \rs{\p(c)\p(v)}$).  

Thus, $w$ as above exists, and has the property that $\rs{w} = \rs{c'v}$.  Moreover, since $\rs{\p(w)} = \rs{\p(c)\p(v)} = \rs{\p(c')}$, $\rs{w} = \rs{c'}$ as $\p$ is separated. Thus, the square is a potential candidate to be a latent pullback.

We will check that the original definition (see \cite[pg. 460]{journal:guo-range-join}) of latent pullback holds, as it involves one fewer piece of data.   So, assume we have $a\colon X \to A$ and $b\colon X \to B$ so that $av = bc$.  We need to find a unique $b'$ as indicated:
\[ \xymatrix{X \ar@/^0.7pc/[drr]^{a}_{\leq}  \ar@/^-0.7pc/[ddr]_{b}^{\geq} \ar@{-->}[dr]|{b'} \\
                    & U \ar[d]_w \ar[r]_{c'} & A \ar[d]^{v} \\  & B \ar[r]_c & C} \]
with the additional properties that $\rs{b'} = \rs{bc} = \rs{av}$ and $b'\rs{w} = b'\rs{c'} = b'$.  

Let $b'$ be the unique arrow arising from the universal property of $c'$:
\[ \xymatrix{ X \ar@{..>}[d]_{b'} \ar[dr]^{a\rs{v}} & ~\ar@{}[drr]|{\textstyle\mapsto}&&& \p(X) \ar[dr]^{\p(a\rs{v})} \ar[d]_{\p(b)\p(w)} 
\\ 
U \ar[r]_{c'}  & A &&& \p(U) \ar[r]_{\p(c)\p(v)} & \p(C)} \]
Note that $\p(b)\p(w)$ is well-defined since $w$ is subvertical.  The triangle in $\B$ commutes since
	\[ \p(b)\p(w)\p(c)\p(v) = \p(b)\rs{\p(c)\p(v)}\p(c)\p(v) = \p(b)\p(c)\p(v) = \p(a)\p(v)\p(v) = \p(a)\p(v) = \p(a\rs{v}) \]
and is precise since
	\[ \rs{\p(b)\p(w)} = \rs{\p(b)\rs{\p(c)\p(v)}} = \rs{\p(b)\p(c)\p(v)} = \rs{\p(a)\p(v)\p(v)} = \rs{\p(a)\p(v)}. \]
Thus, such a $b'$ does exist, and has the property that $\rs{b'} = \rs{a\rs{v}} = \rs{av}$.  

It immediately satisfies many of the required properties: 
	\[ b'c' = a\rs{v} \leq a, \]
and
	\[ \rs{b'} = \rs{av} \]
and 
	\[ b'\rs{c'} = \rs{b'c'}b' = \rs{av}b' = \rs{b'}b' = b'. \]
	
However, checking that $b'w \leq b$, that is, $\rs{b'w}b = b'w$, takes a bit more work.  For this, we will use the cancellation property of the prone map $c$: if $\rs{b'w}bc = b'wc$, $\p(\rs{b'w}b) = \p(b'w)$, and the relevant triangle in the base is precise, then $\rs{b'w}b = b'w$ follows.

For the first requirement, $b'wc = b'c'v = a\rs{v}v = av$ while $\rs{b'w}bc = \rs{b'w}av$.  However, we also have
	\[ \p(\rs{b'w}) = \rs{\p(b')\p(w)} = \rs{\p(b)\p(w)\p(w)} = \rs{\p(b)\p(w)} = \rs{\p(a)\p(v)} \]
by above; thus since $\p$ is separated we get $\rs{b'w}av = \rs{av}av = av$.  For the second requirement,
	\[ \p(\rs{b'w}b) = \rs{\p(b)\p(w)} \p(b) = \p(b)\rs{p(w)} = \p(b)\p(w)\p(w) = \p(b'w). \]
For preciseness of the triangle in the base,
	\[ \rs{\p(b'w)} = \rs{\p(b)\p(w)} = \rs{\p(a)\p(v)} \]
by above.  Thus, by proneness of $c$, $\rs{b'w}b = b'w$, and so $b'w \leq b$.

Thus $b'$ satisfies all the required properties; it remains to show that it is unique.  Thus, suppose we have 
\[ \xymatrix{X \ar@/^0.7pc/[drr]^{a}_{\leq}  \ar@/^-0.7pc/[ddr]_{b}^{\geq} \ar[dr]|{d} \\
                    & U \ar[d]_w \ar[r]_{c'} & A \ar[d]^{v} \\  & B \ar[r]_c & C} \]
with $\rs{d} = \rs{bc} = \rs{av}$ and $d = d\rs{c'} = d\rs{w}$.  

We want to show that $d = b'$; we will use the universal property of $c'$.  Thus, we need to show that $dc' = b'c'$, $\p(d) = \p(b')$, and the relevant triangle in the base is precise.  For the first requirement, since $dc' \leq a$,
	\[ dc' = \rs{dc'}a = \rs{d}a = \rs{av}a = a\rs{v} = b'c'. \]
For the second requirement, since $d = d\rs{w}$, and $dw \leq b$,
	\[ \p(d) = \p(dw)\p(w) = \p(\rs{dw}b)\p(w) = \p(\rs{bc}b)\p(w) = \p(b\rs{c})\p(w) = \p(b)\p(w) = \p(b'), \]
with the second-last equality since $w\rs{c} = w$.  Finally, since $\rs{d} = \rs{av}$, the relevant triangle in $\E$ and hence in $\B$ is precise.

Thus, proneness of $c'$ gives $d = b'$, and we have completed the proof that the square is a latent pullback.

\end{proof}

\begin{proposition}
Separated latent fibrations are closed under composition and pullback.
\end{proposition}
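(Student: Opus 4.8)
The plan is to leverage the fact that closure under composition and pullback has already been established at the level of bare latent fibrations: the composite of latent fibrations is a latent fibration by Lemma \ref{lemma:prone_with_comp_functors} (and its corollary), and the pullback of a latent fibration is a latent fibration by Lemma \ref{lemma:pullback_prone} and Corollary \ref{prop:pullback_latent}. Thus the only new content is to verify that the \emph{separated} condition is inherited in each case. The single fact doing all the work is that every restriction semifunctor preserves restriction idempotents (since it preserves restrictions and composition), so that a restriction idempotent is carried to a restriction idempotent along any of the functors involved.

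For composition, suppose $\q\colon \F \to \E$ and $\p\colon \E \to \B$ are both separated, and consider the composite latent fibration $\q\p\colon \F \to \B$. Let $e,e'$ be restriction idempotents on some $X \in \F$ with $\q\p(e) = \q\p(e')$, i.e.\ $\p(\q(e)) = \p(\q(e'))$. First I would note that $\q(e)$ and $\q(e')$ are restriction idempotents on $\q(X)$. Since $\p(\q(e)) = \p(\q(e'))$ and $\p$ is separated, this forces $\q(e) = \q(e')$; and since $\q$ is separated, this in turn forces $e = e'$. Hence $\q\p$ is separated, and being a latent fibration it is a separated latent fibration.

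For pullback, I would work with the pullback square of $\p\colon \E \to \B$ along a restriction semifunctor $F\colon \X \to \B$, and show that the projection $\p_0\colon \W \to \X$ is separated. Take two restriction idempotents $(e_0,e_1)$ and $(e_0',e_1')$ on an object $(X,E)$ of $\W$ with $\p_0(e_0,e_1) = \p_0(e_0',e_1')$, that is, $e_0 = e_0'$. Because restriction and composition in $\W$ are computed componentwise, each of $e_0,e_1,e_0',e_1'$ is a restriction idempotent in its respective category, and the defining compatibility condition on arrows of $\W$ gives $\p(e_1) = F(e_0) = F(e_0') = \p(e_1')$. Since $\p$ is separated and $e_1,e_1'$ are restriction idempotents on $E$, we conclude $e_1 = e_1'$, whence $(e_0,e_1) = (e_0',e_1')$ and $\p_0$ is separated.

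The arguments are essentially pure bookkeeping, so there is no genuine obstacle; the one point requiring care is the componentwise analysis in the pullback case -- confirming that a restriction idempotent of $\W$ has both components restriction idempotents, and that the compatibility condition $F(e_0) = \p(e_1)$ is precisely what transports the hypothesis $e_0 = e_0'$ across to the $\E$-component. Everything else is an immediate consequence of separatedness being a condition purely about restriction idempotents, which restriction semifunctors always preserve.
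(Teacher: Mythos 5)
Your proof is correct, but it takes a different route from the paper's. The paper's own proof is a one-liner that cites Lemma \ref{lemma:prone_with_comp_functors} and Lemma \ref{lemma:pullback_prone}: implicitly, it converts separatedness into the statement that all restriction idempotents are prone (Proposition \ref{prop:separated_equivalences}), transfers prone-ness of restriction idempotents along composition and pullback using those two lemmas, and then converts back. You instead verify the definition directly: separatedness is injectivity of $\p$ on restriction idempotents over each object, restriction semifunctors preserve restriction idempotents, and restriction in the pullback category $\W$ is computed componentwise, so the hypothesis $e_0 = e_0'$ transports across the compatibility condition $F(e_0) = \p(e_1)$ to give $\p(e_1) = \p(e_1')$ and hence $e_1 = e_1'$. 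Your argument is more elementary and self-contained --- it never invokes the prone characterization, and in fact shows that separatedness of arbitrary restriction semifunctors (latent fibration or not) is closed under composition and pullback, with the latent fibration property of the composite/pullback supplied separately by the paper's earlier corollaries, exactly as you note. What the paper's approach buys is uniformity: the same two prone-transfer lemmas simultaneously dispatch the analogous closure propositions for admissible and hyperconnected latent fibrations, so each proof reduces to the same citation. Both arguments are complete and correct; yours makes the bookkeeping explicit where the paper leaves it implicit.
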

\begin{proof}
This follows from Lemma \ref{lemma:prone_with_comp_functors} and Lemma \ref{lemma:pullback_prone}.
\end{proof}


\subsection{Hyperconnected latent fibrations}\label{sec:connected}

\begin{definition}
A semifunctor $\p\colon \E \to \B$ is said to be \textbf{hyperconnected} (and $\p$ is said to be a \textbf{hyperconnection}) if for any $X \in \E$, the map
	\[ \p_{|\O(X)}\colon \O(X) \to \{d \in \O(\p(X)): d\p(1_X) = d\} \]
sending $e \in \O(X)$ to $\p(e)$ is an isomorphism.  We say that a latent fibration $\p$ is a \textbf{hyperfibration} if $\p$ is a hyperconnection.  
\end{definition}
This is a generalization of the definition of hyperconnection (see \cite[pg. 39]{journal:rcats-enriched}) to restriction \emph{semi}functors.  

Before looking at examples, it will be useful to know the following:
\begin{proposition}\label{prop:hyperconnected_equivalence}
A semifunctor $\p\colon \E \to \B$ is hyperconnected if and only if it is separated and admissible.
\end{proposition}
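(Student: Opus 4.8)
The plan is to recognize that all three notions are statements about the single semilattice map $\p_{|\O(X)}\colon \O(X) \to \{d \in \O(\p(X)): d\p(1_X)=d\}$, and to reduce the biconditional to the elementary fact that a morphism of meet-semilattices is an isomorphism exactly when it is both injective and surjective. By definition, hyperconnected asks precisely that $\p_{|\O(X)}$ be an isomorphism for every $X$; separated asks precisely that it be injective for every $X$; so the whole content is that admissibility supplies surjectivity, and that the surjectivity must be witnessed by \emph{prone} idempotents.

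For the direction ``separated and admissible $\implies$ hyperconnected'', I would argue as follows. Separatedness gives injectivity of $\p_{|\O(X)}$ directly. For surjectivity, I would invoke Lemma \ref{reflection-fibration}: admissibility endows $\p_{|\O(X)}$ with a section $(\_)^*$, and possessing a section makes it surjective. Since a bijective morphism of meet-semilattices is automatically an isomorphism (its set-theoretic inverse preserves meets), $\p_{|\O(X)}$ is an isomorphism for every $X$; that is, $\p$ is hyperconnected.

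For the converse, ``hyperconnected $\implies$ separated and admissible'', injectivity of the isomorphism $\p_{|\O(X)}$ is exactly separatedness. The only genuinely nontrivial point is admissibility: surjectivity of $\p_{|\O(X)}$ produces, for each restriction idempotent $e$ on $\p(X)$ with $e\p(1_X)=e$, \emph{some} restriction idempotent $e'$ on $X$ with $\p(e')=e$, whereas admissibility demands that this lift be \emph{prone}. This is where the separatedness just established does the work: by Proposition \ref{prop:separated_equivalences}, when $\p$ is separated every restriction idempotent in $\E$ is automatically $\p$-prone, so $e'$ is the required prone restriction idempotent over $e$.

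The main obstacle is precisely this last gap between an arbitrary lift and a prone lift; once one notices that Proposition \ref{prop:separated_equivalences} converts separatedness into ``all restriction idempotents are prone'', the two halves fit together cleanly, and the remaining checks (that $\p_{|\O(X)}$ is indeed a semilattice map, and that a bijective semilattice map is an isomorphism) are routine.
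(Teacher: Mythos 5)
Your proposal is correct and follows essentially the same route as the paper's proof: both directions reduce to the semilattice map $\p_{|\O(X)}$ being monic (separatedness) and having a section (admissibility, via Lemma \ref{reflection-fibration}), and the converse closes the gap between an arbitrary lift and a prone lift exactly as the paper does, by applying Proposition \ref{prop:separated_equivalences} to the separatedness already established.
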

\begin{proof}
If $\p$ is hyperconnected, then it is monic on restriction idempotents, and so is separated.  Moreover, if $e\colon \p(X) \to \p(X)$ is a restriction idempotent in $\B$, then there is a restriction idempotent $e'\colon X \to X$ over it (since $\p$ is hyperconnected), and this is prone by Proposition \ref{prop:separated_equivalences}.

Conversely, if $\p$ is separated and admissible, then for any $X$, the map
	\[ \p_{|\O(X)}\colon \O(X) \to \{d \in \O(\p(X)): d\p(1_X) = e\} \]
is monic and has a section (by Lemma \ref{reflection-fibration}) thus is an isomorphism.
\end{proof}

Thus, by the results of the previous two sections, we have the following examples (and non-examples) of latent hyperfibrations:

\begin{example}\label{connected_examples}
\begin{enumerate}[(i)]
\item  The identity fibration $1_\X\colon \X\to\X$ is a hyperfibration.
	\item The strict simple fibration $\X[\X]$ is a hyperfibration, while in general the lax simple fibration $\X(\X)$ is not.  
	\item For any restriction category $\X$ with latent pullbacks, $\X^{\rightarrow} \to \X$ is a hyperfibration, while in general $\X^{\leadsto}$ is not.  
	\item The assemblies fibration is a hyperfibration. 
	\item The splitting fibration $\split(\X) \to \X$ is a hyperfibration.
	\item The propositions fibration and discrete fibrations are not a hyperfibration (in general) as they are not admissible.
\end{enumerate}
\end{example}

Table \ref{examples-properties} presents an overview of the examples of latent fibrations we have considered with their properties.
\begin{table}
\begin{center}
\begin{tabular}{|r|c|c|c||c|} \hline
Restriction semifunctor & Restriction Functor & Admissible  & Separated & Hyperfibration \\ \hline
$\split(\X) \to \X$ & $\times$ & \checkmark & \checkmark & \checkmark \\ \hline
$\X \x \Y \to \X$ & \checkmark & \checkmark & $\times$ & $\times$ \\ \hline
(lax) $\X(\X) \to \X$ & \checkmark & \checkmark & $\times$ & $\times$ \\ \hline
(strict) $\X[\X] \to \X$ & \checkmark & \checkmark & \checkmark & \checkmark \\ \hline
(lax) $\X^\leadsto \to \X$ & \checkmark & \checkmark & $\times$ & $\times$ \\ \hline
(strict) $\X^\rightarrow \to \X$ & \checkmark & \checkmark & \checkmark & \checkmark \\ \hline
$\O(\X) \to \X$ & \checkmark & $\times$ & \checkmark & $\times$ \\ \hline
${\sf Elt}(F) \to \X$ & \checkmark & $\times$ & \checkmark & $\times$ \\ \hline
${\sf Asm}(F) \to \X$ & \checkmark & \checkmark &  \checkmark & \checkmark \\ \hline
\end{tabular}
\end{center}
\caption{Properties of latent fibrations\label{examples-properties}}
\end{table}

Hyperconnections are particularly well-behaved when looking at prone arrows.   For a general restriction semifunctor, there can be prone maps over total maps or restriction monics which are themselves neither total nor restriction monic.  However, for a hyperconnection this is no longer the case, as {\em all} prone maps above a total map are total, and, similarly, {\em all\/} maps prone above a restriction monic are restriction monics.  In addition, any prone maps above a restriction retraction is a restriction retraction:

\begin{proposition}\label{prop:hyper_consequences}
If $\p\colon \E \to \B$ is a hyperconnection, then:
\begin{enumerate}[(i)]
	\item $\p$ reflects total maps;
	\item if $\alpha$ in $\E$ is prone over a partial isomorphism, then $\alpha$ is itself a partial isomorphism;
	\item if $s$ in $\E$ is prone over a restriction monic then $s$ is itself a restriction monic;
	\item if $r$ in $\E$ is prone over a restriction retraction, then $r$ is itself a restriction retraction;
	\item if $\p$ is a latent hyperfibration, then the total maps and restriction monics of $\p$ each determine a subfibration of $\p$.
\end{enumerate}
\end{proposition}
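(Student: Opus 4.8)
The plan is to reduce everything to the two defining features of a hyperconnection, namely that it is both separated and admissible (Proposition \ref{prop:hyperconnected_equivalence}); this gives me simultaneous access to uniqueness of restriction idempotents over a fixed base idempotent and to the existence of prone restriction-idempotent liftings. Part (i) is then immediate: since $\p$ is separated, Lemma \ref{lemma:separated_reflects_total} already states that $\p(f)$ total implies $f$ total, which is exactly the assertion that $\p$ reflects total maps.

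For (ii), suppose $\alpha\colon X \to Y$ is prone with $s := \p(\alpha)$ a partial isomorphism with partial inverse $s^{(-1)}$. I would first manufacture a candidate inverse $\beta\colon Y \to X$. Observe that $\rst{s^{(-1)}} = s^{(-1)}s$ is a restriction idempotent on $\p(Y)$ with $\rst{s^{(-1)}}\p(1_Y) = \rst{s^{(-1)}}$ (using $s = s\p(1_Y)$, which holds because $\alpha = \alpha 1_Y$). Admissibility then supplies a prone restriction idempotent $g\colon Y \to Y$ with $\p(g) = \rst{s^{(-1)}}$. The base triangle $s^{(-1)} s = \rst{s^{(-1)}} = \p(g)$ is precise, so proneness of $\alpha$ produces a unique $\beta\colon Y \to X$ over $s^{(-1)}$ with $\beta\alpha = g$ a precise triangle; preciseness forces $\rst{\beta} = \rst{g} = g$, giving the first partial-inverse equation $\beta\alpha = \rst{\beta}$.

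The remaining equation $\alpha\beta = \rst{\alpha}$ is the crux, and I would extract it from the uniqueness clause of proneness. Both $\rst{\alpha}$ and $\alpha\beta$ lie over $\rst{s}$ (for the latter, $\p(\alpha\beta) = s\,s^{(-1)} = \rst{s}$), so separation identifies their restrictions, $\rst{\alpha\beta} = \rst{\alpha}$, as both are restriction idempotents over $\rst{s}$. Combining this with $\beta\alpha = \rst{\beta} = g$ and axiom [R.4] gives $(\alpha\beta)\alpha = \alpha(\beta\alpha) = \alpha\rst{\beta} = \rst{\alpha\beta}\,\alpha = \rst{\alpha}\,\alpha = \alpha$, a precise triangle; since $\rst{\alpha}\,\alpha = \alpha$ is likewise a precise triangle sitting over the precise base triangle $\rst{s}\,s = s$, uniqueness of prone liftings forces $\alpha\beta = \rst{\alpha}$. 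Thus $\beta = \alpha^{(-1)}$ and $\alpha$ is a partial isomorphism. Parts (iii) and (iv) now follow from (i) and (ii) together with the fact that a restriction semifunctor preserves partial inverses, so $\p(\alpha^{(-1)}) = \p(\alpha)^{(-1)}$. If $s$ is prone over a restriction monic (a total partial isomorphism), then $s$ is a partial isomorphism by (ii) and $\p(s)$ total forces $s$ total by (i), so $s$ is a total partial isomorphism, i.e.\ a restriction monic. Dually, if $r$ is prone over a restriction retraction (a partial isomorphism with total partial inverse), then $r$ is a partial isomorphism by (ii) and $\p(r^{(-1)}) = \p(r)^{(-1)}$ is total, whence $r^{(-1)}$ is total by (i), so $r$ is a restriction retraction.

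Finally, for (v) I would package (i) and (iii). The total maps, and separately the restriction monics, of $\E$ and of $\B$ each form wide subcategories (closed under composition, containing identities). By (i) the prone lift of a total base map is again total, and by (iii) the prone lift of a restriction monic is again a restriction monic, so in each case the prone liftings remain inside the class; hence $\p$ restricts to a latent fibration on each, and, being a latent fibration between total-map categories, each is an ordinary fibration by Lemma \ref{latent-fibration-total}, included in $\p$ as a subfibration. I expect the genuine obstacle to be concentrated in (ii), specifically the identity $\alpha\beta = \rst{\alpha}$: the construction only directly yields $\beta\alpha = \rst{\beta}$, and pinning down $\alpha\beta$ requires using separation and the uniqueness of prone liftings over the precise base triangle $\rst{s}\,s = s$ in tandem, rather than a direct computation.
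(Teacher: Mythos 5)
Your proof is correct, and parts (i), (iii), (iv), and (v) follow essentially the same lines as the paper's (part (i) via Lemma \ref{lemma:separated_reflects_total}; (iii) and (iv) by combining (i) and (ii) with the characterization of restriction monics/retractions as total partial isomorphisms, resp.\ partial isomorphisms with total inverse; (v) by noting lifts and fill-ins stay in the relevant wide subcategory and invoking Lemma \ref{latent-fibration-total}). Where you genuinely diverge is in the crux of (ii), which you correctly identify as the equation $\alpha\beta = \rst{\alpha}$. The paper gets this by first showing the candidate inverse is itself prone (Lemma \ref{lemma:left_factor_separated}), hence that $\alpha\beta$ is prone (Lemma \ref{composites-prones}), and then invoking the mediating partial isomorphism between the two prone maps $\alpha\beta$ and $\rst{\alpha}$ (Lemma \ref{mediating_maps}.i) and proving that mediator is a restriction idempotent. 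You instead use separation to force $\rst{\alpha\beta} = \rst{\alpha}$ (both are restriction idempotents on $X$ over $\rst{\p(\alpha)}$), compute $(\alpha\beta)\alpha = \alpha$ via [R.4], and then observe that $\alpha\beta$ and $\rst{\alpha}$ are both precise fill-ins, over the precise base triangle $\rst{\p(\alpha)}\,\p(\alpha) = \p(\alpha)$, for the prone map $\alpha$ itself, so the uniqueness clause in the definition of proneness identifies them. Your route is more elementary and self-contained -- it needs only the definition of proneness plus separation, bypassing three of the paper's structural lemmas -- while the paper's route has the side benefit of exhibiting the inverse $\alpha'$ as a prone map, a fact it reuses elsewhere. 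Both arguments are sound.
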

\begin{proof}
\begin{enumerate}[(i)]
	\item We saw in Lemma \ref{lemma:separated_reflects_total} that separated semifunctors reflect total maps.
	\item By Proposition \ref{prop:hyperconnected_equivalence}, $\p$ is admissible, so there is a prone restriction idempotent $e$ over $\widehat{\p(\alpha)} = \rs{\p(\alpha)^{(-1)}}$.  Let $\alpha'\colon X \to Y$ be the lift of $\p(\alpha)^{(-1)}$:
\[ \xymatrix{X \ar[dr]^e \ar@{..>}[d]_{\alpha'} & ~\ar@{}[drr]|{\textstyle\mapsto}&&&\p(X) \ar[dr]^{\widehat{\p(\alpha)}} \ar[d]_{\p(\alpha)^{(-1)}} 
\\ 
Y \ar[r]_{\alpha} & X &&& \p(Y) \ar[r]_{\p(\alpha)} & \p(X)} \]
Thus $\alpha' \alpha = e = \rs{e} = \rs{\alpha'}$ since the triangle is precise.  We want to show that $\alpha'$ is the partial inverse of $\alpha$, and so we also need $\alpha \alpha' = \rs{\alpha}$.

By definition, $e$ and $\alpha$ are prone, so by Lemma \ref{lemma:left_factor_separated}, $\alpha'$ is also prone, so by Lemma \ref{composites-prones}, $\alpha\alpha'$ is prone.  Thus $\alpha \alpha'$ and $\rs{\alpha}$ are prone over the same map ($\p(\rs{\alpha})$), and so by Lemma \ref{mediating_maps}.i, there is a mediating partial isomorphism $\beta\colon Y \to Y$ 
	\[ \xymatrix{ Y \ar[r]^{\beta} \ar[dr]_{\rs{\alpha}} & Y \ar[d]^{\alpha \alpha'} \\ & Y} \]
with $\rs{\beta} = \rs{\alpha}$ and $\rs{\beta^{(-1)}} = \rs{\alpha \alpha'}$.  But then we have
	\[ \beta = \beta \beta^{(-1)}\beta = \beta \rs{\beta^{(-1)}} = \beta \alpha \alpha' = \rs{\alpha}. \]
So $\beta$ is a restriction idempotent, and so its partial inverse $\beta^{(-1)}$ is itself, namely $\rs{\alpha}$.  Thus $\alpha \alpha^{(-1)} = \beta^{(-1)} \rs{\alpha} = \rs{\alpha}$, as required.

	\item Suppose $s\colon Y \to X$ is prone over a restriction monic $\p(s)$.   Then in particular $s$ is prone over a partial isomorphism, so by (ii), it has a partial inverse $r$, so that $sr = \rs{s}$ and $rs = \rs{r}$.  But since $\p(s)$ is a restriction monic, it is total, so by (i), $s$ is total, and thus $sr = \rs{s} = 1$.  Thus $s$ is a restriction monic.
	\item Similar proof to (iii).
	\item Restricting the fibration to total maps gives a trivial latent fibration which is a fibration.  Prone maps become Cartesian: so, in particular, restriction 
monics, being partial isomorphisms and therefore prone, are Cartesian.
\end{enumerate}
\end{proof}

Given any admissible latent fibration ${\sf p}\colon \E \to \B$ we may always extract a latent hyperfibration $\widehat{\p}\colon \widehat{\E} \to \B$ by restricting the restriction idempotents to the prone restriction idempotents and the maps to those which pullback prone restrictions to prone restrictions: that is, $f \in \widehat{\E}$ in case for every prone restriction idempotent, $e$ on the codomain of $f$,  $\rst{fe}$ is a prone restriction idempotent (recall from Lemma \ref{reflection-fibration} that the prone restrictions for an admissible semifunctor form a (reflective) subsemilattice).  We then have:

\begin{proposition}
If ${\sf p}\colon \E \to \B$ is an admissible latent fibration then 
\[ \xymatrix{~\widehat{\E}~ \ar[dr]_{\widehat{\p}} \ar@{>->}[rr] && \E \ar[dl]^{{\sf p}} \\ & \B} \]
commutes, and $\widehat{\p}$ is a latent hyperfibration.
\end{proposition}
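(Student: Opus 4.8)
The plan is to verify that $\widehat{\E}$ is a restriction category whose restriction idempotents are exactly the prone restriction idempotents of $\E$, to observe that the triangle commutes because $\widehat{\p}$ is nothing but the restriction of $\p$ to the subcategory $\widehat{\E}$ (hence a restriction semifunctor), and then, invoking Proposition \ref{prop:hyperconnected_equivalence}, to reduce the claim ``$\widehat{\p}$ is a latent hyperfibration'' to the three assertions that $\widehat{\p}$ is separated, admissible, and a latent fibration.

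First I would dispatch the routine structural facts. Identities lie in $\widehat{\E}$ since $\rst{1_X e}=e$; composition is closed because for a prone restriction idempotent $e$ on the codomain of $fg$ the standard identity $\rst{fge}=\rst{f\,\rst{ge}}$, together with $g\in\widehat{\E}$ (so $\rst{ge}$ is a prone restriction idempotent) and $f\in\widehat{\E}$, shows $\rst{fge}$ is a prone restriction idempotent; and restrictions are closed because $\rst f$ is a prone restriction idempotent (take $e=1$) and a product of prone restriction idempotents is again prone, since composites of prone maps are prone (Lemma \ref{composites-prones}, cf.\ the reflective subsemilattice of Lemma \ref{reflection-fibration}). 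Taking $e=1$ throughout shows the restriction idempotents of $\widehat{\E}$ are precisely the prone restriction idempotents of $\E$. Separatedness of $\widehat{\p}$ is then immediate: two such idempotents with the same image under $\widehat{\p}=\p$ are equal by Lemma \ref{mediating_maps}.iii. For admissibility, a base restriction idempotent $e$ with $e\p(1_X)=e$ has, by admissibility of $\p$, a prone restriction idempotent $e^{*}$ over it in $\E$; this $e^{*}$ is a restriction idempotent of $\widehat{\E}$, hence is $\widehat{\p}$-prone by separatedness together with Proposition \ref{prop:separated_equivalences}.

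The genuinely delicate point is that $\widehat{\p}$ is a latent fibration. Given $X$ and $f\colon A\to\p(X)$ with $f\p(1_X)=f$, the naive choice---a $\p$-prone lift $c\colon X'\to X$ of $f$ taken in $\E$---need not lie in $\widehat{\E}$: by Lemma \ref{reflection-fibration} its restriction only satisfies $\rst c\le\rst{f}^{\,*}$ (the prone reflection of $\rst c$ at $X'$), and $\rst c$ may sit strictly below this, so $\rst c$ need not be a prone restriction idempotent. The repair is to replace $c$ by a prone lift $c_0$ of $f$ whose restriction is the full prone reflection $\rst{f}^{\,*}$; one checks such a $c_0$ lies in $\widehat{\E}$. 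I would then show $c_0$ is $\widehat{\p}$-prone: since $\widehat{\p}$ is separated, Lemma \ref{lemma:prone_for_separated} removes the preciseness requirement on the lifted triangle, so it suffices to solve the bare lifting problem for test data drawn from $\widehat{\E}$, which the $\E$-proneness of $c_0$ supplies, the comparison with other prone lifts over $f$ being a prone partial isomorphism by Lemma \ref{nearly_connected}.

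I expect the main obstacle to be exactly this construction: producing, from an arbitrary $\p$-prone lift, a prone lift $c_0$ over the same map whose restriction is the full prone reflection $\rst{f}^{\,*}$, and then confirming both that $c_0$ belongs to $\widehat{\E}$ and that the lifts $\tilde h$ it produces again belong to $\widehat{\E}$, so that the universal property is genuinely internal to the subcategory. (That this really is a subtlety, rather than an artefact, is clear from the fact that restrictions of prone maps are not automatically prone restriction idempotents in a merely admissible, non-separated latent fibration, so some such adjustment is unavoidable.) Once $c_0$ is in hand, assembling the pieces shows $\widehat{\p}$ is a separated, admissible latent fibration, and Proposition \ref{prop:hyperconnected_equivalence} then identifies it as a latent hyperfibration.
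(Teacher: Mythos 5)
Your treatment of the hyperconnection half is correct, and it is essentially an expanded version of what the paper does: the paper's entire proof consists of the remarks that $\widehat{\p}$ is ``clearly still admissible,'' that by construction every restriction idempotent of $\widehat{\E}$ is prone, and that Propositions \ref{prop:separated_equivalences} and \ref{prop:hyperconnected_equivalence} then yield the hyperconnection property. Your closure computations for $\widehat{\E}$, your separatedness argument via Lemma \ref{mediating_maps}.iii, and your admissibility argument are a faithful filling-in of those remarks. You are also right that the remaining issue --- that $\widehat{\p}$ actually \emph{has} prone lifts, i.e.\ is a latent fibration at all --- is the delicate point: notably, the paper's proof says nothing whatsoever about it.

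The genuine gap is exactly the step you flag as the main obstacle, and it cannot be closed: there is in general no prone lift $c_0$ of $f$ with $\rst{c_0}=\rst{f}^{\,*}$, nor indeed any map of $\widehat{\E}$ over $f$. First, the obvious manoeuvres fail: for any $\p$-prone lift $c$ of $f$ one has $\rst{c}\leq e^{*}:=\rst{f}^{\,*}$ by Lemma \ref{reflection-fibration}, hence $e^{*}c=\rst{c}\,c=c$, so composing with the prone reflection returns $c$ and cannot enlarge its restriction; and since any two prone lifts of $f$ at $X$ differ by a mediating partial isomorphism (Lemma \ref{mediating_maps}.i), whose inverse's restriction is unconstrained, there is no mechanism forcing some lift to have prone restriction. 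Second, here is an admissible latent fibration where the conclusion fails outright. Let $\B$ have two objects $A,B$, one non-identity arrow $f\colon A\to B$, and trivial restriction. Let $\E$ have objects $Y$ (over $A$) and $X$ (over $B$), with $\E(Y,Y)=\{e_U : U\subseteq \mathbb{N}\}$ the partial identities on $\mathbb{N}$, $\E(X,X)=\{1_X\}$, $\E(X,Y)=\emptyset$, and $\E(Y,X)=\{a_W : W\subseteq \mathbb{N}_{\geq 1}\}$ the restrictions of the predecessor function, composed as partial functions, with $\rst{a_W}=e_W$ and $\p$ the evident projection. This is a latent fibration ($a_{\mathbb{N}_{\geq 1}}$ is prone over $f$: the unique fill-in against a test $a_W$ is $e_W$, and tests from $X$ are vacuous since $\B(B,A)=\emptyset$), and it is admissible since the base has only identity restriction idempotents, lifted by $1_Y$ and $1_X$. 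But then the only prone restriction idempotents of $\E$ are $1_Y$ and $1_X$ (Lemma \ref{mediating_maps}.iii), while every map over $f$ is some $a_W$ with $\rst{a_W}=e_W\neq 1_Y$; so $\widehat{\E}$ contains \emph{no} map over $f$ with codomain $X$, and $\widehat{\p}$ is not a latent fibration. So your instinct that ``some such adjustment is unavoidable'' is right, but the adjustment you propose is unavailable: existence of prone lifts with prone restrictions is an additional hypothesis beyond admissibility (one that does hold in all of the paper's concrete examples, where the canonical prone lifts have prone restrictions). In short, you have located a real lacuna --- one the paper's own one-line proof silently skips --- but your proposal does not, and cannot, repair it as stated.
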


\begin{proof}
$\widehat{\p}$ is clearly still admissible, and by definition, all restriction idempotents in it are prone.  Thus, by Proposition \ref{prop:separated_equivalences} and Proposition \ref{prop:hyperconnected_equivalence}, $\widehat{\p}$ is a latent hyperfibration.
\end{proof}

\begin{proposition}
Hyperconnected latent fibrations are closed under composition and pullback.
\end{proposition}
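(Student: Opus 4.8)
The plan is to reduce the claim entirely to the characterization in Proposition \ref{prop:hyperconnected_equivalence} --- that a latent fibration is a hyperfibration exactly when it is both admissible and separated --- together with the two closure results established immediately above, namely that admissible latent fibrations are closed under composition and pullback, and that separated latent fibrations are closed under composition and pullback. Since we already know that the composite of two latent fibrations is a latent fibration (the composition corollary) and that the pullback of a latent fibration along any restriction semifunctor is a latent fibration (Corollary \ref{prop:pullback_latent}), the underlying ``fibration'' part is automatic, and it only remains to check that the hyperconnection property is inherited in each case.

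For composition, let $\q\colon \F \to \E$ and $\p\colon \E \to \B$ be hyperfibrations. By Proposition \ref{prop:hyperconnected_equivalence} each is admissible and separated. Admissibility of $\q\p$ follows from closure of admissibility under composition, whose mechanism is the two-step lifting of Lemma \ref{lemma:prone_with_comp_functors}: a suitable restriction idempotent $e$ on $\p\q(F)$ is first lifted by $\p$-admissibility to a prone restriction idempotent on $\q(F)$ and then by $\q$-admissibility to a prone restriction idempotent on $F$, the composite being $\q\p$-prone and manifestly a restriction idempotent. Separatedness of $\q\p$ is immediate, since a composite of maps each monic on restriction idempotents is again monic on restriction idempotents. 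Invoking Proposition \ref{prop:hyperconnected_equivalence} once more yields that $\q\p$ is a hyperfibration. For pullback, let $\p\colon \E \to \B$ be a hyperfibration pulled back along $F\colon \X \to \B$ to give $\p_0\colon \W \to \X$. Here the key tool is Lemma \ref{lemma:pullback_prone}, which identifies the $\p_0$-prone maps with those whose $\E$-component is $\p$-prone, combined with the pointwise description of the restriction in $\W$ (so restriction idempotents in $\W$ are pairs whose components agree under $F$ and $\p$). Separatedness and admissibility of $\p_0$ then transfer coordinatewise from those of $\p$, and Proposition \ref{prop:hyperconnected_equivalence} delivers the conclusion.

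The only place demanding genuine care --- and the step I expect to be the main obstacle --- is verifying that the definedness side-conditions built into the definitions of admissible and separated propagate correctly through the two constructions. For composition, one must confirm that the restriction idempotent produced at the intermediate stage over $\q(F)$ actually satisfies the hypothesis needed to lift it again through $\q$; this uses that $\q(1_F)$ is a restriction idempotent below $1_{\q(F)}$ together with the given condition on the original idempotent. For the pullback, one must check that the coordinatewise matching condition $F(d) = \p(e)$, in the presence of the appropriate definedness constraint, pins the $\E$-component down uniquely using separatedness of $\p$ and supplies the lift using admissibility of $\p$. These are short restriction-idempotent calculations; once they are in place, the statement is a formal consequence of Proposition \ref{prop:hyperconnected_equivalence} and the two preceding closure propositions.
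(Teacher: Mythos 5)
Your proposal is correct and takes essentially the same approach as the paper: the paper's own proof is a one-line citation of Lemma \ref{lemma:prone_with_comp_functors} and Lemma \ref{lemma:pullback_prone}, and your reduction via Proposition \ref{prop:hyperconnected_equivalence} to the closure results for admissible and separated latent fibrations (which the paper proves from exactly those two lemmas) is just the fleshed-out version of that citation. The coordinatewise checks you flag for the pullback case and the definedness side-conditions are handled exactly as you describe, so there is no gap.
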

\begin{proof}
Follows from Lemma \ref{lemma:prone_with_comp_functors} and Lemma \ref{lemma:pullback_prone}.
\end{proof}

Another key property of a latent hyperfibration (see Section \ref{sec:dual}) is that one can construct its fibrational dual.


\section{\texorpdfstring{${\sf M}$}{M}-category Fibrations} \label{sec:fibrations-of-partial}

The purpose of this section is to provide a series of results describing the relation between fibrations of ${\sf M}$-categories and latent fibrations.  In particular, we completely characterize, as an ${\sf M}$-category, a certain type of latent fibration which we call $r$-split (see Definition \ref{defn:split}).  This allows us to similarly characterize $r$-split admissible latent fibrations and $r$-split latent hyperfibrations. 

Another key result in this story is that \emph{admissible} latent fibrations are stable under the process of splitting restriction idempotents: this means that any admissible latent fibration can be embedded into an $r$-split latent fibration and hence into the partial map category of an ${\sf M}$-category.  It is worth emphasizing that for this we need the additional property of being admissible.  Thus,  in general, one cannot split the restriction idempotents of a latent fibration to get a latent fibration.

In this section we shall use restriction functors rather than semi-functors.  In particular, we  shall need functors which preserve the total maps in order to access the equivalence between $r$-split restriction categories and ${\sf M}$-categories.


\subsection{\texorpdfstring{$r$}{r}-split latent fibrations}

In this section we will focus on restriction functors between $r$-split restriction categories, and demand that the fibres of these functors behave well with respect to splitting.

\begin{definition}\label{defn:split} Let $\p\colon \E \to \B$ be a restriction functor.  
\begin{itemize}
	\item $\p$ is said to be \textbf{well-fibred} if for all $B$ in $\B$, whenever a restriction idempotent in the fibre\footnote{Note that we reserve the notation $\p^{-1}(B)$ for the classical fibre, containing only arrows that are mapped to $1_B$ by $\p$, whereas $\p^{(-1)}(B)$ is used for the strand containing all arrows that are mapped to a restriction idempotent on $B$.} $\p^{-1}(B)$ splits in $\E$, it also splits in $\p^{-1}(B)$.
	\item $\p$ is said to be {\bf $r$-split} in case the restriction categories $\E$ and $\B$ are $r$-split, and $\p$ is well-fibred.  
\end{itemize}
\end{definition}

One reason to ask that the projection functor be well-fibred is the following result:

\begin{lemma} \label{totals-in-flush-fibration}
If $\p\colon  \E \to \B$ is a restriction functor in which $\E$ is $r$-split, then every total map, $f\colon B \to \p(E)$, having a prone map $f'\colon E' \to E$  above it whose idempotent $\rst{f'}$ splits in $\p^{-1}(B)$, has a prone map, which is {\em total}, above $f$ at $E$.
\end{lemma}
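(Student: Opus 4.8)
The plan is to take the given prone map $f'\colon E' \to E$ over $f$ and to \emph{totalize} it using the splitting of its own restriction $\rst{f'}$. The first step is to observe that $\rst{f'}$ genuinely lies in the classical fibre over $B$: since $\p$ is a restriction functor and $f$ is total, $\p(\rst{f'}) = \rst{\p(f')} = \rst{f} = 1_B$, so $\rst{f'}$ is a restriction idempotent belonging to $\p^{-1}(B)$. By hypothesis it therefore splits inside $\p^{-1}(B)$, yielding maps $r\colon E' \to E''$ and $m\colon E'' \to E'$ with $\p(r) = \p(m) = 1_B$, together with $m r = 1_{E''}$ and $r m = \rst{f'}$. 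A routine check of the restriction identities shows that such a splitting makes $m$ a restriction monic (in particular total, $\rst{m} = 1_{E''}$) and $r$ a restriction retraction with $\rst{r} = \rst{f'}$, and that $m$ and $r$ are partial inverses of one another.

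Next I would propose $m f'\colon E'' \to E$ as the desired total prone map. It sits over $f$ because $\p(m f') = \p(m)\p(f') = 1_B f = f$. For totality, compute $m\,\rst{f'} = m(rm) = (mr)m = m$, and then, using the standard identity $\rst{gh} = \rst{g\,\rst{h}}$ together with $\rst{m} = 1_{E''}$, conclude $\rst{m f'} = \rst{m\,\rst{f'}} = \rst{m} = 1_{E''}$. Thus $m f'$ is a total map over $f$, and only its proneness remains.

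The main obstacle is precisely showing that $m f'$ is prone, since in a general (non-separated) latent fibration a restriction monic such as $m$ need not itself be prone; hence I cannot simply appeal to closure of prone maps under composition (Lemma \ref{composites-prones}). The way around this is to exhibit $f'$ as factoring through $r$: since $r m = \rst{f'}$ and $\rst{f'}f' = f'$, we have $f' = (rm)f' = r(m f')$. Here $f'$ is prone and $r$ is a restriction retraction, so Lemma \ref{restriction-retraction}.iii applies to the factorization $f' = r(m f')$ and delivers that the right factor $m f'$ is prone. This produces a total prone map over $f$ at $E$, completing the argument. The only delicate point worth spelling out carefully is that a splitting of a restriction idempotent really does supply a genuine restriction-monic/restriction-retraction pair in $\E$, so that the hypotheses of Lemma \ref{restriction-retraction}.iii are legitimately met; this is automatic, but it is the place where the $r$-split assumption on $\E$ (and the well-fibred assumption, via splitting inside $\p^{-1}(B)$) is actually used.
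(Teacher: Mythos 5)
Your proof is correct and is essentially the paper's own argument: both observe $\rst{f'}$ lies in $\p^{-1}(B)$, split it there as $rm = \rst{f'}$, $mr = 1_{E''}$, take $mf'$ as the candidate total map over $f$, verify totality by $\rst{mf'} = \rst{m\rst{f'}} = \rst{m} = 1_{E''}$, and establish proneness via the factorization $f' = r(mf')$ together with Lemma \ref{restriction-retraction}.iii. Your explicit remark that one cannot simply invoke closure of prone maps under composition (since $m$ need not be prone in a non-separated latent fibration) makes precise why the factorization route is needed, a point the paper leaves implicit.
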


\begin{proof}
Suppose $f\colon B \to \p(E)$ is total in $\B$ with an $f'\colon E' \to E$ which is prone above it.  Note that $\rst{f'} \in \p^{-1}(B)$ as $\p(\rst{f'}) =\rst{\p(f')} = \rst{f} = 1_B$.  Thus, since $\p$ is well-fibred, there is a splitting $m\colon E'' \to E$, $r\colon E' \to E''$ of $\rs{f'}$ with both $m$ and $r$ in $\p^{-1}(B)$ also.  Let $f'' = fm'$; we claim that $f''$ is the required total prone map over $f$.

First, since $m \in \p^{-1}(B)$, 
	\[ \p(f'') = \p(m)\p(f') = 1_B f = f, \]
so $f''$ is over $f$.  Next,
	\[ \rs{f''} = \rs{mf'} = \rs{m\rs{f'}} = \rs{mrm} = \rs{m} = 1_{E''}, \]
so $f''$ is total.  Finally, we have $f' = \rs{f'}f' = rmf = rf''$, so since $f'$ is prone and $r$ is a restriction retraction, $f''$ is itself prone by Lemma \ref{restriction-retraction}.iii.  
\end{proof}

When $\p\colon \E \to \B$ is an $r$-split latent fibration the total maps form a fibration because, as observed previously, prone maps in a category in which all maps are total, are just Cartesian maps.  Thus, we have the important observation:

\begin{corollary} \label{totals-in-latent-fibration}
The total maps of an $r$-split latent fibration form a fibration.
\end{corollary}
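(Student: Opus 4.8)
The plan is to reduce the statement to Lemma \ref{latent-fibration-total}, which already asserts that a latent fibration between total map categories is an ordinary fibration. Concretely, I would show that the induced functor ${\sf Total}(\p)\colon {\sf Total}(\E) \to {\sf Total}(\B)$ is itself a latent fibration, and then invoke Lemma \ref{latent-fibration-total} to conclude it is a fibration. Since $\p$ is a restriction functor, its restriction to total maps is an honest functor (both total categories carry the trivial restriction), so the only thing left to supply is a prone arrow over each total map of the base.

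First I would fix an object $E$ and a total map $f\colon B \to \p(E)$ in ${\sf Total}(\B)$; the side condition $f = f\p(1_E)$ from Definition \ref{newdefn} is vacuous because $\p$ preserves identities. As $\p$ is a latent fibration, there is a prone arrow $f'\colon E' \to E$ over $f$ in $\E$. Because $f$ is total, $\p(\rst{f'}) = \rst{\p(f')} = \rst{f} = 1_B$, so $\rst{f'}$ lies in the fibre $\p^{-1}(B)$. Since $\E$ is $r$-split this restriction idempotent splits in $\E$, and since $\p$ is well-fibred it therefore splits inside $\p^{-1}(B)$. This is exactly the hypothesis of Lemma \ref{totals-in-flush-fibration}, which then produces a prone arrow $f''$ over $f$ that is moreover total, i.e. an arrow of ${\sf Total}(\E)$.

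It remains to check that this total prone arrow is prone for ${\sf Total}(\p)$ — equivalently, Cartesian in the total category. Here I would use that among total maps every commuting triangle is automatically precise, since the restriction of a total map is an identity. Thus, given total $g\colon Y \to E$ and total $h$ with $h\p(f'') = \p(g)$, the base triangle is precise, so proneness of $f''$ in $\E$ yields a unique lift $\widetilde{h}$ with $\p(\widetilde{h}) = h$ and $\widetilde{h}f'' = g$ precise; preciseness forces $\rst{\widetilde{h}} = \rst{g} = 1$, so $\widetilde{h}$ is total, and uniqueness persists in ${\sf Total}(\E)$ since any total competitor is a fortiori a lift in $\E$ sitting in a (necessarily precise) triangle. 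Hence ${\sf Total}(\p)$ has enough prone arrows, so it is a latent fibration, and by Lemma \ref{latent-fibration-total} it is a fibration.

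I do not expect any serious obstacle: the corollary is essentially the packaging of Lemma \ref{totals-in-flush-fibration} together with the observation that preciseness is automatic among total maps. The only point requiring genuine care is confirming that the splitting of $\rst{f'}$ takes place inside the fibre $\p^{-1}(B)$ rather than merely in $\E$, which is precisely where the well-fibred half of the $r$-split hypothesis is needed.
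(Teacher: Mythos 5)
Your proposal is correct and follows essentially the same route as the paper: the corollary is obtained by feeding the $r$-split and well-fibred hypotheses into Lemma \ref{totals-in-flush-fibration} to produce total prone arrows, and then observing that among total maps preciseness is automatic, so prone arrows restrict to Cartesian arrows for ${\sf Total}(\p)$ (the content of Lemma \ref{latent-fibration-total}). Your extra care in checking that the splitting of $\rst{f'}$ happens inside the fibre, and that uniqueness of lifts persists in the total category, simply makes explicit what the paper leaves implicit.
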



\subsection{Latent fibrations and \texorpdfstring{${\sf M}$}{M}-categories}

Next, it will be helpful to understand precise triangles and prone maps in a partial map category.  

\begin{lemma}\label{lemma:precise_in_partialMapCat}
Suppose that 
	\[ X \from^{m} X' \to^{f} Y, \ X \from^{m'} X'' \to^{f'} Z, \ Y \from^{n} Y' \to^g Z \]
are representatives of maps in a partial map category ${\sf Par}(\X,{\sf M})$.  Then the triangle 
	\[ \xymatrix{X \ar[dr]^{(m',f')} \ar[d]_{(m,f)} & \\ Y \ar[r]_{(n,g)} & Z } \]
precisely commutes in ${\sf Par}(\X,{\sf M})$ if and only if there is a map $k\colon X'' \to Y'$ such that $kg = f'$ and $(m,f) \cong (m',kn)$.  
\end{lemma}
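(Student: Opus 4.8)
The plan is to unwind preciseness in ${\sf Par}(\X,{\sf M})$ into two span-level conditions and then match those against the existence of $k$. Recall that composition of $(m,f)$ with $(n,g)$ is computed by pulling back $f$ along the ${\sf M}$-map $n$, and that the restriction of a partial map $(m,f)\colon X \to Y$ is the partial identity $\rst{(m,f)} = (m,m)$, which depends only on the left leg. By Definition \ref{defn:precise}, the triangle is precise exactly when (a) it commutes, $(m,f)(n,g) = (m',f')$, and (b) $\rst{(m',f')} = \rst{(m,f)}$; since these restrictions are $(m',m')$ and $(m,m)$ respectively, condition (b) says precisely that $m$ and $m'$ represent the same subobject of $X$, i.e.\ there is an isomorphism $\gamma\colon X'' \to X'$ with $\gamma m = m'$.

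For the direction ($\Leftarrow$), suppose $k\colon X'' \to Y'$ satisfies $kg = f'$ and $(m,f) \cong (m',kn)$. The key observation is that the composite $(m',kn)(n,g)$ simplifies: since $kn$ already factors through the mono $n$ via $k$, the pullback of $kn$ along $n$ is carried by $X''$ itself, with legs $1_{X''}$ and $k$ (this square is a pullback because $n$ is monic). Hence $(m',kn)(n,g) = (m',kg) = (m',f')$. Combining this with $(m,f) \cong (m',kn)$ yields commutativity $(m,f)(n,g) = (m',f')$, which is (a); and the isomorphism $(m,f) \cong (m',kn)$ in particular matches $m$ with $m'$ as subobjects, which is (b). So the triangle is precise.

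For the direction ($\Rightarrow$), assume the triangle is precise. Condition (b) supplies $\gamma\colon X'' \to X'$ with $\gamma m = m'$. To exploit (a), form the pullback $P$ of $f$ along $n$, with projections $f''\colon P \to X'$ (an ${\sf M}$-map) and $g''\colon P \to Y'$ satisfying $g''n = f''f$; then $(m,f)(n,g) = (f''m, g''g)$, and commutativity gives an isomorphism $\delta\colon X'' \to P$ with $\delta f'' m = m'$ and $\delta g'' g = f'$. Setting $k := \delta g''\colon X'' \to Y'$ gives $kg = f'$ at once. The crucial step is to identify $\delta f''$ with $\gamma$: from $\delta f'' m = m' = \gamma m$ and $m$ monic we get $\delta f'' = \gamma$, an isomorphism. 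Then $kn = \delta g'' n = \delta f'' f = \gamma f$, so $(m',kn) = (m',\gamma f)$, which is isomorphic to $(m,f)$ via $\gamma$, as required.

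I expect the main obstacle to be the monic-cancellation identification $\delta f'' = \gamma$ in the forward direction: it is exactly what forces the two a priori independent witnesses --- the commutativity isomorphism $\delta$ from (a) and the restriction isomorphism $\gamma$ from (b) --- to cohere into a single $k$. Everywhere else the work is routine span bookkeeping, the only care being to keep the span-isomorphism conventions and the pullback legs consistent.
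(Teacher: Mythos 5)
Your proof is correct and follows essentially the same route as the paper's: both directions hinge on the same two witnesses (your $\gamma,\delta$ are the paper's $\alpha,\beta$), the same monic-cancellation step $\delta f''=\gamma$ to make them cohere, and the same observation that the square on $k$ and $n$ is a pullback because $n$ is monic. The only difference is cosmetic: in the converse you compute $(m',kn)(n,g)$ first and transport along the isomorphism, while the paper computes $(m,f)(n,g)$ directly from the square built on the isomorphism and $k$.
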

\begin{proof}
First, suppose that the triangle precisely commutes.  Then $\rs{(m,f)} \cong \rs{(m',f')}$, so there is an isomorphism $\alpha\colon X'' \to X'$ such that $\alpha m = m'$.  Let $(am,bg)$ be a representative of $(m,f) \circ (n,g)$:
\[ \xymatrix{ & & P \ar[dl]_{a} \ar[dr]^b & & \\ & X' \ar[dl]_m \ar[dr]^f & & Y' \ar[dl]_n \ar[dr]^g & \\ X & & Y & & Z} \]
then since $(m,f) \circ (n,g) \cong (m',f')$, there is an isomorphism $\beta\colon X'' \to P$ such that $\beta am = m'$ and $\beta bg = f'$.  Let $k = \beta b$, so we have $kg = f'$.  Moreover, we claim that $\alpha$ witnesses $(m,f) \cong (m',kn)$.  Indeed, we have $\alpha m = m'$.  We also need $\alpha f = kn = \beta bn = \beta a f$, so it suffices to show $\alpha = \beta a$. But $\alpha m = m' = \beta a m$, so since $m$ is monic, $\alpha = \beta a$, as required.

Conversly, suppose that we have such a $k$, and an $\alpha\colon X'' \to X$ that witnesses $(m,f) \cong (m',kn)$.  Then by definition $\alpha$ witnesses $\rs{(m,f)} \cong \rs{(m',f')}$.  Moreover, 
	\[ \xymatrix{ X'' \ar[r]^{k}  \ar[d]_{\alpha} & Y' \ar[d]^{n} \\ X' \ar[r]_{f} & Y } \]
commutes by assumption, and it is straightforward to check that it is a pullback since $\alpha$ is an isomorphism and $n$ is monic.  Thus 
\[ \xymatrix{ & & X'' \ar[dl]_{\alpha} \ar[dr]^k & & \\ & X' \ar[dl]_m \ar[dr]^f & & Y' \ar[dl]_n \ar[dr]^g & \\ X & & Y & & Z} \]
is a representative of $(m,f) \circ (n,g)$, and equals $(m',f')$ by assumption on $\alpha$ and $k$.

\end{proof}

The following result allows us to build prone maps in a partial map category from Cartesian maps in the original category:

\begin{lemma}\label{lemma:prone_in_partialMapCat}
Suppose $\p\colon (\E,\M_{\E}) \to (\B,\M_{\B})$ is an $\M$-category functor.  If $f\colon X' \to Y$ is $\p$-Cartesian in $\E$, then for any $m\colon X \to X'$ in $\M_{\E}$, (the equivalence class of) $(m,f)$ is ${\sf Par}(\p)$-prone in the restriction category ${\sf Par}(\E,\M_{\E})$.  
\end{lemma}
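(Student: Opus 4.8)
The plan is to verify the prone condition of Definition~\ref{newdefn} directly, using Lemma~\ref{lemma:precise_in_partialMapCat} as a dictionary that turns the two precise triangles involved (the hypothesised one in the base, and the one to be produced in $\E$) into ordinary span data. Represent $(m,f)$ by the span $X' \from^{m} X \to^{f} Y$ in which $m \in \M_{\E}$ is the monic leg and $f$ is the $\p$-Cartesian map-leg out of the apex $X$. Since $\p$ is a functor of $\M$-categories, $\p(m) \in \M_{\B}$, so ${\sf Par}(\p)(m,f)$ is represented by the span $\p(X') \from^{\p(m)} \p(X) \to^{\p(f)} \p(Y)$, a legitimate morphism of ${\sf Par}(\B,\M_{\B})$.

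To test proneness I would fix arbitrary test data: a map $G\colon W \to Y$ of ${\sf Par}(\E,\M_{\E})$, with a chosen representative $W \from^{a} S \to^{g} Y$ (so $a \in \M_{\E}$), together with a map $H\colon \p(W) \to \p(X')$ in the base for which $H\cdot {\sf Par}(\p)(m,f) = {\sf Par}(\p)(G)$ is a precise triangle. Applying Lemma~\ref{lemma:precise_in_partialMapCat} to this base triangle decodes the hypothesis into concrete maps of $\B$: there is a map $k\colon \p(S) \to \p(X)$ with $k\,\p(f) = \p(g)$ together with an identification $H \cong (\p(a), k\,\p(m))$.

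The construction of the lift is then forced by the Cartesian property. The data $g\colon S \to Y$ and $k\colon \p(S) \to \p(X)$ with $k\,\p(f) = \p(g)$ are exactly what the $\p$-Cartesian-ness of $f$ consumes, so there is a unique $\kappa\colon S \to X$ in $\E$ with $\p(\kappa) = k$ and $\kappa f = g$. I would then set $\widetilde{H}$ to be the map of ${\sf Par}(\E,\M_{\E})$ represented by $W \from^{a} S \to^{\kappa m} X'$. Reading Lemma~\ref{lemma:precise_in_partialMapCat} in the other direction (now inside $\E$), the witness $\kappa$ shows that $\widetilde{H}\cdot(m,f) = G$ is a precise triangle, while ${\sf Par}(\p)(\widetilde{H}) = (\p(a), \p(\kappa)\,\p(m)) = (\p(a), k\,\p(m)) \cong H$; thus $\widetilde{H}$ is a lifting of the required kind.

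For uniqueness, any competing lift is, by the same application of Lemma~\ref{lemma:precise_in_partialMapCat}, represented by $(a, \kappa' m)$ for some $\kappa'\colon S \to X$ with $\kappa' f = g$; being over $H$ forces $\p(\kappa')\,\p(m) = k\,\p(m)$, and since $\p(m)$ is monic (it lies in $\M_{\B}$) this yields $\p(\kappa') = k$, whence $\kappa' = \kappa$ by uniqueness of the Cartesian lift. I expect the main obstacle to be organisational rather than conceptual: keeping the span representatives and their equivalences straight, and invoking Lemma~\ref{lemma:precise_in_partialMapCat} correctly in both directions. All of the genuine mathematical content is carried by the single Cartesian lift $\kappa$, together with the fact that $\p$ preserves $\M$-maps and their pullbacks (so that $\p(m)$ is monic, which is what finally delivers uniqueness).
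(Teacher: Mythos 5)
Your proposal is correct and takes essentially the same route as the paper's own proof: decode the precise base triangle via Lemma \ref{lemma:precise_in_partialMapCat}, lift the resulting map $k$ through the $\p$-Cartesian arrow $f$ to get $\kappa$, repackage the lift as the span $(a,\kappa m)$, and obtain uniqueness by applying the lemma again together with monicity of the $\M$-legs and uniqueness of Cartesian lifts. The only wrinkle is your relabelling of the span so that $m$ and $f$ share a common domain, which simply resolves a typographical mismatch in the statement (the paper's proof resolves it the other way, reading $m\colon X' \to X$) and changes nothing mathematically.
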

\begin{proof}
Suppose $Z \from^{n} Z' \to^{g} Y$ is a map from $Z$ to $Y$:
\[ \xymatrix{Z \ar[dr]^{(n,g)} & \\ X \ar[r]_{(m,f)} & Y } \]

Then by the previous lemma, to have a precise triangle with $\p(m,f)\colon X \to Y$ in the base means that there is an $h\colon Z' \to \p(X')$ with $h\p(g) = \p(f)$, with the triangle of the form
\[ \xymatrix@C=4em{\p(Z) \ar[dr]^{(\p(n),\p(g))} \ar[d]_{(\p(n),h\p(m)} & \\ \p(X) \ar[r]_{(\p(m),\p(f))} & \p(Y) } \]
Then since $f$ is $\p$-Cartesian, there is a unique $\tilde{h}\colon Z' \to X'$ in $\E$ such that $\tilde{h}f = g$ and $\p(\tilde{h}) = h$.  Then by the previous lemma, $(n,\tilde{h}m)$ is a precise fill-in for the first triangle:
\[ \xymatrix{Z \ar[dr]^{(n,g)} \ar[d]_{(n,\tilde{h}m)} & \\ X \ar[r]_{(m,f)} & Y } \]
and is over $(\p(n),\tilde{h}\p(m))$ by definition.  

It remains to show it is unique.  If we have another precise fill-in, by the previous lemma we can take it to be of the form $(n,km)$ for some $k\colon Z' \to X'$ such that $kf = g$.  Since it must be over $(\p(n),h\p(m))$, we must have some isomorphism $\alpha\colon \p(Z') \to \p(Z')$ such that $\p(n) = \alpha \p(n)$ and $\p(k)\p(m) = \alpha h \p(m)$.  But since $\p(n)$ is monic, the first equality means $\alpha = 1$; then since $\p(m)$ is monic the second equality gives $\p(k) = h$.  Thus we have $kf = g$ and $\p(k) = h$, so since $\tidle{h}$ was unique with these properties, $k = \tidle{h}$.  So $(n,\tilde{h}m)$ is unique.  
\end{proof}

Thus, having an $\M$-category functor $\p\colon (\E,\M_{\E}) \to (\B,\M_{\B})$ being an ordinary fibration is close to ${\sf Par}({\sf p})\colon {\sf Par}(\E,{\sf M}_{\E}) \to {\sf Par}(\B,{\sf M}_{\B})$ being a latent fibration.  However, something is missing: if we are given a span
\[ \xymatrix{ & Y' \ar[dl]_m \ar[dr]^{f} & \\ Y & & \p(X)} \]
in ${\sf Par}(\B,{\sf M}_{\B})$, then by the above result, we can lift $f$ to a Cartesian arrow; however, there is no reason why there should be an $\M_{\E}$ monic over $m$ in $\E$.  Thus, we make the following definitions.  

\begin{definition} Let $\p\colon (\E,{\sf M}_{\E}) \to (\B,{\sf M}_{\B})$ be an  ${\sf M}$-category functor.  
\begin{itemize}
	\item $\p$ is said to be {\bf ${\sf M}$-plentiful} in case for every object $X \in \E$ and ${\sf M}_{\B}$-map $m\colon  \p(X) \to Y$ there is an ${\sf M}_\E$-map $n\colon X \to X'$ in $\E$ with ${\sf p}(n) = m$.
	\item $\p$ is said to be an \textbf{${\sf M}$-fibration} if it is a fibration (in the ordinary sense) and it is ${\sf M}$-plentiful.  
\end{itemize}
\end{definition}

This is enough to get an ($r$-split) latent fibration:  

\begin{lemma}
If $\p\colon (\E,{\sf M}_{\E}) \to (\B,{\sf M}_{\B})$ is an ${\sf M}$-fibration then ${\sf Par}({\sf p})\colon {\sf Par}(\E,{\sf M}_{\E}) \to {\sf Par}(\B,{\sf M}_{\B})$ is an $r$-split latent fibration.
\end{lemma}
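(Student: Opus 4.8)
The plan is to verify the three requirements packaged into Definition \ref{defn:split}: that ${\sf Par}(\E,\M_{\E})$ and ${\sf Par}(\B,\M_{\B})$ are $r$-split restriction categories, that ${\sf Par}(\p)$ is a latent fibration, and that ${\sf Par}(\p)$ is well-fibred. The first is immediate from the general theory recalled in Section 2.2: a partial map category is always $r$-split, a restriction idempotent on $X$ being precisely an $\M$-subobject $s\colon X' \to X$, which splits through its domain $X'$. Moreover, since $\p$ preserves $\M$-maps and pullbacks along them, ${\sf Par}(\p)$ is a genuine restriction functor (in particular it preserves identities), so the side condition $f = f\p(1_X)$ in the definition of a latent fibration is vacuous.

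To see that ${\sf Par}(\p)$ is a latent fibration, I would take an object $X$ of ${\sf Par}(\E,\M_{\E})$ and a map $(m,g)\colon A \to \p(X)$ of ${\sf Par}(\B,\M_{\B})$, represented by a span $A \from^{m} A' \to^{g} \p(X)$ with $m \in \M_{\B}$. This factors as the restriction retraction $(m,1_{A'})$ followed by the total map $(1_{A'},g)$, and the idea is to lift the two pieces separately. Since $\p$ is an ordinary fibration, the total part $g$ has a $\p$-Cartesian lift $g'\colon E' \to X$ with $\p(E') = A'$ and $\p(g') = g$. Since $\p$ is $\M$-plentiful, the $\M_{\B}$-map $m\colon \p(E') = A' \to A$ lifts to an $\M_{\E}$-map $n\colon E' \to E''$ with $\p(n) = m$. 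The span $E'' \from^{n} E' \to^{g'} X$ then represents a partial map $(n,g')\colon E'' \to X$ lying over $(m,g)$, and Lemma \ref{lemma:prone_in_partialMapCat} shows that it is ${\sf Par}(\p)$-prone precisely because $g'$ is Cartesian and $n \in \M_{\E}$. This supplies the required prone lift, so ${\sf Par}(\p)$ is a latent fibration.

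Finally, for well-fibredness, note that since ${\sf Par}(\E,\M_{\E})$ is already $r$-split the hypothesis is automatic, so I only need every restriction idempotent lying in a fibre $\p^{-1}(B)$ to split inside that fibre. Such an idempotent on an object $X$ with $\p(X) = B$ has the form $(s,s)$ for an $\M_{\E}$-subobject $s\colon X' \to X$, and membership in the fibre forces $\p(s)$ to be an isomorphism of $\B$. The canonical splitting runs through $X'$, but $\p(X') \cong B$ need not equal $B$, so this splitting need not lie in the fibre; this is the one genuinely delicate point, and is exactly where the fibration structure of $\p$ is used. To repair it I would take the Cartesian lift of the isomorphism $\p(s)^{-1}\colon B \to \p(X')$, which is itself an isomorphism $\kappa\colon Z \to X'$ with $\p(Z) = B$, and transport the canonical section $(1_{X'},s)$ and retraction $(s,1_{X'})$ along $\kappa$ to obtain a splitting through $Z$. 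A short verification then shows the transported section and retraction both project to $1_B$ (using $\p(\kappa) = \p(s)^{-1}$ together with $\p(s)\,\p(s)^{-1} = 1$), so $(s,s)$ splits inside $\p^{-1}(B)$. Hence ${\sf Par}(\p)$ is well-fibred, and combining the three parts gives that it is an $r$-split latent fibration.
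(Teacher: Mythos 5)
Your proposal is correct and follows essentially the same route as the paper: the prone lift is built exactly as in the paper's proof (Cartesian lift of the total leg via the fibration, $\M$-plentiful lift of the monic leg, then Lemma \ref{lemma:prone_in_partialMapCat}), and your well-fibredness repair --- transporting the canonical splitting of $(s,s)$ along the Cartesian, hence isomorphic, lift of $\p(s)^{-1}$ --- is precisely the paper's argument. The extra observations you include (that partial map categories are automatically $r$-split and that ${\sf Par}(\p)$ is a genuine restriction functor) are correct and harmless.
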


\begin{proof}
Let $(m,f)\colon Y \to \p(X)$ be a map in ${\sf Par}(\B,{\sf M}_{\B})$, i.e., a span:
\[ Y \from^{m} Y' \to^f \p(X) \]
We lift this to 
\[ Y_0 \from^n Y_1' \to^{f^*} X \] 
where $f^*$  is a Cartesian arrow above $f$ and $n$ is any ${\sf M}_{\E}$-map above $m$, the existence of which is guaranteed by the plentiful requirement.  Then by Lemma \ref{lemma:prone_in_partialMapCat}, $(n,f^*)$ is prone in ${\sf Par}(\T,{\sf M}_{\T})$.

We also need to show that ${\sf Par}({\sf p})$ is well-fibred.  An idempotent  is a span $(m,m)$ where $m\colon X' \to X$ is in ${\sf M}_\E$: its splitting (as an equalizer with the identity) is 
$(m,1_{X'})$.  Now $(m,m)$ could have $(\p(m),\p(m)) = (1_X,1_X)$ and yet $\p(X') \neq \p(X)$ although, as both are limits of the trivial equalizer, they must be isomorphic, say by $\gamma\colon \p(X') \to \p(X)$. But that means there is a Cartesian arrow above $\gamma^{-1}$,  $(\gamma^{-1})^{*}\colon X_0 \to X'$ which is an isomorphism and has $\p(T_0) =\p(X)$.  This means that $(\gamma^{-1})^{*}m\colon X_0 \to X$ is also a splitting and it is now in the fiber over $\p(X)$. 
\end{proof}

Conversely we have:

\begin{lemma}\label{lemma:split_to_MFibration}
If $\p\colon \E \to \B$ is an $r$-split latent fibration, then the induced functor between the total map categories, with their restriction monics, ${\sf Total}(\p)\colon ({\sf Total}(\E),{\sf Monic}(\E)) \to ({\sf Total}(\B),{\sf Monic}(\B))$, is an ${\sf M}$-fibration.  
\end{lemma}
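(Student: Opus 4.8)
The plan is to verify the three conditions packaged into the definition of an ${\sf M}$-fibration: that ${\sf Total}(\p)$ is a well-defined ${\sf M}$-category functor, that it is an ordinary fibration, and that it is ${\sf M}$-plentiful. The first two are essentially already in hand. Since $\p$ is a restriction functor it automatically preserves total maps and partial isomorphisms, hence restriction monics, so it restricts to a functor ${\sf Total}(\p)\colon ({\sf Total}(\E),{\sf Monic}(\E)) \to ({\sf Total}(\B),{\sf Monic}(\B))$ carrying ${\sf M}_\E$-maps to ${\sf M}_\B$-maps; that this functor also preserves pullbacks along ${\sf M}$-maps (so that it is a genuine ${\sf M}$-functor) is exactly the content of the $2$-equivalence between $r$-split restriction categories and ${\sf M}$-categories applied to the restriction functor $\p$. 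That ${\sf Total}(\p)$ is a fibration in the ordinary sense is precisely Corollary \ref{totals-in-latent-fibration}.

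The real work is ${\sf M}$-plentifulness: given $X \in \E$ and a restriction monic $m\colon \p(X) \to Y$ in $\B$, I must produce a restriction monic $n\colon X \to X'$ in $\E$ with $\p(n) = m$. The idea is to lift not $m$ but its partial inverse. The map $m^{(-1)}\colon Y \to \p(X)$ is a restriction retraction, and since $\p$ is a functor the side condition $m^{(-1)} = m^{(-1)}\p(1_X)$ holds trivially, so the latent fibration supplies a $\p$-prone arrow $r\colon X' \to X$ over $m^{(-1)}$, with $\p(X') = Y$. Because $\E$ is $r$-split, $\rs{r}$ splits, so Lemma \ref{restriction-retraction}.iv applies: since $\p(r) = m^{(-1)}$ is a restriction retraction, $r$ itself is a restriction retraction. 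Its section $n := r^{(-1)}\colon X \to X'$ is then a restriction monic, and since restriction functors preserve partial inverses, $\p(n) = \p(r)^{(-1)} = (m^{(-1)})^{(-1)} = m$. Thus $n$ is the required ${\sf M}_\E$-lift of $m$, completing the verification.

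I expect the one genuinely delicate point to be the step that turns the prone lift of $m^{(-1)}$ back into a restriction retraction. In general a prone arrow lying over a restriction retraction need not itself be a restriction retraction --- this is exactly the kind of failure that motivates the finer classification of latent fibrations --- and it is only the $r$-splitness hypothesis (guaranteeing that $\rs{r}$ splits) that lets Lemma \ref{restriction-retraction}.iv close the gap. The remaining checks, namely that $n$ is total and a partial isomorphism and that $\p$ commutes with $(-)^{(-1)}$, are routine consequences of $\p$ being a restriction functor and of the uniqueness of partial inverses, and require no further hypotheses.
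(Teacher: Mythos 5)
Your proposal is correct and follows essentially the same route as the paper: both obtain the fibration property from Corollary \ref{totals-in-latent-fibration}, and both establish ${\sf M}$-plentifulness by taking a prone lift of the restriction retraction $m^{(-1)}$, invoking $r$-splitness so that Lemma \ref{restriction-retraction}.iv makes that lift a restriction retraction, and then taking its partial inverse (preserved by the restriction functor $\p$) as the required restriction monic over $m$. The only difference is cosmetic — you additionally spell out why ${\sf Total}(\p)$ is a well-defined ${\sf M}$-functor, which the paper leaves implicit.
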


\begin{proof} 
The fact that $\p$ is a latent fibration ensures that ${\sf Total}(\p)$ is a fibration as discussed in Corollary \ref{totals-in-latent-fibration}.  It remains, therefore, to show that ${\sf Total}(\p)$ is ${\sf M}$-plentiful.  
Toward this end let $m\colon  \p(E) \to X$ be a  restriction monic in $\B$, so that by definition there is a restriction retraction $r\colon X \to \p(E)$ such that $mr = 1_{\p(E)}$ and $rm = \rst{r}$.  Let $r'\colon X' \to E$ be a prone arrow over it.  Then since $\E$ is $r$-split, we can use Lemma \ref{restriction-retraction}.iv to get that $r'$ is itself a restriction retraction.  Thus, there is a restriction monic $m'\colon E \to X'$ which is its partial inverse.  Since restriction functors preserve partial inverses, we have
	\[ \p(m') = \p(r'^{(-1)}) =\p(r')^{(-1)} = r^{(-1)} = m, \]
showing that $\p$ is ${\sf M}$-plentiful. 
\end{proof}

As a consequence we now have:

\begin{theorem}
$r$-split latent fibrations, in the equivalence between $r$-split restriction categories and ${\sf M}$-categories, correspond precisely to ${\sf M}$-fibrations.
\end{theorem}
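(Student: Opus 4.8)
The plan is to obtain the theorem as a formal consequence of the two preceding lemmas together with the already-established 2-equivalence between $r$-split restriction categories and ${\sf M}$-categories. Recall that this 2-equivalence is witnessed by the mutually inverse constructions ${\sf Par}(-)$ and $({\sf Total}(-),{\sf Monic}(-))$, so that there are canonical isomorphisms ${\sf Par}({\sf Total}(\E),{\sf Monic}(\E)) \cong \E$ for every $r$-split restriction category $\E$, and ${\sf Total}({\sf Par}(\X,{\sf M})) \cong \X$ (with ${\sf Monic} \cong {\sf M}$) for every ${\sf M}$-category $(\X,{\sf M})$. A fibration is a $1$-cell in the relevant $2$-category, so the equivalence already identifies restriction functors between $r$-split restriction categories with ${\sf M}$-functors between the corresponding ${\sf M}$-categories; the content of the theorem is that this identification carries the $r$-split latent fibrations exactly onto the ${\sf M}$-fibrations.

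First I would record the two implications already in hand. By Lemma \ref{lemma:split_to_MFibration}, the construction ${\sf Total}(-)$ sends an $r$-split latent fibration $\p\colon\E\to\B$ to an ${\sf M}$-fibration; and by the lemma immediately preceding it, ${\sf Par}(-)$ sends an ${\sf M}$-fibration to an $r$-split latent fibration. To promote these to the asserted bijective correspondence I would run the two round trips. Starting from an $r$-split latent fibration $\p$, its image under ${\sf Total}(-)$ is an ${\sf M}$-fibration, and reapplying ${\sf Par}(-)$ returns $\p$ up to the canonical isomorphism above; hence ${\sf Total}(\p)$ being an ${\sf M}$-fibration is not merely necessary but also sufficient for $\p$ to be an $r$-split latent fibration, since $\p \cong {\sf Par}({\sf Total}(\p))$ inherits that status from the preceding lemma. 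Symmetrically, an ${\sf M}$-fibration $\q$ is recovered from ${\sf Par}(\q)$ by applying ${\sf Total}(-)$, so the two assignments restrict to mutually inverse operations between the two classes.

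The one point that genuinely needs checking is that the coherence isomorphisms of the $2$-equivalence respect the fibrational data, so that a round trip preserves not merely the underlying functor but its status as a (latent) fibration. Concretely, one must verify that under ${\sf Par}(-)$ the Cartesian liftings are carried to the prone liftings and that under ${\sf Total}(-)$ the prone liftings are carried back to Cartesian ones. The first of these is exactly Lemma \ref{lemma:prone_in_partialMapCat}, while the second is supplied by the analysis underlying Corollary \ref{totals-in-latent-fibration} (itself relying on Lemma \ref{totals-in-flush-fibration}), which shows that in an $r$-split latent fibration the prone arrows over total maps may be taken total and are then precisely the Cartesian arrows of ${\sf Total}(\p)$. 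I expect this compatibility --- confirming that the canonical isomorphisms intertwine prone and Cartesian liftings, rather than only the ambient functors --- to be the main obstacle; once it is settled, the theorem follows formally from the two lemmas and the mutual invertibility of ${\sf Par}(-)$ and ${\sf Total}(-)$.
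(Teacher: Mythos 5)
Your proposal is correct and takes essentially the same route as the paper: the paper states this theorem as an immediate consequence of the two flanking lemmas (${\sf Par}(-)$ sends ${\sf M}$-fibrations to $r$-split latent fibrations, and ${\sf Total}(-)$ sends $r$-split latent fibrations to ${\sf M}$-fibrations), combined with the already-established 2-equivalence between $r$-split restriction categories and ${\sf M}$-categories. Your explicit round-trip and transport-of-structure argument simply spells out what the paper leaves implicit in the phrase ``as a consequence.''
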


This gives us a further source of examples of latent fibrations.  Any category with finite limits $\X$ is an ${\sf M}$-category with respect to all the monics.  The arrow 
category gives the standard fibration over $\X$, $\partial\colon \X^{\rightarrow} \to \X$.  Then moving to ${\sf Par}(\partial)$ always gives a latent fibration as above any monic $m\colon X \to Y$ in 
$\X$ and any object $x\colon X' \to X$ in $\X^\rightarrow$ lies the map (which is a square):
\[ \xymatrix{X' \ar[d]_x \ar@{=}[r] & X' \ar[d]^{xm} \\ X \ar[r]_m & Y} \]
which is clearly monic.  Note  when $m$ is an isomorphism this is an isomorphism in $\X^\rightarrow$ and thus this $\partial$ is ${\sf M}$-conservative.  This means the functor ${\sf Par}(\partial)$ is ${\sf M}$-plentiful and the ${\sf Par}(\partial)$ is a latent fibration  -- and in a few moments we will see, because pullbacks of monics along any map are monic, that this is an \emph{admissible} latent fibration.  Furthermore, it is not hard to see that this is actually the full subfibration of ${\sf Par}(\X,{\sf Monic})^{\leadsto} \to {\sf Par}(\X,{\sf Monic})$ determined by the objects $a\colon A' \to A \in \E$ such that $a$ is total.  This means, in particular, it is not hyperconnected.

It is worth noting that this is not the only choice of ${\sf M}$-maps possible for the total category $\X^\rightarrow$, as we could have chosen squares which are pullbacks.  These are clearly monics in $\X^\rightarrow$ and still give an ${\sf M}$-plentiful fibration using the codomain map as the above square for demonstrating ${\sf M}$-plenitude is a pullback.  It is not hard then to see that passing to the partial map categories for this  fibration gives a subfibration of the strict codomain fibration, 
${\sf Par}(\X,{\sf Monic})^{\rightarrow} \to {\sf Par}(\X,{\sf Monic})$ determined by objects $a$ which are total maps.


\subsection{Admissible, separated, and hyperconnected \texorpdfstring{${\sf M}$}{M}-fibrations}

In this section we characterize the admissible, separated, and hyperconnected latent fibrations between partial map categories.  

\begin{definition}
An ${\sf M}$-fibration $\q\colon (\T,{\sf M}_{\T}) \to (\S,{\sf M}_{\S})$ is an \textbf{admissible ${\sf M}$-fibration} if Cartesian maps over ${\sf M}$-maps are themselves ${\sf M}$-maps.
\end{definition}

We can immediately observe:

\begin{lemma}
If ${\sf p}\colon (\E,{\sf M}_{\E}) \to (\B,{\sf M}_{\B})$ is an admissible ${\sf M}$-fibration then ${\sf Par}({\sf p})\colon {\sf Par}(\E,{\sf M}_{\E}) \to {\sf Par}(\B,{\sf M}_{\B})$ is an admissible $r$-split latent fibration.
\end{lemma}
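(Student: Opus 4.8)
The plan is to observe first that, since an admissible $\M$-fibration is in particular an $\M$-fibration, the earlier lemma showing that every $\M$-fibration induces an $r$-split latent fibration already gives that ${\sf Par}(\p)\colon {\sf Par}(\E,\M_\E) \to {\sf Par}(\B,\M_\B)$ is an $r$-split latent fibration. Thus the only genuinely new content is the \emph{admissibility} of ${\sf Par}(\p)$, and everything reduces to understanding restriction idempotents in partial map categories. Recall that the restriction idempotents on an object $Y$ of ${\sf Par}(\B,\M_\B)$ are exactly the spans $(m,m)$ determined by an $\M_\B$-subobject $m\colon W \to Y$; moreover ${\sf Par}(\p)$ is here a genuine restriction functor (it preserves identities), so $\p(1_X) = 1_{\p(X)}$ and the side condition $e\,\p(1_X) = e$ in the definition of admissibility is automatic.

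So fix an object $X$ of ${\sf Par}(\E,\M_\E)$ (i.e.\ an object of $\E$) and a restriction idempotent $e$ on $\p(X)$, represented by an $\M_\B$-map $m\colon W \to \p(X)$, so that $e = (m,m)$. I would produce the desired prone restriction idempotent as follows. Using that $\p$ is an ordinary fibration, take a Cartesian lift $m^*\colon W^* \to X$ of $m$ at $X$, so that $\p(m^*) = m$ and $\p(W^*) = W$. The key step is now to invoke the hypothesis that $\p$ is an \emph{admissible} $\M$-fibration: since $m^*$ is Cartesian lying over the $\M_\B$-map $m$, it is itself an $\M_\E$-map. Consequently the span $X \from^{m^*} W^* \to^{m^*} X$ is a legitimate restriction idempotent $e^* := (m^*,m^*)$ on $X$ in ${\sf Par}(\E,\M_\E)$, and it lies over $e$ since ${\sf Par}(\p)(e^*) = (\p(m^*),\p(m^*)) = (m,m) = e$.

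It remains to check that $e^*$ is ${\sf Par}(\p)$-prone, and this is exactly where Lemma \ref{lemma:prone_in_partialMapCat} is designed to apply: that lemma says that a span whose $\M$-leg is an $\M_\E$-map and whose forward leg is $\p$-Cartesian is prone. For $e^* = (m^*,m^*)$ the $\M$-leg $m^*$ is an $\M_\E$-map and the forward leg is $m^*$, which we arranged to be Cartesian; hence $e^*$ is prone. This completes the verification that ${\sf Par}(\p)$ is admissible, and together with the $r$-split latent fibration structure inherited from the preceding lemma, ${\sf Par}(\p)$ is an admissible $r$-split latent fibration.

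I expect the main point that must be stated with care to be the interplay between the two roles played by the lifted map $m^*$: it must simultaneously serve as the $\M$-leg of the idempotent (so that $(m^*,m^*)$ is a bona fide restriction idempotent, which needs $m^*\in\M_\E$, supplied precisely by admissibility of the $\M$-fibration) and as the Cartesian forward leg (so that Lemma \ref{lemma:prone_in_partialMapCat} yields proneness). It is worth emphasizing that this is exactly why the extra ``Cartesian-over-$\M$-is-$\M$'' condition is needed: without it the Cartesian lift of $m$ need not be an $\M$-map, the span $(m^*,m^*)$ would fail to be a restriction idempotent at all, and no prone restriction idempotent over $e$ would be produced.
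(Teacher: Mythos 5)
Your proposal is correct and follows essentially the same route as the paper: represent a restriction idempotent in the base as a span $(m,m)$, take a Cartesian lift $m^*$ of $m$, use the admissibility hypothesis ("Cartesian over $\M$ is $\M$") to conclude $m^* \in {\sf M}_{\E}$ so that $(m^*,m^*)$ is a genuine restriction idempotent over $(m,m)$, and then apply Lemma \ref{lemma:prone_in_partialMapCat} to get proneness. Your write-up is merely more explicit than the paper's (e.g., in noting that the $r$-split latent fibration structure comes from the preceding lemma and that the side condition $e\,\p(1_X)=e$ is vacuous for a genuine restriction functor), but the mathematical content is identical.
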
 

\begin{proof} 
We must show that there is a prone restriction idempotent above every restriction idempotent.  A restriction idempotent as a partial map is a span $(m,m)$.  Thus, if we let $m^*$ be a Cartesian map above $m$, by assumption $m^*$ is in ${\sf M}_{\E}$, and by Lemma \ref{lemma:prone_in_partialMapCat},  $(m^*,m^*)$ is prone over $(m,m)$.  
\end{proof}

Conversely, we also have:

\begin{lemma}
If $\p\colon \E \to \B$ is an admissible $r$-split latent fibration, then ${\sf Total}(\p)\colon ({\sf Total}(\E),{\sf Monic}(\E)) \to ({\sf Total}(\B),{\sf Monic}(\B))$ is an admissible ${\sf M}$-fibration.  
\end{lemma}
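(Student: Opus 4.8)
The plan is to reduce the admissibility of ${\sf Total}(\p)$ to the machinery about prone arrows already developed. By Lemma \ref{lemma:split_to_MFibration} the functor ${\sf Total}(\p)\colon ({\sf Total}(\E),{\sf Monic}(\E)) \to ({\sf Total}(\B),{\sf Monic}(\B))$ is already known to be an ${\sf M}$-fibration, so the only thing left to verify is the admissibility condition itself: that every ${\sf Total}(\p)$-Cartesian map lying over a restriction monic is again a restriction monic (i.e.\ an ${\sf M}$-map of $({\sf Total}(\E),{\sf Monic}(\E))$).

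To that end I would fix a restriction monic $m$ in ${\sf Total}(\B)$ together with a ${\sf Total}(\p)$-Cartesian map $m^{*}$ lying over it. The first step is to move from the ordinary-fibration (Cartesian) world into the latent-fibration (prone) world. Since we are working in the setting of this section, $\p$ is a genuine restriction functor, and since $\E$ is $r$-split all its restriction idempotents split; thus the hypotheses of Lemma \ref{lemma:cart_are_prone} are met for the total map $m^{*}$, and that lemma shows $m^{*}$ is $\p$-prone.

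Now $m^{*}$ is a $\p$-prone map lying over the restriction monic $m$, and $\p$ is admissible by assumption, so Lemma \ref{restriction-monics-in-fibrations} immediately gives that $m^{*}$ is a partial isomorphism. Finally, $m^{*}$ is total, being a Cartesian arrow of the total-map category, and a total partial isomorphism is precisely a restriction monic (as recalled in the discussion of restriction monics for ${\sf M}$-categories). Hence $m^{*}$ is a restriction monic, establishing the admissibility condition and completing the proof.

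The whole argument is a short chain of three previously established facts, so I do not expect a genuine obstacle; the only point requiring care is confirming that the hypotheses of Lemma \ref{lemma:cart_are_prone} genuinely hold here, namely that $\p$ is an honest restriction functor rather than a semifunctor and that $\E$ is $r$-split, both of which are in force by the standing assumptions of this section.
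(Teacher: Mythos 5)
Your proposal is correct and follows essentially the same route as the paper: invoke Lemma \ref{lemma:split_to_MFibration} for the ${\sf M}$-fibration part, then use Lemma \ref{lemma:cart_are_prone} to upgrade the Cartesian map to a $\p$-prone map, Lemma \ref{restriction-monics-in-fibrations} to conclude it is a partial isomorphism, and totality to conclude it is a restriction monic. The paper's proof is exactly this chain of lemmas, so there is nothing to add.
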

\begin{proof}
We already know by Lemma \ref{lemma:split_to_MFibration} that ${\sf Total}(\p)$ is an ${\sf M}$-fibration; thus, it remains to show the ${\sf M}$-admissible property.  Suppose that $m\colon X' \to X$ is a Cartesian map in ${\sf Total}(\E)$ such that $\p(f)$ is a restriction monic in $\B$.  By Lemma \ref{lemma:cart_are_prone}, $m$ is prone in $\E$.  Then by Lemma \ref{restriction-monics-in-fibrations}, $m$ is a partial isomorphism, so there is an $r\colon X \to X'$ so that $rm = \rs{r}$ and $mr = \rs{m} = 1$.  That is, $m$ is a restriction monic.
\end{proof}

\begin{theorem}
Admissible $r$-split latent fibrations, in the equivalence between $r$-split restriction categories and ${\sf M}$-categories, correspond precisely to admissible ${\sf M}$-fibrations.
\end{theorem}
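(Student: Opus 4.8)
The plan is to leverage the correspondence already established in the previous theorem, which states that $r$-split latent fibrations correspond to $\M$-fibrations under the 2-equivalence between $r$-split restriction categories and $\M$-categories recalled in Section \ref{sec:fibrations-of-partial} (and originally in Section 2.2). That 2-equivalence sends an $\M$-category $(\X,\M_{\X})$ to its partial map category ${\sf Par}(\X,\M_{\X})$ in one direction, and an $r$-split restriction category $\E$ to the $\M$-category $({\sf Total}(\E),{\sf Monic}(\E))$ in the other, these two passages being mutually inverse up to equivalence. All that remains, therefore, is to check that this correspondence restricts appropriately to the admissible objects on each side.

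First I would invoke the two lemmas immediately preceding this statement. The first shows that if $\q$ is an admissible $\M$-fibration then ${\sf Par}(\q)$ is an admissible $r$-split latent fibration; the second shows that if $\p$ is an admissible $r$-split latent fibration then ${\sf Total}(\p)$ is an admissible $\M$-fibration. Together these say precisely that the two passages implementing the equivalence each carry admissible objects to admissible objects, so the equivalence of the previous theorem restricts to the admissible objects in both directions.

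Since ${\sf Par}(-)$ and ${\sf Total}(-)$ are already mutually inverse up to equivalence on the underlying ($r$-split latent fibrations and $\M$-fibrations, without the admissibility requirement), and since admissibility is merely a \emph{property} — a condition on the prone/Cartesian liftings of restriction idempotents and $\M$-maps — rather than additional structure, the restriction of the equivalence to the admissible objects remains an equivalence. Concretely, the round trips are naturally isomorphic to the respective identities, so starting from an admissible $\M$-fibration, passing to ${\sf Par}$ (still admissible by the first lemma) and back via ${\sf Total}$ returns an admissible $\M$-fibration equivalent to the original; symmetrically in the other direction.

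The only point requiring genuine verification is that admissibility is preserved by each of the two passages, and this is exactly what the two preceding lemmas accomplish. Consequently there is no further obstacle: the theorem follows formally by combining the two preceding lemmas with the previous theorem establishing the correspondence for $r$-split latent fibrations and $\M$-fibrations.
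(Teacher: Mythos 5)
Your proposal is correct and matches the paper's approach: the paper states this theorem without a separate proof, precisely because it follows immediately from the two preceding lemmas (${\sf Par}$ of an admissible ${\sf M}$-fibration is an admissible $r$-split latent fibration, and ${\sf Total}$ of an admissible $r$-split latent fibration is an admissible ${\sf M}$-fibration) combined with the earlier theorem establishing the correspondence for $r$-split latent fibrations and ${\sf M}$-fibrations. Your additional remark that admissibility is a property, so the restriction of the established equivalence remains an equivalence, is exactly the (implicit) justification the paper relies on.
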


We next turn to characterizing separated latent fibrations between partial map categories.  

\begin{definition}
An ${\sf M}$-fibration $\q\colon (\T,{\sf M}_{\T}) \to (\S,{\sf M}_{\S})$ is a \textbf{separated ${\sf M}$-fibration} if all ${\sf M}_{\T}$-maps are Cartesian.
\end{definition}

\begin{lemma}
If ${\sf q}\colon (\T,{\sf M}_{\T}) \to (\S,{\sf M}_{\S})$ is a separated ${\sf M}$-fibration then ${\sf Par}({\sf q})\colon {\sf Par}(\T,{\sf M}_{\T}) \to {\sf Par}(\S,{\sf M}_{\S})$ is a separated $r$-split latent fibration.
\end{lemma}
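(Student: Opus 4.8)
The plan is to first invoke the preceding lemma, which already establishes that ${\sf Par}(\q)$ is an $r$-split latent fibration; hence all that remains is to verify that ${\sf Par}(\q)$ is \emph{separated}. The tool I would rely on is the standard identification of restriction idempotents in a partial map category with ${\sf M}$-subobjects: a restriction idempotent on an object $X$ in ${\sf Par}(\T,{\sf M}_{\T})$ is represented by a span $(m,m)$ with $m\colon X' \to X$ in ${\sf M}_{\T}$, and two such idempotents $(m,m)$ and $(m',m')$ are equal precisely when $m$ and $m'$ name the same subobject, i.e. when there is an isomorphism $\alpha$ with $\alpha m' = m$ (this is the specialization of Lemma \ref{lemma:precise_in_partialMapCat} to the case at hand).

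So I would take two restriction idempotents $e = (m,m)$ and $e' = (m',m')$ on $X$, with $m\colon X' \to X$ and $m'\colon X'' \to X$ in ${\sf M}_{\T}$, satisfying ${\sf Par}(\q)(e) = {\sf Par}(\q)(e')$. Applying ${\sf Par}(\q)$ sends these to the idempotents $(\q(m),\q(m))$ and $(\q(m'),\q(m'))$ on $\q(X)$, so the hypothesis supplies an isomorphism $\gamma\colon \q(X') \to \q(X'')$ in $\S$ with $\gamma\q(m') = \q(m)$ (and hence $\gamma^{-1}\q(m) = \q(m')$). The goal is then to lift $\gamma$ to an isomorphism in $\T$ witnessing $e = e'$.

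This is exactly where separatedness of the ${\sf M}$-fibration is used: since $m$ and $m'$ are ${\sf M}_{\T}$-maps, they are Cartesian. Using that $m'$ is Cartesian over $\q(m')$ together with $\gamma\q(m') = \q(m)$, I would obtain a unique $\widetilde{\gamma}\colon X' \to X''$ with $\widetilde{\gamma}\, m' = m$ and $\q(\widetilde{\gamma}) = \gamma$; symmetrically, using that $m$ is Cartesian over $\q(m)$ and $\gamma^{-1}\q(m) = \q(m')$, a unique $\widetilde{\gamma^{-1}}\colon X'' \to X'$ with $\widetilde{\gamma^{-1}}\, m = m'$ and $\q(\widetilde{\gamma^{-1}}) = \gamma^{-1}$. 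The composite $\widetilde{\gamma}\,\widetilde{\gamma^{-1}}$ lies over $\gamma\gamma^{-1} = 1$ and satisfies $\widetilde{\gamma}\,\widetilde{\gamma^{-1}}\, m = \widetilde{\gamma}\, m' = m$; since $1_{X'}$ also lies over $1$ and satisfies $1_{X'} m = m$, uniqueness of Cartesian factorizations through $m$ forces $\widetilde{\gamma}\,\widetilde{\gamma^{-1}} = 1_{X'}$, and dually $\widetilde{\gamma^{-1}}\,\widetilde{\gamma} = 1_{X''}$. Thus $\widetilde{\gamma}$ is an isomorphism with $\widetilde{\gamma}\, m' = m$, so $e$ and $e'$ name the same subobject and $e = e'$, proving separatedness.

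The only genuinely delicate points are the bookkeeping identifying restriction idempotents with subobjects (and the precise meaning of their equality as spans, for which Lemma \ref{lemma:precise_in_partialMapCat} and the description of ${\sf Par}$ are the relevant tools) and the verification that the two Cartesian lifts are mutually inverse. Neither is hard, but I expect the uniqueness argument to be the crux: it is exactly there that the hypothesis ``all ${\sf M}_{\T}$-maps are Cartesian'' is needed, since without it the base isomorphism $\gamma$ would have no canonical lift and the two idempotents could fail to coincide upstairs.
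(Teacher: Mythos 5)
Your proof is correct, but it takes a different route from the paper's. You verify separatedness directly from its definition: you identify restriction idempotents in ${\sf Par}(\T,{\sf M}_{\T})$ with spans $(m,m)$, and given two of them agreeing under ${\sf Par}(\q)$, you lift the mediating base isomorphism $\gamma$ through the Cartesian arrows $m'$ and $m$ (Cartesian precisely because of the separated ${\sf M}$-fibration hypothesis), then use uniqueness of Cartesian factorizations to show the two lifts are mutually inverse, producing the isomorphism of spans that witnesses $e = e'$. The paper instead never touches injectivity directly: it invokes Proposition \ref{prop:separated_equivalences} (separated $\iff$ all restriction idempotents are prone) to reduce the claim to showing each $(m,m)$ is prone, which follows in one line from Lemma \ref{lemma:prone_in_partialMapCat} since $m$ is Cartesian by hypothesis. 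The paper's argument is shorter because it leans on that established machinery; yours is more self-contained and elementary, needing only the universal property of Cartesian arrows and the definition of ${\sf Par}$ as equivalence classes of spans. One small misattribution: the criterion you use for equality of idempotent spans (existence of an isomorphism $\alpha$ with $\alpha m' = m$) is just the definition of equality of maps in a partial map category, not a specialization of Lemma \ref{lemma:precise_in_partialMapCat}, which concerns precise \emph{triangles}; this does not affect the validity of your argument.
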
 
\begin{proof}
By Proposition \ref{prop:separated_equivalences}, it suffices to show that restriction idempotents are prone, so suppose that $(m,m)$ is a restriction idempotent in ${\sf Par}(\T,{\sf M}_{\T})$.  Then by assumption $m$ is Cartesian, so by Lemma \ref{lemma:prone_in_partialMapCat}, $(m,m)$ is prone.
\end{proof}

\begin{lemma}
If $\p\colon \E \to \B$ is a separated $r$-split latent fibration, then ${\sf Total}(\p)\colon ({\sf Total}(\E),{\sf Monic}(\E)) \to ({\sf Total}(\B),{\sf Monic}(\B))$ is a separated ${\sf M}$-fibration.  
\end{lemma}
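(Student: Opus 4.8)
The plan is to lean on Lemma \ref{lemma:split_to_MFibration}, which already guarantees that ${\sf Total}(\p)\colon ({\sf Total}(\E),{\sf Monic}(\E)) \to ({\sf Total}(\B),{\sf Monic}(\B))$ is an ${\sf M}$-fibration; so the only thing left to establish is the defining extra condition of a \emph{separated} ${\sf M}$-fibration, namely that every ${\sf M}$-map of $({\sf Total}(\E),{\sf Monic}(\E))$ -- that is, every restriction monic of $\E$ -- is Cartesian for ${\sf Total}(\p)$. Since $\p$ is separated, Corollary \ref{cor:res_monic_prone} tells us that every restriction monic $m\colon X \to Y$ in $\E$ is already $\p$-prone. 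Thus the whole problem reduces to a single general observation: for a separated latent fibration, a total $\p$-prone map is Cartesian for the functor ${\sf Total}(\p)$.

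First I would set up the Cartesian lifting problem for such an $m$ in ${\sf Total}(\E)$: given a total $g\colon Z \to Y$ in $\E$ and a total $h\colon \p(Z) \to \p(X)$ in $\B$ with $h\p(m) = \p(g)$, I must produce a unique total $\widetilde{h}\colon Z \to X$ with $\widetilde{h}m = g$ and $\p(\widetilde{h}) = h$. The key remark is that the base triangle $h\p(m) = \p(g)$ is automatically precise: since $g$ and $h$ are total, $\rst{\p(g)} = 1 = \rst{h}$. Hence proneness of $m$ applies and yields a unique lift $\widetilde{h}$ with $\p(\widetilde{h}) = h$ and $\widetilde{h}m = g$ precise. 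Because $\p(\widetilde{h}) = h$ is total and $\p$ is separated, Lemma \ref{lemma:separated_reflects_total} forces $\widetilde{h}$ to be total, so it lives in ${\sf Total}(\E)$ as required. For uniqueness among total maps, any total $\widetilde{h}'$ with $\widetilde{h}'m = g$ and $\p(\widetilde{h}') = h$ automatically makes the top triangle precise (totality gives $\rst{\widetilde{h}'} = 1 = \rst{g}$), so it is a precise lift over $h$ and therefore coincides with $\widetilde{h}$ by proneness. Applying this to each restriction monic $m$ shows that all ${\sf M}$-maps of ${\sf Total}(\E)$ are Cartesian, completing the verification.

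I expect the only genuinely delicate point to be the interplay between the ``precise'' clause in the definition of a prone arrow and the plain commutativity demanded of a Cartesian arrow: separatedness is used twice to mediate between them -- once (via Corollary \ref{cor:res_monic_prone}) to guarantee that restriction monics are prone at all, and once (via Lemma \ref{lemma:separated_reflects_total}) to guarantee that the prone lift of total data is again total. Everything else is a routine unwinding of totality, which I would not belabour.
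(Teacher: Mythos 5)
Your proposal is correct and follows essentially the same route as the paper's proof: invoke Corollary \ref{cor:res_monic_prone} to see that restriction monics are prone, note they are total, and conclude they are Cartesian for ${\sf Total}(\p)$. The paper leaves the final step (total prone implies Cartesian) implicit, whereas you spell it out; note only that your appeal to Lemma \ref{lemma:separated_reflects_total} is not needed, since preciseness of the lifted triangle already gives $\rst{\widetilde{h}} = \rst{g} = 1$.
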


\begin{proof}
Suppose $m\colon X \to Y$ is a restriction monic in $\E$.  Then by Corollary \ref{cor:res_monic_prone}, $m$ is prone, and by definition it is total, hence it is Cartesian in ${\sf Total}(\E)$.  
\end{proof}

\begin{theorem}
Separated $r$-split latent fibrations, in the equivalence between $r$-split restriction categories and ${\sf M}$-categories, correspond precisely to separated ${\sf M}$-fibrations.
\end{theorem}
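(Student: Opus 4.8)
The plan is to assemble the two lemmas immediately preceding this theorem within the correspondence between $r$-split latent fibrations and ${\sf M}$-fibrations established earlier in this section. That correspondence already identifies the two classes via the mutually inverse passages ${\sf Par}(-)$ and ${\sf Total}(-)$ coming from the underlying 2-equivalence between $r$-split restriction categories and ${\sf M}$-categories; all that is needed here is to check that the \emph{separated} condition on one side is matched by the \emph{separated} condition on the other under this identification.

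First I would invoke the lemma directly above: if ${\sf q}$ is a separated ${\sf M}$-fibration then ${\sf Par}({\sf q})$ is a separated $r$-split latent fibration. This handles the passage from ${\sf M}$-fibrations to latent fibrations. Conversely, the lemma before it shows that if $\p$ is a separated $r$-split latent fibration then ${\sf Total}(\p)$ is a separated ${\sf M}$-fibration, handling the reverse passage. Since ${\sf Par}(-)$ and ${\sf Total}(-)$ are mutually inverse up to equivalence, composing these two preservation statements shows that the established correspondence restricts to an equivalence between separated $r$-split latent fibrations and separated ${\sf M}$-fibrations. This is exactly the pattern already used for the admissible case.

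There is no genuine obstacle: the substantive work has been carried out in the two lemmas, which translate the defining condition of separatedness across the two sides. The only point worth isolating is that separatedness must be invariant under the round-trips, so that the subclasses genuinely correspond rather than merely map into one another; but this is immediate, since separatedness is a condition purely about restriction idempotents (characterized in Proposition \ref{prop:separated_equivalences} as all restriction idempotents being prone, and via Corollary \ref{cor:res_monic_prone} reflected in all restriction monics being prone), and restriction idempotents are preserved by the equivalence. The theorem is therefore a direct corollary of the two preceding lemmas together with the base correspondence.
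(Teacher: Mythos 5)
Your proposal is correct and matches the paper's own (implicit) argument: the paper states this theorem without further proof precisely because it follows by combining the two preceding lemmas --- ${\sf Par}(-)$ carrying separated ${\sf M}$-fibrations to separated $r$-split latent fibrations, and ${\sf Total}(-)$ carrying separated $r$-split latent fibrations to separated ${\sf M}$-fibrations --- with the already-established correspondence between $r$-split latent fibrations and ${\sf M}$-fibrations, exactly as in the admissible case. Your remark that separatedness is stable under the round-trips because it is a condition on restriction idempotents, which the equivalence preserves, is the right justification for why the two one-directional lemmas suffice.
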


By proposition \ref{prop:hyperconnected_equivalence}, we now also have a characterization of $r$-split latent hyperfibrations:

\begin{corollary}
$r$-split latent hyperfibrations, in the equivalence between $r$-split restriction categories and ${\sf M}$-categories, correspond precisely to ${\sf M}$-fibrations which are admissible and separated.  
\end{corollary}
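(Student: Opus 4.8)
The plan is to combine the two characterization theorems immediately preceding with the equivalence of Proposition~\ref{prop:hyperconnected_equivalence}, which identifies hyperconnectedness with the conjunction of separatedness and admissibility. Since an $r$-split latent hyperfibration is, by definition, an $r$-split latent fibration whose projection is a hyperconnection, Proposition~\ref{prop:hyperconnected_equivalence} shows that these are exactly the $r$-split latent fibrations that are \emph{both} admissible \emph{and} separated. The task therefore reduces to observing that the single equivalence between $r$-split restriction categories and ${\sf M}$-categories simultaneously respects both properties.

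For the forward direction, given an $r$-split latent hyperfibration $\p\colon \E \to \B$, Proposition~\ref{prop:hyperconnected_equivalence} makes it admissible and separated. The admissible characterization theorem then gives that ${\sf Total}(\p)$ is an admissible ${\sf M}$-fibration, while the separated characterization theorem gives that the \emph{same} functor ${\sf Total}(\p)$ is a separated ${\sf M}$-fibration; hence ${\sf Total}(\p)$ is an ${\sf M}$-fibration that is at once admissible and separated. Conversely, starting from an ${\sf M}$-fibration $\q$ that is both admissible and separated, the ``${\sf Par}$'' halves of the two theorems show that ${\sf Par}(\q)$ is an admissible $r$-split latent fibration and, being the same restriction functor, also a separated $r$-split latent fibration; thus it is admissible and separated, whence hyperconnected by Proposition~\ref{prop:hyperconnected_equivalence}, i.e.\ an $r$-split latent hyperfibration.

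The only point requiring care---and the nearest thing to an obstacle---is that the two earlier theorems are stated separately, so one must confirm they are instances of one and the same equivalence rather than two unrelated correspondences. This is immediate because in both theorems the functors realizing the equivalence are ${\sf Par}(-)$ and ${\sf Total}(-)$; consequently the conditions ``admissible'' and ``separated'' transport independently and without interference, and their conjunction transports as the conjunction. No further computation is needed.
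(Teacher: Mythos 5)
Your proof is correct and follows exactly the route the paper intends: the paper's (essentially one-line) justification is precisely to invoke Proposition \ref{prop:hyperconnected_equivalence} to identify hyperconnectedness with the conjunction of admissibility and separatedness, and then combine the two preceding characterization theorems, which are both realized by the same ${\sf Par}(-)$/${\sf Total}(-)$ equivalence. Your additional remark that the two properties transport independently through that single equivalence is the correct (and only) point needing verification, so nothing is missing.
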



\subsection{\texorpdfstring{$r$}{r}-splitting a latent fibration}

Next, we turn to the question of what happens when we split the restriction idempotents of a latent fibration.  Unfortunately, in general this will not again be a latent fibration.  To see the issue, suppose we have a latent fibration $\p\colon \E \to \B$ and a map $f\colon (Y,e_0) \to (\p(X),\p(e))$ in $\split(\B)$.  As a mere map in $\B$, we can lift it to a $\p$-prone map $f^*\colon X' \to X$ in $\E$.  However, we need it to be a map in $\split(\E)$, and in particular we need a restriction idempotent $e'$ over $e$ to serve as the domain for $f^*$ as a map in $\split(\E)$.  As we have seen earlier (specifically, in the introduction to section \ref{sec:types}), this need not always exist.  Moreover, we will need this idempotent $e'$ to itself be prone, so that when we compose it with $f^*$ the result will again be prone.

Of course, asking for a prone restriction idempotent in $\E$ over a restriction idempotent in $\B$ is precisely what we demand of an admissible latent fibration, and this property turns out to be sufficient.  

\begin{proposition}  \label{splitting-latent-fibration}
If ${\sf p}\colon \E \to \B$ is an admissible latent fibration then $\split(\p)\colon \split(\E) \to \split(\B)$ is an admissible latent fibration.
\end{proposition}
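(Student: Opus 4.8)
The plan is to describe $\split(\p)$ concretely, then build prone liftings (invoking admissibility at the key moment), verify proneness, and finally verify that $\split(\p)$ is itself admissible. On objects set $\split(\p)(E,e) := (\p(E),\p(e))$ and on arrows $\split(\p)(f):=\p(f)$; this is well defined since $\p$ preserves restriction idempotents and composition, and it is in fact a restriction \emph{functor}, not merely a semifunctor: the identity of $(E,e)$ in $\split(\E)$ is $e$ itself, and $\split(\p)(e)=\p(e)$ is precisely the identity of $(\p E,\p e)$. Hence every side condition of the form ``$u = u\,\split(\p)(1)$'' in Definition \ref{newdefn} is automatic.

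\emph{Prone liftings.} Given $(X,e)$ in $\split(\E)$ and $u\colon (A,d)\to(\p X,\p e)$ in $\split(\B)$ --- i.e.\ $u\colon A\to\p X$ with $d\,u\,\p(e)=u$, so $\rs u \leq d$ --- I would first take a $\p$-prone lift $u^*\colon X'\to X$ of $u$ in $\E$, and then invoke \textbf{admissibility} to obtain a prone restriction idempotent $d^*$ on $X'$ over $d$. The candidate prone arrow is $w := u^*e\colon (X',d^*)\to(X,e)$. The reflection property of Lemma \ref{reflection-fibration} (namely $\rs k \leq \p(\rs k)^*$, with $(\_)^*$ a semilattice map) yields the two identities that make this cohere: $d^*u^* = u^*$ and $\rs{u^*e}\leq d^*$, so that $w$ is a genuine arrow $(X',d^*)\to(X,e)$ of $\split(\E)$ lying over $u$. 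It is exactly the need for a domain idempotent $d^*$ over $d$ that forces the admissibility hypothesis: in a general latent fibration there need be no idempotent on $X'$ lying over $d$, which is why splitting fails in general.

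\emph{Proneness is the main obstacle.} For test data $g\colon (Y,b)\to(X,e)$ and $h\colon (\p Y,\p b)\to(A,d)$ with $h\,u=\p(g)$ precise in $\split(\B)$, I would transport the precise triangle to $\B$ and lift $h$ along the $\p$-prone $u^*$ to the unique $\tilde h$ with $\p(\tilde h)=h$ and $\tilde h\,u^*=g$ precise. The work is then to check that $\tilde h$ is automatically an arrow $(Y,b)\to(X',d^*)$ and that $\tilde h\,w=g$: the equality $b\,\tilde h=\tilde h$ comes from $\rs{\tilde h}=\rs g\leq b$, while $\tilde h\,d^*=\tilde h$ holds because $\tilde h$ and $\tilde h\,d^*$ are both precise $\p$-prone lifts of $h$ along $u^*$ (using $d^*u^*=u^*$), hence equal. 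The delicate point is \emph{uniqueness}: a competing lift $\tilde h'$ satisfies only $\tilde h'\,u^*e=g$, not a priori $\tilde h'\,u^*=g$; but $\rs g = \rs{\tilde h'u^*e}\leq \rs{\tilde h'u^*}\leq \rs{\tilde h'}=\rs g$ forces $\rs{\tilde h'u^*e}=\rs{\tilde h'u^*}$, and then the restriction axiom [R.4] gives $\tilde h'u^*e=\tilde h'u^*$, so $\tilde h'u^*=g$ and uniqueness of the $\p$-prone lift yields $\tilde h'=\tilde h$. This interplay between the imposed domain idempotent $d^*$ and the trailing codomain idempotent $e$ in $w=u^*e$, both tamed by the reflection property of Lemma \ref{reflection-fibration}, is where the real content lies.

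\emph{Admissibility of $\split(\p)$.} A restriction idempotent on $(\p X,\p e)$ is an $\epsilon\in\O(\p X)$ with $\epsilon\leq\p(e)$. Using admissibility of $\p$ I take the prone idempotent $\epsilon^*$ on $X$ over $\epsilon$ and set $\epsilon^{\sharp}:=e\,\epsilon^*$; then $\epsilon^{\sharp}\leq e$ and $\p(\epsilon^{\sharp})=\p(e)\,\epsilon=\epsilon$, so $\epsilon^{\sharp}$ is a restriction idempotent on $(X,e)$ over $\epsilon$. Its proneness reduces to that of $\epsilon^*$ in $\E$: for any test datum $g$, the precise triangle $h\,\epsilon=\p(g)$ forces $h=\p(g)$ by Lemma \ref{Jaws}.ii, and the unique $\p$-prone lift of $\p(g)$ along $\epsilon^*$ is $g$ itself (Lemma \ref{Jaws}.ii again), giving $g\,\epsilon^*=g$ and hence $g\,\epsilon^{\sharp}=g$; for any competitor $\tilde h'$ one uses $\tilde h'e=\tilde h'$ (valid since $\tilde h'$ is an arrow of $\split(\E)$ into $(X,e)$) to reduce to the same uniqueness for $\epsilon^*$. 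Exhibiting $\epsilon^{\sharp}$ as a prone restriction idempotent over each $\epsilon$ is exactly what admissibility of $\split(\p)$ demands, completing the proof.
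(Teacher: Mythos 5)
Your proposal is correct and takes essentially the same route as the paper: your prone lift is literally the paper's (your $u^*e$ equals its $e_0^*f'e$ because $d^*u^*=u^*$, via Lemma \ref{reflection-fibration}), admissibility of $\p$ is invoked at the same point to supply the domain idempotent $d^*$, and your prone idempotent $e\,\epsilon^*$ witnessing admissibility of $\split(\p)$ is the paper's $\epsilon^*e'$. The only differences are cosmetic: you spell out the uniqueness of fill-ins (via [R.4] and Lemma \ref{Jaws}.ii) where the paper is terse, and you omit the paper's opening remark that $\split(\p)$ is also well-fibred, which the stated proposition does not require (it is used in the surrounding section to conclude that $\split(\p)$ is $r$-split).
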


\begin{proof}
It is easy to see that splitting idempotents always results in a well-fibred restriction functor: suppose $e = \rst{e}$ is an object of $\split(\B)$ and $e_0\colon e_1 \to e_1$ is in $\p^{-1}(e)$, so $\p(e_1) = e$ and $\p(e_0) = e$, then a splitting of the map $e_0$ is given by $e_1 \to^{e_0} e_0 \to^{e_0} e_1$ and this is all in $\p^{-1}(e)$.

Now suppose $f\colon e_0 \to {\sf p}(e)$ is a map in $\split(\B)$, with $e$ a restriction idempotent in $\E$; we must show that there is a prone 
lifting to some map in $\split(\E)$.   As a mere map, $f\colon B \to {\sf p}(E)$ in $\B$ has a prone arrow over it 
$f'\colon E' \to E$ but also $e_0$ has a, prone, restriction idempotent $e_0^{*}$ on $E'$ over it (as ${\sf p}$ is an admissible latent fibration): we claim that 
$e_0^{*} f' e\colon e'_0 \to e$ is prone over $f\colon e_0 \to {\sf p}(e)$: it is certainly over $f$.   Suppose now that $g\colon e_1 \to e$ in $\E$ and 
${\sf p}(g) = h f$ is precise with $h\colon {\sf p}(e_1) \to e_0$  then there is a lifting of $h$, $\widetilde{h}$, in $\E$ to give a precise triangle 
$\widetilde{h} (e_0^{*} f') = g$.  The preciseness means $\rst{\widetilde{h}} = \rst{g} \leq e_1$ so it starts at the idempotent $e_1$ as desired. 
It also ends at $e_0^{*}$ as $h e_0= h$ so $\widetilde{h} e_0^{*} = \widetilde{h}$.  Finally we have $\widetilde{h}e_0^{*} f' e = \widetilde{h} f' e = g e = g$.
Thus $e_0^{*} f' e\colon e_0^{*} \to e$ is prone over $f\colon e_0 \to {\sf p}(e)$ and $\split({\sf p})\colon \split(\E) \to \split(\B)$ is a latent fibration.

Finally we must show that above each restriction idempotent  $e$ on ${\sf p}(e')$, in $\split(\B)$, which means $e \leq {\sf p}(e')$ in $\B$, there is a prone 
restriction idempotent on $e'$: it is of course $e^{*}e'$, which is prone by the argument above, and less than or equal to $e'$. 
\end{proof}

The property of being separated (thus, hyperconnected) is also preserved under this process:

\begin{corollary}
If ${\sf p}\colon \E \to \B$ is a latent hyperfibration then $\split({\sf p})\colon \split(\E) \to \split(\B)$ is a latent hyperfibration.
\end{corollary}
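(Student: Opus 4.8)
The plan is to decompose the hyperfibration hypothesis using Proposition \ref{prop:hyperconnected_equivalence}, which tells us that $\p$ being a hyperconnection is equivalent to $\p$ being both admissible and separated. Since $\p$ is admissible, Proposition \ref{splitting-latent-fibration} immediately gives that $\split(\p)\colon \split(\E) \to \split(\B)$ is again an admissible latent fibration. Thus, invoking Proposition \ref{prop:hyperconnected_equivalence} once more, it suffices to show that $\split(\p)$ is separated; for then $\split(\p)$ is both admissible and separated, hence a hyperconnection, and therefore a latent hyperfibration.

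To show $\split(\p)$ is separated, I first identify the restriction idempotents of $\split(\E)$. Recall that in $\split(\E)$ an object is a pair $(X,\epsilon)$ with $\epsilon$ a restriction idempotent on $X$, that composition and restriction are computed as in $\E$, and that the identity of $(X,\epsilon)$ is $\epsilon$ itself. A restriction idempotent on $(X,\epsilon)$ is therefore a map $d\colon (X,\epsilon) \to (X,\epsilon)$ which is a restriction idempotent in $\E$ and satisfies $\epsilon d \epsilon = d$; since restriction idempotents commute, this condition reduces to $d \leq \epsilon$. In other words, the restriction idempotents of $(X,\epsilon)$ are precisely those $d \in \O(X)$ of $\E$ with $d \leq \epsilon$.

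With this in hand separatedness is immediate. Suppose $d, d'$ are restriction idempotents on $(X,\epsilon)$ in $\split(\E)$ with $\split(\p)(d) = \split(\p)(d')$. By construction $\split(\p)$ acts on such maps exactly as $\p$ does, so this says $\p(d) = \p(d')$; and since $d, d'$ are restriction idempotents on $X$ in $\E$ and $\p$ is separated, we conclude $d = d'$. Hence $\split(\p)$ is separated, and the corollary follows by combining this with the admissibility established above.

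The only step requiring any care is the identification of the restriction idempotents of $\split(\E)$ with the down-set $\{ d \in \O(X) : d \leq \epsilon \}$; once this is observed, the statement is a direct consequence of the separatedness of $\p$ together with Propositions \ref{prop:hyperconnected_equivalence} and \ref{splitting-latent-fibration}. I do not anticipate a genuine obstacle here, since passing to $\split(\E)$ does not enlarge the stock of restriction idempotents over a given base idempotent in a way that could destroy injectivity on them.
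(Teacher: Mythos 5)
Your proof is correct, and it differs from the paper's in how separatedness of $\split(\p)$ is established. The paper's proof (which, like yours, rests on Proposition \ref{splitting-latent-fibration} for admissibility and on Proposition \ref{prop:hyperconnected_equivalence} to recombine) argues instead via prone maps: partial isomorphisms in $\split(\E)$ are partial isomorphisms in $\E$, hence $\p$-prone by Proposition \ref{prop:separated_equivalences} (since $\p$ is separated), and this proneness is then asserted to transfer to $\split(\p)$; applying Proposition \ref{prop:separated_equivalences} in the reverse direction then yields separatedness. You bypass prone maps entirely and verify the raw definition of separated, using the observation that the restriction idempotents on $(X,\epsilon)$ in $\split(\E)$ are exactly the $d \in \O(X)$ of $\E$ with $d \leq \epsilon$, and that $\split(\p)$ acts on them as $\p$ does, so injectivity on restriction idempotents is inherited on the nose. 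Your identification of those idempotents is right: a restriction idempotent on $(X,\epsilon)$ is a map $d$ with $d = \rst{d}$ in $\E$ and $\epsilon d \epsilon = d$, and commutation of restriction idempotents reduces the latter to $d \leq \epsilon$. What your route buys is that it is more elementary and avoids the one genuinely delicate point in the paper's argument, namely the claim that $\p$-proneness of a partial isomorphism ``easily'' implies $\split(\p)$-proneness, which is left unproved there; what the paper's route buys is brevity and the explicit byproduct that partial isomorphisms in $\split(\E)$ are prone, though that fact is anyway recoverable from separatedness via Proposition \ref{prop:separated_equivalences} once your argument is complete.
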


\begin{proof}
Partial isomorphisms in $\split(\E)$ are partial isomorphisms in $\E$ so are prone there and this easily means they are prone in $\split(\E)$.  The result thus follows from Proposition \ref{prop:separated_equivalences} and Proposition \ref{prop:hyperconnected_equivalence}.  
\end{proof}


\section{The Dual of a Latent Hyperfibration}\label{sec:dual}

Latent hyperfibrations have the  interesting property that one can form their fibred dual.  This process is, for example, important in  framing the semantics of reversible differential programming in which a fibrewise ability to take the dual is required.  For an ordinary fibration, one can construct its fibred dual by using a span construction on the total category of the fibration (for example, see \cite[pg. 898]{journal:benabou_dual}, \cite[Section 1.10.11]{book:Jacobs-Cat-Log}, and \cite{arxiv:simple-dual}).  In this section, we see how to generalize this construction to any latent hyperfibration.  We begin by investigating certain conditions under which we can form a restriction category of spans.  


\subsection{Hyper-opens and spans}

Let $\X$ be a restriction category and ${\cal H}$ and ${\cal Q}$ be two {\bf downclosed systems} of maps, that is classes of maps which are downclosed, closed to composition, and contain the partial isomorphisms, which  commute in the sense that for each $h\colon A \to B \in {\cal H}$ and $q\colon C \to B \in {\cal Q}$ there is a latent pullback 
\[ \xymatrix{& D \ar[dr]^{h'} \ar[dl]_{q'} \\ A \ar[dr]_{h} & & C \ar[dl]^{q} \\ & B} \]
such that $h' \in {\cal H}$  and $q' \in {\cal Q}$.  Note that if there is one latent pullback with this property then {\em all\/} latent pullbacks for the cospan $(h,q)$  will have this 
property as the classes are certainly closed to mediation.

We shall be particularly interested in the situation in which the maps in ${\cal H}$ are {\bf hyper-open}
  in the sense that, for each $(h\colon X\to Y) \in {\cal H}$  there is an isomorphism 
$\exists_h$ which is inverse to the pullback $h^*$ in the sense that the following are inverse:
\[  \exists_h\colon \rst{h}/{\cal O}(X) \to \exists_h(1_X)/{\cal O}(Y); e \mapsto \exists_h(e) ~~~~~~h^{*}\colon \exists_h(1_X)/{\cal O}(Y) \to \rst{h}/{\cal O}(X)\colon e \mapsto \rst{he} \]

Note that any hyper-open map is automatically open (for a map to be open one only requires the existence of the adjoint, where hyper-open requires the adjoint to be an isomorphism; see \cite{journal:ranges1}) and so in particular any hyper-open map $h\colon X \to Y$ has an associated map $\widehat{h}\colon Y \to Y$ which satisfies the properties of a range combinator.  

Given such a commuting pair of classes of maps we can form the span category  ${\sf Span}_\X({\cal H},{\cal Q})$ with the following data:
  \begin{description}
    \item[Objects: ] Those of $\X$;
    \item[Maps: ] An arrow $A \to B$ consists of an equivalence class of spans 
      \[ \xymatrix{ & A' \ar[dl]_{h} \ar[dr]^{q} \\  A && B}   \]  
      such that $h \in {\cal H}$, $q \in {\cal Q}$ with $\rst{h} = \rst{q}$ under the equivalence relation that $(f,g) \sim (f',g')$ when there 
      is a mediating partial isomorphism $\alpha \in {\cal H}$ such that:
      \[ \xymatrix@C=4em@R=4em{
          & A' \ar[dl]_{h} \ar[drr]_{q} \ar[r]^{\alpha} & A''  \ar[dll]^{h'}|<<<<<<<<<\hole  \ar[dr]^{q'} \\ A & & & B }   \]
    where  $\alpha h' = h$ and $\alpha q' = q$,  $\alpha^{-1} h = h'$ and $\alpha^{-1}q = q'$, and $\rst{\alpha^{-1}} = \rst{q'}$ and $\rst{\alpha} = \rst{q}$.
    \item[Identities: ] The identity span $A = A = A$;
    \item[Composition: ] By latent pullback.
  \end{description}
  
  ${\sf Span}_\X({\cal H},{\cal Q})$ is clearly a category but, in general, is not a restriction category.  However, if ${\cal H}$ is a hyper-open class of maps then 
  it is a restriction category with $\rst{(h,q)}= (\widehat{h},\widehat{h})$.

\begin{proposition}\label{prop:span_res_cat}
If ${\cal H}$ and ${\cal Q}$ are two commuting classes of maps with ${\cal H}$ a class of hyper-opens then 
${\sf Span}_\X({\cal H},{\cal Q})$ is a restriction category.
\end{proposition}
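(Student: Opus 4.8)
The plan is to take the restriction $\rst{(h,q)} = (\widehat h, \widehat h)$ as given and verify, in turn, that it is a legitimate arrow, that it is well defined on equivalence classes of spans, and that it satisfies the four restriction axioms [R.1]--[R.4]. First I would check that $(\widehat h, \widehat h)$ really is a span $A \to A$: since $\widehat h$ is a restriction idempotent on $A$ it is a partial isomorphism, hence lies in both $\mathcal H$ and $\mathcal Q$ (these contain all partial isomorphisms), and both legs of the span are $\widehat h$, so they trivially have equal restriction as required. For well-definedness on $\sim$-classes, if $(h,q)\sim(h',q')$ via a mediating partial isomorphism $\alpha\in\mathcal H$ with $\alpha h' = h$ and $\rst{\alpha^{(-1)}} = \rst{h'}$, then the range laws give $\widehat h = \widehat{\alpha h'} = \widehat{\widehat\alpha\, h'} = \widehat{\rst{h'}h'} = \widehat{h'}$ (using $\widehat\alpha = \rst{\alpha^{(-1)}}$ for a partial isomorphism), so the assigned restriction is independent of the representative.

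The engine for the axioms is a pair of composition formulas obtained directly from Lemma \ref{latent-pullbacks}.iii applied to the partial isomorphism $e$: for any restriction idempotent $e$ in the appropriate place,
\[ (e,e)(h,q) = (he,\ \rst{he}\,q), \qquad (h,q)(e,e) = (\rst{qe}\,h,\ qe), \]
together with the general composite $(h_1,q_1)(h_2,q_2) = (\bar h h_1,\ \bar q q_2)$, where $\bar h\in\mathcal H$ and $\bar q\in\mathcal Q$ are the legs of the commuting latent pullback of $q_1$ along $h_2$. With these in hand, [R.1] is immediate: $(\widehat h,\widehat h)(h,q) = (h\widehat h, \rst{h\widehat h}\,q) = (h,q)$, because $h\widehat h = h$ and $\rst h\, q = \rst q\, q = q$. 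For [R.2] and [R.3] one computes $(\widehat{h_1},\widehat{h_1})(\widehat{h_2},\widehat{h_2}) = (\widehat{h_1}\widehat{h_2}, \widehat{h_1}\widehat{h_2})$, which is symmetric since restriction idempotents commute, giving [R.2]; and $\rst{\,\rst{(h_1,q_1)}(h_2,q_2)} = (\widehat{h_2\widehat{h_1}}, \widehat{h_2\widehat{h_1}})$, which equals the same span because $\widehat{h_2\widehat{h_1}} = \widehat{\widehat{h_2}\,\widehat{h_1}} = \widehat{h_2}\widehat{h_1}$ (the range of a composite depends only on the range of its first factor, and the range of a restriction idempotent is itself), giving [R.3].

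The real work is [R.4], $f\rst g = \rst{fg}\,f$ for $f=(h_1,q_1)$ and $g=(h_2,q_2)$. Using the two composition formulas, the left side is $(\rst{q_1\widehat{h_2}}\,h_1,\ q_1\widehat{h_2})$ and the right side is $(h_1\widehat{\bar h h_1},\ \rst{h_1\widehat{\bar h h_1}}\,q_1)$. Applying [R.4] of $\X$ to pull the trailing restriction idempotents to the front of each leg, both sides take the shape $(\rst{(\cdot)}\,h_1,\ \rst{(\cdot)}\,q_1)$, so the entire identity collapses to the single equation
\[ \rst{h_1\,\widehat{\bar h h_1}} = \rst{q_1\,\widehat{h_2}} \]
of restriction idempotents on the apex. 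I would deduce this from two facts about hyper-open maps: the pullback-stability of the range, $\widehat{\bar h} = \rst{q_1\widehat{h_2}}$ (a Beck--Chevalley/Frobenius property of the latent pullback of the hyper-open $h_2$ along $q_1$), together with $\widehat{\bar h h_1} = \widehat{\widehat{\bar h}\,h_1} = \exists_{h_1}(\widehat{\bar h})$ and the hyper-open cancellation $h_1^{*}\exists_{h_1} = \mathrm{id}$, which gives $\rst{h_1\,\exists_{h_1}(\widehat{\bar h})} = \widehat{\bar h} = \rst{q_1\widehat{h_2}}$.

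The main obstacle is precisely the pullback-stability identity $\widehat{\bar h} = \rst{q_1\widehat{h_2}}$: it is the one step that genuinely uses that $\mathcal H$ consists of hyper-opens rather than arbitrary maps, and I would establish it from the open-map adjunction machinery of \cite{journal:ranges1} (that the range of a pullback of an open map is the pullback of its range, which for a hyper-open map is computed by $h_2^{*}$ and $\exists_{h_2}$). Everything else reduces to routine restriction-category algebra with the range laws, so once pullback-stability is in place the four axioms fall out as above.
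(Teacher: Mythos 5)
Your proposal tracks the paper's proof exactly up to and including the crucial reduction: the composition formulas obtained from Lemma \ref{latent-pullbacks}.iii, the verification of [R.1]--[R.3], and the collapse of [R.4] to the single identity $\rst{h_1\widehat{h_2^*h_1}} = \rst{q_1\widehat{h_2}}$ (your $\bar h$ is the paper's $h_2^*$, the ${\cal H}$-leg of the latent pullback) are all as in the paper. The divergence is in how that identity is finished, and this is where there is a genuine gap. You split it into (a) the Beck--Chevalley claim $\widehat{h_2^*} = \rst{q_1\widehat{h_2}}$ and (b) the hyper-open cancellation $\rst{h_1\widehat{h_2^*h_1}} = \widehat{h_2^*}$. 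Step (b) is sound, but observe what it does: given (b), your claim (a) is \emph{literally equivalent} to the identity being proved, so the entire content of [R.4] now sits in (a) -- and (a) is not proved. It is not among the hypotheses (the commutation assumption only says latent pullbacks \emph{exist} with legs in the two classes; no stability of ranges along those pullbacks is assumed), and it is not a citable general fact: the adjunction/Frobenius calculus for a single open map in \cite{journal:ranges1} yields at best the inequality $\widehat{h_2^*} \leq \rst{q_1\widehat{h_2}}$, while the reverse inequality is a genuine stability statement about the latent pullback square. Indeed, in a sparse restriction category a latent pullback can be ``too small'' (even empty, with zero-map legs) and still satisfy the universal property, since no object in the category witnesses the failure; so nothing in the axioms forces the range of $h_2^*$ to exhaust $\rst{q_1\widehat{h_2}}$. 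Deferring the ``main obstacle'' to that citation is therefore circular rather than a completion of the proof.

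The paper closes [R.4] by a different device that stays inside the given structure: because the pullback leg $h_2^*$ lies in ${\cal H}$ (by the commutation hypothesis), it is itself hyper-open, so $e \mapsto \rst{h_2^*e}$ is injective on the relevant downset of ${\cal O}(S)$; it therefore suffices to verify the identity \emph{after} pulling back along $h_2^*$, and a direct computation -- using commutativity of the latent pullback square $h_2^*q_1 = q_1^*h_2$, the range law $h_2\widehat{h_2} = h_2$, and $\rst{h_1} = \rst{q_1}$ -- shows both sides become $\rst{h_2^*h_1}$. That is the idea your proposal is missing: exploit the hyper-openness of the \emph{pullback leg} $h_2^*$ itself (injectivity of pulling back along it), rather than asserting a pullback-stability property of the square that the hypotheses do not provide.
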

\begin{proof}
We will use $\circ$ to denote composition in ${\sf Span}_\X({\cal H},{\cal Q})$.  For [R.1], let 
	\[ \xymatrix{ & S \ar[dl]_{h} \ar[dr]^{q} \\  X && Y}   \] 
be a representative of an arrow in ${\sf Span}_\X({\cal H},{\cal Q})$.  Then since $\hat{h}$ is a restriction idempotent (and hence a partial isomorphism), by Lemma \ref{latent-pullbacks}.iii, the composite $\rs{(h,q)} \circ (h,q)$ is given by
\[ \xymatrix{ & & S \ar[dl]_{h\hat{h}} \ar[dr]^{\rs{h\hat{h}}} & & \\ & X \ar[dl]_{\hat{h}} \ar[dr]^{\hat{h}} & & S \ar[dl]_{h} \ar[dr]^{q} & \\ X & & X & & Y } \]
But using the range properties of open maps, we have
	\[ (h\hat{h}\hat{h},\rs{h\hat{h}}q) = (h,\rs{h}q) = (h,\rs{q}q) = (h,q) \]
as required.  [R.2] and [R.3] follow similarly.

For [R.4], let $(h_2^*h_1,q_1^*q_2)$ denote the composite $(h_1,q_1) \circ (h_2,q_2)$:
\[ \xymatrix{ & & P \ar[dl]_{h_2^*} \ar[dr]^{q_1^*} & & \\ & S \ar[dl]_{h_1} \ar[dr]^{q_1} & & T \ar[dl]_{h_2} \ar[dr]^{q_2} & \\ X & & Y & & Z } \]
(where the diamond is a latent pullback). 

Then $\rs{(h_1,q_1) \circ (h_2,q_2)} = (\widehat{h_2^*h_1},\widehat{h_2^*h_1})$, so as for [R.1], 
	\[ \rs{(h_1,q_1) \circ (h_2,q_2)} \circ (h_1,q_1) = (h_1\widehat{h_2^*h_1}, \rs{h_1\widehat{h_2^*h_1}} q_1) = (\rs{h_1\widehat{h_2^*h_1}}h_1, \rs{h_1\widehat{h_2^*h_1}} q_1). \]
	
On the other hand, $(h_1,q_1) \circ \rs{(h_2,q_2)}$ is given by
\[ \xymatrix{ & & S \ar[dl]_{\rs{q\widehat{h_2}}} \ar[dr]^{q_1 \widehat{h_2}} & & \\ & S \ar[dl]_{h_1} \ar[dr]^{q_1} & & Y \ar[dl]_{\widehat{h_2}} \ar[dr]^{\widehat{h_2}} & \\ X & & Y & & Z } \]
that is,
	\[ (h_1,q_1) \circ \rs{h_2,q_2} = (\rs{q_1\widehat{h_2}}h_1,q_1\widehat{h_2}) = (\rs{q_1\widehat{h_2}}h_1, \rs{q_1\widehat{h_2}}q_1) \]
Thus, comparing the two expressions, it suffices to prove that
	\[ \rs{h_1\widehat{h_2^*h_1}} = \rs{q_1\widehat{h_2}}. \]

Note that these are both restriction idempotents on $S$.  To prove they are equal, since $h_2^*$ is hyper-open, pulling back along $h_2^*$ is an isomorphism; thus, it suffices to instead prove that 
	\[ \rs{h_2^* \rs{h_1\widehat{h_2^*h_1}}} = \rs{h_2^*\rs{q_1\widehat{h_2}}}. \]
Indeed,
	\[ \rs{h_2^* \rs{h_1\widehat{h_2^*h_1}}} = \rs{h_2^*h_1\widehat{h_2^*h_1}} = \rs{h_2^*h_1} \]
while
   \[  \rs{h_2^*\rs{q_1\widehat{h_2}}} = \rs{h_2^* q_1 \widehat{h_2}} = \rs{q_1^*h_2 \widehat{h_2}} = \rs{q_1^*h_2} = \rs{h_2^*q_1} = \rs{h_2^*h_1} \]
since $\rs{q_1} = \rs{h_1}$.  

\end{proof}
  

\subsection{The fibrational dual}

Our main construction of this section is the following:

\begin{proposition}
Suppose $\p\colon \E \to \B$ is a latent hyperfibration.  Then there is a restriction category $\E^*$ whose objects are those of $\E$ and whose maps are equivalence classes of spans
\[ \xymatrix{ & S \ar[dl]_{v} \ar[dr]^{h} & \\ X & & Y} \]
where $v$ is subvertical, $h$ is prone, and $\rs{v} = \rs{h}$; the equivalence is up to subvertical partial isomorphism and as in Proposition \ref{prop:span_res_cat}, the restriction of $[(v,c)]$ is $[(\hat{v},\hat{v})]$.  Moreover, there is a restriction functor $\p^*\colon \E^* \to \B$ which is defined on objects as $\p^*(A) = \p(A)$ and on arrows as $\p^*(v,h) = \p(h)$.
\end{proposition}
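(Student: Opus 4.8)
The plan is to realize $\E^*$ as the span category ${\sf Span}_\E({\cal H},{\cal Q})$ of Proposition \ref{prop:span_res_cat}, with ${\cal H}$ the class of subvertical maps (the left legs $v$) and ${\cal Q}$ the class of prone maps (the right legs $h$); the condition $\rs{v}=\rs{h}$ and the equivalence up to subvertical partial isomorphism then match the span construction verbatim. First I would check that these are suitable classes. Since a hyperfibration is separated (Proposition \ref{prop:hyperconnected_equivalence}), all partial isomorphisms and all restriction idempotents are prone (Proposition \ref{prop:separated_equivalences}); prone maps compose (Lemma \ref{composites-prones}); and if $g\leq f$ with $f$ prone then $g=\rs{g}f$ is a composite of the prone restriction idempotent $\rs{g}$ with $f$, hence prone. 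Thus ${\cal Q}$ is a downclosed system. For ${\cal H}$, the identity $\p(vv')=\p(v)\p(v')$ exhibits a meet of restriction idempotents, and $w\leq v$ forces $\p(w)=\rs{\p(w)}\p(v)\leq\p(v)$, again a restriction idempotent; so the subvertical maps are closed under composition and downclosed, and they contain all subvertical partial isomorphisms, which are exactly the mediating maps used in the equivalence relation and in the proof of Proposition \ref{prop:span_res_cat}.

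Next I would verify the two structural hypotheses. That ${\cal H}$ and ${\cal Q}$ commute is precisely Lemma \ref{lemma:pullback_prone_subvertical}: a cospan consisting of a prone map and a subvertical map has a latent pullback whose legs are again prone and subvertical respectively, and by closure under mediation every such latent pullback has this property.

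The main obstacle is showing that each subvertical map is hyper-open. Here I would use that $\p$ is a hyperconnection, so for each object $Z$ the restriction gives a poset isomorphism $\p_{|{\cal O}(Z)}\colon {\cal O}(Z)\to\{d\leq\p(1_Z)\}$. If $v\colon S\to X$ is subvertical with $\p(v)=e$, then $\p(S)=\p(X)$ and $e$ is a restriction idempotent, which is hyper-open in $\B$ since both pullback and image along a restriction idempotent amount to ``meet with $e$'', acting as the identity on $\{d\leq e\}$. Under the hyperconnection isomorphisms the pullback $v^*\colon d\mapsto\rs{vd}$ on ${\cal O}(X)$ corresponds to $e^*\colon\p(d)\mapsto e\,\p(d)$, because $\p(\rs{vd})=\rs{\p(v)\p(d)}=e\,\p(d)$; transporting the invertibility of $e^*$ back through these isomorphisms shows $v^*$ is invertible, so $v$ is hyper-open and $\hat{v}$ is the unique restriction idempotent over $e$, giving $\p(\hat{v})=e$. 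Proposition \ref{prop:span_res_cat} then yields that $\E^*$ is a restriction category with $\rs{(v,h)}=(\hat{v},\hat{v})$.

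Finally I would check that $\p^*$, defined by $\p^*(A)=\p(A)$ and $\p^*(v,h)=\p(h)$, is a restriction functor. It is well defined on classes: a mediating subvertical partial isomorphism $\alpha$ has $\p(\alpha)$ a restriction idempotent with $\p(\alpha)=\rs{\p(\alpha)}=\rs{\p(h)}$, so $\p(h')=\p(\alpha^{(-1)})\p(h)=\rs{\p(h)}\p(h)=\p(h)$. For composition, the right leg of a composite span is $c'h_2$, where (by the proof of Lemma \ref{lemma:pullback_prone_subvertical}) $\p(c')=\p(h_1)\p(v_2)$; since $\rs{v_2}=\rs{h_2}$ forces $\p(v_2)=\rs{\p(h_2)}$, we obtain $\p(c'h_2)=\p(h_1)\rs{\p(h_2)}\p(h_2)=\p(h_1)\p(h_2)$, so $\p^*$ preserves composition, and it preserves identities exactly when $\p$ does. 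Restriction is preserved by the chain $\p^*(\rs{(v,h)})=\p(\hat{v})=e=\rs{\p(v)}=\p(\rs{v})=\p(\rs{h})=\rs{\p(h)}=\rs{\p^*(v,h)}$, using $\p(\hat{v})=e$ from the hyper-openness computation together with $\rs{v}=\rs{h}$.
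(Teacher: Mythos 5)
Your proposal is correct and follows essentially the same route as the paper: both realize $\E^*$ as ${\sf Span}_\E({\cal H},{\cal Q})$ with ${\cal H}$ the subverticals and ${\cal Q}$ the prones, invoke Lemma \ref{lemma:pullback_prone_subvertical} for commutation and Proposition \ref{prop:span_res_cat} for the restriction structure, and then check $\p^*$ with the same computations (mediating subvertical partial isomorphisms have equal $\p$-images, the prone leg of a composite lies over $\p(h_1)\p(v_2)$, and $\p(\hat{v}) = \rst{\p(h)}$). The only difference is that you explicitly verify the downclosed-system and hyper-openness hypotheses (the latter by transporting the invertibility of meet-with-$e$ through the hyperconnection isomorphisms), which the paper asserts without proof---a worthwhile filling-in of detail rather than a divergence in method.
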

\begin{proof}
Using Lemma \ref{lemma:pullback_prone_subvertical}, the prones and subverticals in a latent hyperfibration form a commuting pair of classes of maps; moreover, the subverticals are hyper-open and the classes are downclosed systems of maps.   Thus Proposition \ref{prop:span_res_cat} tells us we can form a restriction category from the spans.  We will use $\circ$ to denote composition in $\E^*$.

We need to show that $\p^*$ is a well-defined restriction functor.  To start, first note that for a subvertical partial isomorphism $\alpha$, $\alpha \alpha^{(-1)} = \rs{\alpha}$ gives $\p(\alpha)\p(\alpha^{(-1)}) = \rs{\p(\alpha)} = \p(\alpha)$, so that $\p(\alpha) \leq \p(\alpha^{(-1)})$.   The reverse inequality follows similarly, and so $\p(\alpha) = \p(\alpha^{(-1)})$.  We will use this to check $\p^*$ is well-defined.  

Indeed, suppose that $(v_1,h_1)$ and $(v_2,h_2)$ represent the same map in $\E^*$, so that, in particular, there is a subvertical partial isomorphism $\alpha$ so that $h_1 = \alpha h_2$ and $\rs{\alpha^{(-1)}}h_2 = h_2$.  Then
	\[ \p(h_1) = \p(\alpha)\p(h_2) = \p(\alpha^{(-1)})\p(h_2) = \p(\rs{\alpha^{(-1)}})\p(h_2) = \p(\rs{\alpha^{(-1)}} h_2) = \p(h_2), \]
so $\p^*$ is well-defined.  

For preservation of composition, suppose that we form the composite of $(v_1,h_1)$ and $(v_2,h_2)$:
\[ \xymatrix{ & & S \ar[dl]_{v_2^*} \ar[dr]^{h_1^*} & & \\ & S \ar[dl]_{v_1} \ar[dr]^{h_1} & & T \ar[dl]_{v_2} \ar[dr]^{h_2} & \\ X & & Y & & Z } \]
As in the previous lemma, we can choose $h_1^*$ to be the prone arrow over $\p(h_1)\p(v_2)$ so $\p(h_1^*) = \p(h_1)\p(v_2)$.  Then we have
	\[ \p(h_1^*h_2) = \p(h_1^*)\p(h_2) = \p(h_1)\p(v_2)\p(h_2) = \p(h_1)\p(\rs{v_2})\p(h_2) = \p(h_1)\p(h_2) \]
since $\rs{v_2} = \rs{h_2}$.  Thus
	\[ \p^*[(v_1,h_1) \circ (v_2,h_2)] = \p^*[(v_2^*v_1,h_1^*h_2) = \p(h_1^*h_2) = \p(h_1)\p(h_2) = \p^*(v_1,h_1) \p^*(v_2,h_2), \]
so composition is preserved.

Finally, $\p^*$ is a restriction functor since $\p(\hat{h}) = \p(h)$ by definition of $\hat{h}$.  
\end{proof}

The following lemma is useful when working with maps in $\E^*$.  

\begin{lemma}\label{lemma:basic_dual_results} Suppose that $\p\colon \E \to \B$ is a hyperfibration.  
\begin{enumerate}[(i)]
	\item For any $[(v,c)]\colon X \to Y$ in $E^*$ and any $c'\colon Y \to Z$ prone in $\E$, 
		\[ [(v,c)] \circ [(\rs{c'},c)] = [(\rs{cc'}v,cc')]. \]
	\item For any $v\colon Y \to X$ subvertical in $\E$ and any $[(v',c)]\colon Y \to Z$ in $\E^*$,
		\[ [(v,\rs{v})] \circ [(v',c)] = [(v'v,\rs{v'v}c)]. \]
	\item Any $[(v,c)]\colon X \to Y$ in $\E^*$ can be factored as:
		\[ (v,c) = (v,\rs{v}) \circ (\rs{c},c). \]
\end{enumerate}
\end{lemma}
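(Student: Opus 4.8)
The plan is to treat all three identities as direct computations of span composites, using the description of composition in $\E^*$ as latent pullback together with Lemma \ref{latent-pullbacks}.iii. The key observation is that in each of the three composites, one leg of the cospan being pulled back is a \emph{restriction idempotent}, hence a partial isomorphism which is its own partial inverse. Thus Lemma \ref{latent-pullbacks}.iii applies directly and produces a latent pullback whose apex is an already-present object, with both projections given by explicit restriction formulas. Having read off the two legs of the resulting composite span, each identity reduces to simplification via the restriction axioms [R.1]--[R.4], the standard consequence $\rs{fg}\,\rs{f} = \rs{fg}$, and the defining condition $\rs{v}=\rs{c}$ of a valid span $(v,c)$.

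For (i) I would form the composite of $[(v,c)]$ with the span $[(\rs{c'},c')]$ determined by a prone $c'\colon Y \to Z$, by pulling back the cospan $S \to^{c} Y \from^{\rs{c'}} Y$. Since $\rs{c'}$ is a restriction idempotent, Lemma \ref{latent-pullbacks}.iii gives a latent pullback with apex $S$, whose projection to the first apex is $\rst{c\rs{c'}}$ and whose projection to the second apex is $c\rs{c'}$. The right leg of the composite is then $c\rs{c'}c' = cc'$ by [R.1], and the left leg is $\rst{c\rs{c'}}\,v$. Rewriting $c\rs{c'} = \rs{cc'}c$ by [R.4], then applying [R.3] and $\rs{cc'}\,\rs{c}=\rs{cc'}$, yields $\rst{c\rs{c'}} = \rs{cc'}$, so the left leg is $\rs{cc'}\,v$ as claimed. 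Here $cc'$ is prone as a composite of prones (Lemma \ref{composites-prones}), and the pair is a valid span.

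For (ii) I would compose $[(v,\rs{v})]$ with $[(v',c)]$ by pulling back the cospan $Y \to^{\rs{v}} Y \from^{v'} S'$, where $S'$ is the apex of $(v',c)$. Again $\rs{v}$ is a restriction idempotent, so Lemma \ref{latent-pullbacks}.iii gives apex $S'$ with projection $v'\rs{v}$ to the first apex and $\rst{v'\rs{v}}$ to the second. The left leg of the composite is $v'\rs{v}v = v'v$ by [R.1], and the right leg is $\rst{v'\rs{v}}\,c$; rewriting $v'\rs{v} = \rs{v'v}v'$ via [R.4] and simplifying via [R.3] and $\rs{v'v}\,\rs{v'} = \rs{v'v}$ gives $\rst{v'\rs{v}} = \rs{v'v}$, hence right leg $\rs{v'v}\,c$. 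One checks $\rs{v'v}c$ is prone (the restriction idempotent $\rs{v'v}$ is prone by separatedness, Proposition \ref{prop:separated_equivalences}, and composites of prones are prone), so the pair is a valid span.

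Finally, (iii) follows immediately from (ii) by specializing $v' = \rs{c}$, which is subvertical: this gives $[(v,\rs{v})] \circ [(\rs{c},c)] = [(\rs{c}\,v,\ \rs{\rs{c}v}\,c)]$. Using $\rs{v}=\rs{c}$ and [R.1] we get $\rs{c}\,v = \rs{v}v = v$ and $\rs{\rs{c}v}\,c = \rs{v}\,c = \rs{c}\,c = c$, so the composite is $[(v,c)]$. Throughout, the only real work is bookkeeping: identifying which leg of each cospan is the partial isomorphism fed to Lemma \ref{latent-pullbacks}.iii, keeping the orientation of the span legs straight, and confirming at each step that the computed pair still satisfies the span conditions (left subvertical, right prone, $\rs{\text{left}}=\rs{\text{right}}$). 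This is the main, though entirely routine, obstacle.
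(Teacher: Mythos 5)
Your proof is correct and follows essentially the same route as the paper: both compute the composites via Lemma \ref{latent-pullbacks}.iii (using that the restriction-idempotent leg of the cospan yields a latent pullback with an existing apex) and then simplify with the restriction axioms. The only cosmetic difference is that you deduce (iii) from (ii) with $v' = \rs{c}$, whereas the paper deduces it from (i); both specializations work equally well.
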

\begin{proof}
For (i), by Lemma \ref{latent-pullbacks}.iii, $(v,c) \circ (\rs{f},f)$ is given by
\[ \xymatrix{ & & S \ar[dl]_{\rs{cf}} \ar[dr]^{c\rs{f}} & & \\ & S \ar[dl]_{v} \ar[dr]^c & & Y \ar[dl]_{\rs{f}} \ar[dr]^{f} & \\ X & & Y & & Z } \]
Thus, the composite is $(v,c) \circ (\rs{f},f) = (\rs{cf}v,cf)$;  part (ii) follows similarly.  

For (iii), by (i), $(v,\rs{v}) \circ (\rs{c},c) = (\rs{\rs{v}c}v,\rs{v}c)$, which equals $(v,c)$ since $\rs{v} = \rs{c}$.
\end{proof}

\begin{lemma}\label{lemma:prone_e*}
If $\p\colon \E \to \B$ is a hyperfibration, and $f\colon X \to Y$ is prone in $\E$, then $(\rs{f},f)$ is prone in $\E^*$.
\end{lemma}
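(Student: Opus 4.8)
The plan is to verify the proneness condition of Definition \ref{newdefn} for the arrow $(\rs f,f)\colon X\to Y$ with respect to $\p^*$. Since $\p^*(\rs f,f)=\p(f)$, the lifting data lives over $\p(f)$, so I would start from an arbitrary arrow $G=[(u,k)]\colon W\to Y$ of $\E^*$ (with $u$ subvertical, $k$ prone, $\rs u=\rs k$; write $T$ for the apex) together with a map $m\colon \p(W)\to\p(X)$ of $\B$ making $m\,\p(f)=\p(k)$ a precise triangle, and then produce a unique $\p^*$-prone lift $\widetilde m\colon W\to X$.

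For existence I would exploit that the proneness of $(\rs f,f)$ reduces to the proneness of $f$ in $\E$. Since $f$ is prone and $m\,\p(f)=\p(k)$ is precise, proneness of $f$ yields a unique $\widetilde m_0\colon T\to X$ with $\p(\widetilde m_0)=m$ and $\widetilde m_0 f=k$ precise. Because $\p$ is a hyperconnection it is separated (Proposition \ref{prop:hyperconnected_equivalence}), so the precise triangle $\widetilde m_0 f=k$ with $k$ and $f$ both prone lets me invoke Lemma \ref{lemma:left_factor_separated} to conclude $\widetilde m_0$ is itself prone. As $\rs{\widetilde m_0}=\rs k=\rs u$, the span $\widetilde m:=[(u,\widetilde m_0)]$ is a legitimate arrow $W\to X$ with $\p^*(\widetilde m)=m$, and Lemma \ref{lemma:basic_dual_results}(i) gives
\[ \widetilde m\circ(\rs f,f)=[(\rs{\widetilde m_0 f}\,u,\,\widetilde m_0 f)]=[(\rs k\,u,k)]=[(u,k)]=G, \]
the triangle being precise because $\rs{\widetilde m}=[(\hat u,\hat u)]=\rs G$.

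For uniqueness, suppose $[(a,b)]\colon W\to X$ (apex $R$) is any lift, so $\p(b)=m$ and $[(a,b)]\circ(\rs f,f)=G$ precisely. By Lemma \ref{lemma:basic_dual_results}(i) the composite is $[(\rs{bf}\,a,bf)]$, and its equivalence with $[(u,k)]$ furnishes a subvertical partial isomorphism $\alpha\colon T\to R$ with $\alpha(\rs{bf}a)=u$, $\alpha(bf)=k$, $\rs\alpha=\rs k$, and $\rs{\alpha^{(-1)}}=\rs{bf}$. The main obstacle here is that a priori $\rs{bf}$ could lie strictly below $\rs b=\rs a$, which would spoil the restriction bookkeeping of $\alpha$. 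This is exactly where preciseness of the $\E^*$-triangle is essential: $\rs{\widetilde m}=\rs G$ forces the equality of diagonal idempotent spans $[(\hat a,\hat a)]=[(\widehat{\rs{bf}a},\widehat{\rs{bf}a})]$; comparing the two through the mediating subvertical isomorphism collapses them to $\hat a=\widehat{\rs{bf}a}$ in $\X$, and since $a$ is hyper-open, $\exists_a$ is injective with $\widehat{\rs{bf}a}=\exists_a(\rs{bf})$ and $\hat a=\exists_a(\rs a)$, whence $\rs{bf}=\rs a=\rs b$.

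With $\rs{bf}=\rs b$ secured, the rest is bookkeeping that I would carry out as follows. From $\rs{\alpha\,bf}=\rs k=\rs\alpha$ the triangle $\alpha(bf)=k$ is precise, so $\alpha\rs{bf}=\alpha$ by Lemma \ref{Jaws}(i); also $\p(\alpha)=\rs{\p(\alpha)}=\rs{\p(k)}=\rs m$ using the precise base triangle, giving $\p(\alpha b)=\rs m\,m=m$, while $(\alpha b)f=\alpha(bf)=k$ with the triangle precise by separatedness. Uniqueness of the prone lift defining $\widetilde m_0$ then yields $\alpha b=\widetilde m_0$. Finally I would check that $\alpha$ witnesses $[(u,\widetilde m_0)]\sim[(a,b)]$: $\alpha a=\alpha\rs{bf}a=u$, $\alpha b=\widetilde m_0$, $\rs\alpha=\rs u$, together with $\alpha^{(-1)}u=\rs{bf}a=a$, $\alpha^{(-1)}\widetilde m_0=\rs{bf}b=b$, and $\rs{\alpha^{(-1)}}=\rs{bf}=\rs b$. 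Hence $[(a,b)]=\widetilde m$, so the lift is unique and $(\rs f,f)$ is $\p^*$-prone.
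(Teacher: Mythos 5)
Your proof is correct, and its skeleton matches the paper's: existence is obtained exactly as in the paper (prone lift $\widetilde m_0$ of $f$ over the precise base triangle, proneness of $\widetilde m_0$ via Lemma \ref{lemma:left_factor_separated}, composite computed by Lemma \ref{lemma:basic_dual_results}(i)), and uniqueness proceeds, as in the paper, by extracting the mediating subvertical partial isomorphism $\alpha$ from the span equivalence and using the uniqueness clause of proneness of $f$ to conclude $\alpha b = \widetilde m_0$. The one genuine divergence is how you establish $\rs{bf}=\rs{b}$. You route this through preciseness of the $\E^*$-triangle, the collapse of equivalent diagonal idempotent spans (essentially the uniqueness argument of Lemma \ref{lemma:verticals_in_e*}), and injectivity of $\exists_a$ for the hyper-open subvertical $a$; this works, but it silently relies on the identity $\widehat{\rs{bf}\,a}=\exists_a(\rs{bf})$, which in this setting requires its own (easy, hyperconnection-based) verification. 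The paper gets the same equality in one line, with no appeal to ranges or to $\E^*$-preciseness: since the base triangle $m\,\p(f)=\p(k)$ is precise, $\p(\rs{bf})=\rs{\p(b)\p(f)}=\rs{m\,\p(f)}=\rs{m}=\p(\rs{b})$, and separatedness of $\p$ (Proposition \ref{prop:hyperconnected_equivalence}) forces $\rs{bf}=\rs{b}$. A side effect of the paper's route is that it never uses preciseness of the competing lift's triangle in $\E^*$, so it proves uniqueness among \emph{all} commuting fills over $m$ -- slightly stronger than Definition \ref{newdefn} demands -- whereas your argument, by invoking that preciseness, establishes exactly the required uniqueness among precise fills.
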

\begin{proof}
Suppose we are given 
\[ \xymatrix{ Z  \ar[dr]^{(v,c)} & ~\ar@{}[drr]|{\textstyle\mapsto}&& \p(Z) \ar[dr]^{\p^*(v,c) = \p(c)} \ar[d]_{h}
\\ 
X \ar[r]_{(\rs{f},f)}  & Y &&   \p(X) \ar[r]_{\p(f)} & \p(Y)} \]
such that $\rs{h} = \rs{p(c)}$.  Then we claim that $(v,h')$ is the required unique fill-in, where $h'$ is the unique fill-in from the universal property of $f$ as a prone arrow in $E$:
\[ \xymatrix{ S \ar@{-->}[d]_{h'} \ar[dr]^{c} & ~\ar@{}[drr]|{\textstyle\mapsto}&& \p(Z) \ar[dr]^{\p(c)} \ar[d]_{h} 
\\ 
X \ar[r]_{f}  & Y && \p(X) \ar[r]_{\p(f)} & \p(Y)} \]
First, $(v,h')$ is a well-defined arrow in $\E^*$: by Lemma \ref{lemma:left_factor_separated}, $h'$ is prone in $\E$, and by definition of $h'$ and $(v,c)$, $\rs{h'} = \rs{c} = \rs{v}$.  

Second, $(v,h')$ does make the triangle commute: using Lemma \ref{lemma:basic_dual_results}.i, 
	\[ (v,h') \circ (\rs{f},f) = (\rs{h'f}v,h'f) = (\rs{c}v,c) = (v,c) \]
since $\rs{c} = \rs{v}$.  

It remains to show that $(v,h')$ is unique with these properties.  Thus, suppose we have an arrow $(w\colon T \to Z, k\colon T \to X)$ in $\E^*$ such that 
\[ \xymatrix{ Z  \ar[d]_{(w,k)}   \ar[dr]^{(v,c)} & ~\ar@{}[drr]|{\textstyle\mapsto}& & \p(Z) \ar[dr]^{\p(c)} \ar[d]_{h}
\\ 
X \ar[r]_{(\rs{f},f)}  & Y &&  \p(X) \ar[r]_{\p(f)} & \p(Y)} \]
Since the triangle commutes, using Lemma \ref{lemma:basic_dual_results}.i, $(\rs{kf}w,kf) = (v,c)$.  Thus, by definition of arrows in $\E^*$, there is some subvertical partial isomorphism $\alpha\colon S \to T$ such that 
	\[ \alpha kf = c, \ \alpha\rs{kf} w = v, \ \rs{\alpha} = \rs{c}, \mbox{ and } \rs{\alpha^{(-1)}} = \rs{kf}. \]
We claim that this same $\alpha$ witnesses that $(w,k) = (v,h')$ in $\E^*$; that is, we claim that 
	\[ \alpha k = h', \ \alpha w = v, \ \rs{\alpha} = \rs{h'}, \mbox{ and } \rs{\alpha^{(-1)}} = \rs{w}. \]

Indeed, for the first requirement, we have $\alpha kf = c = h'f$.  However, we also have 
	\[ \p(\alpha k) = \rs{\p(\alpha)}\p(k) = \rs{\p(c)}\p(k) = \rs{h}\p(k) = \rs{h}h = h = \p(h'), \]
so by proneness of $f$, $\alpha k = h'$.  
For the second requirement, 
	\[ v = \alpha\rs{kf}w = \rs{\alpha kf}\alpha w = \rs{c}\alpha w = \rs{\alpha} \alpha w = w. \]
For the third requirement, 
	\[ \rs{\alpha} = \rs{c} = \rs{h'}. \]
For the fourth requirement, we use hyperconnectedness:
	\[ \rs{\p(\alpha^{(-1)})} = \rs{\p(k)\p(f)} = \rs{h \p(f)} = \rs{h} = \rs{\p(k)} = \rs{\p(w)}, \]
with the third equality since $h\p(f) = c$ is precise.  Thus, since $\p$ is a hyperconnection, $\rs{\alpha^{(-1)}} = \rs{w}$.

Thus $\alpha$ witnesses  $(w,k) = (v,h')$ in $\E^*$, and so $(\rs{f},f)$ is prone, as required.
\end{proof}

\begin{theorem}\label{thm:dual_hyperfibration}
If $\p\colon \E \to \B$ is a hyperfibration then so is $\p^*\colon \E^* \to \B$.  
\end{theorem}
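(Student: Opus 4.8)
The plan is to establish the two halves of being a hyperfibration separately: that $\p^*$ has enough prone maps (so is a latent fibration), and that $\p^*$ is a hyperconnection; the latter I will obtain via Proposition~\ref{prop:hyperconnected_equivalence} by checking that $\p^*$ is both separated and admissible.

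The prone-lifting property is immediate from the work already done. Given a map $f\colon A \to \p^*(Y) = \p(Y)$ in $\B$ (satisfying the required compatibility with $\p^*(1_Y)$), use the fact that $\p$ is a latent fibration to choose a $\p$-prone map $c\colon X \to Y$ in $\E$ over $f$. Then Lemma~\ref{lemma:prone_e*} tells us that $(\rs{c},c)\colon X \to Y$ is prone in $\E^*$, and by definition $\p^*(\rs{c},c) = \p(c) = f$. Hence $(\rs{c},c)$ is a $\p^*$-prone lift of $f$, so $\p^*$ is a latent fibration.

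The key structural observation for the hyperconnection part is a description of the restriction idempotents of $\E^*$. If $\epsilon = [(v,c)]$ is a restriction idempotent on $X$, then $\epsilon = \rs{\epsilon} = [(\hat{v},\hat{v})]$; since $v$ is hyper-open its range $\hat{v}$ is a restriction idempotent of $\E$ on $X$, which is subvertical (as $\p(\hat{v})$ is a restriction idempotent) and, because $\p$ is separated, is prone by Proposition~\ref{prop:separated_equivalences}. Thus every restriction idempotent of $\E^*$ on $X$ has the form $[(e,e)]$ for a restriction idempotent $e$ of $\E$ on $X$, and conversely each such $e$ determines one with $\p^*[(e,e)] = \p(e)$. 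Separatedness of $\p^*$ now follows: if $[(e,e)]$ and $[(e',e')]$ have the same image, then $\p(e) = \p(e')$, so $e = e'$ since $\p$ is separated, whence the two classes coincide. For admissibility, given a restriction idempotent $d$ on $\p(X) = \p^*(X)$ with $d\,\p(1_X) = d$, use admissibility of $\p$ to obtain a prone restriction idempotent $e$ on $X$ over $d$; then $[(e,e)]$ is a restriction idempotent of $\E^*$ over $d$ which is prone by Lemma~\ref{lemma:prone_e*}. Combining separated and admissible via Proposition~\ref{prop:hyperconnected_equivalence} shows $\p^*$ is a hyperconnection, and together with the first paragraph this makes $\p^*$ a hyperfibration.

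The main obstacle is the bookkeeping behind the characterization of restriction idempotents of $\E^*$ as the classes $[(e,e)]$: one must be sure that passing to the range $\hat{v}$ really lands inside the restriction idempotents of $\E$, and that the span equivalence does not identify such a class with one not of this normal form. Once this normal form is in hand, everything else reduces to the separatedness and admissibility of $\p$ itself, together with the already-established Lemma~\ref{lemma:prone_e*}, so no genuinely new calculation is required.
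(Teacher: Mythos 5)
Your proposal is correct and takes essentially the same route as the paper's own proof: prone lifts in $\E^*$ are obtained from $\p$-prone lifts via Lemma \ref{lemma:prone_e*}, admissibility is inherited because the lift of a restriction idempotent is again a restriction idempotent, and the hyperconnection property descends from $\p$. The only difference is that you explicitly work out the normal form $[(e,e)]$ for restriction idempotents of $\E^*$ and invoke Proposition \ref{prop:hyperconnected_equivalence} to verify the separated/admissible conditions, details the paper compresses into ``it is easy to check that $\p^*$ is a hyperconnection since $\p$ is.''
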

\begin{proof}
Given an $f\colon A \to B$ in $\B$ and a $Y$ over $\B$ let $f'\colon X \to Y$ be its $\p$-prone lift.  Then the previous lemma tells us that $(\rs{f'},f')$ is $\p^*$-prone, and so $\p^*$ is a latent fibration.  Moreover, it is immediately admissible as this lift is a restriction idempotent if $f$ is.  Finally, it is easy to check that $\p^*$ is a hyperconnection since $\p$ is, so $\p^*$ is a hyperfibration.  
\end{proof}

Our goal in the rest of this section is to further understand this latent fibration.  We begin by characterizing the subvertical maps:

\begin{lemma}\label{lemma:verticals_in_e*} Let $\p\colon \E \to \B$ be a hyperfibration.  A map $[(v,c)]\colon X \to Y$ is subvertical in $\E^*$ if and only if $c$ is a partial isomorphism, if and only if $[(v,c)]$ has a unique representative of the form
		\[ \xymatrix{ & Y \ar[dl]_{v'} \ar[dr]^{\rs{v'}} & \\ X & & Y } \]
\end{lemma}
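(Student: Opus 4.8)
The plan is to unwind what ``subvertical'' means for $\p^*$ and then run the cycle (i) $\Rightarrow$ (ii) and (i) $\Leftrightarrow$ (iii), with the construction of the normal form doing most of the work. Since $\p^*(v,c) = \p(c)$ by definition, the map $[(v,c)]$ is subvertical in $\E^*$ exactly when $\p(c)$ is a restriction idempotent in $\B$, and this is the reformulation I would use throughout. For (i) $\Rightarrow$ (ii): if $\p(c)$ is a restriction idempotent then it is in particular a partial isomorphism, and since $c$ is prone over $\p(c)$, Proposition \ref{prop:hyper_consequences}(ii) immediately gives that $c$ is a partial isomorphism in $\E$.

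The substance is the normal form together with its uniqueness. Assuming $c$ is a subvertical partial isomorphism, I would transport the apex of the span from $S$ to $Y$ along $c$ itself, which is a legitimate mediating map precisely because it is a subvertical partial isomorphism. Concretely, set $v' := c^{(-1)}v \colon Y \to X$; this is subvertical since $\p(v') = \p(c)^{(-1)}\p(v) = \p(c)\p(v)$ is a composite of (commuting) restriction idempotents, hence a restriction idempotent. I would then check the conditions making $c$ a mediating subvertical partial isomorphism between $(v,c)$ and $(v',\rs{v'})$. The two nonroutine verifications are $c\,\rs{v'} = c$, which follows from axiom [R.4] via $c\,\rs{c^{(-1)}v} = \rs{cc^{(-1)}v}\,c = \rs{\rs{v}v}\,c = \rs{v}c = c$, and the restriction condition $\rs{c^{(-1)}} = \rs{v'}$, which reduces to the identity $\rs{c^{(-1)}v} = \rs{c^{(-1)}\,\rs{v}} = \rs{c^{(-1)}\,\rs{c}} = \rs{c^{(-1)}}$ using $\rs{v} = \rs{c}$ and $\rs{c} = cc^{(-1)}$.

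For uniqueness, two representatives $(v',\rs{v'})$ and $(v'',\rs{v''})$ over the same apex $Y$ are related by a subvertical partial isomorphism $\beta \colon Y \to Y$. Using $\beta\,\rs{\beta^{(-1)}} = \beta\beta^{(-1)}\beta = \rs{\beta}\beta = \beta$ together with the equivalence data $\rs{\beta^{(-1)}} = \rs{v''}$ and $\beta\,\rs{v''} = \rs{v'}$, I would deduce $\beta = \rs{v'}$, so $\beta$ is a restriction idempotent, whence $v'' = \beta^{(-1)}v' = \rs{v'}v' = v'$. Implication (iii) $\Rightarrow$ (i) is then immediate, as $\p^*(v',\rs{v'}) = \p(\rs{v'}) = \rs{\p(v')}$ is a restriction idempotent; and (iii) $\Rightarrow$ (ii) follows because the right leg $\rs{v'}$ is a restriction idempotent, hence a partial isomorphism, and partial-isomorphism-ness of the right leg is invariant under mediation (a partial isomorphism composed with a partial isomorphism $\alpha$ stays a partial isomorphism).

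The step I expect to be the main obstacle is closing the loop at (ii) $\Rightarrow$ (i): one must show that a \emph{prone} partial-isomorphism right leg $c$ is actually subvertical, i.e.\ that $\p(c)$ is a restriction idempotent and not merely a partial isomorphism. The natural handle is that $\rs{\p(c)} = \p(\rs{c}) = \p(\rs{v}) = \p(v)$ is \emph{already} a restriction idempotent (because $v$ is subvertical and $\rs{v}=\rs{c}$), so $\p(c)$ is a partial isomorphism whose restriction idempotent is $\p(v)$; here I would invoke hyperconnectedness of $\p$ -- the bijection $\p_{|\O(X)}$ of restriction idempotents from Proposition \ref{prop:hyperconnected_equivalence} -- to pin $\p(c)$ down against $\p(v)$ and conclude $\p(c) = \rs{\p(c)}$. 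This is the delicate place where the full force of $\p$ being a hyperconnection (rather than merely admissible or separated) is needed, and it is exactly the junction at which the equivalence is most sensitive, so I would treat it with care.
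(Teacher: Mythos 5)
Up to the step you yourself flagged as the obstacle, your argument is essentially the paper's proof: the reformulation of subverticality in $\E^*$ as ``$\p(c)$ is a restriction idempotent,'' the appeal to Proposition \ref{prop:hyper_consequences}.ii, the normal form $v' := c^{(-1)}v$ with $c$ itself as the mediating subvertical partial isomorphism, and the uniqueness argument forcing any mediating map between two normal forms to be a restriction idempotent (your computation $\beta = \beta\rs{\beta^{(-1)}} = \beta\rs{v''} = \rs{v'}$ is in fact tidier than the one printed in the paper).

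The genuine gap is the step (ii) $\Rightarrow$ (i), and your proposed repair cannot work. Hyperconnectedness is an isomorphism $\O(X) \cong \{d \in \O(\p(X)) : d\,\p(1_X) = d\}$ between semilattices of \emph{restriction idempotents}; it exerts no control over partial isomorphisms that are not restriction idempotents, so knowing that $\p(c)$ is a partial isomorphism with $\rs{\p(c)} = \p(v)$ cannot ``pin $\p(c)$ down'' to equal $\rs{\p(c)}$. Indeed the implication is false. Take $\p = 1_\X$ for any restriction category $\X$ containing a partial isomorphism $c$ that is not a restriction idempotent (a nontrivial group with the trivial restriction, or a non-identity bijection in ${\sf Par}$). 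For the identity hyperfibration every map is prone and the subverticals are exactly the restriction idempotents, so every map of $\X^*$ has the form $(\rs{c},c)$ and the span equivalence degenerates to equality; hence $\X^* \cong \X$ with $\p^* = 1_\X$. The map $[(\rs{c},c)]$ then has a partial isomorphism as its prone leg, yet $\p^*[(\rs{c},c)] = c$ is not a restriction idempotent: it is not subvertical, and it admits no representative of the form $(v',\rs{v'})$.

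What this reveals is that the statement is too strong as written, and the paper's own proof tacitly concedes this: it proves (i) $\Rightarrow$ (ii), builds the normal form using that $c$ is a \emph{subvertical} partial isomorphism --- the subverticality coming from hypothesis (i), not from (ii) --- and proves (iii) $\Rightarrow$ (i); the direction (ii) $\Rightarrow$ (i) is never addressed. The correct form of condition (ii) is ``$c$ is a subvertical partial isomorphism'' (equivalently, a partial isomorphism with $\p(c)$ a restriction idempotent); under that reading your remaining steps close the cycle exactly as the paper's do. The contrast with Lemma \ref{lemma:prones_in_e*} is instructive: there the analogous three-way equivalence does hold, because the leg shown to be a partial isomorphism is $v$, which is subvertical by the very definition of spans in $\E^*$, so no side condition is needed. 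Your instinct about where the difficulty lay was exactly right; the resolution is not a cleverer use of hyperconnectedness but a weakening of the statement.
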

\begin{proof}
If $(v,c)$ is subvertical, then $c$ is prone and subvertical, so by Proposition \ref{prop:hyper_consequences}.ii, $c$ is a partial isomorphism.  Then define $v' := c^{(-1)}v$.  Then the pair $(v',\rs{v'})$ represents the same map as $(v,c)$ via the subvertical partial isomorphism $c$; note that since $\rs{v} = \rs{c}$,
		\[ \rs{v'} = \rs{c^{(-1)}\rs{v}} = \rs{c^{(-1)}c} = \rs{c^{(-1)}}. \]
	Moreover, such a representative is unique.  If we have any other equivalent span 
		\[ \xymatrix{ & Y \ar[dl]_{w} \ar[dr]^{\rs{w}} & \\ X & & Y } \]
	 then there is some subvertical partial isomorphism $\alpha\colon Y \to Y$ such that (in particular) $\alpha\rs{v'} = \rs{w}$, $\rs{\alpha} = \rs{v'}$, and $\alpha v = w$.  But then $\alpha \leq \alpha \rs{v'} = \rs{w}$, so $\alpha$ is a restriction idempotent, and so we have
	 	\[ w = \alpha v' = \rs{\alpha} v' = \rs{v'} v' = v' \]
     If we have a map $[(v,c)]$ with such a representative, then it is clearly subvertical.

\end{proof}

Before we look at prone maps, it will be helpful to understand certain partial isomorphisms in $\E^*$.
\begin{lemma}\label{lemma:partial_isos_e*}
Let $\p\colon \E \to \B$ be a hyperfibration.  If $(v,\rs{v})\colon X \to S$ represents a partial isomorphism in $\E^*$, then $v$ is a partial isomorphism in $\E$.
\end{lemma}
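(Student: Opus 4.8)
The plan is to extract a partial inverse of $v$ directly from the partial inverse of $[(v,\rs v)]$ in $\E^*$, and then to show that the two defining equations of that inverse, read off in $\E$, are exactly the partial-isomorphism equations for $v$. First I would observe that the partial inverse of $[(v,\rs v)]$ in $\E^*$ is again subvertical: since $[(v,\rs v)]$ is subvertical, $\p^*[(v,\rs v)] = \p(\rs v)$ is a restriction idempotent, and as $\p^*$ is a restriction functor it preserves partial inverses, so $\p^*$ of the inverse is $(\p(\rs v))^{(-1)} = \p(\rs v)$, again a restriction idempotent. Hence the inverse is subvertical, and by Lemma \ref{lemma:verticals_in_e*} it has a unique canonical representative $(w,\rs w)$ with $w\colon X \to S$ subvertical.

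Next I would translate the equations $[(v,\rs v)] \circ [(w,\rs w)] = \rs{[(v,\rs v)]}$ and $[(w,\rs w)] \circ [(v,\rs v)] = \rs{[(w,\rs w)]}$ into $\E$. Using Lemma \ref{lemma:basic_dual_results}.ii (together with $\rs{wv}\,\rs w = \rs{wv}$ and $\rs{vw}\,\rs v = \rs{vw}$, since $\rs{fg}\leq\rs f$), these composites compute to $[(wv,\rs{wv})]$ and $[(vw,\rs{vw})]$, both already in canonical form, while the right-hand sides are $[(\hat v,\hat v)]$ and $[(\hat w,\hat w)]$. Uniqueness of canonical representatives (Lemma \ref{lemma:verticals_in_e*}) then yields the two identities $wv = \hat v$ and $vw = \hat w$ in $\E$.

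Finally I would convert these into the partial-isomorphism equations. Since the subverticals form a hyper-open class, $v$ and $w$ have ranges satisfying $v\hat v = v$ and $w\hat w = w$; hence $vwv = v(wv) = v\hat v = v$ and, symmetrically, $wvw = w$. Writing $q := vw = \hat w$, which is a restriction idempotent, I get $q\,\rs v = \rs{qv} = \rs{vwv} = \rs v$, so $\rs v \leq q$, while $q = \rs{vw} \leq \rs v$; thus $vw = \rs v$, and the symmetric computation gives $wv = \rs w$. These are precisely the equations exhibiting $w$ as the partial inverse of $v$, so $v$ is a partial isomorphism.

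The main obstacle is the range/restriction mismatch: the translation step naturally produces the \emph{ranges} $\hat v,\hat w$ rather than the restrictions $\rs v,\rs w$, and the real content of the proof is the short idempotent calculation in the last paragraph that converts one into the other. A secondary point requiring care is verifying that the relevant composites land in canonical form, so that the uniqueness clause of Lemma \ref{lemma:verticals_in_e*} applies; this is what lets us pass from equalities of spans in $\E^*$ to honest equalities of maps in $\E$.
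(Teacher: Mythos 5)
Your proof is correct, and its skeleton is the same as the paper's: pass to the canonical subvertical representative $(w,\rs{w})$ of the partial inverse (Lemma \ref{lemma:verticals_in_e*}), compute both composites with Lemma \ref{lemma:basic_dual_results}.ii, and read the resulting equations off in $\E$. Where you genuinely diverge is the endgame. The paper unfolds the span equivalence $[(v'v,\rs{v'v})] = [(\hat{v},\hat{v})]$ into a mediating subvertical partial isomorphism $\alpha$, shows $\alpha$ is forced to be a restriction idempotent, and then invokes hyperconnectedness ($\p(\rs{v'}) = \p(\hat{v})$ forces $\rs{v'} = \hat{v}$) to land directly on $v'v = \rs{v'}$. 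You instead use the uniqueness clause of Lemma \ref{lemma:verticals_in_e*} (legitimately, since you check the composites $(wv,\rs{wv})$ and $(vw,\rs{vw})$ are themselves in canonical form) to get the clean identities $wv = \hat{v}$ and $vw = \hat{w}$, and then convert ranges to restrictions purely equationally, via the range-combinator identity $v\hat{v} = v$, the consequences $vwv = v$ and $wvw = w$, and antisymmetry of the order on restriction idempotents. This finish avoids re-invoking separatedness/hyperconnectedness at that point (it remains hidden in the fact that subverticals are hyper-open and in Lemma \ref{lemma:verticals_in_e*} itself), so your argument is slightly more self-contained and arguably cleaner, at the cost of the short extra computation identifying $\hat{v}$ with $\rs{w}$ and $\hat{w}$ with $\rs{v}$; the paper's route gets that identification in one stroke from the hyperconnection. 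Both are complete and correct.
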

\begin{proof}
Since $(v,\rs{v})$ is a subvertical partial isomorphism, its partial inverse is also subvertical (and over the same map, $\p(\rs{v})$).  Then by the previous lemma, its partial inverse can be taken to be of the form
	\[ \xymatrix{ & X \ar[dl]_{v'} \ar[dr]^{\rs{v'}} & \\ S & & X} \]
We want to show that $v'$ is the partial inverse of $v$.  Since $(v',\rs{v'})$ represents a partial inverse of $(v,\rs{v})$, we have that their composite is equivalent to $\rs{(v,\rs{v})} = (\hat{v},\hat{v})$; by Lemma \ref{lemma:basic_dual_results}.ii, we have that the composite is represented by 	
	\[ (v'v,\rs{v'v}\rs{v'}) = (v'v,\rs{v'v}). \]
Thus, there is a subvertical partial isomorphism $\alpha\colon X \to X$ so that in particular
	\[ \alpha \rs{v'v} = \hat{v}, \alpha v'v = \hat{v}, \rs{\alpha} = \rs{\hat{v}} = \hat{v}. \]
Then $\alpha \leq \alpha \rs{v'v} = \hat{v}$, so $\alpha$ is a restriction idempotent, and so $\alpha = \rs{\alpha} = \hat{v}$.  Morever, as noted above $\p(\rs{v'}) = \p(v) = \p(\hat{v})$, so since $\p$ is a hyperconnection, $\hat{v} = \rs{v'}$.  So then
	\[ \rs{v'} = \hat{v} = \alpha v'v = \rs{v'}v'v = v'v. \]
The other equality $(vv' = \rs{v})$ follows similarly, and so $v$ is indeed a partial isomorphism.   
\end{proof}

We now characterize prone maps in $\E^*$:

\begin{lemma}\label{lemma:prones_in_e*} Let $\p\colon \E \to \B$ be a hyperfibration.  A map $[(v,c)]\colon X \to Y$ is prone in $\E^*$ if and only if $v$ is a partial isomorphism, if and only if $[(v,c)]$ has a unique representative of the form
		\[ \xymatrix{ & X \ar[dl]_{\rs{c'}} \ar[dr]^{c'} & \\ X & & Y } \]
\end{lemma}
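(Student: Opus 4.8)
The plan is to prove the three-way equivalence as a cycle: if $[(v,c)]$ has a representative of the stated form then it is prone; if it is prone then $v$ is a partial isomorphism in $\E$; and if $v$ is a partial isomorphism then such a representative exists and is unique. The first implication is immediate from Lemma \ref{lemma:prone_e*}: a span $(\rs{c'},c')$ with $c'$ prone in $\E$ is exactly a prone arrow of $\E^*$.

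For the implication that proneness forces $v$ to be a partial isomorphism, I would use the subvertical/prone factorization of Lemma \ref{lemma:basic_dual_results}.iii, which writes $[(v,c)] = [(v,\rs v)] \circ [(\rs c,c)]$ with $(v,\rs v)$ subvertical in $\E^*$ (by Lemma \ref{lemma:verticals_in_e*}) and $(\rs c,c)$ prone in $\E^*$ (by Lemma \ref{lemma:prone_e*}). Assuming $(v,c)$ is prone, both $(v,c)$ and $(\rs c,c)$ are prone over $\p(c)$ with common codomain, so Lemma \ref{mediating_maps}.i supplies a mediating subvertical partial isomorphism $\gamma$ with $\gamma \circ (\rs c,c) = (v,c)$. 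I would then observe that $(v,\rs v)$ and $\gamma$ are both precise left factors of $(v,c)$ through the prone map $(\rs c,c)$, lying over the same base restriction idempotent $\rs{\p(c)}$ (using $\rs v = \rs c$ so that $\p(\rs v) = \rs{\p(c)}$); the universal property of the prone map $(\rs c,c)$ then forces $(v,\rs v) = \gamma$. Hence $(v,\rs v)$ is a partial isomorphism in $\E^*$, and Lemma \ref{lemma:partial_isos_e*} yields that $v$ is a partial isomorphism in $\E$.

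For the remaining implication, suppose $v$ is a partial isomorphism and set $c' := v^{(-1)}c$. A short restriction computation (using $\rs v = \rs c$ and $v^{(-1)} v v^{(-1)} = v^{(-1)}$) gives $\rs{c'} = \rs{v^{(-1)}}$ and $v c' = c$, so $v$ is a mediating partial isomorphism between $c$ and $c'$; since $c$ is prone, Lemma \ref{mediating_maps}.ii makes $c'$ prone. The partial isomorphism $v^{(-1)}$ then witnesses $(v,c) \sim (\rs{c'},c')$, giving a representative of the required form. Uniqueness follows the pattern of Lemmas \ref{lemma:verticals_in_e*} and \ref{lemma:partial_isos_e*}: any two such representatives are related by a subvertical partial isomorphism $\alpha$ with $\rs\alpha = \rs{c'}$ and $\alpha\rs{c'} = \rs{c'}$, which forces $\alpha$ to be a restriction idempotent and hence the two right legs to coincide.

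I expect the main obstacle to be the middle implication, specifically justifying cleanly that $(v,\rs v) = \gamma$: one must check that the subvertical factor really is a precise left factor over $\rs{\p(c)}$ and then invoke uniqueness in the proneness of $(\rs c,c)$, rather than uniqueness of the whole factorization (Proposition \ref{prop:factorization}), which only gives equality up to subvertical partial isomorphism. Keeping track of which legs the various mediating isomorphisms act on, and confirming that $\gamma$ is genuinely subvertical (so that Lemma \ref{lemma:partial_isos_e*} applies to it), is where the bookkeeping is most error-prone.
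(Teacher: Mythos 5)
Your proof is correct and follows the paper's decomposition almost exactly: the implication from the special representative to proneness is Lemma \ref{lemma:prone_e*}; proneness implies $v$ is a partial isomorphism via the factorization $[(v,c)]=[(v,\rs{v})]\circ[(\rs{c},c)]$ of Lemma \ref{lemma:basic_dual_results}.iii, ending with Lemma \ref{lemma:partial_isos_e*}; and the last implication sets $c':=v^{(-1)}c$ with uniqueness argued as in Lemma \ref{lemma:verticals_in_e*}. The one genuine difference is the middle step, showing $[(v,\rs{v})]$ is a partial isomorphism of $\E^*$. The paper observes that the factorization is precise, applies Lemma \ref{lemma:left_factor_separated} to conclude $[(v,\rs{v})]$ is prone, and then uses Proposition \ref{prop:hyper_consequences}.ii (prone over a partial isomorphism implies partial isomorphism); both of these steps need $\p^*$ itself to be separated and hyperconnected, i.e., they lean on Theorem \ref{thm:dual_hyperfibration}. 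You instead take the mediating partial isomorphism $\gamma$ between the two prone maps $[(v,c)]$ and $[(\rs{c},c)]$ supplied by Lemma \ref{mediating_maps}.i, and force $[(v,\rs{v})]=\gamma$ by the uniqueness clause in the proneness of $[(\rs{c},c)]$: both are liftings of $\rs{\p(c)}$ whose composites with $[(\rs{c},c)]$ equal $[(v,c)]$ precisely, exactly the checks you list (using $\rs{v}=\rs{c}$ so that $\p^*[(v,\rs{v})]=\rs{\p(c)}$, and that both spans have restriction $[(\hat{v},\hat{v})]$). What this buys: Lemma \ref{mediating_maps}.i and the uniqueness of prone liftings hold for arbitrary restriction semifunctors, so your argument does not invoke the separatedness or hyperconnectedness of $\p^*$ at all; in effect you have inlined the content of Proposition \ref{prop:hyper_consequences}.ii, whose own proof also rests on Lemma \ref{mediating_maps}.i. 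The paper's route is shorter given the machinery it has already established. Your closing worry is unfounded: subverticality of $\gamma$ is automatic, since Lemma \ref{mediating_maps}.i gives $\p^*(\gamma)=\rs{\p^*[(v,c)]}=\rs{\p(c)}$, a restriction idempotent.
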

\begin{proof}
By Lemma \ref{lemma:basic_dual_results}.iii, 
	\[ [(v,c)] = [(v,\rs{v})] \circ [(\rs{c},c)]. \]
However, by Lemma \ref{lemma:prone_e*}, $[(\rs{c},c)]$ is prone, by assumption $[(v,c)]$ is prone, and this factorization is precise since $[(v,c)]$ and  $[(v,\rs{v})]$ both have restriction $[(\hat{v},\hat{v})]$.  Thus by Lemma \ref{lemma:left_factor_separated}, $[(v,\rs{v})]$ is also prone.  Thus by Proposition \ref{prop:hyper_consequences}.i, $[(v,\rs{v})]$ is a partial isomorphism in $\E^*$, and so by Lemma \ref{lemma:partial_isos_e*}, $v$ is a partial isomorphism.

Given that $v$ is a partial isomorphism, define $c'\colon X \to Y$ by $c' := v^{(-1)}c$.  Then it is straightforward to check that $v\colon S \to X$ witnesses that $(\rs{c'},c')$ represents the same map as $[(v,c)]$.  That this representative is unique is similar to the proof of a similar statement in Lemma \ref{lemma:verticals_in_e*}.

The result that such maps are prone in $\E^*$ was Lemma \ref{lemma:prone_e*}. 
\end{proof}

By Proposition \ref{prop:factorization}, any map $f\colon X \to Y$ in the total category of a latent fibration $\p\colon \E \to \B$ has a subvertical/prone factorization, and this factorization is unique up to vertical partial isomorphism. We will need a slight strengthening of this result in the case when $\p$ is a latent hyperfibration:
\begin{lemma}\label{lemma:factorization_in_hyper}
Suppose $\p\colon \E \to \B$ is a latent hyperfibration.  Then for any $f\colon X \to Y$ in $\E$, there is a factorization 
	\[ X \to^{v} S \to^{c} Y \]
where $\rs{c} = \hat{v}$.  Such a factorization is unique up to subvertical partial isomorphism.
\end{lemma}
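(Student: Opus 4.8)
The plan is to bootstrap from the general subvertical/prone factorization of Proposition \ref{prop:factorization} and then upgrade its restriction identity $\rs{\p(c)} = \p(v)$ to the sharper \emph{range} identity $\rs{c} = \hat{v}$, exploiting the fact that a hyperconnection restricts to an order-isomorphism on restriction idempotents. Concretely, I would first apply Proposition \ref{prop:factorization} to $f$ to obtain a subvertical $v\colon X \to S$ and a prone $c\colon S \to Y$ with $vc = f$ a precise triangle and $\rs{\p(c)} = \p(v)$. Since subverticals are hyper-open in a hyperfibration (as established in the construction of $\E^*$), $v$ carries a range $\hat{v}\in\O(S)$ with $v\hat{v} = v$, and $\hat{v}$ is the least restriction idempotent $e$ on $S$ satisfying $ve = v$.

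For existence I would then prove $\rs{c} = \hat{v}$ by two inequalities. Preciseness of $vc=f$ gives, via Lemma \ref{Jaws}.i, that $v\rs{c} = v$; by minimality of the range this yields $\hat{v} \le \rs{c}$. For the reverse, applying $\p$ to $v\hat{v}=v$ gives $\p(v)\p(\hat{v}) = \p(v)$, so $\p(v) \le \p(\hat{v})$ in $\O(\p(S))$; combined with $\p(\rs{c}) = \rs{\p(c)} = \p(v)$ this gives $\p(\rs{c}) \le \p(\hat{v})$. As $\p$ is a hyperconnection, $\p_{|\O(S)}$ is an order-isomorphism onto its image, whence $\rs{c} \le \hat{v}$. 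Thus $\rs{c} = \hat{v}$.

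For uniqueness I would show that any factorization meeting the stated conditions ($v$ subvertical, $c$ prone, $vc=f$, $\rs{c}=\hat{v}$) is one of the factorizations classified by Proposition \ref{prop:factorization}, so that its uniqueness up to subvertical partial isomorphism applies directly. Such a factorization is precise, since $\rs{c}=\hat{v}$ together with $v\hat{v}=v$ gives $v\rs{c}=v$ (Lemma \ref{Jaws}.i). To supply the remaining hypothesis $\rs{\p(c)}=\p(v)$ it suffices to establish $\p(\hat{v})=\p(v)$: letting $u$ be the unique restriction idempotent on $S$ over $\p(v)$ (guaranteed by hyperconnectedness), one has $\p(\rs{vu}) = \rs{\p(v)\p(u)} = \p(v) = \p(\rs{v})$, so by separatedness $\rs{vu}=\rs{v}$ and hence $vu = \rs{vu}v = \rs{v}v = v$ by [R.4] and [R.1]; minimality then gives $\hat{v}\le u$, so $\p(\hat{v})\le\p(v)$, and with the reverse inequality above, $\p(\hat{v})=\p(v)$. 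Therefore $\rs{\p(c)}=\p(\rs{c})=\p(\hat{v})=\p(v)$, and Proposition \ref{prop:factorization} finishes uniqueness.

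The main obstacle is the reverse inequality $\rs{c}\le\hat{v}$, together with its companion $\p(\hat{v})=\p(v)$ used for uniqueness: these are exactly the places where hyperconnectedness is indispensable, since one must transport an inequality of restriction idempotents from the base back up to $\E$ through the order-isomorphism $\p_{|\O(S)}$, and verify that the range of a subvertical map lies over $\p(v)$ rather than strictly above it. Everything else is a routine translation between preciseness (Lemma \ref{Jaws}.i) and the range axiom $v\hat{v}=v$.
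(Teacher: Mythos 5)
Your proposal is correct and follows essentially the same route as the paper: both obtain the factorization from the canonical subvertical/prone factorization (Proposition \ref{prop:factorization}, i.e., a prone lift $c$ of $\p(f)$ together with the induced map $v$ over $\rs{\p(f)}$) and then use hyperconnectedness to identify $\rs{c}$ with $\hat{v}$. The only difference is presentational: the paper reads off $\p(\hat{v})=\p(\rs{v})$ directly as the definition of the range of a subvertical and applies separatedness once, whereas you re-derive this fact from the range axioms ($v\hat{v}=v$ plus minimality of $\hat{v}$) and make the reduction of the uniqueness claim to the uniqueness clause of Proposition \ref{prop:factorization} explicit -- a slightly longer but equally valid bookkeeping of the same argument.
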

\begin{proof}
Given such an $f$, let $c\colon S \to Y$ be a prone lift of $\p(f)$, and let $v\colon X \to S$ be the induced map over $\rs{\p(f)}$:
\[ \xymatrix{ X  \ar@{-->}[d]_{v}   \ar[dr]^{f} & ~ \ar@{}[drr]|{\textstyle\mapsto}&&\p(X) \ar[dr]^{\p(f)} \ar[d]_{\rs{p(f)}} 
\\ 
S \ar[r]_{c}  & Y && \p(X) \ar[r]_{\p(f)} & \p(Y)} \]
Then we have
	\[ \p(\rs{c}) = \rs{\p(c)} = \rs{p(f)} = \p(\rs{v}) = \p(\hat{v}) \]
(with the last by definition of $\hat{v}$) and so since $\p$ is a hyperconnection, $\rs{c} = \hat{v}$.  Uniqueness of such a factorization follows from the uniqueness property of prone arrows.  
\end{proof}

With the above results in hand, we can now prove a result that can be used to prove that $(\E^*)^*$ is isomorphic to $\E$, but is useful in its own right.  
\begin{proposition}\label{prop:dual_functor_correspondence}
Suppose that $\p\colon \X \to \B$ and $\q\colon \Y \to \B$ are latent hyperfibrations.  Then there is a bijection between morphisms of latent fibrations (i.e., restriction functors which preserve the projection and prone arrows) 
	\[ \{F\colon \X \to \Y^*\} \cong \{G\colon \X^* \to \Y\}. \]
\end{proposition}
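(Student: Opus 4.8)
The plan is to exploit the fact that, in both $\X$ and $\X^*$ (and likewise $\Y$, $\Y^*$), every map factors into a subvertical part followed by a prone part, and that passing to the dual keeps prone legs pointing the same way while reversing subvertical legs. Concretely I will use: the subvertical/prone factorization of a map in $\X$ with matching restrictions (Lemma \ref{lemma:factorization_in_hyper}); the factorization $(v,c) = (v,\rs{v}) \circ (\rs{c},c)$ of a map of $\Y^*$ (Lemma \ref{lemma:basic_dual_results}.iii); and the characterizations of prone maps in $\Y^*$ as having a unique representative $(\rs{c},c)$ with $c$ prone in $\Y$ (Lemma \ref{lemma:prones_in_e*}) and of subvertical maps as having a unique representative $(v,\rs{v})$ with $v$ subvertical in $\Y$ (Lemma \ref{lemma:verticals_in_e*}). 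Throughout, $F$ and $G$ will share a common object assignment, since $\X,\X^*$ have the same objects, $\Y,\Y^*$ have the same objects, and compatibility over $\B$ forces both to agree with $F$ on objects.

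Given a morphism of latent fibrations $F\colon \X \to \Y^*$, I define $G\colon \X^* \to \Y$ as follows. A map of $\X^*$ is a span $X \xleftarrow{v} S \xrightarrow{c} Y$ with $v$ subvertical and $c$ prone in $\X$; since these legs are honest $\X$-maps I may apply $F$. Because $F$ is over $\B$ it carries the subvertical $v$ to a subvertical $F(v)$ in $\Y^*$, whose unique subvertical representative (Lemma \ref{lemma:verticals_in_e*}) is $(\bar v,\rs{\bar v})$ with $\bar v\colon F(X)\to F(S)$ subvertical in $\Y$; and because $F$ preserves prones it carries $c$ to a prone $F(c)$ whose unique prone representative (Lemma \ref{lemma:prones_in_e*}) is $(\rs{\bar c},\bar c)$ with $\bar c\colon F(S)\to F(Y)$ prone in $\Y$. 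I then set $G(v,c) := \bar v\,\bar c\colon F(X)\to F(Y)$. One must check this is independent of the chosen representative span (using that $F$ preserves subvertical partial isomorphisms, being a restriction functor over $\B$), that $\rs{\bar v} = \rs{\bar c}$ so the definition is sound and restriction-preserving, that $\q G = \p^*$, and that $G$ sends prone maps $(\rs{c},c)$ of $\X^*$ to the prone map $\bar c$ of $\Y$, so that $G$ is again a morphism of latent fibrations.

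Conversely, given $G\colon \X^* \to \Y$ I define $F\colon \X \to \Y^*$ by factoring a map $f\colon X\to Y$ of $\X$ as $X \xrightarrow{v} S \xrightarrow{c} Y$ with $v$ subvertical, $c$ prone, and $\rs{c} = \hat{v}$ (Lemma \ref{lemma:factorization_in_hyper}). I form the prone map $(\rs{c},c)\colon S\to Y$ and the \emph{reversed} subvertical map $(v,\rs{v})\colon S\to X$ in $\X^*$, apply $G$, and reassemble the results into the span $G(X) \xleftarrow{G(v,\rs{v})} G(S) \xrightarrow{G(\rs{c},c)} G(Y)$, setting $F(f) := [(G(v,\rs{v}),G(\rs{c},c))]$. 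Well-definedness follows since the factorization is unique up to subvertical partial isomorphism (Lemma \ref{lemma:factorization_in_hyper}), whose $G$-image supplies the mediating subvertical partial isomorphism in $\Y^*$. I will then verify that these two assignments are mutually inverse, which comes down to the uniqueness of the subvertical/prone decompositions in $\X$ and in $\X^*$ (Proposition \ref{prop:factorization} together with Lemmas \ref{lemma:verticals_in_e*}, \ref{lemma:prones_in_e*}, \ref{lemma:basic_dual_results}.iii): applying $F$ then $G$ recovers $\bar v$ and $\bar c$ exactly, and symmetrically.

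The main obstacle is \textbf{functoriality}, i.e.\ that $G$ (and $F$) respects composition, since composition in the dual is computed by latent pullback. For a composite $(v_1,c_1)\circ(v_2,c_2) = (v_2^*v_1,\,c_1^*c_2)$ built from the latent pullback of the cospan $c_1, v_2$ (which exists with the correct legs by Lemma \ref{lemma:pullback_prone_subvertical}), I must show $\bar{v_2^*v_1}\,\bar{c_1^*c_2}$ equals $\bar v_1\bar c_1\bar v_2\bar c_2$; the real content is that $F$, applied to the latent-pullback square, sends the ``middle'' juxtaposition $\bar c_1\bar v_2$ of a prone followed by a subvertical to the subvertical-then-prone datum determined by the pullback. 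This should follow from functoriality of $F$ (whose target $\Y^*$ also composes by latent pullback), the fact that prone legs compose to prone legs (Lemma \ref{composites-prones}) and left factors of precise prone triangles are prone (Lemma \ref{lemma:left_factor_separated}), and the uniqueness of prone liftings, which pins down both sides to the same arrow over $\B$.
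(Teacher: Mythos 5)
Your construction coincides with the paper's own proof: in both directions you build exactly the same assignments (apply $F$ legwise to a span of $\X^*$ and compose the unique representatives supplied by Lemmas \ref{lemma:verticals_in_e*} and \ref{lemma:prones_in_e*}; conversely, factor a map of $\X$ via Lemma \ref{lemma:factorization_in_hyper}, form the reversed subvertical and the prone map in $\X^*$, apply $G$, and reassemble the span), with the same well-definedness arguments based on uniqueness of factorizations up to subvertical partial isomorphism. The paper explicitly presents only a sketch and leaves the remaining verifications (including functoriality) to the reader, so your added outline of the composition check goes slightly beyond, but is entirely consistent with, the published argument.
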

\begin{proof}
We will sketch the construction and leave most of the details to the reader.  Given a morphism of latent fibrations $F\colon \X \to \Y^*$, define $\hat{F}\colon \X^* \to Y$ as follows.  On objects, $\hat{F}$ is defined as $F$.  On a representative arrow
	\[ \xymatrix{ & S \ar[dl]_v \ar[dr]^c & \\ X & & Y} \]
since $F$ is a restriction functor, $F(v)$ is a subvertical in $\Y^*$, and so by Lemma \ref{lemma:verticals_in_e*} corresponds to a unique subvertical arrow which we denote as $F_v(v)\colon FX \to FS$.  Similarly since $F$ preserves prone arrows, $F(c)$ is prone in $\Y^*$, and so by Lemma \ref{lemma:prones_in_e*}, corresponds to a unique prone arrow $F_c(c)\colon FS \to FY$.  We then define $\hat{F}([(v,c)])$ to be the composite
	\[ FX \to^{F_v(v)} FS \to^{F_c(c)} FY. \]
This is well-defined since if $(v,c)$ is equivalent to some other $(v',c')$, then there is a partial isomorphism taking $v$ to $v'$ and $c$ to $c'$; thus $F(v)$ and $F(v')$ go to the same map under $F$, and so $F_v(v) = F_v(v')$ and similarly $F_c(c) = F_c(c')$.  

Conversely, suppose we have a morphism of latent fibrations $G\colon \X^* \to \Y$.  We define $\hat{G}\colon \X \to \Y^*$ on objects as $G$.  For an arrow $f\colon X \to Y$ in $\X$, by Lemma \ref{lemma:factorization_in_hyper} there is a factorization
	\[ X \to^{v} S \to^{c} Y \]
such that $\hat{v} = \rs{c}$ (and this factorization is unique up to subvertical partial isomorphism).  Then 
	\[ \xymatrix{& X \ar[dl]_v \ar[dr]^{\rs{v}} & \\ S & & X \ar@{}[u]_{~~~~} } \xymatrix{ & S \ar[dl]_{\rs{c}} \ar[dr]^c & \\ S & & Y} \]
are respectively subvertical and prone in $\X^*$.  Thus we define $\hat{G}(f)$ to be the equivalence class of the span
	\[ \xymatrix{& GS \ar[dl]_{G([(v,\rs{v})])} \ar[dr]^{G([(\rs{c},c)])} & \\ GX & & GY } \]
Note that this has the required restriction property since
	\[ \rs{[(v,\rs{v})]} = \hat{v} = \rs{c} = \hat{\rs{c}} = \rs{[(\rs{c},c)]}. \]
Also note that $\hat{G}$ is independent of the choice of factorization since such factorizations are unique up to subvertical partial isomorphism.
\end{proof}

\begin{corollary}
If $\p\colon \E \to \B$ is a latent hyperfibration, then $\E^{**}$ is isomorphic to $\E$. 
\end{corollary}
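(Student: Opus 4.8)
The plan is to deduce the corollary directly from Proposition \ref{prop:dual_functor_correspondence} by feeding it the identity functors. First I would instantiate the bijection of that proposition with $\X = \E$ and $\Y = \E^*$, obtaining
\[ \{F\colon \E \to \E^{**}\} \cong \{G\colon \E^* \to \E^*\}, \]
and let $\Phi\colon \E \to \E^{**}$ be the morphism of latent fibrations corresponding to $G = 1_{\E^*}$. Symmetrically, instantiating with $\X = \E^*$ and $\Y = \E$ gives
\[ \{F\colon \E^* \to \E^*\} \cong \{G\colon \E^{**} \to \E\}, \]
and I would let $\Psi\colon \E^{**} \to \E$ be the morphism corresponding to $F = 1_{\E^*}$. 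Since the correspondence of Proposition \ref{prop:dual_functor_correspondence} is a bijection between morphisms of latent fibrations, both $\Phi$ and $\Psi$ are restriction functors that commute with the projections to $\B$ and preserve prone maps; it therefore suffices to show that $\Phi$ and $\Psi$ are mutually inverse in order to conclude that $\E^{**} \cong \E$ as latent hyperfibrations over $\B$.

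Next I would unwind the two constructions using the explicit formulas in the proof of Proposition \ref{prop:dual_functor_correspondence}. For $\Phi = \widehat{1_{\E^*}}$: given $f\colon X \to Y$ in $\E$, take the factorization $X \to^v S \to^c Y$ with $\hat v = \rs c$ from Lemma \ref{lemma:factorization_in_hyper}; then $\Phi(f)$ is the class of the span $X \xleftarrow{[(v,\rs v)]} S \xrightarrow{[(\rs c,c)]} Y$ in $\E^*$, whose legs are the subvertical and prone maps of $\E^*$ associated to $v$ and $c$ by Lemmas \ref{lemma:verticals_in_e*} and \ref{lemma:prones_in_e*}. For $\Psi = \widehat{1_{\E^*}}$ in the other direction: a map of $\E^{**}$ presented by a span $X \xleftarrow{V} T \xrightarrow{C} Y$ with $V$ subvertical and $C$ prone in $\E^*$ is sent to the composite $F_V(V)\,F_C(C)$, where $F_V(V)$ and $F_C(C)$ are the unique subvertical and prone maps of $\E$ corresponding to $V$ and $C$ under the same two lemmas.

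The composite $\Psi\Phi = 1_\E$ is then essentially immediate: applied to $f = v\,c$ as above, $\Phi$ produces the span whose legs are exactly the $\E^*$-images of $v$ and $c$, and $\Psi$ reads these back as $v$ and $c$ and recomposes them to $v\,c = f$, using that the subvertical and prone characterizations of Lemmas \ref{lemma:verticals_in_e*} and \ref{lemma:prones_in_e*} are inverse to the assignments $v \mapsto [(v,\rs v)]$ and $c \mapsto [(\rs c,c)]$. The step I expect to be the main obstacle is $\Phi\Psi = 1_{\E^{**}}$: starting from a span $X \xleftarrow{V} T \xrightarrow{C} Y$, the functor $\Psi$ collapses it to a single arrow $f = F_V(V)\,F_C(C)$ of $\E$, and $\Phi$ then re-factors $f$ by Lemma \ref{lemma:factorization_in_hyper} and rebuilds a span; to see that this returns the original class one must first check that $F_V(V)$, $F_C(C)$ already constitute a valid subvertical/prone factorization of $f$ (so that $\widehat{F_V(V)} = \rs{F_C(C)}$, which follows from $\rs V = \rs C$ in $\E^*$ together with hyperconnectedness), and then invoke the uniqueness of such factorizations up to subvertical partial isomorphism. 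That uniqueness is exactly what guarantees that the rebuilt span is equivalent, in the sense of Proposition \ref{prop:span_res_cat}, to the one we started with. Finally, since $\Phi$ and $\Psi$ preserve the projections by construction, the resulting isomorphism is one of latent hyperfibrations over $\B$, as desired.
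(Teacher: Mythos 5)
Your proposal is correct and follows essentially the same route as the paper: the paper likewise applies Proposition \ref{prop:dual_functor_correspondence} to the identity functor on $\E^*$ to obtain the two functors $\E \to \E^{**}$ and $\E^{**} \to \E$, and then notes they are mutually inverse. The only difference is that the paper leaves the inverse check as ``straightforward,'' whereas you spell it out (correctly, including the use of hyperconnectedness to see that $\Psi$ of a span yields a valid subvertical/prone factorization, and uniqueness of such factorizations up to subvertical partial isomorphism for $\Phi\Psi = 1_{\E^{**}}$).
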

\begin{proof}
Using Proposition \ref{prop:dual_functor_correspondence}, from the identity functor $\E^* \to^{1} \E^*$ we get a functor $\E \to^{\eta} \E^{**}$ and a functor $\E^{**} \to^{\varepsilon} \E$; using the previous results it is straightforward to check that these are inverse to one another.
\end{proof}


\section{Conclusion}

Aside from developing the applications of latent fibrations to, for example, restriction differential categories and partial lenses, there is much theoretical work to be done to bring the theory of latent fibrations to a state of maturity.  We certainly hope to return to this task in future papers and we also hope that others will join us in this development.  While there are some aspects of this theory for which we already have a partial understanding there are other aspects which are completely open.   For example, traditionally fibrations are used to model logical features such as existential and universal quantification and as a semantics of type theories, however, at this stage, we have little idea of what a type theory 
corresponding to a latent fibration might look like.

Turning to aspects for which we do have some understanding we have shown above that every latent fibration has associated re-indexing (or substitution)  restriction \emph{semi\/}functors.  Nonetheless, we have not here provided a full equivalence of such a collection of such re-indexing semifunctors to a latent fibration.  In other words, we have not provided the analogue of the normal Grothendieck construction for latent fibrations.  However, at this stage, we do have an understanding of this construction which we hope to publish separately.  Notably the construction requires more structure than for the  normal case but, as in the normal case, has the potential of providing a significantly different and important perspective.  The additional structure is actually a ``categorification'' of the structure used to define restriction presheaves  introduced in \cite{journal:lin-garner-cocomp-rcat}.  

A further, rather separate issue, concerns the behaviour of factorization systems over latent fibrations.  It is well-known that given a normal fibration, $\p\colon \E \to \B$, any orthogonal factorization system in $\B$ can be lifted to one in $\E$.  A similar result can be proven for latent fibrations.  However, some care is needed at the outset of this project, for defining what is meant by a factorization system for a restriction categories has some, perhaps, unexpected features.  This also we hope to return to in future work.

At this stage in the development of the theory of latent fibrations, what we know still seems dwarfed by what we still do not know.  Perhaps the major open issues concern the understanding of how logical features and, indeed, restrictional features (joins, meets, discreteness, latent limits, classifiers, etc.) interact with the structure of latent fibrations and how these, in turn, translate into type theoretic features.

\bibliographystyle{plain}
\bibliography{references}

\appendix

\section{The original definition of prone}
\label{Appendix-A}

The notion of a latent fibration was first introduced in \cite{msc:nester-calgary} in order to describe the structures encountered while exploring realizability at the level of restriction categories.   In this paper we have used a more convenient notion of ``prone'' which is a considerable simplification of the original definition.   The purpose of this appendix is to show that the original definition of prone from \cite{msc:nester-calgary} and the one used in this paper are indeed equivalent.

Here is the original definition of prone:

\begin{definition} \label{olddefn}
Let ${\sf p} \colon \E \to \B$ be a restriction functor, with $\E$ and $\B$ restriction categories.
An arrow $f \colon X' \to X$ in $\E$ is {\bf ${\sf p}$-prone} in case whenever we have $g \colon Y \to X$ in $\E$ and $h \colon \p(Y) \to {\sf p}(X')$ in $\X$ such that $h{\sf p}(f) \geq {\sf p}(g)$,
\[
\text{in $\E$}\hspace{10pt}
\xymatrix{
Y \ar[rd]^g \ar@{.>}[d]_{\exists\widetilde{h}} & \\
X' \ar[r]_f \ar@{}[ur]|(0.33){\geq} & X
}
\hspace{30pt}
\text{in $\B$}\hspace{10pt}
\xymatrix{
{\sf p}(Y) \ar[rd]^{{\sf p}(g)} \ar[d]_h & \\
{\sf p}(X') \ar[r]_{{\sf p}(f)} \ar@{}[ur]|(0.33){\geq} & {\sf p}(X)
}
\]
 there is a {\bf lifting} $\widetilde{h} \colon Y \to X'$ in $\X$ such that
\begin{enumerate}[(a)]
\item $\widetilde{h}$ is a {\bf candidate lifting}:  that is, $\widetilde{h}f \geq g$ and ${\sf p}(\widetilde{h}) \leq h$.
\item $\widetilde{h}$ is the {\bf smallest} candidate lifting: that is, for any other candidate lifting $k \colon Y \to X'$ in $\X$, $\widetilde{h} \leq k$.
\end{enumerate}
\end{definition}

Our objective is to show that this notion of a prone map is equivalent to Definition \ref{newdefn}.i  which has been used in this paper.  A first step in this direction is to show that the lifting of $h$ is unique having $\widetilde{h}f \geq g$ minimal is precisely to ask for the apparently stronger requirement that $\widetilde{h}f = g$ and $\rst{\widetilde{h}}= \rst{g}$, in other words that the lifted triangle is precise:

\begin{lemma} (see \cite{msc:nester-calgary} Lemma 6.2) \label{ChadsLemma}
For a restriction functor ${\sf p}\colon \E \to \B$, if $f$ is a  ${\sf p}$-prone arrow (in the sense of Definition \ref{olddefn}) and $h{\sf p}(f) \geq {\sf p}(g)$ then $\widetilde{h}$ is the lifting of $h$
\[
\text{in $\E$}\hspace{10pt}
\xymatrix{A \ar[rd]^g \ar[d]_{\widetilde{h}} & \\
B \ar[r]_f \ar@{}[ur]|(0.33){\geq} & C
}
\hspace{30pt}
\text{in $\B$}\hspace{10pt}
\xymatrix{A \ar[rd]^{{\sf p}(g)} \ar[d]_h & \\
B \ar[r]_{{\sf p}(f)} \ar@{}[ur]|(0.33){\geq} & C
}
\]
if and only if $\widetilde{h}$ is the unique map such that $\rst{\widetilde{h}} = \rst{g}$ and $\widetilde{h}f = g$ and ${\sf p}(\widetilde{h}) \leq h$.
\end{lemma}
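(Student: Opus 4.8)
The plan is to prove both implications by isolating a single auxiliary observation: any candidate lifting can be \emph{truncated} by precomposing with $\rst{g}$ to produce a smaller candidate lifting that is automatically precise over $g$. Recall that in the restriction order $a \leq b$ means $\rst{a}b = a$, so that $k$ being a candidate lifting for $h$ (in the sense of Definition \ref{olddefn}) unpacks to $\rst{g}\,kf = g$ (this is $kf \geq g$) together with $\rst{\p(k)}h = \p(k)$ (this is $\p(k) \leq h$). The first thing I would establish is that for \emph{any} candidate lifting $k$, the truncated map $\rst{g}k$ is again a candidate lifting and moreover satisfies $(\rst{g}k)f = g$, $\rst{\rst{g}k} = \rst{g}$, and $\rst{g}k \leq k$. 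Here $(\rst{g}k)f = \rst{g}(kf) = g$ uses $kf \geq g$; the identity $\rst{\rst{g}k} = \rst{g}\,\rst{k} = \rst{g}$ uses [R.3] together with $\rst{g} \leq \rst{kf} \leq \rst{k}$ (obtained by taking restrictions of $\rst{g}kf = g$); and $\p(\rst{g}k) = \rst{\p(g)}\p(k) \leq \p(k) \leq h$ shows it stays under $h$.

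For the forward direction (being the smallest candidate lifting $\Rightarrow$ the uniqueness condition), suppose $\widetilde{h}$ is the smallest candidate lifting. Applying the truncation observation with $k := \widetilde{h}$ produces a candidate lifting $\rst{g}\widetilde{h} \leq \widetilde{h}$, and minimality forces $\widetilde{h} \leq \rst{g}\widetilde{h}$, hence $\widetilde{h} = \rst{g}\widetilde{h}$. This equality is exactly what promotes the two inequalities of a candidate lifting into the three precise conditions: it yields $\rst{\widetilde{h}} \leq \rst{g}$, the reverse inequality coming from candidacy ($\rst{g} \leq \rst{\widetilde{h}f} \leq \rst{\widetilde{h}}$), so $\rst{\widetilde{h}} = \rst{g}$; then $\widetilde{h}f = \rst{\widetilde{h}}\widetilde{h}f = \rst{g}\widetilde{h}f = g$ by [R.1], while $\p(\widetilde{h}) \leq h$ is inherited. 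For uniqueness, any other $\widetilde{h}'$ with $\rst{\widetilde{h}'} = \rst{g}$, $\widetilde{h}'f = g$, $\p(\widetilde{h}') \leq h$ is in particular a candidate lifting, so $\widetilde{h} \leq \widetilde{h}'$ by minimality; since both restrictions equal $\rst{g}$, this inequality collapses ($\rst{\widetilde{h}}\widetilde{h}' = \rst{\widetilde{h}'}\widetilde{h}' = \widetilde{h}'$) to $\widetilde{h} = \widetilde{h}'$.

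For the converse, suppose $\widetilde{h}$ is the unique map satisfying the three precise conditions. It is visibly a candidate lifting (since $\widetilde{h}f = g \geq g$), so it remains only to show it is smallest. Given an arbitrary candidate lifting $k$, the truncation observation shows $\rst{g}k$ satisfies all three precise conditions, so by the assumed uniqueness $\rst{g}k = \widetilde{h}$; combined with $\rst{g}k \leq k$ this gives $\widetilde{h} \leq k$, as required.

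The one step demanding genuine care — and the main obstacle — is verifying that truncation by $\rst{g}$ preserves candidacy \emph{and} simultaneously produces preciseness; everything else is formal manipulation in the restriction order. In particular the direction of each inequality must be tracked carefully, since the old definition is phrased with $\geq$/$\leq$ rather than equalities, and it is precisely the interplay between ``$\widetilde{h}f \geq g$ minimal'' and ``$\widetilde{h}f = g$ with $\rst{\widetilde{h}} = \rst{g}$'' that the lemma captures. I expect that the hypothesis ``$f$ is $\p$-prone'' plays no role beyond guaranteeing that a smallest candidate lifting exists at all; the equivalence itself is a purely order-theoretic property of the map $\widetilde{h}$.
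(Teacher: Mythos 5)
Your proof is correct and takes essentially the same approach as the paper's: both directions hinge on truncating a candidate lifting $k$ to $\rst{g}k$, then invoking minimality (for $\Rightarrow$) and uniqueness of the precise specification (for $\Leftarrow$), and your closing observation that proneness of $f$ plays no role beyond guaranteeing existence is accurate. If anything your write-up is slightly more complete, since you explicitly verify the uniqueness clause of the forward direction (any other map satisfying $\rst{\widetilde{h}'}=\rst{g}$, $\widetilde{h}'f=g$, $\p(\widetilde{h}')\leq h$ is a candidate lifting and hence equals $\widetilde{h}$), a step the paper's $(\Rightarrow)$ leaves implicit.
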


\begin{proof} 
\begin{description}
\item[($\Rightarrow$)]
Note that for the lifting $\widetilde{h}$ we have $\rst{\widetilde{h}} \leq \rst{\widetilde{h} f} \leq \rst{g}$ and yet $\rst{g}\widetilde{h} \leq \widetilde{h}$ is certainly 
a candidate lifting so, as $\widetilde{h}$ is the smallest such, $\rst{g}\widetilde{h} = \widetilde{h}$, so $\rst{g} \leq \rst{\widetilde{h}}$ and, thus, $\rst{\widetilde{h}} = \rst{g}$.
It then follows that $\widetilde{h}f \leq g$ as $\rst{\widetilde{h}f}g \leq \rst{\widetilde{h}}g = \rst{g} g = g$ and so $\widetilde{h} f = g$.
\item[($\Leftarrow$)]
Suppose that $\widetilde{h}$ is the unique map such that $\rst{\widetilde{h}} = \rst{g}$, $\widetilde{h}f=g$, and ${\sf p}(\widetilde{h}) \leq h$, then certainly  it is a candidate lifting.  Suppose also that $v$ is some other candidate lifting, so that $vf \geq g$ and ${\sf p}(v) \leq w$,  then $\rst{g}v$ has $\rst{\rst{g}v} \geq \rst{\rst{g}vf} \geq \rst{\rst{g} g}  = \rst{g} \geq \rst{\rst{g}v}$ so $\rst{\rst{g}v} = \rst{g}$, as $vf \geq g$ so $\rst{g}vf = g$, and ${\sf p}(\rst{g}v) = \rst{{\sf p}(g)}{\sf p}(v) \leq h$.  So $\rst{g}v$ satisfies the specification of $\widetilde{h}$ and so $\rst{g}v = \widetilde{h}$ and, consequently, $v \geq \widetilde{h}$ showing the minimality of $\widetilde{h}$.
\end{description}
\end{proof}

Of course once one realizes the lifted triangle is always going to be precise then one can reduce the lax triangle in the base to a precise triangle with $h' =  \rst{\p(g)} h$ (by Lemma \ref{Jaws}.v).  This means:

\begin{proposition}
Definition \ref{olddefn} of  being prone is equivalent to Definition \ref{newdefn}. 

\medskip
More precisely, if ${\sf p}\colon \E \to \B$ is a restriction functor, $f\colon B \to C$ is ${\sf p}$-prone -- in the sense of Definition \ref{olddefn} -- in $\E$ if and only if every 
precise triangle $h\p(f) = {\sf p}(g)$ with left factor $h$ in $\B$ has a unique $\widetilde{h}$ with ${\sf p}(\widetilde{h})=h$ making $\widetilde{h}f = g$ a precise triangle in $\E$:
\[ \xymatrix{A \ar@{..>}[d]_{\widetilde{h}} \ar[dr]^g & \ar@{}[drr]|{\textstyle\stackrel{\p}{\longmapsto}}&&&{\sf p}(A) \ar[d]_{{\sf p}(\widetilde{h}) = h} \ar[dr]^{{\sf p}(g)}
\\ 
B \ar[r]_f & C&&& {\sf p}(B) \ar[r]_{{\sf p}(f)} & {\sf p}(C)} \]
\end{proposition}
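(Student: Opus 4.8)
The plan is to prove the two implications separately, in both cases exploiting the single structural fact that a lax triangle in the base can be replaced by a precise one without changing the lifting problem. Concretely, given any lax triangle $h\p(f) \geq \p(g)$, I would first record that $h' := \rst{\p(g)}\,h$ yields a \emph{precise} triangle $h'\p(f) = \p(g)$ with left factor $h'$: indeed $h'\p(f) = \rst{\p(g)}h\p(f) = \p(g)$ directly from $h\p(f) \geq \p(g)$, and $\rst{h'} = \rst{\p(g)}\rst{h} = \rst{\p(g)}$ since $\rst{\p(g)} = \rst{\rst{\p(g)}h\p(f)} \leq \rst{\p(g)}\rst{h}$ while the reverse inequality is automatic. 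This is precisely the content of the remark preceding the statement, and it makes the two formulations' domains of quantification interchangeable.

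For the forward implication, I would start from a precise triangle $h\p(f) = \p(g)$ (so in particular $h\p(f) \geq \p(g)$) and feed it to Definition \ref{olddefn}, obtaining the smallest candidate lifting $\widetilde{h}$. By Lemma \ref{ChadsLemma} this $\widetilde{h}$ is the unique map satisfying $\rst{\widetilde{h}} = \rst{g}$, $\widetilde{h}f = g$, and $\p(\widetilde{h}) \leq h$; the first two conditions already say the lifted triangle is precise in the sense of Definition \ref{defn:precise}. The only point needing care is upgrading $\p(\widetilde{h}) \leq h$ to the equality $\p(\widetilde{h}) = h$ demanded by Definition \ref{newdefn}: since $\p$ is a restriction functor, $\rst{\p(\widetilde{h})} = \p(\rst{\widetilde{h}}) = \p(\rst{g}) = \rst{\p(g)} = \rst{h}$, using preciseness of the base triangle, whence $\p(\widetilde{h}) = \rst{\p(\widetilde{h})}h = \rst{h}h = h$. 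Uniqueness transfers directly from the uniqueness clause of Lemma \ref{ChadsLemma}.

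For the backward implication, I would take an arbitrary lax triangle $h\p(f) \geq \p(g)$, reduce it to the precise triangle $h'\p(f) = \p(g)$ as above, and apply Definition \ref{newdefn} to obtain the unique precise lift $\widetilde{h'}$ with $\p(\widetilde{h'}) = h'$, $\widetilde{h'}f = g$, and $\rst{\widetilde{h'}} = \rst{g}$. This $\widetilde{h'}$ is visibly a candidate lifting for the original lax problem, since $\widetilde{h'}f = g \geq g$ and $\p(\widetilde{h'}) = h' = \rst{\p(g)}h \leq h$. The crux is minimality: given any candidate lifting $v$ (so $vf \geq g$ and $\p(v) \leq h$), I would show $\rst{g}\,v = \widetilde{h'}$ by checking the three defining conditions. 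From $vf \geq g$ one gets $\rst{\rst{g}v} = \rst{g}$ and $(\rst{g}v)f = g$ routinely.

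The delicate step — which I expect to be the main obstacle — is verifying $\p(\rst{g}v) = h'$ \emph{on the nose}, since Definition \ref{newdefn} pins down $\widetilde{h'}$ by an equality over the base, not an inequality. Here $\p(\rst{g}v) = \rst{\p(g)}\p(v)$; applying $\p$ to $\rst{\rst{g}v} = \rst{g}$ gives $\rst{\p(g)} = \rst{\p(g)}\rst{\p(v)}$, i.e.\ $\rst{\p(g)} \leq \rst{\p(v)}$, and combining this with $\p(v) \leq h$ yields $\rst{\p(g)}\p(v) = \rst{\p(g)}\rst{\p(v)}h = \rst{\p(g)}h = h'$. Uniqueness of $\widetilde{h'}$ then forces $\rst{g}v = \widetilde{h'}$, whence $v \geq \rst{g}v = \widetilde{h'}$, establishing that $\widetilde{h'}$ is smallest and so that $f$ is $\p$-prone in the sense of Definition \ref{olddefn}. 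This final computation is exactly the proneness-free half of the $(\Leftarrow)$ argument in Lemma \ref{ChadsLemma}, so one may either repeat it or observe that that sub-argument never used proneness of $f$ and cite it.
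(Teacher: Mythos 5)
Your proposal is correct and follows essentially the same route as the paper's own proof: the forward direction upgrades the inequality $\p(\widetilde{h}) \leq h$ from Lemma \ref{ChadsLemma} to an equality via $\rst{\p(\widetilde{h})} = \p(\rst{g}) = \rst{h}$, and the backward direction replaces the lax triangle by the precise one with left factor $\rst{\p(g)}h$, takes its unique precise lift as the candidate, and establishes minimality by showing $\rst{g}v$ satisfies the uniqueness specification of that lift. Your observation that the minimality computation is the proneness-free part of the $(\Leftarrow)$ argument in Lemma \ref{ChadsLemma} is also accurate, and your verification that $\p(\rst{g}v) = \rst{\p(g)}h$ on the nose is exactly the step the paper handles by noting the inequality $\p(\rst{g}k) \leq \rst{\p(g)}h$ has equal restrictions on both sides.
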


\begin{proof}~
\begin{description}
\item{($\Rightarrow$)} If $f$ is prone in the sense of Definition \ref{olddefn}, and $h {\sf p}(f) = {\sf p}(g)$ is a precise triangle with left factor $h$ then, by Lemma 
\ref{ChadsLemma}, $\widetilde{h}f =g$ is a precise triangle with ${\sf p}(\widetilde{h}) \leq h$.  However, 
$\rst{{\sf p}(\widetilde{k})} = {\sf p}(\rst{\widetilde{h})} = {\sf p}(\rst{g}) = \rst{{\sf p}(g)} = \rst{h}$ so ${\sf p}(\widetilde{h}) = h$.
\item{($\Leftarrow$)} Let $h \p(f) \geq \p(g)$ be a lax triangle with lax left factor $h$, then, by Lemma \ref{Jaws}.v, $(\rst{\p(g)}h)\p(f) = \p(g)$ is a precise triangle and so there is, by assumption, a unique left factor making a precise triangle $(\widetilde{\rst{{\sf p}(g)}h})f = g$ with ${\sf p}(\widetilde{\rst{{\sf p}(g)}h}) = \rst{{\sf p}(g)}h$.  
This $\widetilde{\rst{{\sf p}(g)}h}$ we claim satisfies all the requirements of being a lifting of $h$.  It is a candidate lifting as 
$\widetilde{\rst{\p(g)}h}f=g\geq g$ and $\p(\widetilde{\rst{\p(g)}h}) = \rst{\p(g)}h \leq h$.  

To show that $\widetilde{\rst{\p(g)}h}$ is the {\em least\/} candidate lifting, consider another candidate lifting, $k$, with $kf \geq g$ and $\p(k) \leq h$ then $kf \geq g$ then $(\rst{g}k)f = g$ is precise as is $(\rst{\p(g)}\p(k))\p(f)= \p(g)$.  Now $\p(\rst{g}k) = \p(\rst{g}\p(k) \leq \p(\rst{g})h$ as $\p(k) \leq h$, however, this inequality is an equality as $\rst{\p(\rst{g}k)} = \rst{\p(g)} = \rst{\rst{\p(g)}h}$ so that by the uniqueness of lifting (under Definition \ref{newdefn})    we have $\rst{g}k = \widetilde{\rst{\p(g)}h}$ so that  $\widetilde{\rst{\p(g)}h} = \rst{g}k  \leq k$ as required.
\end{description}
\end{proof}

\section{Latent fibrations as fibrations in the 2-category \texorpdfstring{${\sf SRest}$}{SRest}}
\label{Appendix-B}

In this appendix we show that, in ${\sf SRest}$, the 2-category of restriction categories, restriction semifunctors, and transformations, fibrations in the 2-categorical sense are precisely latent fibrations:

\begin{theorem}
Latent fibrations are precisely fibrations in the 2-category ${\sf SRest}$.
\end{theorem}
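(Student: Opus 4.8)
The plan is to use Street's internal characterization of a fibration in a $2$-category rather than the representable one, since it matches the prone condition most directly. Writing $P$ for the comma object $\B \downarrow \p$ in ${\sf SRest}$, with projections $d_0 \colon P \to \B$ and $d_1 \colon P \to \E$ and its universal $2$-cell, the semifunctor $\p$ is a Street fibration exactly when the canonical comparison $1$-cell $J \colon \E \to P$ (induced by $\mathrm{id}_\E$, $\p$, and the identity $2$-cell on $\p$) admits a right adjoint $R$ whose counit is inverted by $d_0$ (a right adjoint right inverse). The strategy is to compute $P$ and $J$ explicitly in ${\sf SRest}$ and to show that the existence of such an $R$ is literally the content of Definition~\ref{newdefn}.

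First I would identify the comma object. By its universal property a $1$-cell $T \to P$ is a pair of restriction semifunctors $u \colon T \to \B$, $w \colon T \to \E$ together with a restriction transformation $\theta \colon u \Rightarrow w\p$. Testing against the terminal restriction category (one object, one arrow) reveals the ``points'' of $P$: an object is a triple $(A, X, \theta)$ with $A \in \B$, $X \in \E$, and $\theta \colon A \to \p(X)$ a map of $\B$ whose restriction datum, forced by the defining condition $\overline{\theta} = u(1)$ of a restriction transformation, yields exactly the side condition $\theta = \theta\,\p(1_X)$ of Definition~\ref{newdefn}(ii). The comparison $J$ sends $X$ to $(\p(X), X, \p(1_X))$, recording an object of $\E$ together with the identity $2$-cell on $\p$.

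Next I would unwind the right adjoint. An r.a.r.i.\ $R$ assigns to each $(A, X, \theta)$ an object $R(A,X,\theta) \in \E$ together with a counit component, which amounts to a map $\theta^{*} \colon R(A,X,\theta) \to X$ in $\E$ lying over $\theta$; spelling out the adjunction's universal property using that $2$-cells in ${\sf SRest}$ are restriction transformations says precisely that for every $g \colon Y \to X$ and every $h$ with $h\,\p(\theta^{*}) = \p(g)$ a precise triangle there is a unique $\widetilde h$ with $\widetilde h\,\theta^{*} = g$ precise and $\p(\widetilde h) = h$. In other words $\theta^{*}$ is a $\p$-prone lift of $\theta$, and the condition that $d_0$ invert the counit is exactly preciseness together with $\p(\theta^{*}) = \theta$. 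Conversely, a choice of prone lifts assembles into such an $R$, using the uniqueness, compositionality, and mediating-isomorphism properties already established (Lemma~\ref{composites-prones}, Lemma~\ref{mediating_maps}); this gives the equivalence in both directions.

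The main obstacle I anticipate is the careful matching of the $2$-cell bookkeeping: because $2$-cells in ${\sf SRest}$ are restriction transformations constrained by $\overline{\alpha_X} = F(1_X)$, the ``totality'' built into them must be shown to convert precisely into preciseness of triangles, and the fact that $\p$ is only a semifunctor means the idempotents $\p(1_X)$ intervene throughout and must be tracked (this is exactly where the side condition $\theta = \theta\,\p(1_X)$ enters, and why a naive ``all maps $A \to \p(X)$'' version would be incorrect). A secondary technical point is to verify that $\B \downarrow \p$ genuinely exists in ${\sf SRest}$ and satisfies its universal property at the level of $2$-cells, not merely on underlying objects and arrows; once that is confirmed, translating the triangle identities of the adjunction into the uniqueness clause of proneness is routine.
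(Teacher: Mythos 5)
Your strategy---replacing the representable (Cartesian $2$-cell) definition that the paper actually uses by Street's comma-object characterization, and then unwinding the right adjoint to the canonical map $J\colon \E \to \B \downarrow \p$---is a genuinely different route, but as written it has a real gap at its foundation: the existence of the comma object $\B \downarrow \p$ in ${\sf SRest}$, which you defer as ``a secondary technical point,'' is in fact the crux, it is nowhere constructed, and it is far from routine. Your identification of its objects by probing with the terminal restriction category is not a construction (objects of a restriction category are not recoverable from global points in ${\sf SRest}$), and it is also incomplete: a semifunctor from the terminal category into $\X$ is an object \emph{together with a restriction idempotent} (the image of $1_\bullet$), a fact the paper's own proof exploits by setting $b(1_\bullet) = \rst{f}$. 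Worse, the naive comma category does not satisfy the $2$-dimensional universal property: the universal $2$-cell $\lambda\colon d_0 \Rightarrow \p d_1$ must be a restriction transformation, so its component at an object $(A,X,\theta)$ must satisfy $\rst{\theta} = d_0(1_{(A,X,\theta)})$; with the evident projection $d_0$ this forces every $\theta$ to be total. Repairing this requires giving $d_0$ a non-obvious semifunctor structure (sending identities to $\rst{\theta}$), which in turn imposes extra ``semi-precise'' conditions on the comma category's morphisms---exactly the kind of delicacy the paper confronts in defining $\X^{\leadsto}$ and $\X^{\rightarrow}$. None of this is carried out, so the object your whole argument manipulates has not been shown to exist.

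Second, even granting the comma object, you invoke without proof the equivalence ``$\p$ is a Street fibration iff $J$ admits a right adjoint (over $\B$).'' That equivalence is a theorem whose known proofs are set in $2$-categories admitting comma objects, and the statement being proven here is precisely that the paper's latent fibrations coincide with fibrations in ${\sf SRest}$ \emph{in the Cartesian-$2$-cell sense} of Definition~B.1. You therefore cannot silently substitute the adjoint characterization: you must either prove the two $2$-categorical characterizations agree in ${\sf SRest}$ (which again requires the comma object and a nontrivial argument in a semifunctor-based setting where identities are not preserved), or prove the adjoint condition is equivalent to latent fibration \emph{and} that it is equivalent to the paper's definition. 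By contrast, the paper's proof works directly with the representable definition---constructing, for a $2$-cell $\beta$ valued in one-object test categories and in general, the lift $e'$, verifying it is a semifunctor, and checking the $2$-categorical Cartesian property---and thereby never needs any $2$-dimensional limits in ${\sf SRest}$ at all. Your final step (unwinding the counit components $\theta^*$ into prone lifts) is plausible and would indeed recover Definition~\ref{newdefn}, but it rests on the two unestablished pillars above, so the proposal as it stands is not a proof.
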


Recall that Street \cite{street_fibration} provided a general definition of a fibration in a 2-category.  The following is an expanded version of Street's definition from \cite{Loregian_2019}:

\begin{definition} A 1-cell $p\colon E \to B$ in a 2-category is a \textbf{fibration} if every 2-cell
	\[ \xymatrix{X \ar[r]^{e} \ar[dr]_b^{\Rightarrow \beta} & E \ar[d]^{p} \\ & B } \]
has a $p$-Cartesian lift $\alpha\colon e' \Rightarrow e$ so that $p \alpha = \beta$.  A 2-cell
	\[ \xymatrix{X \ar@/^0.5pc/[rr]^{e'}_{\Downarrow \alpha } \ar@/_0.5pc/[rr]_{e} & & E } \]
is \textbf{$p$-Cartesian} if for all $F\colon Y \to X$ and all 2-cells
\[ \xymatrix{Y \ar[rr]^{e''} \ar@/_0.6pc/@{{}{ }{}}[rr]_{\Downarrow \xi} \ar[dr]_F & & E \\ &  X \ar[ur]_e & } ~~~~~~~ 
   \xymatrix{Y \ar[rr]^{e''} \ar[d]_F \ar@{}[drr]|{\Downarrow \gamma} & & E \ar[d]^{p} \\ X \ar[r]_{e'} & E \ar[r]_p & B} \]
such that $p \xi = p \alpha F \cdot \gamma$, there is a unique 2-cell $\zeta\colon e'' \Rightarrow e'F$ such that $\xi = \alpha F \cdot \zeta$ and $p \zeta = \gamma$.
\end{definition}

We shall prove the theorem using two propositions corresponding to the two implications required:

\begin{proposition}
If $\p$ is a fibration in the 2-category ${\sf SRest}$, then $\p\colon \E \to \B$ is a latent fibration.
\end{proposition}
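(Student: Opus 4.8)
The plan is to test the $2$-categorical fibration property against the terminal restriction category $\mathbf 1$ (one object $*$, one arrow, trivial restriction), exploiting the fact that semifunctors and transformations out of $\mathbf 1$ encode exactly the object/map/restriction data in Definition \ref{newdefn}. First I would record the dictionary: a restriction semifunctor $\mathbf 1 \to \E$ is a pair consisting of an object $E$ and a restriction idempotent $e(1_*)$ on it (since $e(1_*)$ must be idempotent and self-restricting), and a restriction transformation $\mu\colon e_1 \Rightarrow e_2$ between two such is a single arrow $\mu_*\colon e_1(*) \to e_2(*)$ with $\rs{\mu_*} = e_1(1_*)$ and naturality $e_1(1_*)\mu_* = \mu_* e_2(1_*)$. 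The crucial observation, via Lemma \ref{Jaws}.i, is that the transformation axiom $\rs{\mu_*} = e_1(1_*)$ together with the naturality square is interchangeable with the data of a \emph{precise} triangle.

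Given $X \in \E$ and $f\colon A \to \p(X)$ with $f = f\p(1_X)$, I would feed Street's definition the $1$-cell $e\colon \mathbf 1 \to \E$ picking out $X$ with $e(1_*) = 1_X$, the $1$-cell $b\colon \mathbf 1 \to \B$ picking out $A$ with $b(1_*) = \rs f$, and the $2$-cell $\beta\colon b \Rightarrow \p e$ whose single component is $f$; here the transformation axiom reads $\rs f = b(1_*)$ and the naturality square is exactly the hypothesis $f = f\p(1_X)$. The fibration property then yields a $\p$-Cartesian $\alpha\colon e' \Rightarrow e$ over $\beta$, and its component $f' := \alpha_*\colon X' \to X$ (with $X' := e'(*)$) satisfies $\p(f') = \beta_* = f$, so $f'$ sits over $f$ and is the candidate prone map; note $e'(1_*) = \rs{\alpha_*} = \rs{f'}$ comes for free.

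The substance of the argument is showing $f'$ is $\p$-prone, i.e. translating $\p$-Cartesianness of $\alpha$ into Definition \ref{newdefn}.i. Given prone test data $g\colon Y \to X$ and $h\colon \p(Y) \to \p(X')$ with $h\p(f') = \p(g)$ precise, I would instantiate the Cartesian property with $Y_0 = \mathbf 1$ and $F = \mathrm{id}_{\mathbf 1}$, taking $e''$ to pick out $Y$ with $e''(1_*) = \rs g$, the $2$-cell $\xi\colon e'' \Rightarrow e$ of component $g$, and $\gamma\colon \p e'' \Rightarrow \p e'$ of component $h$. The delicate checks are that these are genuine restriction transformations and that the compatibility condition $\p\xi = \p\alpha F \cdot \gamma$ holds: the transformation axiom for $\gamma$ demands $\rs h = \rs{\p(g)}$ and its naturality demands $h\rs f = h$, which are precisely the two halves of preciseness of $h\p(f') = \p(g)$ (Lemma \ref{Jaws}.i), while compatibility reduces to $h\p(f') = \p(g)$ itself. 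The Cartesian property then produces a unique $\zeta\colon e'' \Rightarrow e'$ with $\p\zeta = \gamma$ and $\alpha F \cdot \zeta = \xi$; decoding, $\tilde h := \zeta_*$ satisfies $\p(\tilde h) = h$ and $\tilde h f' = g$, with preciseness supplied automatically by the axiom $\rs{\zeta_*} = e''(1_*) = \rs g$, and uniqueness of $\zeta$ yields uniqueness of $\tilde h$.

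I expect the main obstacle to be organisational rather than conceptual: keeping straight which of the several transformations ($\beta,\alpha,\xi,\gamma,\zeta$) carries its restriction-idempotent constraint on its source versus its target, and confirming in each instance that the pair (transformation axiom, naturality square) is exactly the pair (restriction-of-left-factor equality, the $k\rs f = k$ condition) characterising a precise triangle. Once this correspondence is set up cleanly at the level of $\mathbf 1$-indexed data, each remaining verification collapses to a one-line application of the restriction axioms or of Lemma \ref{Jaws}.
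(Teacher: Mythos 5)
Your proposal is correct and follows essentially the same route as the paper's own proof: both test Street's fibration property against the terminal restriction category, encode the given data as semifunctors $e,b$ and the 2-cell $\beta$ with component $f$, extract $f'$ from the Cartesian lift $\alpha$, and then verify prone-ness by packaging the test data $(g,h)$ as transformations $\xi,\gamma$ and decoding the unique $\zeta$ into the precise lift $\widetilde{h}$. Your dictionary identifying the pair (transformation axiom, naturality square) with the two halves of preciseness via Lemma \ref{Jaws}.i is exactly the mechanism the paper uses, just stated more explicitly.
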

\begin{proof}
Let $b \to^{f} \p(e)$ be an arrow in $\B$ such that $f\p(1_e) = f$.  Define:
\begin{itemize}
	\item $\X := \{\bullet\}$ (the terminal category)
	\item $e(\bullet) := e$, $e(1_{\bullet}) := 1_e$
	\item $b(\bullet) := b$, $b(1_{\bullet}) := \rs{f}$
	\item $\beta_{\bullet} := b \to^{f} \p(e)$
\end{itemize}
Note that naturality of a 2-cell which comes from a terminal category is not automatic as semi-functors need not preserve identities!  However, $\beta$ is natural in this case as
	\[ b(\idb)\beta_{\bu} = \rs{f}f = f \mbox{ \ while \ } \beta_{\bu} \p(e(\idb)) = f \p(1_e) = f\]
by assumption on $f$.  Moreover, $\beta$ is a transformation of semifunctors since 
	\[ \rs{\beta_{\bu}} = \rs{f} = b(\idb). \]
	
So by the 2-fibration property, there exists a 2-cell $\alpha\colon e' \Rightarrow e$ such that $\p(\alpha) = \beta$ which is $\p$-Cartesian (in the 2-categorical sense).  Explicitly, $\alpha$ consists of a single arrow $\alpha(\idb) = f'\colon e' \to e$ with $\rs{f'} = e'(\idb)$, $\p(e') = b$, and $\p(f') = f$.

Now suppose we have 
	\[ \xymatrix{e'' \ar[dr]^{g} &\ar@{}[drr]|{\textstyle\mapsto}&&  \p(e'') \ar[d]_h \ar[dr]^{\p(g)}&
\\ 
e' \ar[r]_{f'} & e && b \ar[r]_{f} & p(e)} \]
with $\rs{h} = \rs{\p(g)}$.  From this, define:
\begin{itemize}
	\item $\Y := \X = {\bullet}$ and $F$ the identity functor,
	\item $e''(\bu) := e'', e''(\idb) := \rs{g}$,
	\item $\xi\colon e'' \to e$ by $\xi_{\bu} := g$,
	\item $\gamma\colon \p(e'') \to \p(e')$ by $\gamma_{\bu} := h$.
\end{itemize}
$\xi$ is natural as $\rs{g}g = g1$, and is a semifunctor transformation since $\rs{g}  = e''(\idb)$.  $\gamma$ is natural as the square translates to requiring that
	\[ \rs{\p(g)}h = h\rs{f} \]
which holds by preciseness of the triangle $hf = \p(g)$, and $\gamma$ is a semifunctor transformation since
	\[ \rs{\gamma_{\bu}} = \rs{h} = \rs{\p(g)} = \p(\rs{g}) = \p(e''(\idb)). \]

So by $\p$-Cartesian-ness of $\alpha$, there is a unique 2-cell $\zeta\colon e'' \to eF$ so that $\xi = \alpha F \cdot \zeta$ and $\p \zeta = \gamma$.  That is, $\zeta$ consists of a single arrow $\zeta_{\bu}\colon e'' \to e'$ which fills in the triangle above and sits over $h$.  Moreover, $\alpha$ being a semifunctor transformation means that $\rs{\zeta_{\bu}} = e''(1_{\bu}) = \rs{g}$, so it \emph{precisely} fills in the top triangle.

\end{proof}

But we can also go back:

\begin{proposition}
Suppose $\p\colon \E \to \B$ is a latent fibration.  Then $\p$ is a fibration in the 2-category ${\sf SRest}$.
\end{proposition}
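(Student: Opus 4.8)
The plan is to build the required $\p$-Cartesian lift \emph{pointwise}, using the prone liftings supplied by the latent fibration, in direct analogy with the previous proposition but now for arbitrary semifunctors $e\colon \X \to \E$ and $b\colon \X \to \B$ rather than ones out of the terminal category. So suppose we are given a $2$-cell $\beta\colon b \Rightarrow \p e$. For each object $X\in\X$ I would first fix a prone lift of the component $\beta_X\colon b(X)\to\p(e(X))$ at the object $e(X)$. Such a lift exists because naturality of $\beta$ at $1_X$ gives $b(1_X)\beta_X = \beta_X\p(e(1_X))$, and since $\rs{\beta_X}=b(1_X)$ this reads $\beta_X=\beta_X\p(e(1_X))$; composing on the right with $\p(1_{e(X)})$ and using that $e$ and $\p$ are semifunctors then yields $\beta_X=\beta_X\p(1_{e(X)})$, which is exactly the hypothesis needed for the latent fibration to produce a prone $c_X\colon e'(X)\to e(X)$ over $\beta_X$ (so $\p(e'(X))=b(X)$).

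The one genuinely new point — absent in the total or separated settings — is that $e$ is a semifunctor, so $e(1_X)$ is a nontrivial restriction idempotent on $e(X)$. A short calculation shows that any restriction transformation $\alpha\colon e'\Rightarrow e$ is forced to satisfy $\alpha_X=\alpha_X e(1_X)$ (combine the restriction-transformation axiom $\rs{\alpha_X}=e'(1_X)$ with naturality of $\alpha$ at $1_X$). The bare prone lift $c_X$ need not have this property — it does precisely when $\p$ is separated — so I would instead take the component to be $\alpha_X := c_X\,e(1_X)$. Then $\p(\alpha_X)=\beta_X\p(e(1_X))=\beta_X$, and $\alpha_X e(1_X)=\alpha_X$ since $e(1_X)$ is idempotent. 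Moreover axiom [R.4] gives $\rs{\alpha_X}c_X = \rs{c_X e(1_X)}c_X = c_X e(1_X)=\alpha_X$, which exhibits $\rs{\alpha_X}$ as the unique prone factorization of $\alpha_X$ through $c_X$; this is what will make the restriction-transformation axiom $\rs{\alpha_X}=e'(1_X)$ hold.

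Next I would extend $e'$ to a semifunctor. For $f\colon X\to Y$ I define $e'(f)\colon e'(X)\to e'(Y)$ to be the unique prone factorization of $\alpha_X e(f)$ through $c_Y$, which is available because $\p(\alpha_X e(f))=\beta_X\p(e(f))=b(f)\beta_Y$ is a precise triangle with left factor $b(f)$; thus $e'(f)c_Y=\alpha_X e(f)$ and $\p(e'(f))=b(f)$. Preservation of composition and restriction by $e'$, and naturality of $\alpha$ (the equation $e'(f)\alpha_Y=\alpha_X e(f)$, which reduces using $\alpha_Y=c_Y e(1_Y)$ and the semifunctor identity $e(f)e(1_Y)=e(f)$), then all follow from uniqueness of prone factorizations; by construction $\p e'=b$ on the nose and $\p\alpha=\beta$.

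Finally I would check that $\alpha$ is $\p$-Cartesian. Unwinding Street's condition pointwise, for a semifunctor $F$ into $\X$ and each object $Z$ I am given $\xi_Z\colon e''(Z)\to e(FZ)$ and $\gamma_Z$ with $\p(\xi_Z)=\gamma_Z\beta_{FZ}$, and must produce a unique $\zeta_Z$ over $\gamma_Z$ with $\zeta_Z\alpha_{FZ}=\xi_Z$. Two observations let the prone-ness of $c_{FZ}$ do the work: the triangle $\gamma_Z\beta_{FZ}=\p(\xi_Z)$ is automatically precise since $\gamma$ and $\xi$ are restriction transformations (so $\rs{\gamma_Z}=\p(e''(1_Z))=\rs{\p(\xi_Z)}$), and $\xi_Z=\xi_Z e(1_{FZ})$ because $\xi$ being a restriction transformation gives $\xi_Z=\xi_Z e(F(1_Z))$ while $e(F(1_Z))e(1_{FZ})=e(F(1_Z))$. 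Factoring $\xi_Z$ through $c_{FZ}$ gives $\zeta_Z$ with $\zeta_Z c_{FZ}=\xi_Z$, whence $\zeta_Z\alpha_{FZ}=\zeta_Z c_{FZ}e(1_{FZ})=\xi_Z e(1_{FZ})=\xi_Z$. For uniqueness, any competitor $\zeta'_Z$ is again a restriction-transformation component, so $\rs{\zeta'_Z}=\rs{\xi_Z}$; since $\xi_Z=\zeta'_Z c_{FZ}e(1_{FZ})\le \zeta'_Z c_{FZ}$ with equal restrictions, the elementary fact that $a\le b$ and $\rs a=\rs b$ force $a=b$ gives $\zeta'_Z c_{FZ}=\xi_Z$, and proneness of $c_{FZ}$ yields $\zeta'_Z=\zeta_Z$. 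The main obstacle throughout is bookkeeping the failure of $e$ and $\p$ to preserve identities — in particular recognizing that the Cartesian components must be the corrected maps $c_X\,e(1_X)$ and not the prone lifts themselves; once this is in place every remaining step collapses to the uniqueness of prone factorizations.
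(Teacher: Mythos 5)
Your proposal is correct and follows essentially the same route as the paper's proof: prone lifts of the components $\beta_X$ (justified by first showing $\beta_X = \beta_X\p(1_{e(X)})$), the corrected Cartesian components $\alpha_X := c_X\, e(1_X)$ (which is exactly the subtlety the paper flags), the definition of $e'$ on arrows by uniqueness of precise prone factorizations, and the same two key identities ($\rs{\gamma_Z} = \rs{\p(\xi_Z)}$ and $\xi_Z\, e(1_{FZ}) = \xi_Z$) driving the Cartesianness and uniqueness arguments. The only point you gloss over is that $\zeta$ must be verified to be a natural transformation satisfying the restriction-transformation axiom, not merely a family of components; the paper spends some effort on this, but it follows by the same uniqueness-of-fill-in technique you repeatedly invoke, so this is a routine completion rather than a gap.
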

\begin{proof}
Let 
	\[ \xymatrix{\X \ar[r]^{e} \ar[dr]_b^{\Rightarrow \beta} & \E \ar[d]^{\p} \\ & \B } \]
be a 2-cell.  In particular, we have for each $X \in \X$ an arrow 
	\[ b(x) \to^{\beta_X} \p(e(x)) \mbox{ in $\B$ such that } \rs{\beta_x} = b(1_X). \]
Moreover, naturality of $\beta_X$ with respect to $1_X$ gives
	\[ b(1_X)\beta_X = \beta_X\p(e(1_X)), \]
Then we have
	\[ \beta_X = \rs{\beta_X}\beta_X = b(1_X)\beta_X = \beta_X\p(e(1_X)) \leq \beta_X \p(1_{e(X)}). \]
But since $\p(1_{e(X)})$ is a restriction idempotent, we always have $\beta_X \p(1_{e(X)}) \leq \beta_X$, so actually  	
	\[ \beta_X \p(e(1_X)) = \beta_X = \beta_X \p(1_{e(X)}). \]
	
Thus we have the required property, and so $b(x) \to^{\beta_X} \p(e(x))$ has a prone lift
	\[ e'(X) \to^{\beta_X^*} e(X). \]
Define $\alpha_X := \beta_X^* e(1_X)$.  Note that this is over $\beta_X$ as
	\[ \p(\alpha_X) = \p(\beta_X^* e(1_X)) = \beta_X \p(e(1_X)) = \beta_X \]
as in the calculation above.  

We need to define $e'\colon \X \to \E$ as a functor, so we need to give its action on arrows.  Given $f\colon X' \to X$, define $e'(f)$ as the unique fill-in
\[ \xymatrix{ e'(X') \ar[r]^{\beta^*_{X'}} \ar@{-->}[d]_{e'(f)} & e(X') \ar[d]^{e(f)} \ar@{}[drr]|{\textstyle\mapsto} && b(X') \ar[r]^{\beta_{X'}} \ar[d]_{b(f)} & \p(e(X')) \ar[d]^{\p(e(f))}
\\ 
e'(X) \ar[r]_{\beta^*_{X}} & e(X) && b(X) \ar[r]_{\beta_X} & \p(e(X)) } \]
Note that the square on the right (in $\B$) commutes since $\beta$ is natural, and is precise since
	\[ \rs{\beta_{X'}\p(e(f))} = \rs{b(f)\beta_X} = \rs{b(f)\rs{\beta_X}} = \rs{b(f)b(1_X)} = \rs{b(f)}. \]
Thus we do get an $e'(f)$ as above, and by definition it has the precise property that $\rs{e'(f)} = \rs{\beta^*_{X'}e(f)}$.  

We need to show that $e'$ is a semifunctor and $\alpha$ is a semifunctor transformation.  For $e'$ preserving restriction, note that $e'(\rs{f})$ is defined as the unique fill-in
\[ \xymatrix{ e'(X') \ar[r]^{\beta^*_{X'}} \ar@{-->}[d]_{e'(\rs{f})} & e(X') \ar[d]^{e(\rs{f})} \ar@{}[drr]|{\textstyle\mapsto}&& b(X') \ar[r]^{\beta_{X'}} \ar[d]_{b(\rs{f})} & \p(e(X')) \ar[d]^{\p(e(\rs{f}))}
\\ 
e'(X') \ar[r]_{\beta^*_{X'}} & e(X') && b(X') \ar[r]_{\beta_{X'}} & \p(e(X')) } \]
   
It suffices to show $\rs{e'(f)}$ satisfies the same fill-in property as $e'(\rs{f})$.  Indeed,
	\[ \beta^*_{X'} e(\rs{f}) = \beta^*_{X'} \rs{e(f)} = \rs{\beta^*_{X'} e(f)} \beta^*_{X'} = \rs{\beta^*_{X'} e(f) e(1_X)} \beta^*_{X'} = \]
	\[ \rs{e'(f)\beta^*_X e(1_X)} \beta^*_{X'} = \rs{e'(f)e'(1_X)} \beta^*_{X'} = \rs{e'(f)}\beta^*_{X'}, \]
where we have that by preciseness of the fill-in, $\rs{e'(1_X)} = \rs{\beta_X^*e(1_X)}$; moreover, this fill-in is itself precise as
	\[ \rs{e'(f)} = \rs{\beta^*_{X'} e(f)} = \rs{\beta^*_{X'}e(\rs{f})}, \]
and this fill-in is over $b(\rs{f})$ as 
	\[ \p(\rs{e'(f)}) = \rs{\p(e'(f))} = \rs{b(f)} = b(\rs{f}). \]
Thus by uniqueness of fill-in, $e'(\rs{f}) = \rs{e'(f)}$.

For preservation of composites, given $X'' \to^{g} X' \to^{f} X$, $e'(gf)$ is defined as the unique fill-in
\[ \xymatrix{ e'(X'') \ar[r]^{\beta^*_{X''}} \ar@{-->}[d]_{e'(gf)} & e(X'') \ar[d]^{e(gf)}\ar@{}[drr]|{\textstyle\mapsto} &&b(X'') \ar[r]^{\beta_{X''}} \ar[d]_{b(gf)} & \p(e(X'')) \ar[d]^{\p(e(gf))} 
\\ 
e'(X) \ar[r]_{\beta^*_{X}} & e(X) && b(X) \ar[r]_{\beta_X} & \p(e(X)) } \]
So it suffices to show $e'(g)e'(f)$ satisfies the same properties as $e'(gf)$.  Indeed, it makes the square commute as
	\[ e'(g)e'(f) \alpha_X = e'(g) \beta^*_{X'} e(f) = \beta^*_{X''} e(g)e(f) = \beta^*_{X''} e(gf), \]
it is over $b(gf)$ as
	\[ \p(e'(g)e'(f) = \p(e'(g)) \p(e'(f)) = b(g)b(f) = b(gf), \]
and it is precise since
	\[ \rs{e'(g)e'(f)} = \rs{e'(g)\rs{e'(f)}} = \rs{e'(g)\beta^*_{X'}e(f)} = \rs{\beta^*_{X''} e(g) e(f)} = \rs{\beta^*_{X''}e(gf)}. \]
Thus by uniqueness of fill-in, $e'(gf) = e'(g)e'(f)$.  

For the restriction requirement on the component $\alpha_X$, consider
	\[ \rs{\alpha_X} = \rs{\beta_X^*e(1_X)} = \rs{e'(1_X)} \]
by precise-ness of the fill-in, $e'(1_X)$.  (Note that this is why we defined $\alpha_X$ in this way; $\beta_X^*$ itself would not in general satisfy this condition).  For naturality of $\alpha_X$, we have
	\[ e'(f)\alpha_X = e'(f) \beta^*_X e(1_X) = \beta^*_{X'} e(f)e(1_X) = \beta^*_{X'} e(1_X') e(f)  = \alpha_{X'} e(f) \]
as required.

Now we need to show this $\alpha$ is $\p$-Cartesian (in the 2-categorical sense).  For this, we are given $F\colon \Y \to \X$ and 2-cells
\[ \xymatrix{\Y \ar[rr]^{e''} \ar@/_0.6pc/@{{}{ }{}}[rr]_{\Downarrow \xi} \ar[dr]_F & & \E \\ &  \X \ar[ur]_e & } ~~~~~~~ 
   \xymatrix{\Y \ar[rr]^{e''} \ar[d]_F \ar@{}[drr]|{\Downarrow \gamma} & & E \ar[d]^{\p} \\ X \ar[r]_{e'} & E \ar[r]_{\p} & \B} \]
such that $\p\xi = \gamma \p \alpha F$.  Note that since these are semifunctor transformations, $\rs{\xi_Y} = e''(1_Y)$ and $\rs{\gamma_Y} = \p(e''(1_Y))$.  

Then we define $\zeta_Y\colon e''(Y) \Rightarrow e'(FY)$ as the unique fill-in
\[ \xymatrix{e''(Y) \ar@{-->}[d]_{\zeta_Y} \ar[dr]^{\xi_Y} & \ar@{}[drr]|{\textstyle\mapsto}&& \p(e''(Y)) \ar[d]_{\gamma_Y} \ar[dr]^{\p(\xi_Y)} &
\\ 
e'(FY) \ar[r]_{\beta^*_{FY}} & e(FY)  \ar@{}[u]&&\p(e'(FY)) \ar[r]_{\beta_{FY}} & \p(e(FY))} \]
Note that the base triangle commutes since by assumption $\p(\xi_Y) = \gamma_Y \p(\alpha_{FY})$ and we proved above that $\p(\alpha_{FY}) = \beta_{FY}$. Moreover, the base triangle is precise since
	\[ \rs{\gamma_Y} = \p(e''(1_Y)) = \p(\rs{\xi_Y}) = \rs{\p(\xi_Y)}. \]

Thus, we do have such a map $\zeta_Y$; by definition it has $\rs{\zeta_Y} = \rs{\xi_Y}$.  Thus, it has the required restriction property to be a semifunctor transformation, as $\rs{\xi_Y} = e''(1_Y)$.   However, we still need it to be natural.

For naturality, given any $g\colon Y' \to Y$ in $\Y$, we need
	\[ \xymatrix{e''(Y') \ar[r]^{\zeta_{Y'}} \ar[d]_{e''(g)} & e'(FY') \ar[d]^{e'(Fg)} \\ e''(Y) \ar[r]_{\zeta_Y} & e'(FY)} \]
To prove this, we claim that both compositions satisfy the properties to be the unique fill-in
	\[ \xymatrix{ e''(Y') \ar[r]^{\xi_{Y'}} \ar@{-->}[d] & e(FY') \ar[d]^{e(Fg)} \\ e'(FY) \ar[r]^{\beta^*_{FY}} & e(FY) } \]
Indeed, putting $e''(g) \zeta_Y$ into the square we get
	\[ e''(g) \zeta_Y \beta^*_{FY} = e''(g) \xi_Y = \xi_{Y'} e(Fg) \]
while putting $\zeta_{Y'}e'(Fg)$ in we get
	\[ \zeta_{Y'}e'(Fg)\beta^*_{FY} = \zeta_{Y'} \beta^*_{FY'} e(Fg) = \xi_{Y'} e(Fg) \]
Applying $\p$ to both gives the same result since
	\[ \p(e''(g)\zeta_Y) = \p(e''(g)) \gamma_Y = \gamma_{Y'}\p(e'(Fg)) \]
while
	\[ \p(\zeta_{Y'} e'(Fg)) = \gamma_{Y'} \p (e'(Fg)). \]
Finally, the triangle is precise in one case since
	\[ \rs{e''(g)\zeta_Y} = \rs{e''(g)\rs{\zeta_Y}} = \rs{e''(g)\xi_Y} = \rs{\xi_{Y'} e(Fg)} \]
and in the other case since 
	\[ \rs{\zeta_{Y'} e'(Fg)} = \rs{\zeta_{Y'} \rs{e'(Fg)}} = \rs{\zeta_{Y'} \alpha_{FY'} e(Fg)} = \rs{\xi_{Y'} e(Fg)}. \]
Thus, the required 2-cell $\zeta$ exists.  

By definition, it satisfies the required property that $\p(\zeta) = \gamma$.  For the required property $\zeta \alpha_F = \xi$, consider
	\[ \zeta_Y \alpha_{FY} = \zeta_Y \beta^*_{FY} e(1_{FY}) = \xi_Y e(1_{FY}). \]
However, as with $\beta$, using naturality of $\xi$ with respect to $1_Y$, we have
	\[ \xi_Y = \rs{\xi_Y} = e''(1_Y)\xi_Y = \xi_Y e(F(1_Y)) \leq \xi_Y e(1_{FY}) \]
and we always have the opposite inequality since $e(1_{FY})$ is a restriction idempotent.  Thus $\xi_Y e(1_{FY}) = \xi_Y$, and so $\zeta \alpha_F = \xi$, as required.  

Finally, for uniqueness, suppose we have $\zeta'\colon e'' \to e'(F)$ (so $\rs{\zeta'_Y} = e''(1_Y)$) such that $\p(\zeta') = \gamma$ and $\zeta' \alpha_F = \xi$.  Then expanding this last equality, we have
\[ \xi_Y  = \zeta'_Y \beta^*_{FY} e(1_{FY}), \]
and so since $e(1_{FY})$ is a restriction idempotent,
\begin{eqnarray*}
\xi_Y & = & \rs{\zeta'_Y \beta^*_{FY} e(1_{FY})} \zeta'_Y \beta^*_{FY}  \\
& = & \rs{\xi_Y} \zeta'_Y \beta^*_{FY} \mbox{ (by above)} \\
& = & \zeta'_Y \beta^*_{FY} \mbox{ (since $\rs{\xi_Y} = e''(1_Y) = \rs{\zeta'_Y}$)} 
\end{eqnarray*}

Thus we have
	\[ \xymatrix{ e''(Y) \ar[dr]^{\xi_Y} \ar[d]_{\zeta'_Y} & \\ e'(FY) \ar[r]_{\beta^*_{FY}} & e(FY)} \]
commutes, $\zeta'_Y$ is over $\gamma_Y$, and the triangle is precise since $\rs{\zeta'_Y} = e''(1_Y) = \rs{\xi_Y}$.  Thus by uniqueness of fill-ins for prone arrows, $\zeta' = \zeta$ as required.  
\end{proof}

\end{document}